        \title[Equivariant principal bundles  and their classifying spaces]
        {Equivariant principal bundles and their classifying spaces}
          \author{Wolfgang L\"uck}
          \address{Rheinische Wilhelms-Universit\"at Bonn\\
               Mathematisches Institut\\
               Endenicher Allee 60, 53115 Bonn, Germany}      
       \email{wolfgang.lueck@him.uni-bonn.de}
      \urladdr{http://www.him.uni-bonn.de/lueck/}
      \author{Bernardo Uribe}
      \address{Departamento de Matem\'{a}ticas, Universidad del Norte, Km.5 V\'ia Puerto Colombia, 
Barranquilla, Colombia}
      \email{buribe@gmail.com}
            \thanks{This work was partially funded by the Deutsche Forschungsgemeinschaft and the Alexander Von Humboldt 
      Foundation}
            \keywords{Equivariant principal bundle, families of local representations, classifying spaces}       
       \subjclass[2010]{55R91, 55P91}
\date{January 7, 2014}
\DeclareMathAlphabet{\matheurm}{U}{eur}{m}{n}
\DeclareMathOperator{\aut}{aut}
\DeclareMathOperator{\Bundle}{Bundle}
\DeclareMathOperator{\coker}{coker}
\DeclareMathOperator{\colim}{colim}
\DeclareMathOperator{\ev}{ev}
\DeclareMathOperator{\Ext}{Ext}
\DeclareMathOperator{\Fred}{Fred}
\DeclareMathOperator{\id}{id}
\DeclareMathOperator{\Index}{index}
\DeclareMathOperator{\im}{im}
\DeclareMathOperator{\map}{map}
\DeclareMathOperator{\pr}{pr}
\newcommand{\calCOM}{{\mathcal C}{\mathcal O}{\mathcal M}}
  \newcommand{\IR}{\mathbb{R}}
  \newcommand{\IZ}{\mathbb{Z}}
  \newcommand{\cala}{\mathcal{A}}
  \newcommand{\calb}{\mathcal{B}}
  \newcommand{\calf}{\mathcal{F}}
  \newcommand{\calh}{\mathcal{H}}
  \newcommand{\cali}{\mathcal{I}}
  \newcommand{\caln}{\mathcal{N}}
  \newcommand{\calp}{\mathcal{P}}
  \newcommand{\calr}{\mathcal{R}}
  \newcommand{\cals}{\mathcal{S}}
  \newcommand{\caltr}{\mathcal{T}\mathcal{R}}
  \newcommand{\calu}{\mathcal{U}}
\newcommand{\EGF}[2]{E_{#2}(#1)}
\newcounter{commentcounter}
\theoremstyle{plain}
\newtheorem{theorem}{Theorem}[section]
\newtheorem{lemma}[theorem]{Lemma}
\newtheorem{corollary}[theorem]{Corollary}
\newtheorem{proposition}[theorem]{Proposition}
\newtheorem*{theorem*}{Theorem}
\newtheorem*{mtheorem*}{Main Theorem}
\theoremstyle{definition}
\newtheorem{definition}[theorem]{Definition}
\newtheorem{example}[theorem]{Example}
\newtheorem{remark}[theorem]{Remark}
\newtheorem{notation}[theorem]{Notation}
\theoremstyle{remark}
\newtheorem*{summary*}{Summary}
\renewcommand{\labelenumi}{(\roman{enumi})}
\let\c@equation=\c@theorem\makeatother
\newcommand{\pt}{\{\bullet\}}
\newcommand{\version}[1] 
{\begin{center} Last edited on #1\\
    Last compiled on \today\\
file name: \jobname
  \end{center}
}
\begin{document}

\begin{abstract}
  We consider $\Gamma$-equivariant principal $G$-bundles over proper $\Gamma$-$CW$-complexes
  with prescribed family of local
  representations. We construct and analyze their classifying spaces for locally compact, second
  countable topological groups with finite covering dimension $\Gamma$ and $G$, such that
  $G$ is almost connected.
\end{abstract}

\maketitle

\newlength{\origlabelwidth} \setlength\origlabelwidth\labelwidth


\section*{Introduction}

Let $\Gamma$ and $G$ be topological groups. We will introduce the notion of a
\emph{$\Gamma$-equivariant principle $G$-bundle} over a $\Gamma$-$CW$-complex, i.e. a
principal $G$-bundle $p \colon E \to B$ together with left $\Gamma$-actions on $E$ and $B$
commuting with the right $G$-action on $E$ such that $p$ is $\Gamma$-equivariant. For
every $e \in E$ we obtain a \emph{local representation} $\rho_e \colon \Gamma_{p(e)} \to
G$ uniquely determined by $\gamma^{-1} \cdot e = e \cdot \rho_e(\gamma)$ for $\gamma \in
\Gamma_{p(e)}$, where $\Gamma_{p(e)}$ is the isotropy group of $p(e) \in B$. One can
consider such bundles where the family of local representations $\calr$ is prescribed,
e.g. one may demand that $\Gamma_{p(e)}$ is always compact and allow only certain
homomorphisms $\rho_e$.

Our main technical result is Theorem~\ref{the:bundles_and_Gamma_times_G-spaces}, where we
prove that a $\Gamma$-equivariant principal $G$-bundle $p \colon E \to B$ is the same as a
$\Gamma \times G$-$CW$-complex $E$, provided that the family of locally representations
satisfies \emph{Condition (H)} introduced in Definition~\ref{def:property_(H)}.  This implies the
main result of this paper
Theorem~\ref{the:Classifying_space_for_gamma-equivariant_principal_G-bundles} that gives a
\emph{universal $\Gamma$-equivariant principal $G$-bundle with respect to a given family
  of local representations $\calr$}, provided that $\calr$ satisfies Condition (H).

Condition (H) is needed to ensure homotopy invariance for $\Gamma$-equivariant principal
$G$-bundles. It is automatically satisfied, if $\Gamma$ and $G$ are locally compact second
countable groups with finite covering dimensions (e.g. Lie groups),  $G$ is almost
connected (i.e. $G$ modulo its connected component of the identity is compact), and all
base spaces are $\Gamma$-$CW$-complexes with compact isotropy groups,
see Theorem~\ref{the:criterion_for_(H)}.

Equivariant principal bundles have been studied before by several authors (see
\cite{Hambleton-Hausmann(2003), Lashof(1982), Lashof-May(1986), Lashof-May-Segal(1983),
  May(1990), Maruyama-Shimakawa(1995), Dieck(1987)} and references therein) and our
construction generalizes all the previous constructions in the following sense.  We
isolated the conditions that the groups $\Gamma$ and $G$ and the family of local
representations need to satisfy, in order to show the existence of a universal equivariant
bundle associated to the prescribed family of local representation  as base; 
this is the Condition (H) mentioned above.  Moreover, in the literature some conditions about the restriction of the
bundles to $\Gamma$-invariant neighborhoods of points in the base space are demanded; we
show that they automatically follow from our setting in
Theorem~\ref{the:local_objects} if the Condition (H) is satisfied. 

In Section~\ref{Example:The_case_G_is_PU(H)_the_projective_unitary_group} 
we have included
a study of the case $G=\calp\calu(\calh)$, the projective unitary group, endowed with the
norm topology. The main result in
Section~\ref{Example:The_case_G_is_PU(H)_the_projective_unitary_group} is
Theorem~\ref{the:classification_of_equivariant_stable_projective_unitary_bundles} which
produces a universal $\Gamma$-equivariant stable projective unitary bundle for almost free
$\Gamma$-CW-complexes.  This result
generalizes~\cite[Theorem~3.21]{Barcenas-Espinoza-Joachim-Uribe(2012)}, where $\Gamma$ is
assumed to be discrete. This universal bundle is relevant for equivariant twisted
topological $K$-theory since with this bundle it can be defined as a parameterized
equivariant cohomology theory.

Our results carry directly over to the case, where one allows an intertwining between the
$\Gamma$ and the $G$-action, i.e. there exists a group homomorphism 
$\tau \colon \Gamma \to \aut(G)$ 
and the condition that the $\Gamma$ and $G$-action on the total
space commute, is replaced by the weaker condition 
$\gamma \cdot (e \cdot g) = (\gamma \cdot e) \cdot \tau(\gamma)(g)$. 
A typical example for such a non-trivial intertwining is the case $\Gamma = \IZ/2$, $G
= U(n)$ and $\tau \colon \IZ/2 \to \aut(U(n))$ given by complex conjugation, which leads to
real vector bundles in the sense of Atiyah~\cite{Atiyah(1966)}.  For the simplicity of the
exposition we only treat the case where $\tau$ is trivial.

Throughout this paper we will work in the category of compactly generated spaces,
see~\cite{Steenrod(1967)} and Section~\ref{sec:Appendix_A:Compactly_generated_spaces}, and 
subgroups are always understood to be closed subgroups. Most of the 
equivariant $CW$-complexes under consideration are proper, or, equivalently, have compact isotropy groups.

This paper has been financially supported by the Leibniz-Award, granted by the Deutsche
Forschungsgemeinschaft, to the first author, and by the Alexander Von Humboldt Foundation
through a scholarship for Experienced Researchers granted to the second author. Both
authors would like to acknowledge and thank the financial support of the Deutsche
Forschungsgemeinschaft and of the Alexander Von Humboldt Foundation. Moreover, 
the authors thank the referee
for his careful reading of the first version and his useful comments.

The paper is organized as follows:

\tableofcontents


\section{Principal bundles}
\label{sec:Principle_bundles}

We recall some basic facts about principal $G$-bundles over $CW$-complexes for a
topological group $G$.

\begin{definition}[Quasi-regular open set and regular space]
  \label{def:quasi-regular}
  An open subset $U \subseteq B$ is called \emph{quasi-regular} if for any $x \in U$ there
  exists an open neighborhood $V_x$ whose closure in $B$ is contained in $U$. An space is called 
  \emph{regular} if it separates points from closed subsets.
\end{definition}

The main point of this notion is that a quasi-regular open subset equipped 
with its subspace topology is again compactly generated, see 
Lemma~\ref{lem:properties_of_quasi-regular_open_sets}~\ref{lem:properties_of_quasi-regular_open_sets:open_subset},
and preimages of quasi-regular open subsets are again quasi-regular open subsets, see 
Lemma~\ref{lem:properties_of_quasi-regular_open_sets}~\ref{lem:properties_of_quasi-regular_open_sets:preimage}.

\begin{definition}[Principal $G$-bundle]
  \label{def:principal_G-bundle}
  A \emph{principal $G$-bundle} $p \colon E \to B$ consists of a space $E$ with right
  $G$-action, a space $B$ with trivial $G$-action and a $G$-map $p$ such that $p$ is
  locally trivial, i.e., for any $b \in B$ there exists a quasi-regular open neighborhood
  $U$ of $b$ in $B$ and a $G$-homeomorphism $\phi \colon G \times U \to p^{-1}(U)$
  satisfying $p \circ \phi = \pr$ for the projection $\pr \colon U \times G \to U$.
\end{definition}
 
In the previous definition we have added the condition that the local trivialization can be done on a quasi-regular open set. 
On the standard definition of principal bundles the quasi-regularity is not required. Nevertheless, since in this article
we will work in the category of equivariant $CW$-complexes, and any invariant open subset of a equivariant $CW$-complex
is automatically quasi-regular, see Lemma \ref{lem:properties_of_quasi-regular_open_sets:Gamma-CW}, this extra condition in the definition is innocuous.

\begin{lemma} \label{lem:homotopy_invariance_for_trivial_Gamma} Let $B$ be a $CW$-complex
  and let $p \colon E \to B \times [0,1]$ be a principal $G$-bundle. Let $i_0 \colon B = B
  \times \{0\} \to B \times [0,1]$ be the inclusion.

  Then $i_0^*E \times [0,1] \xrightarrow{i_0^*p \times \id_{[0,1]}} B \times [0,1]$ is a
  principal $G$-bundle and there exists an isomorphism of principal $G$-bundles
  \[
  f \colon i_0^*E \times [0,1] \to E
  \]
  over $B \times [0,1]$ whose restriction to $B \times \{0\}$ is the identity.
\end{lemma}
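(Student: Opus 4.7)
The plan is to construct $f$ by induction over the skeleta $B^{(n)}$ of $B$. At the $n$-th stage I build an isomorphism $f_n$ of principal $G$-bundles over $B^{(n)} \times [0,1]$, restricting to the identity on $B^{(n)} \times \{0\}$ (under the canonical identifications of both restricted bundles with $i_0^*E|_{B^{(n)}}$), and agreeing with $f_{n-1}$ on $B^{(n-1)} \times [0,1]$. The colimit over $n$ then yields $f$, using the weak topology on $B$ with respect to its skeletal filtration.

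Since $B^{(n)}$ is obtained from $B^{(n-1)}$ by attaching $n$-cells, the inductive step reduces to the following local problem: for each $n$-cell with characteristic map $\Phi \colon (D^n, S^{n-1}) \to (B^{(n)}, B^{(n-1)})$, pull back both bundles via $\Phi \times \id_{[0,1]}$ to obtain bundles over $D^n \times [0,1]$, and extend the isomorphism defined on the subspace $A := D^n \times \{0\} \cup S^{n-1} \times [0,1]$ (coming from the identity on $D^n \times \{0\}$ together with $f_{n-1}$ on $S^{n-1} \times [0,1]$) to all of $D^n \times [0,1]$.

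The crucial technical input is that any principal $G$-bundle over the compact contractible space $D^n \times [0,1]$ is trivial. This follows by a standard Lebesgue-number argument: by local triviality and compactness, refine a trivializing cover of $D^n \times [0,1]$ into a grid of the form $\{V_j \times [t_{i-1}, t_i]\}$, then compose the finitely many trivializations in the $t$-direction using transition functions to obtain a global one. Applying this to the pullbacks of $E$ and of $i_0^*E \times [0,1]$, both become trivial over $D^n \times [0,1]$. Now an isomorphism between two trivial principal $G$-bundles over a space $X$ is the same datum as a map $X \to G$, so the given iso on $A$ corresponds to a map $g \colon A \to G$, and the extension problem reduces to extending $g$ to $D^n \times [0,1] \to G$. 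This is elementary because the inclusion $A \hookrightarrow D^n \times [0,1]$ admits a retraction $r$ (the classical radial projection from a point just above the top face of the cylinder); the composite $g \circ r$ does the job.

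The main obstacle is really just bookkeeping: checking that the extensions produced over different cells are compatible and descend from the disks to $B^{(n)}$ itself. This is straightforward from the construction, since the extension agrees with $f_{n-1}$ on $S^{n-1} \times [0,1]$ and with the identity on $D^n \times \{0\}$ simply because it does so on $A$, which covers the attaching and base data. Passing through the pushout $B^{(n-1)} \cup_{S^{n-1}} D^n = B^{(n)}$ therefore yields a well-defined $f_n$, completing the induction.
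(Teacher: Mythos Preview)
Your argument is correct. It differs from the paper's own proof, however: the paper simply observes that a $CW$-complex is paracompact (Miyazaki) and then invokes the standard homotopy-invariance theorem for principal bundles over paracompact bases from Husemoller, checking that the quasi-regularity conventions of Definition~\ref{def:principal_G-bundle} cause no trouble. That route uses partitions of unity rather than the cell structure.

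Your skeletal induction is more elementary (no partitions of unity, only compactness and the retraction $D^n \times [0,1] \to D^n \times \{0\} \cup S^{n-1} \times [0,1]$) and is in fact precisely the strategy the paper employs later, in the proof of Theorem~\ref{the:homotopy_invariance}, for the $\Gamma$-equivariant version. There the partition-of-unity argument is not available, so one is forced into the cellular approach; compare your extension step with the reduction there to extending a map $S^{n-1} \times [0,1] \cup D^n \times \{0\} \to \map_{\Gamma \times G}\bigl((\Gamma \times G)/H_i,(\Gamma \times G)/H_i\bigr)$ over $D^n \times [0,1]$. So what you have written is essentially the $\Gamma = \{1\}$ specialization of that later proof, which is a perfectly good (and arguably more informative) way to establish the lemma.
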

\begin{proof}
  A $CW$-complex $B$ is paracompact by~\cite{Miyazaki(1952)}.  Now the proof is analogous
  to the one of~\cite[Theorem~9.8 in Chapter~3 on page~51]{Husemoeller(1966)} taking into
  account that any open subset of $B$ is quasi-regular by 
  Lemma~\ref{lem:properties_of_quasi-regular_open_sets}~\ref{lem:properties_of_quasi-regular_open_sets:regular}
  and~\ref{lem:properties_of_quasi-regular_open_sets:equivalence:locally_compact_or_metrizable} and that
  in~\cite[Theorem~9.8 in Chapter~3 on page~51]{Husemoeller(1966)} the symbol $\times$
  stands for the classical product space, where in our setting $\times$ stands for the
  product within the category of compactly generated spaces.
\end{proof}

  \begin{notation}\label{not:X_l:_and_Y_r}
    Given a $\Gamma \times G$-space $X$, let $X_r$ be the space $X$ but now equipped with
    the left $\Gamma$ action given by $\gamma \cdot x = (\gamma,1) \cdot x$ and the right
    $G$-action given by $x \cdot g := (1,g^{-1}) \cdot x$.

    Given a space $Y$ with (commuting) left $\Gamma$- and right $G$-action, let $Y_l$ be
    the same space but now equipped with the left $\Gamma \times G$-action given by
    $(\gamma,g) \cdot y := \gamma \cdot y \cdot g^{-1}$.
  \end{notation}

  \begin{example}[Free $G$-$CW$-complexes]
    \label{free_G-CW-complex}
    Let $X$ be a free $G$-$CW$-complex. Then $p \colon X_r \to X_r/G$ is a principal
    $G$-bundle.  Conversely, if $p \colon E \to B$ is a principal $G$-bundle over a
    $CW$-complex $B$, then $E_l$ carries the structure of a free $G$-$CW$-complex coming
    from the filtration given by the preimages of the skeletons of $B$.

    These claims are proved in~\cite[1.24 and 1.25 on page~18]{Lueck(1989)}, and they will
    also follow from Theorem~\ref{the:bundles_and_Gamma_times_G-spaces} applied to the
    special case $\Gamma = \{1\}$.
  \end{example}

  \begin{example}[Free proper smooth $G$-actions on smooth manifolds]
    \label{exa:Free_proper_smooth_G-actions_on_smooth_manifolds} Consider a Lie group $G$
    with a free proper smooth left $G$-action on a smooth manifold $M$. Then $M$ is a
    proper $G$-$CW$-complex by~\cite{Illman(2000)} and the projection $M_r \to M_r/G$ is a principal $G$-bundle.
  \end{example}


  \section{Equivariant principal bundles}
  \label{sec:Equivariant_principle_bundles}

  We first fix the notions for the objects we want to study.

  \begin{definition}[$\Gamma$-equivariant principle $G$-bundle]
    \label{def:Gamma-equivariant_principle_G-bundle}
    A \emph{$\Gamma$-equivariant principal $G$- bundle} $p \colon E \to B$ consists of a
    principal $G$-bundle together with left $\Gamma$-actions on $E$ and $B$ (commuting
    with the right $G$-actions) such that $p \colon E \to B$ is $\Gamma$-equivariant.
  \end{definition}

  Note that since the $\Gamma$ and the $G$ actions commute, then $\Gamma$ acts
  on  $p \colon E \to B$ through $G$-bundle maps, cf. \cite[\S 1]{Lashof(1982)}. See also~\cite[Chapter~I, Section~8]{Dieck(1987)} for this notion and its main
  properties including universal objects for a compact Lie group $\Gamma$ and a
  topological group $G$, where also a twisting of the left $\Gamma$- and right $G$-actions
  by a homomorphism $\Gamma \to \aut(G)$ is allowed. Sometimes in the literature some
  conditions about the restriction of the bundles to $\Gamma$-invariant neighborhoods of
  points in the base space are demanded or can only be proved in the case that $G$ is a compact Lie group, see for 
  instance~\cite[Proposition~8.10 on page~58]{Dieck(1987)}. We will show that they automatically follow from
  our setting in Theorem~\ref{the:local_objects}.

  We mention some basic properties of $\Gamma$-equivariant principal $G$-bundles.

  If $f \colon X \to B$ is a $\Gamma$-map of $\Gamma$-$CW$-complexes and $p \colon E \to
  B$ is a $\Gamma$-invariant principal $G$-bundle, then the map $f^*p \colon f^*E \to X$
  obtained from the pullback of $p$ with $f$
  \[
  \xymatrix{ f^*E \ar[r]^{\overline{f}} \ar[d]^{f^*p} & E \ar[d]^p
    \\
    X \ar[r]^{f} & B }
  \]
  is a $\Gamma$-equivariant principal $G$-bundle again.  An \emph{isomorphism of two
    $\Gamma$-invariant principal $G$-bundles} $p_0 \colon E_0 \to X$ and $p_1 \colon E_1
  \to B$ over the $\Gamma$-$CW$-complex $B$ is a homeomorphism $f \colon E_0 \to E_1$
  which is compatible with both the left $\Gamma$-action and the right $G$-action and
  satisfies $p_1 \circ f = p_0$.

  \begin{lemma} \label{lem_maps_are_isos} Let $p_0 \colon E_0 \to B$ and $p_1 \colon E_1
    \to B$ be $\Gamma$-equivariant principle $G$-bundles over the $\Gamma$-$CW$-complex
    $B$. Let $f \colon E_0 \to E_1 $ be a map which is compatible with both the left
    $\Gamma$-actions and the right $G$-actions and satisfies $p_1 \circ f = p_0$.

    Then $f$ is an isomorphism of $\Gamma$-equivariant principal $G$-bundles.
  \end{lemma}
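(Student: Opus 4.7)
The plan is to reduce the statement to a purely local check using the local trivializations that come with the principal bundle structure. Note that the $\Gamma$-action plays no essential role in constructing an inverse map: once $f$ is shown to be a homeomorphism, the set-theoretic inverse $f^{-1}$ is automatically $\Gamma$- and $G$-equivariant because $f$ itself is an equivariant bijection, and it satisfies $p_0 \circ f^{-1} = p_1$ trivially. So the task reduces to showing that $f$ is a homeomorphism.

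For this, I would fix $b \in B$ and choose quasi-regular open neighborhoods $U_i \subseteq B$ of $b$ together with $G$-trivializations $\phi_i \colon G \times U_i \to p_i^{-1}(U_i)$ for $i = 0, 1$. Set $U := U_0 \cap U_1$. Transporting the restriction $f|_{p_0^{-1}(U)}$ through the $\phi_i$ produces a $G$-equivariant map $\widetilde{f} \colon G \times U \to G \times U$ lying over $\id_U$. Any such map is uniquely determined by the continuous function $\alpha \colon U \to G$, $u \mapsto \pr_G\bigl(\widetilde{f}(e, u)\bigr)$, via the formula $\widetilde{f}(g, u) = (\alpha(u) g, u)$; here continuity of $\alpha$ is immediate from continuity of $f$ and of $\phi_0$ and $\phi_1^{-1}$. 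Since inversion in $G$ is continuous, the assignment $(g, u) \mapsto (\alpha(u)^{-1} g, u)$ is a continuous two-sided inverse to $\widetilde{f}$, so $\widetilde{f}$, and therefore $f|_{p_0^{-1}(U)}$, is a homeomorphism.

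Globalizing, the open sets of the form $p_0^{-1}(U)$ obtained in this way cover $E_0$, the corresponding sets $p_1^{-1}(U)$ cover $E_1$, and $f$ restricts to a homeomorphism on each matched pair. Hence $f$ is a bijection and is locally a homeomorphism, so it is a homeomorphism. Combined with the automatic equivariance of the inverse observed above, this proves that $f$ is an isomorphism of $\Gamma$-equivariant principal $G$-bundles.

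I do not anticipate any genuine obstacle; the argument is the standard principle that an equivariant bundle map over the identity is automatically invertible. The only minor point to be aware of is that $U = U_0 \cap U_1$ need not itself be quasi-regular in the sense of Definition~\ref{def:quasi-regular}, but this is irrelevant, since for the local analysis we only use that $G \times U \to U$ is trivially a $G$-bundle and that $\phi_i$ restrict to homeomorphisms over $U$.
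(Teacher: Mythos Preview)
Your proposal is correct and is exactly the argument the paper has in mind: the paper's own proof is the single sentence ``The map $f$ is a homeomorphism because of the local triviality of the principal $G$-bundles $p_0$ and $p_1$,'' and you have simply unpacked that sentence. One small remark: your worry about $U = U_0 \cap U_1$ failing to be quasi-regular is unfounded, since Lemma~\ref{lem:properties_of_quasi-regular_open_sets}~\ref{lem:properties_of_quasi-regular_open_sets:finite_intersection} shows finite intersections of quasi-regular open sets are again quasi-regular; but as you note, this plays no role in the argument anyway.
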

  \begin{proof}
    The map $f$ is a homeomorphism because of the local triviality of the principal
    $G$-bundles $p_0$ and $p_1$.
  \end{proof}


  \section{Families of local representations}
  \label{sec:Families_of_local_representations}

  In this section we introduce the local representations coming from the left action of
  the $\Gamma$-isotropy group of a point $b$ in the base space and the free right
  $G$-action on the fiber over $b$ (after choosing a lift of $b$ to the total space).

  \begin{definition}[Local representations]
    \label{def:local_representations}
    Let $p \colon E \to B$ be a $\Gamma$-equivariant principal $G$-bundle.  Consider $e
    \in E$. Then we obtain a (continuous) group homomorphism
    \begin{eqnarray}
      \rho_e \colon \Gamma_{p(e)} \to G
     \label{rho_e}
    \end{eqnarray}
    uniquely determined by $\gamma \cdot e = e \cdot \rho_e(\gamma)$ for $\gamma \in
    \Gamma_{p(e)}$, where $\Gamma_{p(e)}$ is the isotropy group of $p(e) \in B$.
  \end{definition}

  $\rho_e$ is indeed a homomorphism by the following calculation
  \begin{eqnarray*}
    e \cdot \rho_e(\gamma_1 \cdot \gamma_2)
    & = & 
    (\gamma_1 \cdot \gamma_2) \cdot e
    \\
    & = &
    \gamma_1 \cdot (\gamma_2 \cdot e)
    \\
    & = &
    \gamma_1 \cdot \bigl(e \cdot \rho_e(\gamma_2)\bigr)
    \\
    & = &
    \bigl(\gamma_1 \cdot e\bigr)  \cdot \rho_e(\gamma_2)
    \\
    & = &
    \bigl(e \cdot \rho_e(\gamma_1)\bigr) \cdot \rho_e(\gamma_2)
    \\
    & = &
    e \cdot \bigl(\rho_e(\gamma_1) \cdot \rho_e(\gamma_2)\bigr).
  \end{eqnarray*}
  It is continuous since the map $G \to p^{-1}(e),\; g \mapsto e \cdot g$ is a
  homeomorphism because of the local triviality of the principal $G$-bundle $p$
  and the map $\Gamma_{p(e)} \to p^{-1}(e), \;\gamma \mapsto \gamma \cdot e$ is continuous.

  \begin{remark}[Basic properties of the local representations]
    \label{rem:local_representations}
    If we replace $e$ by $eg$ for some $g \in G$, then $\rho_{eg} = c_{g^{-1}} \circ \rho_e$ for
    $c_g \colon G \to G$ the conjugation homomorphism sending $g'$ to $gg'g^{-1}$.

    If we replace $e$ by $\gamma e$ for some $\gamma \in \Gamma$, then 
    $\Gamma_{p(e)} = \gamma^{-1} \Gamma_{p(\gamma e)} \gamma$ and $\rho_{\gamma e} = \rho_e \circ
    c_{\gamma^{-1}}$.

    If
    \[\xymatrix{E_0 \ar[r]^{\overline{f}} \ar[d]^{p_0} & E_1 \ar[d]^{p_1}
      \\
      B_0 \ar[r]^{f} & B_1 }
    \]
    is a morphism of $\Gamma$-equivariant principal $G$-bundles, then 
    \[\rho^{p_0}_{e} = \rho^{p_1}_{\overline{f}(e)} \circ i_e
    \] 
    holds for all $e \in E_0$, where $i_e \colon \Gamma_{p_0(e)} \to \Gamma_{p_1\circ \overline{f}(e)}$ is the inclusion.
  \end{remark}

  \begin{definition}[Family of local representations]
    \label{def:family_of_local_representations}
    A \emph{family $\calr$ of local representations for $(\Gamma,G)$} is a set of pairs
    $(H,\alpha)$, where $H$ is a subgroup of $\Gamma$ and $\alpha \colon H \to G$ is a
   continuous group homomorphism such that the following conditions are satisfied:

    \begin{itemize}
    \item \emph{Finite intersections}\\
      Suppose that $(H_0,\alpha_0)$ and $(H_1,\alpha_1)$ belong to $\calr$. Define $H :=
      \{h \in H_0 \cap H_1 \mid \alpha_0(h) = \alpha_1(h)\}$ and $\alpha \colon H \to G$
      by $\alpha = \alpha_0|_H = \alpha_1|_H$. Then $(H,\alpha) \in \calr$;

    \item \emph{Conjugation in G}\\
      If $(H,\alpha)$ belongs to $\calr$ and $g \in G$, then $(H,c_{g^{-1}} \circ \alpha)$
      belongs to $\calr$;

    \item \emph{Conjugation in $\Gamma$}\\
      If $(H,\alpha)$ belongs to $\calr$ and $\gamma \in \Gamma$, then $(\gamma
      H\gamma^{-1},\alpha \circ c_{\gamma^{-1}})$ belongs to $\calr$.

    \end{itemize}
  \end{definition}

  \begin{definition}[(Pre)family of local representations associated to a
    $\Gamma$-equivariant principal $G$-bundle]
    \label{exa_calr_of_an-equivariant_principal_bundle}
    Let $p \colon E \to B$ be a $\Gamma$-equivariant principal $G$-bundle. Define the
    \emph{prefamily of local representations of $p$} to be
    \[
    \calr'(p) := \{(\Gamma_{p(e)},\rho_e) \mid e \in E\}.
    \]
    Let $\calr(p)$ be the smallest system of local representations containing $\calr'(p)$.
    We call $\calr(p)$ the \emph{family of local representations associated to $p$}.
  \end{definition}

  One easily checks using Remark~\ref{rem:local_representations} that $\calr'(p)$ is
  closed under conjugation, but not necessary under finite intersections so that
  $\calr'(p)$ itself is not a family of local representations.

  We want to deal with families of local representations to ensure that the following
  lemma is true.

  A \emph{family of subgroups of $G$} is a set of subgroups of $G$ closed under
  conjugation and taking finite intersections.

  \begin{lemma} \label{lem:family_associated_to_calr} 
   Let $\calr$ be a family of local
    representations for $\Gamma$ and $G$.  For $(H,\alpha)$ in $\calr$ let $K(H,\alpha)$
    be the subgroup of $\Gamma \times G$ given by
    \[
    K(H,\alpha) := \{(\gamma,\alpha(\gamma)) \mid \gamma \in H\}.
    \]
    Put
    \[
    \calf(\calr) = \{K(H,\alpha) \mid (H,\alpha) \in \calr\}.
    \]
    Then $\calf(\calr)$ is a family of subgroups of $\Gamma \times G$.
  \end{lemma}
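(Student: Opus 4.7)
The plan is to verify the two defining properties (closure under finite intersections and closure under conjugation) of a family of subgroups of $\Gamma \times G$ for $\calf(\calr)$, by translating each condition on $\calf(\calr)$ into the corresponding defining condition on $\calr$. First, I would observe that each $K(H,\alpha)$ really is a subgroup of $\Gamma \times G$, since it is literally the graph of the (continuous) group homomorphism $\alpha \colon H \to G$, and the graph of a homomorphism defined on a subgroup is always a subgroup of the product.

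Next I would handle finite intersections. Given $(H_0,\alpha_0),(H_1,\alpha_1) \in \calr$, I would compute
\[
K(H_0,\alpha_0) \cap K(H_1,\alpha_1) = \{(\gamma,g) \in \Gamma \times G \mid \gamma \in H_0 \cap H_1,\; \alpha_0(\gamma) = g = \alpha_1(\gamma)\},
\]
which by inspection equals $K(H,\alpha)$ for $H = \{h \in H_0 \cap H_1 \mid \alpha_0(h) = \alpha_1(h)\}$ and $\alpha := \alpha_0|_H = \alpha_1|_H$. The ``Finite intersections'' axiom of Definition~\ref{def:family_of_local_representations} then places $(H,\alpha)$ in $\calr$, so $K(H,\alpha) \in \calf(\calr)$.

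For closure under conjugation, I would take $(H,\alpha) \in \calr$ and $(\gamma_0,g_0) \in \Gamma \times G$, and simply unwind the computation
\[
(\gamma_0,g_0)\cdot(\gamma,\alpha(\gamma))\cdot(\gamma_0,g_0)^{-1} = \bigl(\gamma_0 \gamma \gamma_0^{-1},\; g_0 \alpha(\gamma) g_0^{-1}\bigr),
\]
and then reindex by $\gamma' := \gamma_0\gamma\gamma_0^{-1}$. This identifies the conjugate subgroup with $K\bigl(\gamma_0 H \gamma_0^{-1},\; c_{g_0} \circ \alpha \circ c_{\gamma_0^{-1}}\bigr)$. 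Applying the ``Conjugation in $\Gamma$'' axiom to $(H,\alpha)$ produces $(\gamma_0 H \gamma_0^{-1}, \alpha \circ c_{\gamma_0^{-1}}) \in \calr$; then applying the ``Conjugation in $G$'' axiom with the element $g_0^{-1}$ (so that $c_{(g_0^{-1})^{-1}} = c_{g_0}$) yields $(\gamma_0 H \gamma_0^{-1}, c_{g_0} \circ \alpha \circ c_{\gamma_0^{-1}}) \in \calr$, as required.

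The only mildly subtle point — and the place where one could easily slip — is matching the sign conventions in the axioms (which use $c_{g^{-1}}$ and $c_{\gamma^{-1}}$) against the explicit conjugation $(\gamma_0,g_0)(\cdot)(\gamma_0,g_0)^{-1}$ on the graph. Once that bookkeeping is done correctly, the result is immediate, so I would not expect any serious obstacle here; the statement is essentially a dictionary between the two notions of ``family.''
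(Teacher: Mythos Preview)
Your proposal is correct and follows essentially the same approach as the paper's proof: both verify closure under conjugation and finite intersections by direct computation, identifying the resulting subgroup as $K(H',\alpha')$ for a pair $(H',\alpha')$ that lies in $\calr$ by the axioms of Definition~\ref{def:family_of_local_representations}. The only cosmetic difference is that the paper computes $(\gamma,g)^{-1} K(H,\alpha)(\gamma,g)$ rather than $(\gamma_0,g_0) K(H,\alpha)(\gamma_0,g_0)^{-1}$, which of course amounts to the same thing.
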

  \begin{proof}
    We have to check that $\calf(\calr)$ is closed under conjugation and finite
    intersections.

    Consider $K \in \calf(\calr)$ and $(\gamma,g) \in \Gamma \times G$.  Choose
    $(H,\alpha) \in \calr$ with $K = K(H,\alpha)$. Then
    \begin{eqnarray*}
      (\gamma,g)^{-1}  \cdot K \cdot (\gamma,g) 
      & = & 
      (\gamma,g)^{-1}  \cdot K(H,\alpha) \cdot (\gamma,g) 
      \\
      & = & 
      \{(\gamma,g)^{-1} \cdot  (h,\alpha(h)) \cdot  (\gamma,g) \mid h \in H\}
      \\
      & = & 
      \bigl\{(\gamma^{-1}h\gamma,g^{-1}\alpha(h)g) \mid h \in H\bigr\}
      \\
      & = & 
      \bigl\{(c_{\gamma}(h), c_{g^{-1}} \circ \alpha \circ c_{\gamma}(c_{\gamma^{-1}}(h)) \mid h \in H\bigr\}
      \\
      & = & 
      \bigl\{h',  c_{g^{-1}} \circ \alpha \circ c_{\gamma}(h') \mid h' \in \gamma^{-1} H\gamma\bigr\}
      \\
      & = & 
      K( \gamma^{-1} H\gamma,c_{g^{-1}} \circ \alpha \circ c_{\gamma}).
    \end{eqnarray*}
    Since $(\gamma^{-1} H\gamma,c_{g^{-1}} \circ \alpha \circ c_{\gamma})$ belongs to
    $\calr$, we conclude that $(\gamma,g)^{-1} \cdot K \cdot (\gamma,g) $ belongs to
    $\calf(\calr)$.

    Consider $K_0,K_1 \in \calf(\calr)$. Choose $(H_i,\alpha_i)$ in $\calr$ with $K_i =
    K(H_i,\alpha_i)$ for $i = 0,1$. Define $H := \{h \in H_0 \cap H_1 \mid \alpha_0(h) =
    \alpha_1(h)\}$ and $\alpha \colon H \to G$ by $\alpha = \alpha_0|_H =
    \alpha_1|_H$. Then $K_0 \cap K_1 = K(H,\alpha)$ and $(H,\alpha) \in \calr$. This
    implies $K_0 \cap K_1 \in \calf(\calr)$.
  \end{proof}

  \begin{remark}[Families] \label{rem:families} We later will consider the classifying
    space $\EGF{\Gamma} {\calf}$ of a family of subgroups of $\Gamma$.  It can be defined
    without the condition that $\calf$ is closed under finite intersections, being closed
    under conjugation is enough, but this extra condition is usually required to ensure
    that for two $\Gamma$-spaces $X$ and $Y$ whose isotropy groups belong to $\calf$ also
    the isotropy groups of $X \times Y$ with the diagonal $\Gamma$-action belong to
    $\calf$.

    It is actually more convenient to require instead of the condition that $\calf$ is
    closed under finite intersections that it is closed under subgroups. However, this
    rules out one important case, namely, the case of the family of open compact
    subgroups, which naturally occurs in the context of locally compact second countable
    totally disconnected groups $\Gamma$.

    If $\calr$ is closed under subgroups, i.e., for $(H,\alpha) \in \calr$ and any
    subgroup $K \subseteq H$ we have $(K,\alpha|_K) \in \calr$, then $\calf(\calr)$ is
    closed under taking subgroups.
  \end{remark}

\begin{remark}[Local representations and pullbacks]\label{rem:calr_and_pullbacks}
  Let $p \colon E \to B$ be a $\Gamma$-equivariant principal $G$-bundle and let $f \colon
  A \to B$ be a $\Gamma$-map.  Let $\calr$ be a family of local representations. Suppose
  that we have $\calr(p) \subseteq \calr$.  Then we get $\calr(f^*p) \subseteq \calr$ for
  the pullback $f^*p$, provided for any $(H,\alpha) \in \calr$ and any subgroup $K
  \subseteq H$ which occurs as isotropy group in $A$, we have $(K,\alpha|_K) \in \calr$.
  This follows from Remark~\ref{rem:local_representations}.

  If we make the assumption that $\calr$ is closed under subgroups, then $f^*p$
  automatically satisfies $\calr(f^*p) \subseteq \calr$ if $\calr(p) \subseteq \calr$
  holds.
\end{remark}


\section{The Condition (S)}
\label{sec:The_condition_(S)}

\begin{definition}[Condition (S)] \label{ref:condition_(S)}
  Given a topological group $\Gamma$ and a (closed) subgroup $H \subseteq \Gamma$, we say
  that the pair $(\Gamma,H)$ \emph{satisfies Condition (S)} if the projection $\pr\colon
  \Gamma \to \Gamma/H$ has a local cross section, i.e., there is a quasi-regular open 
  neighborhood $U$ of $1H \in \Gamma/H$ together with a map $\sigma \colon U \to \Gamma$ 
  such that $\pr \circ \sigma = \id_U$.

  A topological group $\Gamma$ \emph{satisfies Condition (S)} if for any subgroup $H
  \subseteq \Gamma$ the pair $(\Gamma,H)$ satisfies Condition (S).
\end{definition}

The role of the Condition (S) is to ensure the following lemma.

\begin{lemma} \label{lem:condition_(S)_and_homeomorphisms} 
  Let $f \colon E \to \Gamma/H$   be a $\Gamma$-map for some subgroup $H \subseteq \Gamma$.  
  Suppose that the pair  $(\Gamma,H)$ satisfies Condition (S).

  Then the $\Gamma$-map
  \[
  u \colon \Gamma \times_H f^{-1}(1H) \to E, \quad (\gamma,e) \mapsto \gamma \cdot e
  \]
  is a homeomorphism.
\end{lemma}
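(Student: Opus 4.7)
\medskip

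The plan is to construct an explicit continuous inverse using the local cross-section provided by Condition (S). The map $u$ is continuous (induced by the action $\Gamma \times F \to E$ on the quotient, where $F := f^{-1}(1H)$) and a straightforward check shows it is bijective: surjectivity uses $\Gamma$-equivariance of $f$ (for $x \in E$, pick $\gamma$ with $f(x) = \gamma H$, then $\gamma^{-1} x \in F$), and injectivity follows because $\gamma_1 e_1 = \gamma_2 e_2$ forces $\gamma_2^{-1}\gamma_1 =: h \in H$ and hence $[\gamma_1, e_1] = [\gamma_2 h, e_1] = [\gamma_2, he_1] = [\gamma_2, e_2]$.

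For the inverse, I would use Condition (S) to pick a quasi-regular open neighborhood $U$ of $1H$ in $\Gamma/H$ and a section $\sigma \colon U \to \Gamma$. On the open set $f^{-1}(U) \subseteq E$, define
\[
v_0 \colon f^{-1}(U) \to \Gamma \times_H F, \qquad v_0(x) := \bigl[\sigma(f(x)),\, \sigma(f(x))^{-1} \cdot x\bigr].
\]
Note that $f(\sigma(f(x))^{-1}\cdot x) = \sigma(f(x))^{-1} \cdot f(x) = 1H$, so this lands in $\Gamma \times_H F$, and it is continuous because $\sigma$, $f$, and the action are continuous. To extend to all of $E$, use the $\Gamma$-translates: for $\gamma_0 \in \Gamma$, set
\[
v_{\gamma_0}(x) := \bigl[\gamma_0\, \sigma(\gamma_0^{-1} f(x)),\, (\gamma_0\, \sigma(\gamma_0^{-1} f(x)))^{-1} \cdot x\bigr]
\]
on the open set $\gamma_0 \cdot f^{-1}(U) = f^{-1}(\gamma_0 U)$. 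A short computation (using that two different lifts of $f(x) \in \gamma_0 U \cap \gamma_0' U$ to $\Gamma$ differ by an element of $H$) shows that these agree on overlaps, and the $\gamma_0 \cdot f^{-1}(U)$ cover $E$ since $U$ contains $1H$. Hence the $v_{\gamma_0}$ glue to a continuous map $v \colon E \to \Gamma \times_H F$.

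Finally I would verify $u \circ v = \id_E$ and $v \circ u = \id_{\Gamma \times_H F}$. The former is immediate from the definition: $u(v_{\gamma_0}(x)) = \gamma_0 \sigma(\gamma_0^{-1}f(x)) \cdot \sigma(\gamma_0^{-1}f(x))^{-1}\gamma_0^{-1} \cdot x = x$. For the latter, given $[\gamma,e] \in \Gamma \times_H F$, choose $\gamma_0 = \gamma$ so that $\gamma_0^{-1}f(\gamma e) = 1H \in U$; writing $h_0 := \sigma(1H) \in H$ (since $\pi(\sigma(1H)) = 1H$), one gets $v(\gamma e) = [\gamma h_0, h_0^{-1} e] = [\gamma, e]$.

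The main subtlety, rather than any deep obstacle, is to make the argument gluing the local inverses $v_{\gamma_0}$ into a single continuous map go through cleanly in the category of compactly generated spaces; here the quasi-regularity of $U$ built into Condition~(S) is precisely what guarantees that the open cover $\{f^{-1}(\gamma_0 U)\}_{\gamma_0 \in \Gamma}$ of $E$ behaves well enough (via Lemma~\ref{lem:properties_of_quasi-regular_open_sets}) that continuity may be checked locally on these open subsets.
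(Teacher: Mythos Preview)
Your proof is correct and follows essentially the same approach as the paper: both use the local section $\sigma$ from Condition~(S) to write down the inverse $e \mapsto [\sigma(f(e)),\,\sigma(f(e))^{-1}e]$ on $f^{-1}(U)$, and then propagate this over $E$ via the $\Gamma$-translates of $U$. The only organizational difference is that the paper, rather than explicitly gluing the $v_{\gamma_0}$ and checking overlap compatibility, argues that the restriction $u|_{\pr^{-1}(U)\times_H f^{-1}(1H)}$ is a homeomorphism onto $f^{-1}(U)$ and then invokes $\Gamma$-equivariance plus bijectivity of $u$ (a bijective equivariant map that is a homeomorphism over one translate of $U$ is a homeomorphism over every translate, hence globally); this sidesteps the overlap check, but the content is the same.
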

\begin{proof}
  The map $u$ is clearly a bijective map of sets. Now, the Condition (S) ensures that there is an open neighborhood $U \subseteq \Gamma/H$ of
  $1H$ and a map $s \colon U \to \Gamma$ whose composite with the projection $\pr \colon
  \Gamma \to \Gamma/H$ is the identity on $U$.  Moreover, the open subsets
   $U \subseteq \Gamma/H$,  $\pr^{-1}(U) \subseteq \Gamma$  and $f^{-1}(\pr^{-1}(U)) \subseteq E$
  equipped   with the subspace topologies are compactly generated, see 
Lemma~\ref{lem:properties_of_quasi-regular_open_sets}~\ref{lem:properties_of_quasi-regular_open_sets:open_subset} 
and~\ref{lem:properties_of_quasi-regular_open_sets:preimage}.
 Define a map
  \[
  v \colon f^{-1}(\pr^{-1}(U)) \to \pr^{-1}(U) \times_H f^{-1}(1H), \quad e \mapsto (s
  \circ \pi(e),s \circ \pi(e)^{-1} \cdot e),
  \]
  where $\pi \colon f^{-1}(\pr^{-1}(U)) \to U$ is the map induced by $\pr \circ f$.  Let
  \[
  u|_{ \pr^{-1}(U) \times_H f^{-1}(1H)} \colon \pr^{-1}(U) \times_H f^{-1}(1H) \to
  f^{-1}(\pr^{-1}(U))
  \]
  be obtained by restricting $u$. Then $u|_{ \pr^{-1}(U) \times_H f^{-1}(1H)} \circ v =
  \id_{f^{-1}(\pr^{-1}(U))}$ and $v \circ u|_{ \pr^{-1}(U) \times_H f^{-1}(1H)} =
  \id_{\pr^{-1}(U) \times_H f^{-1}(1H)}$.  Hence $u|_{ \pr^{-1}(U) \times_H f^{-1}(1H)}$
  is a homeomorphism. Since $u$ is $\Gamma$-equivariant,
  $f^{-1}(\pr^{-1}(U)) \subseteq E$ and $\pr^{-1}(U) \times_H f^{-1}(1H) \subseteq \Gamma \times_H f^{-1}(1H)$ are 
  open subsets,  $\{\gamma \cdot U \mid \gamma
  \in \Gamma\}$ is an open covering of $\Gamma/H$ and $u$ is bijective, then the map $u$ is a homeomorphism.
\end{proof}

\begin{remark}[Condition (S) and principal bundle structure] \label{rem:(S)_and_principal}
  If the projection $\pr \colon \Gamma \to \Gamma/H$ is a principal $H$-bundle, then the
  local triviality implies that the pair $(\Gamma,H)$ satisfies Condition (S). The
  converse is also true, namely, apply
  Lemma~\ref{lem:equivariant_principal_G-bundles_over_Gamma/H_times_Z}~%
\ref{lem:equivariant_principal_G-bundles_over_Gamma/H_times_Z:well-defined} in the
  special case, where the role of $\Gamma$, $H$ and $G$ is played by $\Gamma$, $H$, and
  $H,$ and $Z = \pt$, and use the canonical $\Gamma$-homeomorphism $\Gamma \times_H H
  \xrightarrow{\cong} \Gamma, (\gamma,h) \mapsto \gamma \cdot h$.
\end{remark}

The Condition (S) is satisfied in many cases.

\begin{lemma} \label{lem:condition_(S)}
  \begin{enumerate}

  \item \label{lem:condition_(S):H_compact_Lie} Suppose that $\Gamma$ is completely
    regular, i.e., for any $x \in \Gamma$ and any neighborhood $U$ of $x$ in $\Gamma$,
    there exists a continuous function $f \colon \Gamma \to [0,1]$ with $f(x) = 0$ and
    $f(X\setminus U) = 1$.  Then for any subgroup $H \subseteq \Gamma$ which is a compact
    Lie group, the pair $(\Gamma,H)$ satisfies Condition (S);

  \item \label{lem:condition_(S):Gamma} A topological group $\Gamma$ satisfies Condition
    (S) if $\Gamma$ is discrete, if $\Gamma$ is a Lie group, or more generally, if
    $\Gamma$ is locally compact and second countable and has finite covering dimension.
  \end{enumerate}
\end{lemma}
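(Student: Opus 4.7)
The plan is to handle the two parts by appealing to classical structure/slice theorems for group actions and for homogeneous spaces, and then to upgrade the local sections produced to sections defined on quasi-regular open neighborhoods (which, in all three cases, is essentially automatic from regularity of $\Gamma/H$).

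For part~\ref{lem:condition_(S):H_compact_Lie}, I would invoke Gleason's theorem on free actions of compact Lie groups: if $H$ is a compact Lie group acting freely on a completely regular space $X$, then $X \to X/H$ is a principal $H$-bundle, and in particular admits a local cross section near every point of $X/H$. Applied to the right translation action of $H$ on $\Gamma$ (which is free because $H$ is a subgroup and $\Gamma$ is Hausdorff, as any completely regular space is), this immediately gives a local section $\sigma \colon U \to \Gamma$ of $\pr$ on some open neighborhood $U$ of $1H$. To ensure $U$ can be taken quasi-regular, I shrink $U$ using the fact that $\Gamma/H$ is regular: since $\Gamma$ is completely regular and $H$ is closed, one checks that $\Gamma/H$ is Hausdorff and regular, so any open neighborhood of $1H$ contains a smaller open neighborhood with closure inside the original neighborhood.

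For part~\ref{lem:condition_(S):Gamma} I would split into the three sub-cases. If $\Gamma$ is discrete, then $\Gamma/H$ is discrete; take $U = \{1H\}$ and $\sigma(1H) = 1$. If $\Gamma$ is a Lie group, Cartan's closed subgroup theorem implies $H$ is itself a Lie subgroup; then the standard construction (choose a vector space complement $\mathfrak{m}$ of the Lie algebra $\mathfrak{h}$ inside $\mathfrak{g}$ and restrict the exponential map to a small disk in $\mathfrak{m}$) yields a smooth local cross section of $\pr$, showing $\Gamma \to \Gamma/H$ is a principal $H$-bundle. Quasi-regularity of $U$ is automatic because Lie groups, and hence their quotients by closed subgroups, are metrizable. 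The remaining case (locally compact, second countable, finite covering dimension) is the main obstacle: here I would invoke the structure theory arising from the solution of Hilbert's fifth problem (Gleason--Yamabe), according to which every identity neighborhood in $\Gamma$ contains a compact normal subgroup $N$ with $\Gamma/N$ a Lie group. Combining this with the finite-dimensionality hypothesis, one reduces the existence of a local section at $1H$ to the Lie-group case already handled, modulo a compact piece handled by part~\ref{lem:condition_(S):H_compact_Lie}.

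The genuinely hard point is the last sub-case: the finite-dimensionality is essential there, since for profinite or Hilbert-cube-group-type obstructions one cannot reduce to Lie groups at sufficiently small scales, and it is here that one must either cite a refined cross-section theorem for homogeneous spaces of locally compact groups (in the spirit of Mostert or Skljarenko) or perform an explicit small-normal-subgroup construction and check its compatibility with $H$.
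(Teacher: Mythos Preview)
Your treatment of part~\ref{lem:condition_(S):H_compact_Lie} is essentially the paper's argument: the paper cites Palais's slice theorem (which contains Gleason's theorem on free compact Lie group actions as a special case) and then verifies quasi-regularity via regularity of $\Gamma/H$, exactly as you do.

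For part~\ref{lem:condition_(S):Gamma} the paper proceeds differently and much more briefly. Rather than splitting into sub-cases, it directly invokes Mostert's cross-section theorem for locally compact groups of finite covering dimension (which you yourself mention at the end as a possible citation), noting only that the left-invariant metric Mostert needs is supplied by the Birkhoff--Kakutani metrization theorem under the second-countability hypothesis, and that quasi-regularity of any open subset of $\Gamma/H$ follows from local compactness of $\Gamma/H$. The discrete and Lie cases are not treated separately; they are subsumed in the general statement.

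Your explicit handling of the discrete and Lie cases is correct and has the virtue of being self-contained. The potential weakness is in your sketch for the general locally compact case: the Gleason--Yamabe reduction gives a compact normal $N \trianglelefteq \Gamma$ with $\Gamma/N$ Lie, but turning a section of $\Gamma/N \to (\Gamma/N)/(HN/N)$ and a section handling $N$ into a section of $\Gamma \to \Gamma/H$ requires nontrivial care (for instance, $HN$ need not interact cleanly with $H$, and one must control the covering dimension throughout). This is precisely the content of Mostert's paper, so your final suggestion to cite it is the right move---and is what the paper does from the outset.
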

\begin{proof}~\ref{lem:condition_(S):H_compact_Lie} This follows from~\cite{Palais(1961)},
  see also~\cite[Theorem~1.38 on page~27]{Lueck(1989)}. Notice that the conditions that
  $\Gamma$ is completely regular (and hence regular) and $H$ is compact imply that $\Gamma/H$ is regular.
  Hence every open subset of $\Gamma/H$ is quasi-regular, see
  Lemma~\ref{lem:properties_of_quasi-regular_open_sets}~\ref{lem:properties_of_quasi-regular_open_sets:regular}.
  \\[2mm]~\ref{lem:condition_(S):Gamma} This follows from~\cite{Mostert(1953)}. The metric
  needed in~\cite{Mostert(1953)} follows under our assumptions from
  Theorem~\ref{the:Birkhoff-Kakutani}.
 Notice that the condition that
  $\Gamma$ is locally compact implies that $\Gamma/H$ is locally compact and hence
  that any open subset of $\Gamma/H$ is quasi-regular, see
  Lemma~\ref{lem:properties_of_quasi-regular_open_sets}~\ref{lem:properties_of_quasi-regular_open_sets:open_subset}~%
\ref{lem:properties_of_quasi-regular_open_sets:regular},
  and~\ref{lem:properties_of_quasi-regular_open_sets:equivalence:locally_compact_or_metrizable}.  
\end{proof}

\begin{example}[Kac-Moody groups]\label{exa:Kac-Moody_groups}
Kac-Moody groups are not Lie groups but all their compact subgroups are Lie groups, see~\cite[Theorem~2.4]{Kitchloo(2009)}. 
Since they are completely regular, Lemma~\ref{lem:condition_(S)}~\ref{lem:condition_(S):H_compact_Lie} applies to them
and hence they satisfy Condition (S).
\end{example}


\section{Equivariant principal bundles over equivariant cells}
\label{sec:Equivariant_principal_bundles_over_equivariant_cells}

In this section we want to analyze $\Gamma$-equivariant principal $G$-bundles over spaces
of the type $\Gamma/H \times Z$ for some subgroup $H \subset \Gamma$ and space $Z$ with
trivial $H$-action.  Later we will be mainly interested in the case $\Gamma/H \times D^n$.

Let $H$ and $G$ be two  topological groups. Equip $\hom(H,G)$
with the subspace topology with respect to the inclusion $\hom(H,G) \subseteq \map(H,G)$
see Subsection~\ref{subsec:Space_of_homomorphisms}.

Consider a space $Z$ and a map $\sigma \colon Z
\to \hom(H,G)$.  We have the obvious  right $H$-action on $\Gamma$ and the left $H$-action on
$Z \times G$ given by $h \cdot (z,g) := \bigl(z, \sigma(z)(h) \cdot g\bigr)$.  Let
\[
p_{\sigma} \colon \Gamma \times_H (Z \times G) \to \Gamma/H \times Z
\]
be the map induced by the projection $Z \times G \to Z$. It is compatible with the left
$\Gamma$-action on $\Gamma \times_H (Z \times G)$ given by $\gamma_0 \cdot
\bigl(\gamma,(z,g)\bigr) = \bigl(\gamma_0 \gamma,(z,g)\bigr)$ and the left $\Gamma$-action
on $\Gamma/H \times Z$ given by 
$\gamma_0 \cdot (\gamma \cdot H ,z) = (\gamma_0\gamma \cdot H,z)$.  
It is also compatible with the right $G$-action on $\Gamma \times_H (Z
\times G)$ given by $\bigl(\gamma,(z,g)\bigr) \cdot g_0 = \bigl(\gamma,(z,gg_0)\bigr)$ and
the trivial right $G$-action on $\Gamma \times_H Z$.  The left $\Gamma$-action and the
right $G$-action on $\Gamma \times_H (Z \times G)$ commute.

\begin{lemma} \label{lem:equivariant_principal_G-bundles_over_Gamma/H_times_Z} Suppose
  that the pair $(\Gamma,H)$ satisfies Condition (S), see
  Definition~\ref{ref:condition_(S)}, and let $Z$ be a space. Then
  \begin{enumerate}

  \item \label{lem:equivariant_principal_G-bundles_over_Gamma/H_times_Z:well-defined} The
    map $p_{\sigma} \colon \Gamma \times_H (Z \times G) \to \Gamma/H \times Z$ is a
    $\Gamma$-equivariant principal $G$-bundle;

  \item \label{lem:equivariant_principal_G-bundles_over_Gamma/H_times_Z:given_by_rho} A
    $\Gamma$-equivariant principal $G$-bundle $E \to \Gamma/H \times Z$ is isomorphic as
    $\Gamma$-equivariant principal $G$-bundle to $p_{\sigma}$ for an appropriate map
    $\sigma \colon Z \to \hom(H,G)$, provided that the restriction of $p$ to $\{1H\} \times Z$
    is (after forgetting the $H$-action) a trivial principal $G$-bundle;

  \item \label{lem:equivariant_principal_G-bundles_over_Gamma/H_times_Z:uniqueness} Given
    two maps $\sigma_0 \colon Z \to \hom(H,G)$ and $\sigma_1 \colon Z \to \hom(H,G)$, the
    $\Gamma$-equivariant principal $G$-bundles $p_{\sigma_0}$ and $p_{\sigma_1}$ are
    isomorphic, if and only if there is a map $\omega \colon Z \to G$ such that
    \[
    \sigma_1(z)(g) = \omega(z) \cdot \sigma_0(z)(h) \cdot \omega(z)^{-1}
    \]
    holds for all $h\in H$ and $z \in Z$;

  \item \label{lem:equivariant_principal_G-bundles_over_Gamma/H_times_Z:local_system}
    Given a map $\sigma \colon Z \to \hom(H,G)$, the homomorphism $\rho_{(\gamma,(z,g))}
    \colon \Gamma_{(\gamma H,z)} \to G$ associated to $p_{\sigma}$ in~\eqref{rho_e} for
    $\bigl(\gamma,(z,g)\bigr) \in \Gamma \times_H (Z \times G)$ is given by
    \begin{eqnarray*}
      \Gamma_{(\gamma H,z)} 
      & = & 
      \gamma H \gamma^{-1};
      \\ 
      \rho_{(\gamma,(z,g))}(\gamma \cdot h \cdot \gamma^{-1}) 
      & = & 
      g^{-1}\sigma(z)(h) \cdot g.
    \end{eqnarray*}
  \end{enumerate}
\end{lemma}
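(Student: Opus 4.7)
My overall strategy is to use Condition (S) as the gateway that lets us work locally on the $\Gamma/H$-factor, and to use Lemma \ref{lem:condition_(S)_and_homeomorphisms} as the structural tool that recovers the whole $\Gamma$-space from its fiber over $1H$. With those in hand, all four statements reduce to bookkeeping about how $H$ acts on that fiber.

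For~\ref{lem:equivariant_principal_G-bundles_over_Gamma/H_times_Z:well-defined}, I would verify local triviality directly. Condition (S) provides a quasi-regular open $U \subseteq \Gamma/H$ around $1H$ and a section $s \colon U \to \Gamma$ of the projection. I would define a trivialization
\[
\phi \colon G \times (U \times Z) \to p_\sigma^{-1}(U \times Z), \qquad (g,(\gamma H, z)) \mapsto [s(\gamma H),(z,g)],
\]
check that it is $G$-equivariant and covers the identity on $U \times Z$, and write down the continuous inverse $[\gamma,(z,g)] \mapsto (\sigma(z)(s(\gamma H)^{-1}\gamma) \cdot g, \gamma H, z)$. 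Translating this trivialization around $\Gamma/H$ by the $\Gamma$-action covers the whole base. For~\ref{lem:equivariant_principal_G-bundles_over_Gamma/H_times_Z:local_system}, I would just unwind the definitions: the isotropy of $(\gamma H, z)$ is $\gamma H \gamma^{-1}$, and direct manipulation of $[\gamma h \gamma^{-1}]\cdot[\gamma,(z,g)] = [\gamma, h\cdot(z,g)] = [\gamma,(z,\sigma(z)(h) g)]$ against the defining equation $\gamma \cdot e = e \cdot \rho_e(\gamma)$ of~\eqref{rho_e} reads off $\rho_e(\gamma h \gamma^{-1}) = g^{-1}\sigma(z)(h)g$.

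For~\ref{lem:equivariant_principal_G-bundles_over_Gamma/H_times_Z:given_by_rho}, let $p \colon E \to \Gamma/H \times Z$ be the given bundle. Applying Lemma \ref{lem:condition_(S)_and_homeomorphisms} to the $\Gamma$-map $E \to \Gamma/H$ (composition of $p$ with the projection) yields a $\Gamma$-homeomorphism $\Gamma \times_H F \cong E$, where $F$ is the total space of $p$ restricted to $\{1H\} \times Z$. By hypothesis this restricted bundle is trivial, so a chosen $G$-trivialization $\psi \colon Z \times G \xrightarrow{\cong} F$ identifies $F$ with $Z \times G$. The left $H$-action on $F$ transports to an $H$-action on $Z \times G$; since $H$ fixes $(1H,z)$ and commutes with the right $G$-action, one has $h \cdot \psi(z,e) = \psi(z, \sigma(z)(h))$ for a unique element $\sigma(z)(h) \in G$, giving the desired map $\sigma \colon Z \to \hom(H,G)$ whose continuity follows from that of $\psi$, $\psi^{-1}$ and the action map. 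By construction the $\Gamma$-homeomorphism $\Gamma \times_H (Z \times G) \to E$ is $G$-equivariant and lies over $\Gamma/H \times Z$, so it is an isomorphism of $\Gamma$-equivariant principal $G$-bundles by Lemma \ref{lem_maps_are_isos}. For~\ref{lem:equivariant_principal_G-bundles_over_Gamma/H_times_Z:uniqueness}, any isomorphism $F \colon p_{\sigma_0} \to p_{\sigma_1}$ over $\Gamma/H \times Z$ is $\Gamma\times G$-equivariant and therefore determined by its restriction over $\{1H\} \times Z$, which on the section $z \mapsto [1,(z,e)]$ is forced to have the form $[1,(z,\omega(z))]$ for some continuous $\omega \colon Z \to G$; compatibility with the $H$-action at $1H$ then yields $\omega(z)\sigma_0(z)(h) = \sigma_1(z)(h)\omega(z)$, and conversely any such $\omega$ extends $\Gamma$-equivariantly to an isomorphism.

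The main subtlety I expect is topological rather than algebraic: one must be careful that the open subsets produced by Condition (S) are quasi-regular so that their subspace topologies remain compactly generated (via Lemma \ref{lem:properties_of_quasi-regular_open_sets}), and that the continuity of $\sigma$ in part~\ref{lem:equivariant_principal_G-bundles_over_Gamma/H_times_Z:given_by_rho} really holds as a map into $\hom(H,G)$ with its subspace topology inside $\map(H,G)$. Once these points are handled, the rest is essentially transport of structure along the identifications supplied by Lemma \ref{lem:condition_(S)_and_homeomorphisms}.
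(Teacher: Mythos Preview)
Your proposal is correct and follows essentially the same approach as the paper: Condition~(S) plus the explicit section $s$ to trivialize $p_\sigma$ over $U\times Z$ in~\ref{lem:equivariant_principal_G-bundles_over_Gamma/H_times_Z:well-defined}, Lemma~\ref{lem:condition_(S)_and_homeomorphisms} applied to the composite $E\to\Gamma/H$ together with the trivialization over $\{1H\}\times Z$ to extract $\sigma$ in~\ref{lem:equivariant_principal_G-bundles_over_Gamma/H_times_Z:given_by_rho}, reading off $\omega$ from the value of an isomorphism on the section $z\mapsto[1,(z,1)]$ in~\ref{lem:equivariant_principal_G-bundles_over_Gamma/H_times_Z:uniqueness}, and a direct computation for~\ref{lem:equivariant_principal_G-bundles_over_Gamma/H_times_Z:local_system}. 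Your explicit trivialization formulas and the paper's (which introduces the auxiliary map $\overline{s}(\gamma)=s(\pr(\gamma))^{-1}\gamma$) are the same up to notation, and you have also correctly anticipated the two points the paper pauses on: quasi-regularity of $U\times Z$ via Lemma~\ref{lem:properties_of_quasi-regular_open_sets}, and continuity of $\sigma$ via the adjunctions in Subsection~\ref{subsec:Basic_feature_of_the_category_of_compactly_generated_spaces}.
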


\begin{proof}~\ref{lem:equivariant_principal_G-bundles_over_Gamma/H_times_Z:well-defined}
  It remains to show that $p_{\sigma} \colon \Gamma \times_H (Z \times G) \to \Gamma/H
  \times Z$ is a principal $G$-bundle after forgetting the $\Gamma$-action.  The Condition
  (S) ensures that there is a quasi-regular open neighborhood $U \subseteq \Gamma/H$ of $1H$ and a map
  $s \colon U \to \Gamma$ whose composite with the projection 
  $\pr \colon \Gamma \to \Gamma/H$ is the identity.   Notice that the open subsets $U \subseteq \Gamma/H$ and
  $\pr^{-1}(U) \subseteq \Gamma$ equipped with the subspace topology are compactly generated by
  Lemma~\ref{lem:properties_of_quasi-regular_open_sets}~\ref{lem:properties_of_quasi-regular_open_sets:open_subset} 
  and~\ref{lem:properties_of_quasi-regular_open_sets:preimage}.

  Define $\overline{s} \colon \pr^{-1}(U) \to H$ by
  $\overline{s}(\gamma) = s \circ \pr(\gamma)^{-1} \cdot\gamma$.  It has the property
  $\overline{s}(\gamma \cdot h) = \overline{s}(\gamma) \cdot h$ for all $\gamma \in   p^{-1}(U)$ and $h \in H$.
  Define maps
  \begin{eqnarray*}
    \alpha \colon (U \times Z) \times G 
    & \to &
    \pr^{-1}(U) \times_H(Z \times G);
    \\
    \beta \colon \pr^{-1}(U) \times_H(Z \times G);
    & \to &
    (U \times  Z) \times G, 
  \end{eqnarray*}
  by
  \begin{eqnarray*}
    \alpha \bigl((\gamma H,z),g\bigr)  
    & := &
    \bigl(\gamma, (z, \sigma(z)(\overline{s}(\gamma))^{-1} \cdot g)\bigr);
    \\
    \beta \bigl(\gamma,(z,g)\bigr)
    & := &
    \bigl((\gamma H,z), \sigma(z)(\overline{s}(\gamma)) \cdot g\bigr).
  \end{eqnarray*}
  Then $\alpha$ and $\beta$ are to one another inverse $G$-homeomorphisms. They induce
  isomorphisms of principal $G$-bundles from $p_{\sigma}$ restricted to $U  \times Z$ 
  to the trivial principal $G$-bundle over $U \times Z$.   One easily checks using
  Lemma~\ref{lem:properties_of_quasi-regular_open_sets}~\ref{lem:properties_of_quasi-regular_open_sets:preimage} 
  applied to the standard map $\Gamma/H \times Z  = k(\Gamma/H \times_p Z) \to \Gamma/H \times_p Z$, see
  Subsections~\ref{subsec:The_retraction_functor} and~\ref{subsec:mapping_spaces_product_spaces_and_subspaces},
  and Lemma~\ref{lem:properties_of_quasi-regular_open_sets}~\ref{lem:properties_of_quasi-regular_open_sets:open_subset} 
  that $U  \times  Z$ is a quasi-regular open subset of $\Gamma/H \times Z$. Since
  $\gamma H \in \Gamma/H$ is contained in the open subset $\gamma \cdot U$ and
  $p_{\sigma}$ is a $\Gamma$-equivariant map, we conclude that $p_{\sigma}$ is locally
  trivial and hence a principal $G$-bundle.
  \\[2mm]~\ref{lem:equivariant_principal_G-bundles_over_Gamma/H_times_Z:given_by_rho} In
  the sequel we identify the subspace $\{1\} \times Z$ of $\Gamma/H \times Z$ with $Z$.
  Define the $\Gamma$-map
  \[
  u \colon \Gamma \times _H p^{-1}(Z) \to E, \quad (\gamma,e) \to \gamma \cdot e.
  \]
  It is a homeomorphism by Lemma~\ref{lem:condition_(S)_and_homeomorphisms}.  It is
  compatible with the natural right $G$-actions and commutes with the left $\Gamma$-actions. The restriction $p|_{p^{-1}(Z)} \colon
  p^{-1}(Z) \to Z$ of $p$ to $Z = \{1\} \times Z$ is a principal $G$-bundle over $Z$.  By assumption
  $p|_{p^{-1}(Z)}$ is isomorphic to the trivial principal $G$-bundle over $Z$. Hence we can
  choose a $G$-homeomorphism
  \[
  f \colon Z \times G \xrightarrow{\cong} p^{-1}(Z)
  \]
  such that the composite $p|_{p^{-1}(Z)} \circ f$ is the canonical projection $Z \times G
  \to Z$. Let $\sigma \colon Z \to \hom(H,G)$ be defined by the map $\rho_{f(z,1)}$, see~\eqref{rho_e}, i.e., $\sigma$ is uniquely determined by 
$h \cdot f(z,1) = f(z,1)   \cdot \sigma(z)(h)$.  (It is continuous because
  of the properties of the category of compactly generated spaces listed in 
  Subsection~\ref{subsec:Basic_feature_of_the_category_of_compactly_generated_spaces}.)
We define a left $H$-action on $Z \times G$ by $h \cdot (z,g) =
  \bigl(z,\sigma(z)(h) \cdot g)$.  Then $f$ is compatible with the left $H$-actions by the
  following calculation
   \begin{eqnarray*}
    f\bigl(h \cdot (z,g)\bigr) 
    & = & 
    f\bigl(z, \sigma(z)(h)\cdot g\bigr)  
    \\
    & = & 
    f(z,1) \cdot  \sigma(z)(h)\cdot g
    \\
    & = & 
    h \cdot f(z,1) \cdot   g
    \\
    & = & 
    h \cdot f(z,g).
  \end{eqnarray*}
    We obtain a homeomorphism compatible with the obvious left $\Gamma$-actions and with the
  obvious right $G$-actions
  \[
  \Gamma \times_H (Z \times G) \xrightarrow{\id_{\Gamma} \times_H f} \Gamma \times_H
  p^{-1}(Z) \xrightarrow{u} E.
  \]
  This is an isomorphism of $\Gamma$-equivariant principal $G$-bundles from $p_{\sigma}
  \colon \Gamma \times_H (Z \times G) \to \Gamma/H \times Z $ to $p \colon E \to \Gamma/H \times Z$.
  \\[2mm]~\ref{lem:equivariant_principal_G-bundles_over_Gamma/H_times_Z:uniqueness} Let
  \[
  f \colon \Gamma \times _H (Z \times G) \xrightarrow{\cong} \Gamma \times_H (Z \times G)
  \]
  be an isomorphism of $\Gamma$-equivariant principal $G$-bundles from $p_{\sigma_0}$ to
  $p_{\sigma_1}$.  Notice that the left $H$-actions on $(Z \times G)$ in the source and
  the target are different, the first one depends on $\sigma_0$, the second on $\sigma_1$.
  There is precisely one map $\omega \colon Z \to G$ satisfying $f(1,(z,1)) =
  \bigl(1,(z,\omega(z))\bigr)$.  (It is continuous because of
  the properties of the category of compactly generated spaces listed in 
  Subsection~\ref{subsec:Basic_feature_of_the_category_of_compactly_generated_spaces}.)
  Since $f$ is compatible with the natural left
  $\Gamma$-actions and with the natural right $G$-actions, we get for $h \in H$ and $z \in  Z$
  \begin{eqnarray*}
    \bigl(1,(z,\sigma_1(z)(h) \cdot \omega(z))\bigr)
    & = &
    \bigl(1,h \cdot (z,\omega(z))\bigr)
    \\
    & = &
    \bigl(h, (z,\omega(z))\bigr)
    \\
    & = & 
    h \cdot \bigl(1,(z,\omega(z))\bigr)
    \\
    & = & 
    h \cdot f\bigl(1,(z,1)\bigr)
    \\
    & = & 
    f\bigl(h \cdot (1,(z,1))\bigr)
    \\
    & = & 
    f\bigl(h,(1,z)\bigr)
    \\
    & = & 
    f\bigl(1, h \cdot (z,1)\bigr)
    \\
    & = &
    f\bigl(1,(z,\sigma_0(z)(h)\bigr)
    \\
    & = &
    f\bigl(1,(z,1)\bigr) \cdot \sigma_0(z)(h)
    \\
    & = &
    (1,z,\omega(z))\cdot \sigma_0(z)(h)
    \\
    & = &
    \bigl(1,(z,\omega(z)\cdot \sigma_0(z)(h))\big).
  \end{eqnarray*}
  This implies $\sigma_1(z)(h) \cdot \omega(z) = \omega(z)\cdot \sigma_0(z)(h)$ for all 
  $h \in H$ and $z \in Z$.
  
  For the converse, the map
  \begin{align*}
  \Gamma \times_H(Z \times G)  & \to  \Gamma \times_H(Z \times G)\\
  (\gamma,(z,g)) & \mapsto (\gamma,(z,w(z)g)))
  \end{align*}
  satisfies the desired properties.
  \\[2mm]~\ref{lem:equivariant_principal_G-bundles_over_Gamma/H_times_Z:local_system}
  Consider $(\gamma,(z,g)) \in \Gamma \times_H (Z \times G)$. One easily checks
  \[
  \Gamma_{(\gamma H ,z)} = \gamma \cdot \Gamma_{(1H,z)} \cdot \gamma^{-1} = \gamma \cdot H
  \cdot \gamma^{-1}.
  \]
  We compute for $h \in H$ in $\Gamma \times_H (Z \times G)$
  \begin{eqnarray*}
    (\gamma \cdot h \cdot \gamma^{-1}) \cdot \bigl(\gamma ,(z,g)\bigr)
    & = & 
    \bigl(\gamma \cdot h,(z,g)\bigr)
    \\
    & = & 
    \bigl(\gamma, h \cdot (z,g)\bigr)
    \\
    & = & 
    \bigl(\gamma, (z,\sigma(z)(h) \cdot g)\bigr)
    \\
    & = & 
    \bigl(\gamma,(z,g \cdot (g^{-1}\cdot \sigma(z)(h) \cdot g))\bigr)
    \\
    & = & 
    \bigl(\gamma ,(z,g)\bigr) \cdot (g^{-1}\cdot \sigma(z)(h) \cdot g).
  \end{eqnarray*}
  This finishes the proof of
  Lemma~\ref{lem:equivariant_principal_G-bundles_over_Gamma/H_times_Z}.
\end{proof}


\section{Discussion of homotopy invariance and Condition (H)}
\label{sec:Discussion_of_homotopy_invariance_and_property_(H)}

Since we want to have a bundle theory for which there exists a universal bundle, we have to
ensure homotopy invariance, i.e., if $f_0,f_1 \colon B_0 \to B_1$ are $\Gamma$-maps and $p
\colon E \to B_1$ is a $\Gamma$-equivariant principal $G$-bundle, we want to arrange that
the pullbacks $f_0^*p$ and $f_1^*p$ are isomorphic as $\Gamma$-equivariant principal
$G$-bundles.

Let $Z$ be a contractible $CW$-complex and $H \subseteq \Gamma$ be a subgroup. Equip $Z$ with
the trivial $\Gamma$-action. Then the projection $\pr \colon \Gamma/H \times Z \to
\Gamma/H$ is a $\Gamma$-homotopy equivalence. Hence in order to guarantee homotopy
invariance, we must ensure that every $\Gamma$-equivariant principal $G$-bundle $p \colon
E \to \Gamma/H \times Z$ over $\Gamma/H \times Z$ is isomorphic to $\pr^* E'$ for some
$\Gamma$-equivariant principal $G$-bundle on $p' \colon E' \to \Gamma/H$ over $\Gamma/H$.

For $\alpha \in \hom(H,G)$ the  \emph{centralizer of $\alpha$ in $G$} is defined to be the subgroup of $G$ given by
\[
C_G(\alpha) := \{g \in G \mid g\alpha(h)g^{-1} =\alpha(h) \; \text{for all}\; h \in H\}.
\]

\begin{definition}[Condition (H)]\label{def:property_(H)}
  A family $\calr$ of local representations in the sense of
  Definition~\ref{def:local_representations} satisfies Condition (H) if the following
  holds for every $(H,\alpha) \in \calr$:

  \begin{enumerate}

  \item \label{def:property_(H):component} The path component of $\alpha$ in $\hom(H,G)$
    is contained in $\{c_g \circ \alpha \mid g \in G\}$;

  \item \label{def:property_(H):S_for_G} The pair $(G,C_G(\alpha))$ satisfies Condition
    (S) introduced in Definition~\ref{ref:condition_(S)};

  \item \label{def:property_(H):S_for_Gamma} The pair $(\Gamma,H)$ satisfies Condition (S)
    introduced in Definition~\ref{ref:condition_(S)};

  \item \label{def:property_(H):homeo} The canonical map
    \[\iota_{\alpha} \colon G/C_G(\alpha) \to \hom(H,G), \quad gC_G(\alpha) \mapsto c_g
    \circ \alpha
    \]
    is a homeomorphism onto its image.

  \end{enumerate}
\end{definition}

\begin{lemma}\label{lem_property_(H)_and_bundles_over_Gamma/H_times_Z}
  Suppose that the family $\calr$ of local representations satisfies Condition (H). Let
  $Z$ be a (non-equivariant) contractible $CW$-complex and let $p \colon E \to \Gamma/H \times Z$ be a
  $\Gamma$-equivariant principal $G$-bundle with $\calr(p) \subseteq \calr$.

  Then $p$ is isomorphic to $\pr^*E'$ for a $\Gamma$-equivariant principal $G$-bundle $p'
  \colon E'\to \Gamma/H$ for the projection $\pr \colon \Gamma/H \times Z \to \Gamma/H$,
  or, equivalently, there exists an element $(H,\alpha)$ in $\calr$ such that $p$ is
  isomorphic to the $\Gamma$-equivariant principal bundle 
  \[
   p_{\alpha} \colon ( \Gamma \times_H G) \times Z \to \Gamma/H \times Z, 
   \quad \bigl((\gamma,g),z\bigr) \mapsto (\gamma H, z).
  \]
\end{lemma}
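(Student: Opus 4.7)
The plan is to first put $p$ into the normal form $p_\sigma$ supplied by Lemma~\ref{lem:equivariant_principal_G-bundles_over_Gamma/H_times_Z} and then to exploit Condition~(H) to show that the resulting map $\sigma\colon Z\to\hom(H,G)$ can be conjugated, up to isomorphism of bundles, to a constant homomorphism. For the reduction I would first check that the restriction of $p$ to the slice $\{1H\}\times Z$, viewed as an ordinary principal $G$-bundle over the contractible CW-complex $Z$, is trivializable: pick a homotopy $\Phi\colon Z\times[0,1]\to Z$ from a constant map to $\id_Z$, pull the restriction back along $\Phi$, and use Lemma~\ref{lem:homotopy_invariance_for_trivial_Gamma} to identify the $t=0$ restriction (trivial, being a constant pullback) with the $t=1$ restriction. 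With triviality in hand, Lemma~\ref{lem:equivariant_principal_G-bundles_over_Gamma/H_times_Z}~\ref{lem:equivariant_principal_G-bundles_over_Gamma/H_times_Z:given_by_rho} delivers an isomorphism $p\cong p_\sigma$ for some continuous $\sigma\colon Z\to\hom(H,G)$.

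Next I would fix a basepoint $z_0\in Z$, set $\alpha:=\sigma(z_0)$, and observe that $(H,\alpha)$ belongs to $\calr'(p_\sigma)\subseteq\calr$ by Lemma~\ref{lem:equivariant_principal_G-bundles_over_Gamma/H_times_Z}~\ref{lem:equivariant_principal_G-bundles_over_Gamma/H_times_Z:local_system}. Since $Z$ is path-connected, the image $\sigma(Z)$ lies in the path component of $\alpha$ in $\hom(H,G)$, which by Condition~(H)\ref{def:property_(H):component} is contained in the $G$-conjugation orbit $\{c_g\circ\alpha\mid g\in G\}$. Condition~(H)\ref{def:property_(H):homeo} then allows me to factor $\sigma$ uniquely as $\iota_\alpha\circ\tilde\sigma$ for a continuous $\tilde\sigma\colon Z\to G/C_G(\alpha)$.

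The crucial step is to lift $\tilde\sigma$ to a continuous $\omega\colon Z\to G$ with $\omega(z)\cdot C_G(\alpha)=\tilde\sigma(z)$. By Condition~(H)\ref{def:property_(H):S_for_G} together with Remark~\ref{rem:(S)_and_principal}, the quotient map $G\to G/C_G(\alpha)$ is a principal $C_G(\alpha)$-bundle, so the pullback $\tilde\sigma^\ast G\to Z$ is a principal $C_G(\alpha)$-bundle over the contractible CW-complex $Z$. Repeating the homotopy-invariance argument of the first paragraph, this time with structure group $C_G(\alpha)$, shows that $\tilde\sigma^\ast G$ is trivial and hence admits a section; postcomposing with the projection to $G$ produces $\omega$. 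With the identity $\sigma(z)(h)=\omega(z)\alpha(h)\omega(z)^{-1}$ in place, Lemma~\ref{lem:equivariant_principal_G-bundles_over_Gamma/H_times_Z}~\ref{lem:equivariant_principal_G-bundles_over_Gamma/H_times_Z:uniqueness} (applied to $\sigma_0\equiv\alpha$ and $\sigma_1=\sigma$) gives an isomorphism $p_\sigma\cong p_{\alpha_{\mathrm{const}}}$; for a constant map the $H$-action on $Z\times G$ is independent of $Z$, so $p_{\alpha_{\mathrm{const}}}$ is canonically $p'\times\id_Z$ with $p'\colon\Gamma\times_H G\to\Gamma/H$, which is exactly the $\Gamma$-equivariant bundle appearing in the statement.

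The hard part is the continuous lifting in the last two paragraphs: the pointwise conclusion that each $\sigma(z)$ lies in the conjugation orbit of $\alpha$ is almost a tautology, but upgrading this to a continuous map into $G/C_G(\alpha)$ and then further to $G$ relies crucially on the homeomorphism assertion (H)\ref{def:property_(H):homeo} and on the principal bundle structure ensured by (H)\ref{def:property_(H):S_for_G}; without either of these, only a set-theoretic lift is available. Condition~(H)\ref{def:property_(H):S_for_Gamma}, though not directly invoked here, is what permits the appeal to Lemma~\ref{lem:equivariant_principal_G-bundles_over_Gamma/H_times_Z} in the first place.
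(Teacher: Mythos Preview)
Your proof is correct and follows essentially the same route as the paper: trivialize over the slice via Lemma~\ref{lem:homotopy_invariance_for_trivial_Gamma}, invoke Lemma~\ref{lem:equivariant_principal_G-bundles_over_Gamma/H_times_Z}\ref{lem:equivariant_principal_G-bundles_over_Gamma/H_times_Z:given_by_rho} to obtain $p_\sigma$, use Conditions~(H)\ref{def:property_(H):component} and~\ref{def:property_(H):homeo} to factor $\sigma$ through $G/C_G(\alpha)$, lift via the principal $C_G(\alpha)$-bundle structure coming from~(H)\ref{def:property_(H):S_for_G} and Remark~\ref{rem:(S)_and_principal}, and conclude with Lemma~\ref{lem:equivariant_principal_G-bundles_over_Gamma/H_times_Z}\ref{lem:equivariant_principal_G-bundles_over_Gamma/H_times_Z:uniqueness}. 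Your added remarks on the role of~(H)\ref{def:property_(H):S_for_Gamma} and the explicit homotopy for the trivialization step are accurate elaborations of what the paper leaves implicit.
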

\begin{proof}
   Since $Z$ is contractible, by Lemma~\ref{lem:homotopy_invariance_for_trivial_Gamma}, the restriction bundle
  $p|_{\{1H\} \times Z} : E|_{\{1H\} \times Z} \to \{1H\} \times Z$ is trivializable. We can then apply
  Lemma~\ref{lem:equivariant_principal_G-bundles_over_Gamma/H_times_Z}
~\ref{lem:equivariant_principal_G-bundles_over_Gamma/H_times_Z:given_by_rho} and we obtain that $p$ is
  isomorphic to $p_{\sigma}$ for an appropriate map  $\sigma \colon Z  \to \hom(H,G)$.
  Since $Z$ is path connected, the image of $\sigma$ is contained in a path component of 
  $\alpha \in \hom(H,G)$ if we take $\alpha = \sigma(z)$ for some $z \in Z$. 
  Lemma~\ref{lem:equivariant_principal_G-bundles_over_Gamma/H_times_Z}
~\ref{lem:equivariant_principal_G-bundles_over_Gamma/H_times_Z:local_system} implies
  that $(H,\alpha)$ belongs to $\calr(p)$ and hence to $\calr$.  Because of Condition (H)
  the image of $\sigma \colon Z \to \hom(H,G)$ is contained in the image of
  $\iota_{\alpha} \colon G/C_G(\alpha) \to \hom(H,G)$.  Since $\iota_{\alpha}$ is a
  homeomorphism onto its image by Condition (H), we can find a map $\overline{\omega}
  \colon Z \to G/C_G(\alpha)$ with $\iota_{\alpha} \circ \overline{\omega} = \sigma$.
  Because of Condition (H) and Remark~\ref{rem:(S)_and_principal}, the projection $G \to
  G/C_G(\alpha)$ is a principal $C_G(\alpha)$-bundle.  Hence its pullback with
  $\overline{\omega}$ is a principal $C_G(\alpha)$-bundle over the contractible
  $CW$-complex $Z$ and hence has a section by
  Lemma~\ref{lem:homotopy_invariance_for_trivial_Gamma}.  Thus we can find a map $\omega
  \colon Z \to G$ whose composite with the projection $G \to G/C_G(\alpha)$ is
  $\overline{\omega}$. This implies $\sigma = c_{\omega} \circ \alpha$. Now apply
  Lemma~\ref{lem:equivariant_principal_G-bundles_over_Gamma/H_times_Z}%
~\ref{lem:equivariant_principal_G-bundles_over_Gamma/H_times_Z:uniqueness}.
\end{proof}

\begin{theorem} \label{the:criterion_for_(H)} Let $\calr$ be a family of local
  representations. Then it satisfies Condition (H) if the following conditions are
  satisfied:

  \begin{enumerate}

  \item The group $\Gamma$ is locally compact, second countable and has finite
    covering dimension, e.g., is a  Lie groups, or $\Gamma$ is completely regular and all compact subgroups of $\Gamma$ are 
   Lie groups, e.g.,  is a Kac-Moody group;

  \item The group $G$ is  locally compact, second countable and has finite
    covering dimension, e.g., is a  Lie group;
 
  \item The group $G$ is almost connected (see Definition~\ref{def:almost_connected}); 
  \item For every element $(H,\alpha)$ the group $H$ is a compact group.
  \end{enumerate}
\end{theorem}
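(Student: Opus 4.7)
The plan is to verify each of the four clauses~\ref{def:property_(H):component}--\ref{def:property_(H):homeo} of Definition~\ref{def:property_(H)} separately, handling the two ``Condition (S)'' clauses first and then the two more substantial homogeneity clauses.

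First I would dispose of~\ref{def:property_(H):S_for_Gamma}: under the first alternative on $\Gamma$, Condition (S) for $(\Gamma,H)$ is immediate from Lemma~\ref{lem:condition_(S)}~\ref{lem:condition_(S):Gamma}; under the Kac-Moody alternative, the compact subgroup $H$ is a Lie group by hypothesis and $\Gamma$ is completely regular, so Lemma~\ref{lem:condition_(S)}~\ref{lem:condition_(S):H_compact_Lie} applies instead. For~\ref{def:property_(H):S_for_G}, the centralizer $C_G(\alpha)$ is a closed subgroup of $G$, and $G$ is locally compact, second countable and of finite covering dimension, so Lemma~\ref{lem:condition_(S)}~\ref{lem:condition_(S):Gamma} again gives Condition (S) for the pair $(G,C_G(\alpha))$.

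The substance of the argument lies in~\ref{def:property_(H):component} and~\ref{def:property_(H):homeo}. The key reduction I would make is to observe that under the standing hypotheses $G$ is in fact a Lie group: a locally compact, almost connected group of finite covering dimension is a Lie group by the Montgomery--Zippin / Gleason--Yamabe solution to Hilbert's fifth problem. With $H$ compact and $G$ a Lie group, the classical local rigidity of compact-group homomorphisms into a Lie group then provides, for every $\alpha \in \hom(H,G)$, an open neighborhood $V_\alpha$ of $\alpha$ in $\hom(H,G)$ together with a continuous map $\sigma_\alpha \colon V_\alpha \to G$ satisfying $\sigma_\alpha(\alpha) = 1$ and $\beta = c_{\sigma_\alpha(\beta)} \circ \alpha$ for all $\beta \in V_\alpha$; the standard proof proceeds by averaging over $H$ against its Haar measure and invoking the inverse function theorem around the identity in $G$.

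From such a local slice, clause~\ref{def:property_(H):homeo} follows at once: the map $\beta \mapsto \sigma_\alpha(\beta) \cdot C_G(\alpha)$ is a continuous local inverse to $\iota_\alpha$ on an open neighborhood of $\iota_\alpha(C_G(\alpha))$ in the image, and the $G$-equivariance of $\iota_\alpha$ propagates continuity of the inverse to the whole image. For~\ref{def:property_(H):component}, given $\beta$ in the path component of $\alpha$, I would take a path $\gamma \colon [0,1] \to \hom(H,G)$ from $\alpha$ to $\beta$, use compactness of $[0,1]$ to cover its image by finitely many slices $V_{\gamma(t_i)}$, and chain the corresponding local conjugating elements into a single $g \in G$ with $\beta = c_g \circ \alpha$. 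The main obstacle will be the careful verification of the local rigidity result in the topology in use here — namely the compactly generated topology on $\hom(H,G) \subseteq \map(H,G)$ from Subsection~\ref{subsec:Space_of_homomorphisms} — since this is precisely where both the compactness of $H$ (for the Haar-measure averaging) and the Lie structure on $G$ (for the exponential chart) are decisive.
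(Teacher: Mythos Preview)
Your treatment of the two Condition~(S) clauses is correct, and in fact slightly more careful than the paper's, which simply cites Lemma~\ref{lem:condition_(S)}~\ref{lem:condition_(S):Gamma} for both~\ref{def:property_(H):S_for_G} and~\ref{def:property_(H):S_for_Gamma} without separating out the Kac-Moody alternative for~$\Gamma$.

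The gap is in your reduction step for~\ref{def:property_(H):component} and~\ref{def:property_(H):homeo}. The claim that a locally compact, almost connected group of finite covering dimension is a Lie group is false: the $p$-adic integers $\IZ_p$ are compact (hence almost connected, since $\IZ_p/\IZ_p^0 = \IZ_p$ is compact), second countable, and of covering dimension~$0$, but $\IZ_p$ is not a Lie group --- it has small subgroups $p^n\IZ_p$ and is not locally Euclidean. Montgomery--Zippin and Gleason--Yamabe give Lie structure only under additional hypotheses such as local connectedness or ``no small subgroups'', neither of which follows from almost connectedness plus finite dimension. Since your subsequent local rigidity argument genuinely uses an exponential chart and the inverse function theorem on~$G$, it does not go through at the stated level of generality.

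The paper's route is different and avoids Lie structure entirely. For~\ref{def:property_(H):component} it cites a theorem of Lee--Wu (Theorem~\ref{the:Lee-Wu_connected_components_Hom(H,G)}) valid for $H$ compact and $G$ merely locally compact second countable. For~\ref{def:property_(H):homeo} it first proves that the orbit $G\cdot\alpha$ is closed in $\hom(H,G)$ (Theorem~\ref{the:G_alpha_closed_in_Hom(H,G)}, deduced from another Lee--Wu result applied to $H\times G$), hence a complete metric space and a Baire space; it then runs a Baire category argument to show that the bijective continuous $G$-map $\iota_\alpha$ is open. Your slice-and-chain argument is cleaner when $G$ is a Lie group and yields more (a local cross section, not just a homeomorphism), but the Baire argument is what covers the general hypotheses of the theorem.
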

\begin{proof}
  Condition~\ref{def:property_(H):component} appearing in Definition~\ref{def:property_(H)} is proved
  in Theorem~\ref{the:connected_components_and_conjugation}.

  Conditions~\ref{def:property_(H):S_for_G} and~\ref{def:property_(H):S_for_Gamma}
  appearing in Definition~\ref{def:property_(H)} follow from
  Lemma~\ref{lem:condition_(S)}~\ref{lem:condition_(S):Gamma}.

  Condition~\ref{def:property_(H):homeo} appearing in
  Definition~\ref{def:property_(H)} is proved in 
  Theorem~\ref{the:G/C_G(alpha)_homeo_G.alpha}.
\end{proof}


  \section{The Slice Theorem for equivariant $CW$-complexes}
  \label{subsection:The_Slice_Theorem_for_equivariant_CW-complexes}

In this section we prove the following Slice Theorem for $\Gamma$-$CW$-complexes,
generalizing~\cite[Theorem~1.37]{Lueck(1989)}.

\begin{theorem}[Slice Theorem]
\label{the:Slice_Theorem}
Let $G$ be a (compactly generated) topological group and let $X$ be a $G$-$CW$-complex. 
Consider $x \in X$ together with a $G_x$-invariant neighborhood $V_x$. Suppose
that  the pair $(G,G_x)$ satisfies Condition (S), see Definition~\ref{ref:condition_(S)}.

Then there exists a  $G_x$-invariant subset $S_x$ of $x$ with the following properties:

\begin{enumerate}

\item \label{the:Slice_Theorem:compactly_generated}
$S_x$ inherited with the subspace topology is compactly generated;

\item \label{the:Slice_Theorem:compactly_generated:closure_of_S_x_subset_V_x}
The closure of $S_x$ is contained in $V_x$;

\item \label{the:Slice_Theorem:G_x-homotopy_equivalence}
The inclusion $\{x\} \to S_x$ is a $G_x$-homotopy equivalence;

\item \label{the:Slice_Theorem:G-homeomorphism}
The set $U := G \cdot S_x$ is a quasi-regular $G$-invariant open subset of $X$ and the map
\[
G \times_{G_x} S_x \xrightarrow{\cong} U, \quad (g,s) \mapsto g \cdot s
\]
is a $G$-homeomorphism.
\end{enumerate}
\end{theorem}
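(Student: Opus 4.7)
The plan is to proceed by induction on the equivariant skeleta $X_{-1} \subseteq X_0 \subseteq X_1 \subseteq \cdots \subseteq X$, constructing a nested family of $G_x$-invariant subsets $S_x^n \subseteq X_n$, each satisfying the analogues of (i)--(iv) relative to $X_n$, and then setting $S_x := \bigcup_n S_x^n$. The essential geometric input at each step is Lemma~\ref{lem:condition_(S)_and_homeomorphisms}, which (via Condition (S)) provides the product decomposition $G \times_{H} F \xrightarrow{\cong} E$ needed to split equivariant cells into a $G$-orbit direction and a slice direction.

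\textbf{Base case and inductive step.} Let $n_0$ be the smallest integer with $x \in X_{n_0}$, and let $Q \colon G/H \times D^{n_0} \to X$ be the characteristic map of the equivariant cell containing $x$, so $H = G_x$ and $x = Q(1H, y_0)$ for some interior point $y_0 \in \mathring{D}^{n_0}$. Put $S_x^n := \emptyset$ for $n < n_0$ and $S_x^{n_0} := Q(\{1H\} \times B)$ for a small $G_x$-invariant open disc $B$ around $y_0$ chosen so that $Q(\overline{\{1H\} \times B}) \subseteq V_x$. For the inductive step from $X_{k-1}$ to $X_k$ ($k > n_0$), and for each attaching map $q_i \colon G/H_i \times S^{k-1} \to X_{k-1}$, I would consider the $G$-invariant preimage $A_i := q_i^{-1}(G \cdot S_x^{k-1})$. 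The induction hypothesis gives $G \cdot S_x^{k-1} \cong G \times_{G_x} S_x^{k-1}$; applying Lemma~\ref{lem:condition_(S)_and_homeomorphisms} to the composite $G$-map $A_i \to G/H_i$ then identifies $A_i$ with $G \times_{H_i} F_i$ for its fiber $F_i := A_i \cap (\{1H_i\} \times S^{k-1})$. I would extend $F_i$ along a radial, $H_i$-invariant open collar $\widetilde{F_i} \subseteq \{1H_i\} \times D^k$ that deformation retracts onto $F_i$, with radius shrunk so that $Q_i(\overline{\widetilde{F_i}}) \subseteq V_x$, and define $S_x^k := S_x^{k-1} \cup \bigcup_i Q_i(\widetilde{F_i})$.

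\textbf{Verification of (i)--(iv).} Property (i) follows because each $S_x^k$ is assembled from quasi-regular open pieces of equivariant cells, so Lemma~\ref{lem:properties_of_quasi-regular_open_sets} ensures the subspace topology is compactly generated. Property (ii) is built in by the radial shrinking performed at every stage. For (iii), each collar $\widetilde{F_i}$ admits an $H_i$-equivariant strong deformation retraction onto its outer boundary piece in $F_i \subseteq S_x^{k-1}$; composing these telescopically with the radial contraction of $B$ onto $\{y_0\}$ yields a $G_x$-equivariant strong deformation retraction of $S_x$ onto $\{x\}$. Property (iv) is then obtained by applying Lemma~\ref{lem:condition_(S)_and_homeomorphisms} a final time to the $G$-map $U = G \cdot S_x \to G/G_x$ defined by the slice decomposition, noting that $U$ is open in $X$ because its intersection with each equivariant cell is open and quasi-regular.

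\textbf{Main obstacle.} The key technical point lies in the inductive step: one must simultaneously (a) invoke Condition (S) for each $(G, H_i)$ occurring, which I would derive from the hypothesis on $(G, G_x)$ together with the observation that the isotropy groups arising inside $G \cdot S_x$ are $G$-conjugate into $G_x$; (b) arrange the $H_i$-equivariant collar so that the deformation retraction assembles compatibly across cells to preserve the $G_x$-contractibility of $S_x$; and (c) control the closure globally inside $V_x$ through a potentially infinite induction, which I would handle by choosing collar radii shrinking geometrically (e.g.\ by a factor $1/2^k$ at stage $k$) so that the union's closure remains inside the initially chosen neighborhood of $x$.
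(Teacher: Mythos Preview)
Your overall architecture (induction over skeleta, thickening via radial collars into newly attached cells) matches the paper's, but there is a genuine gap in how you build the slice. You define $S_x^k := S_x^{k-1} \cup \bigcup_i Q_i(\widetilde{F_i})$ with $\widetilde{F_i} \subseteq \{1H_i\} \times D^k$. For $g \in G_x$ one has $g \cdot Q_i(1H_i,d) = Q_i(gH_i,d)$, which lies in $Q_i(\widetilde{F_i})$ only when $g \in H_i$. Since the isotropy groups $H_i$ of cells attached at stage $k$ are in general \emph{sub}conjugate to $G_x$ (not supergroups), your $S_x^k$ is not $G_x$-invariant, and the collar retractions you invoke in (iii) are only $H_i$-equivariant, so they do not assemble to a $G_x$-homotopy. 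Relatedly, your ``obstacle (a)'' is a red herring: the decomposition $A_i \cong G/H_i \times \pr_2(A_i)$ is automatic from the product structure of the cell and needs no Condition~(S) for $(G,H_i)$; and even if it did, subconjugacy of $H_i$ into $G_x$ does not transfer Condition~(S) from $(G,G_x)$ to $(G,H_i)$.

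The paper avoids this by reversing the order: it first builds inductively the $G$-invariant open tube $U = \bigcup_n U[n]$ together with a $G$-retraction $\rho \colon U \to Gx$ and a $G$-homotopy $\iota \circ \rho \simeq_G \id_U$ (all constructed from the radial collars, which are automatically $G$-equivariant since $G$ acts trivially on the disk factor), and only \emph{then} defines $S_x := \rho^{-1}(x)$. This makes $G_x$-invariance of $S_x$ automatic, and restricting the $G$-homotopy to the fiber over $x$ yields the required $G_x$-contraction. Condition~(S) for $(G,G_x)$ is invoked exactly once, at the very end, via Lemma~\ref{lem:condition_(S)_and_homeomorphisms} applied to $\rho$, to obtain $G \times_{G_x} S_x \xrightarrow{\cong} U$. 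Your strategy can be salvaged by making this switch: carry the $G$-invariant sets $U[n]$ and the retraction through the induction, and extract the slice only at the end.
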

\begin{proof}
Let $n_x \ge 0$ be the integer for which $x \in X_{n_x}$ and $x \notin X_{n_x-1}$. 
We construct inductively for $n = n_x , n_x +1, \ldots$
open $G$-invariant subsets $U[n] \subseteq X$ such that the following
conditions hold if we put $V = G \cdot V_x$:

\renewcommand{\labelenumi}{(\arabic{enumi})}
\begin{enumerate}

\item \label{the:Slice_Theorem:inclusions}
We have $U[n] \subseteq X_n$, $U[n] \subseteq U[n+1]$, $\overline{U[n]} \subseteq V$ and $x \in U[n]$
for all $n \ge n_x$;

\item \label{the:Slice_Theorem:retractions}
For each $n \ge n_x$ there is a $G$-map 
\[
r[n+1] \colon U[n+1]\to U[n]
\]
satisfying $r[n+1] \circ i[n+1] = \id_{U[n]}$,
where $i[n+1] \colon U[n] \to U[n+1]$ is the inclusion;

\item \label{the:Slice_Theorem:(homotopies)}
For each $n \ge n_x$ there is a $G$-homotopy 
\[
h[n+1] \colon U[n+1] \times [0,1] \to U[n+1]
\]
satisfying
\[\begin{array}{rcll}
h[n+1]_t                  & = & \id_{U[n+1]}                  & \text{for}\; 0 \le t \le (n+3)^{-1};
\\
h[n+1]_{t}                & = & i[n+1] \circ r[n+1]      &  \text{for}\;  (n+2)^{-1} \le t \le 1;
\\
h[n+1](z,t)               & = & z                                & \text{for}\;  z \in U[n], t \in [0,1];
\\
r[n+1] \circ h[n+1] & = & r[n+1] \circ \pr_{n+1}, &
\end{array}
\]
where $\pr_{n+1} \colon U[n+1] \times [0,1] \to U[n+1]$ is the projection;

\item \label{the:Slice_Theorem:beginning}
There is a $G$-map 
\[
r[n_x] \colon U[n_x] \to Gx
\]
such that $r[n_x] \circ i[n_x] = \id_{Gx}$ holds, where
$Gx := \{gx\mid g \in G\}$ and $i[n_x] \colon Gx \to U[n_x]$ is the inclusion;

\item There is a $G$-homotopy 
\[
h[n_x] \colon U[n_x] \times [0,1] \to U[n_x]
\]
satisfying 
\[
\begin{array}{rcll} 
h[n_x]_t               & = & \id_{U_{n_x}}                 & \text{for}\; t \le (n_x+2)^{-1};
\\
h[n_x]_t                & = & i[n_x] \circ r[n_x]       &  \text{for}\; t \ge (n_x+1)^{-1};
\\
h[n_x](gx,t)          & = & gx                             &   \text{for}\;  gx \in Gx, t \in [0,1];
\\
r[n_x] \circ h[n_x] & = & r[n_x] \circ \pr_{n_x}, &
\end{array}
\]
where $\pr_{n_x} \colon U[n_x] \times [0,1] \to U[n_x]$ is the projection.
\end{enumerate}

Recall that we have for each $n\ge 0$ a $G$-pushout of the form
\[
\xymatrix@!C= 8em{\coprod_{i \in I_{n+1}} G/H_i \times S^n \ar[r]^-{\coprod_{i \in I_n} q_i^{n+1}} \ar[d] & X_n \ar[d]
\\
\coprod_{i \in I_{n+1}} G/H_i \times D^{n+1} \ar[r]^-{\coprod_{i \in I_n} Q_i^{n+1}} \ar[r] & X_{n+1} 
}
\]
Next we explain the beginning of the induction. Since $x$ belongs to $X_{n_x}$ but not to $X_{n_x-1}$,
we can find $i \in I_{n_x}$ and $(\gamma H_i,y) \in G/H_i \times \bigl(D^{n_x} \setminus S^{n_x -1}\bigr)$
satisfying $Q_i^{n_x}(g H_i,y) = x$. Choose $\delta_i > 0$
such that $G/H_i \times \overline{B_{\delta}(y)}$ is contained in both 
$G/H_i \times D^{n_x} \setminus S^{n_x -1}$ and $(Q_i^{n_x})^{-1}(V_x \cap X_{n_x})$,
where $B_{\delta}(y)$ is the ball of radius $\delta$ around $y$. 
Define a $G$-map 
\[
r[n_x]' \colon G/H_i \times B_{\delta}(y) \to Gy, \quad (gH_i,z) \mapsto gy,
\]
and a $G$-homotopy
\[
h[n_x]' \colon G/H_i \times B_{\delta}(y) \times [0,1] \to G/H_i \times B_{\delta}(y)
\]
by sending $(gH_i,z,t)$ to
\[
\begin{array}{ll}
(gH_i,z)  &  \text{for}\; t \le (n_x+2)^{-1};
\\
\left(gH_i,\frac{t- (n_x+ 2)^{-1}}{(n_x+1)^{-1} - (n_x +2)^{-1}}   
\cdot y +   \bigl(1 - \frac{t- (n_x+2)^{-1}}{(n_x+1)^{-1} - (n_x +2)^{-1}}\bigr) \cdot z\right)
& \text{for}   (n_x+2)^{-1}  \le t \le (n_x+1)^{-1};
\\
(gH_i,y)  &  \text{for}\;  t \ge (n_x+1)^{-1}.
\end{array}
\]
Now define 
\[
U[n_x] = Q_i^{n_x}\bigl(G/H_i \times B_{\delta}(y)\bigr).
\]
Let 
\[
h[n_x] \colon U[n_x] \times [0,1] \to U[n_x]
\]
be the $G$-homotopy  uniquely determined by the property that for every element 
$(gH_i,z,t) \in G/H_i \times B_{\delta}(y) \times  [0,1]$
we have $Q_i^{n_x} \circ h[n_x]'(gH_i,z,t) = h[n_x](Q_i^{n_x}(gH_i,z),t)$ holds. Define the $G$-map 
\[
r[n_x] \colon U[n_x] \to Gx
\]
analogously using $r[n_x]'$. One easily checks that $U[n_x]$, $r[n_x]$  and $h[n_x]$ have the desired properties.

Next we explain the induction step from $n \ge n_x$ to $n+1$. 
For a real number $\epsilon \in (0,1)$ define the  subspace
\[S^n[\epsilon] := \{t \cdot z \mid t \in (1-\epsilon ,1], z \in S^n\} \subseteq  D^{n+1},
\]
a map 
\[
p[\epsilon] \colon S^n[\epsilon] \to S^n, \quad t \cdot z \mapsto z,
\]
and a homotopy
\[
l[\epsilon] \colon S^n[\epsilon] \times [0,1] \to S^n[\epsilon], \quad (t \cdot z,s) \mapsto (t \cdot (1-s) + s) \cdot z.
\]
If $j[\epsilon] \colon S^n \to S^n[\epsilon]$ is the inclusion, then $l[\epsilon]_0 = \id_{S^n[\epsilon]}$ 
and $l[\epsilon]_1 = i[\epsilon] \circ p[\epsilon]$. Roughly speaking, the homotopy
$l[\epsilon]$ pushes $S^n[\epsilon]$ radially to  $S^n$.

Roughly speaking, we will obtain $U[n+1]$ from $U[n]$ by a thickening into the interior
of the various equivariant $(n+1)$-cells such  that the thickening is  small enough to ensure
that the closure of $U[n+1]$ stays within $V$. In detail:
Consider $i \in I_{n+1}$. Then we get inclusions of $G$-invariants subsets of 
$\Gamma/H_i \times S^n$.
\[
(q^{n+1}_i)^{-1}(U[n]) \subseteq (q^{n+1}_i)^{-1}(\overline{U[n]}) \subseteq (q_i^{n+1})^{-1}(V)
\]
because of the  induction hypothesis.  Notice that $(Q_i^{n+1})^{-1}(V)$ is a 
$G$-invariant open subset of $G/H_i \times D^{n+1}$
containing the closed $G$-invariant subset $(q^{n+1}_i)^{-1}(\overline{U[n]})$.
Let $\pr \colon G/H_i \times D^{n+1} \to D^{n+1}$ be the projection.
Then $\pr\bigl((Q_i^{n+1})^{-1}(V)\bigr)$ is an open subset of $D^{n+1}$
containing the closed  subset $\pr\bigl((q^{n+1}_i)^{-1}(\overline{U[n]})\bigr)$.
Since $S^n$ is compact, we can choose $\epsilon_i > 0$ such that
\[
p[2 \cdot \epsilon_i]^{-1}\left(\pr\bigl((q^{n+1}_i)^{-1}(\overline{U[n]})\bigr)\right) 
\subseteq \pr\left((Q_i^{n+1})^{-1}(V)\right)
\]
holds. Define
\[
U[n+1] := U[n] \cup \bigcup_{i\in I_{n+1}} 
Q_i^{n+1}\left(\pr^{-1}\left(p[\epsilon_i]^{-1}\left(\pr\bigl((q^{n+1}_i)^{-1}(\overline{U[n]})\bigr)\right)\right)\right).
\]
One easily checks that $U[n+1]$ is an open
$G$-invariant subset of $X_{n+1}$ such that $\overline{U[n+1]} \subseteq V$ holds.  
The
various $G$-maps
$\id_{G/H_i} \times p[\epsilon_i] \colon G/H_i \times S^n[\epsilon] \to G/H_i \times S^n$
fit together to a $G$-map
\[
r[n+1] \colon U[n+1] \times [0,1] \to U[n+1]
\]
such that $r[n+1] \circ i[n+1] = \id_{U[n]}$ holds for the inclusion $i[n+1] \colon U[n] \to U[n+1|$.
The various $G$-homotopies 
$\id_{G/H_i} \times l[\epsilon_i] \colon G/H_i \times S^n[\epsilon] \to G/H_i \times S^n$
fit together to a $G$-homotopy 
\[
h[n+1]' \colon U[n+1] \times [0,1] \to U[n+1]
\]
such that $h[n+1]'_0 = \id_{U[n+1]}$, the image of $h[n+1]_1$ lies in $U[n+1] \cap X_n = U[n]$, 
the restriction of  $h[n+1]'_t \colon U[n+1] \to U[n+1]$ to $U[n]$ is the inclusion $U[n] \to U[n+1]$
for all $t \in [0,1]$ and $r[n+1] \circ h[n+1] =  r[n+1] \circ \pr_{n+1}$ for the projection
$\pr_{n+1} \colon U[n+1] \times [0,1] \to U[n+1]$. Define a map
\[
\tau \colon [0,1] \to [0,1], \quad t \mapsto 
\begin{cases}
0 
& 
0 \le t \le (n +3)^{-1};
\\
\frac{t - (n +3)^{-1}}{(n+2)^{-1} - (n +3)^{-1}} 
& 
(n+3) ^{-1} \le t \le (n+2)^{-1};
\\
1 & 
(n+2)^{-1} \le t.
\end{cases}
\]
One easily checks that the $G$-homotopy
\[
h[n+1] := h[n+1]' \circ \bigl(\id_{U[n+1]} \times \tau\bigr) \colon U[n+1]  \times [0,1] \to U[n+1]
\] 
has the desired properties. This finishes the induction step.

Now define
\[
U := \bigcup_{n \ge n_x} U[n]
\]
Since each $U[n] \subseteq X_n$ is an open $G$-invariant subset of $X_n$ and
$X$ has the weak topology with respect to the filtration by its skeletons $\{X_n \mid n \ge -1\}$,
the subset $U$ of $X$ is open and $G$-invariant. Since 
$\overline{U_n} \subseteq V$ holds for all $n \ge 0$, we conclude
$\overline{U} \subseteq V$. Define the $G$-map
\[
\rho[n] := U[n] \to Gx
\]
to be the composite $U[n] \xrightarrow{r[n]} U[n-1] \xrightarrow{r[n-1]} \cdots  
\xrightarrow{r[n_x+1]} U[n_x] \xrightarrow{r[n_x]} Gx$.
Since $\rho[n+1]$ restricted to $U[n]$ is $\rho[n]$, we obtain a $G$-map
\[
\rho \colon U \to Gx.
\]
Let $\iota \colon Gx \to U$ be the inclusion. Obviously $\rho \circ \iota = \id_{Gx}$.
Next we define inductively for $n = n_x, n_x +1, \ldots $ $G$-homotopies
\[
k[n] \colon U[n] \times [0,1] \to U[n]
\]
such that $k[n]_t = \id_{U[n]}$ holds for $t \in [0,(n+2)^{-1}]$, we have $k[n]_1 = \rho[n]$, the composite
$\rho[n] \circ k[n] \colon U[n] \times [0,1] \to Gx$ factorizes over the projection $U[n] \times [0,1]  \to U[n]$
to $\rho[n] \colon U[n] \to Gx$,  and the following diagram commutes
\[
\xymatrix@!C= 7em{
U[n] \times [0,1] \ar[r]^-{k[n]} \ar[d]^{i[n+1] \times \id_{[0,1]}}
&
U[n] \ar[d]^{i[n+1]}
\\
U[n+1] \times [0,1] \ar[r]^-{k[n+1]} 
&
U[n+1]
}
\]
In the induction beginning put $k[n_x] = h[n_x]$. 
In the induction step from $n \ge n_x$ to $n+1$, we define
$k[n+1]|_{U[n+1] \times [0,(n+2)^{-1}]}$ to be $h[n+1]|_{U[n+1] \times [0,(n+2)^{-1}]}$,
and define $k[n+1]|_{U[n+1] \times [(n+2)^{-1},1]}$ to be the composite
\begin{multline*}
U[n+1] \times [(n+2)^{-1},1] \xrightarrow{r[n+1] \times \id_{[(n+2)^{-1},1]}} U[n] \times [(n+2)^{-1},1] 
\\
\xrightarrow{k[n]|_{U[n] \times [(n+2)^{-1},1]}} 
U[n] \xrightarrow{i[n+1]} U[n+1].
\end{multline*}
The homotopies $k[n]$ for $n = n_x, n_x+1, \ldots$ fit together to a $G$-homotopy
\[
k \colon U \times [0,1] \to U
\]
with  $k_0 = \id_U$ and $k_1 = \iota \circ \rho$. To summarize, we have 
$\rho \circ \iota = \id_{Gx}$ and $k \colon \iota \circ \rho \simeq_G \id_U$, and 
the composite $\rho \circ k \colon U \times [0,1] \to Gx$ factorizes
over the projection $U \times [0,1] \to U[n]$ to $\rho$.

Let $S_x$ be the preimage of $x$ under the map $\rho \colon U \to Gx$. 
The $G$-map $G/G_x \to Gx, \; g \mapsto gx$ is obviously a 
$G$-homeomorphism.  Since the pair $(G,G_x)$ satisfies Condition (S) by assumption, 
Lemma~\ref{lem:condition_(S)_and_homeomorphisms}  implies that the map 
\[
u \colon G \times_{G_x} S_x \xrightarrow{\cong} U, \quad (g,s) \mapsto gs
\]
is a $G$-homeomorphism. The restriction of $\iota$ 
to $\{x\}$ is the inclusion $i \colon \{x\} \to S_x$.
The restriction of $\rho$ to $S_x$ is a $G_x$-homotopy inverse of $i$
since the composite $\rho \circ k \colon U \times [0,1] \to Gx$ factorizes
over the projection $U \times [0,1] \to U[n]$ to $\rho$.

Since $V$ is by definition $G \cdot V_x$ and the closure
of $U$ is contained in $V$, the closure of $S_x$ is contained in $V_x$. 

We conclude from  
Lemma~\ref{lem:properties_of_quasi-regular_open_sets}~\ref{lem:properties_of_quasi-regular_open_sets:Gamma-CW} 
that $U = \pr^{-1}(\pr(U))$ is quasi-regular and hence $U$ equipped with the subspace topology
is compactly generated. Since $S_x \subseteq U$ is closed, also $S_x$ is compactly generated.
This finishes the proof of Theorem~\ref{the:Slice_Theorem}.
\end{proof}

\renewcommand{\labelenumi}{(\roman{enumi})}


\section{Equivariant principal bundles versus equivariant $CW$-complexes}
\label{sec:Equivariant_principal_bundles_versus_equivariant_CW-complexes}

In this section we prove one of our main technical results saying that a
$\Gamma$-equivariant principal $G$-bundle $p \colon E \to B$ is the same as a $\Gamma
\times G$-$CW$-complex $E$ with a special structure of its $\Gamma \times G$-isotropy
groups provided that $\calr(p) \subseteq \calr$ holds for a given family of local
representations $\calr$ satisfying Condition (H). 

\begin{theorem} \label{the:bundles_and_Gamma_times_G-spaces} Let $\calr$ be a family of
  local representations for $(\Gamma,G)$ satisfying Condition (H) introduced in
  Definition~\ref{def:property_(H)}.

  \begin{enumerate}

  \item \label{the:bundles_and_Gamma_times_G-spaces:b_to_s} Let $p \colon E \to B$ be a
    $\Gamma$-equivariant principal $G$-bundle with $\calr(p) \subseteq \calr$ over a
    $\Gamma$-$CW$-complex $B$.  Then $E_l$ is a $\Gamma \times G$-$CW$-complex whose
    isotropy groups belong to the family $\calf(\calr)$ introduced in Lemma~\ref{lem:family_associated_to_calr};

  \item \label{the:bundles_and_Gamma_times_G-spaces:s_to_b} Let $E$ be a left $\Gamma
    \times G$-$CW$-complex whose isotropy groups belong to $\calf(\calr)$.  Then $p \colon
    E_r\to E_r/G$ is a $\Gamma$-equivariant principal $G$-bundle with $\calr(p) \subseteq \calr$.

  \end{enumerate}
\end{theorem}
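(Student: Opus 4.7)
The plan is to handle the two directions by exploiting the classification of equivariant bundles over equivariant cells obtained in Section~\ref{sec:Equivariant_principal_bundles_over_equivariant_cells}, so that in both directions the cells appear as the standard objects $(\Gamma \times G)/K(H,\alpha) \times D^n$ of type $K(H,\alpha) \in \calf(\calr)$.

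For (i), I would induct on the skeletal filtration $\emptyset = B_{-1} \subseteq B_0 \subseteq B_1 \subseteq \cdots$ of $B$ and simultaneously construct a $\Gamma \times G$-CW-structure on $E_l$. At stage $n$, $B_n$ is obtained from $B_{n-1}$ by a $\Gamma$-pushout along $\coprod_i \Gamma/H_i \times S^{n-1} \hookrightarrow \coprod_i \Gamma/H_i \times D^n$. Pulling $p$ back to $\Gamma/H_i \times D^n$ and invoking Lemma~\ref{lem_property_(H)_and_bundles_over_Gamma/H_times_Z} with the contractible space $Z = D^n$, the pullback is isomorphic to the standard bundle $p_{\alpha_i}$ for some $(H_i,\alpha_i) \in \calr$. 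Its total space $(\Gamma \times_{H_i} G) \times D^n$ admits a canonical $\Gamma \times G$-equivariant homeomorphism to $(\Gamma \times G)/K(H_i,\alpha_i) \times D^n$, exhibiting it as a $\Gamma \times G$-cell whose isotropy lies in $\calf(\calr)$. The same identification applied over $\Gamma/H_i \times S^{n-1}$ supplies compatible attaching maps, so that the corresponding $\Gamma \times G$-pushouts assemble $p^{-1}(B_n)_l$ into the required CW-complex.

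For (ii), I first observe that the right $G$-action on $E_r$ is free: if $e$ has $\Gamma \times G$-isotropy $K(H,\alpha)$ and $e \cdot g = (1,g^{-1}) \cdot e = e$, then $(1,g^{-1}) \in K(H,\alpha)$ forces $g = 1$. The $\Gamma \times G$-CW-structure on $E$ descends to a $\Gamma$-CW-structure on $B := E_r/G$ because each cell $(\Gamma \times G)/K(H,\alpha) \times D^n$ maps onto $\Gamma/H \times D^n$ under the $G$-quotient, using that $K(H,\alpha) \cdot G = H \times G$. To obtain local triviality I apply the Slice Theorem~\ref{the:Slice_Theorem} to $E$ regarded as a $\Gamma \times G$-CW-complex, which first demands Condition (S) for the pair $(\Gamma \times G, K(H,\alpha))$. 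This is an explicit consequence of Condition (H): a local section $s$ of $\Gamma \to \Gamma/H$ yields the local section
\[
[(\gamma,g)] \;\mapsto\; \bigl(s([\gamma H]),\, g \cdot \alpha(s([\gamma H])^{-1} \gamma)^{-1}\bigr)
\]
of $\Gamma \times G \to (\Gamma \times G)/K(H,\alpha)$. The Slice Theorem then provides, around each $e$ with isotropy $K(H,\alpha)$, an open $\Gamma \times G$-neighborhood of the form $(\Gamma \times G) \times_{K(H,\alpha)} S_e$; taking $G$-quotients produces a $\Gamma$-invariant open neighborhood $\Gamma \times_H S_e$ of $p(e)$ in $B$ over which $p$ restricts to the bundle $p_\alpha$ of Lemma~\ref{lem:equivariant_principal_G-bundles_over_Gamma/H_times_Z}\ref{lem:equivariant_principal_G-bundles_over_Gamma/H_times_Z:well-defined}. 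That lemma furnishes the $G$-trivialization over a quasi-regular open set, and a direct computation at $e$ using $(h,\alpha(h)) \cdot e = e$ gives $\Gamma_{p(e)} = H$ and $\rho_e = \alpha$, so $\calr(p) \subseteq \calr$.

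The main obstacle is not geometric but organizational: in (i) one has to arrange that the trivializations chosen over different cells glue along their common boundaries, and in (ii) that the Slice-theoretic neighborhoods really match the standard model $p_\alpha$. In both cases, Condition (H) is the tool that makes this possible --- via Lemma~\ref{lem_property_(H)_and_bundles_over_Gamma/H_times_Z} it forces every bundle over $\Gamma/H \times D^n$ into the form $p_\alpha$ with $(H,\alpha) \in \calr$, and the closure properties of $\calr$ encoded in Lemma~\ref{lem:family_associated_to_calr} guarantee that $\calf(\calr)$ behaves well under the conjugations that appear when changing local trivializations.
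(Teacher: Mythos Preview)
Your approach is essentially the paper's: for (i) you pull back the $\Gamma$-pushout describing $B_n$ to a $\Gamma\times G$-pushout on $E$ and use Lemma~\ref{lem_property_(H)_and_bundles_over_Gamma/H_times_Z} to identify each pulled-back cell with $(\Gamma\times G)/K(H_i,\alpha_i)\times D^n$; for (ii) you apply the Slice Theorem at $e$, quotient by $G$, and read off a local model. Two remarks. First, your explicit derivation of Condition~(S) for $(\Gamma\times G, K(H,\alpha))$ from Condition~(S) for $(\Gamma,H)$ is a point the paper uses silently when invoking Theorem~\ref{the:Slice_Theorem}; it is good that you wrote it down. Second, there is a small imprecision in your appeal to Lemma~\ref{lem:equivariant_principal_G-bundles_over_Gamma/H_times_Z}\ref{lem:equivariant_principal_G-bundles_over_Gamma/H_times_Z:well-defined} at the end of (ii): that lemma treats $p_\sigma\colon \Gamma\times_H(Z\times G)\to \Gamma/H\times Z$ where $H$ acts \emph{trivially} on $Z$, whereas the slice $S_e$ carries a genuine $K(H,\alpha)\cong H$-action, so the base of the local model is $\Gamma\times_H S_e$, not $\Gamma/H\times S_e$. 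You therefore cannot cite the lemma verbatim; you must redo the local-triviality argument with the $H$-action $h\cdot(s,g)=(h\cdot s,\alpha(h)g)$ on $S_e\times G$, exactly as the paper does with the maps $\mu,\nu$ in its proof. The construction is the same formula, so this is a patch rather than a gap, but as written your citation does not apply.
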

\begin{proof}~\ref{the:bundles_and_Gamma_times_G-spaces:b_to_s} Let $B_n$ be the
  $n$-skeleton of the $\Gamma$-$CW$-complex structure on $B$.  Put $E_n := p^{-1}(B_n)$.
  Let $p_n \colon E_n \to B_n$ be the $\Gamma$-equivariant principal $G$-bundle obtained
  by restricting $p$ to $B_n$. Next we will show that the filtration of $E$ by the $E_n$-s
  induces the structure of a $\Gamma\times G$-$CW$-structure on $E_l$ whose isotropy
  groups belong to $\calf(\calr)$.

  Fix $n \ge 0$. Since $B$ is a $\Gamma$-$CW$-complex, there exists a $\Gamma$-pushout
  \[
  \xymatrix@!C=7em{ \coprod_{i \in I} \Gamma/H_i \times S^{n-1} \ar[r]^-{\coprod_{i \in I} q_i}
    \ar[d]_j & B_{n-1} \ar[d]^{J}
    \\
    \coprod_{i \in I} \Gamma/H_i \times D^n \ar[r]_-{\coprod_{i \in I} Q_i} & B_n }
  \]
  where $j$ and $J$ are the inclusions. Consider the square obtained by the pullback
  construction applied to $p_n \colon E_n \to B_n$.
  \[
  \xymatrix@!C=8em{\coprod_{i \in I} q_i^*J^*E_n = j^*Q_i^*E_n \ar[r]^-{\coprod_{i \in I}
      \overline{q_i}} \ar[d]_{\overline{j}} & J^*E_n \ar[d]^{\overline{J}}
    \\
    \coprod_{i \in I} Q_i^*E_n \ar[r]_-{\coprod_{i \in I} \overline{Q_i}} & E_n }
  \]
  This is a diagram of $\Gamma \times G$-spaces, the various left $\Gamma \times
  G$-actions come from the left $\Gamma \times G$-action on $E_l$.  It is a  $\Gamma \times G$-pushout
  by Lemma~\ref{lem_pushouts-pullbacks}~\ref{lem_pushouts-pullbacks:equivariant_versus_non-equivariant}
  and~\ref{lem_pushouts-pullbacks:pullbacks}. 

  We conclude from Remark~\ref{rem:local_representations} that $\calr(Q_i^*E_n) \subseteq
  \calr$ holds.  Lemma~\ref{lem_property_(H)_and_bundles_over_Gamma/H_times_Z} implies
  that the $\Gamma$-equivariant principal $G$-bundle $Q_i^*E_n$ is isomorphic to the
  $\Gamma$-equivariant principal $G$-bundle 
   $p_{\alpha} \colon (\Gamma \times_H  G) \times D^n \to \Gamma/H \times D^n$ for some element 
   $(H,\alpha) \in \calr$. Hence there exists
  a $\Gamma \times G$-homeomorphism of left $\Gamma \times G$-pairs
  \begin{eqnarray}
    (\Gamma \times G)/H_i' \times (D^n,S^{n-1}) 
    &\xrightarrow{\cong} & 
    \bigl((Q_i^*E_n)_l,(q_i^*J^*E_n)_l\bigr)
    \label{desired_Gamma_times_G-homeo}
  \end{eqnarray}
  for an appropriate subgroup $H_i'\subseteq \Gamma \times G$ belonging to $\calf(\calr)$.

  It remains to show that $E$ has the weak topology with respect to the filtration given
  by the $E_n$-s. This follows from
  Lemma~\ref{lem:filtrations}~\ref{lem:filtrations:equivariant_versu_non_equivariant} 
  and~\ref{lem:filtrations:preimages}. This finishes the
  proof of assertion~\ref{the:bundles_and_Gamma_times_G-spaces:b_to_s}.
  \\[2mm]~\ref{the:bundles_and_Gamma_times_G-spaces:s_to_b} 
   Let $E$ be a left $\Gamma
  \times G$-$CW$-complex whose isotropy groups belong to $\calf(\calr)$.  Firstly, we show
  that $E/G$ is a $\Gamma$-$CW$-complex.  Consider $e \in E$.
  There exists $(H,\alpha) \in \calr$ such that 
  $(\Gamma \times G)_e = K(H,\alpha) = \{(h,\alpha(h) \mid h \in H\}$.
  Hence the image of $(\Gamma \times G)_e$ under the projection $\Gamma \times G \to \Gamma$ is $H$ and hence closed.
  Lemma~\ref{lem:quotient_of_CW-complexes} implies that $E/G$ is a $\Gamma$-$CW$-complex.

  Next we show that $p \colon E_r \to E/G$ is a principal $G$-bundle. Consider $b \in E/G$.  Choose $e
  \in E$ with $p(e) = b$.  From the Slice Theorem~\ref{the:Slice_Theorem}  we obtain a 
  $(\Gamma \times G)_e$-invariant (compactly generated) subspace $S$ of $E$ containing $e$ such that the map
  \[
  f\colon (\Gamma \times G) \times_{(\Gamma \times G)_e} S \to (\Gamma \times G) \cdot S,
  \quad \bigl((\gamma,g),s\bigr) \mapsto (\gamma,g) \cdot s
  \]
  is a $(\Gamma \times G)$-homeomorphism and $(\Gamma \times G) \cdot S$ is an open
  $\Gamma \times G$-invariant (compactly generated) subset of $E$. Put $V := p((\Gamma \times G) \cdot S)$. 
  Choose $(H,\alpha) \in \calr$ such that
  \[
  (\Gamma \times G)_e = K(H,\alpha) = \{(h,\alpha(h) \mid h \in H\}.
  \]
  We equip $S$ with the left $H$-action given by $h \cdot s := (h,\alpha(h)) \cdot s$.  We
  obtain a $\Gamma$-map
  \[
  u \colon \Gamma \times_H S \to V, \quad (\gamma, s) \to p \circ
  f\bigl((\gamma,e),s\bigr).
  \]
  Define the $\Gamma \times G$-map
  \[q \colon (\Gamma \times G) \times_{(\Gamma \times G)_e} S \to \Gamma \times_H S, \quad
  \bigl((\gamma,g),s\bigr) \mapsto (\gamma, s),
  \]
  where $G$ acts trivially on the target.  Then $V$ is an open $\Gamma$-invariant
  neighborhood of $b$ in $E/G$ which is quasi-regular by 
   Lemma~\ref{lem:properties_of_quasi-regular_open_sets}~\ref{lem:properties_of_quasi-regular_open_sets:Gamma-CW}.
   Moreover, the following diagram of right $G$-spaces
   commutes, where $G$ acts trivial on $\Gamma \times _H S$ and $V$, has a
  $G$-homeomorphism as upper horizontal map and identifications as vertical maps
  \begin{eqnarray}
    & 
    \xymatrix{
      \left((\Gamma \times G) \times_{(\Gamma \times G)_e} S\right)_r  \ar[r]^-{f}_-{\cong} \ar[d]_{q}
      &
      p^{-1}(V)   \ar[d]^{p|_{p^{-1}(V)}}
      \\
      \Gamma \times _H S \ar[r]^-{u}_-{\cong} 
      &
      V 
    }
    &
    \label{diagram_for_local_object_for_Gamma_times_G}
  \end{eqnarray}
  Since $u$ is a bijective identification, it is a homeomorphisms. Hence it suffices to
  show that $q$ is a principal $G$-bundle.
  
  Equip $S \times G$ with the left $H$-action given by $h \cdot (s,g) := \bigl(h \cdot s,
  \alpha(h) \cdot g\bigr)$.  Then
  \[
  p_{\alpha} \colon \Gamma \times_H (S \times G) \to \Gamma \times _H S, \quad
  \bigl(\gamma, (s,g)\bigr) \mapsto (\gamma,s)
  \]
  is a well-defined map which is compatible with the obvious left $\Gamma$-actions on the
  source and the target and with the obvious right $G$-action on the source and the
  trivial right $G$-action on the target.  Define a map
  \[
  \xi \colon \Gamma \times _H (S \times G) \xrightarrow{\cong} \left((\Gamma \times G)
    \times_{(\Gamma \times G)_e} S\right)_r, \quad \bigl(\gamma, (s,g)\bigr) \mapsto
  \bigl((\gamma,g^{-1}),s\bigr).
  \]
  It is well-defined by the following calculation for $\gamma \in \Gamma$, $s \in S$ and
  $g \in G$
  \begin{eqnarray*}
    \bigl((\gamma h^{-1} , (\alpha(h) \cdot g)^{-1}), h \cdot s\bigr)
    & = & 
    \bigl((\gamma \cdot h^{-1} , g^{-1} \alpha(h)^{-1}),  h\cdot s\bigr)
    \\
    & = & 
    \bigl((\gamma, g^{-1}) \cdot (h,\alpha(h))^{-1}, (h,\alpha(h)) \cdot s\bigr)
    \\
    & = & 
    \bigl((\gamma,g^{-1}),s\bigr).
  \end{eqnarray*}
  The map $\xi$ is a homeomorphisms, an inverse is given by $\bigl((\gamma,g),s\bigr)
  \mapsto \bigl(\gamma, (s,g^{-1})\bigr)$. It is compatible with the left $\Gamma$-actions
  and the right $G$-actions. We obtain a commutative diagram
  of spaces with left $\Gamma$-actions and right $G$-actions, where the spaces in the lower
  left and lower right corner carry trivial $G$-actions
  \begin{eqnarray}
    & 
    \xymatrix{
      \Gamma \times _H (S \times G)  \ar[r]^-{\xi}_-{\cong} \ar[d]_{p_{\alpha}}
      &
      \left((\Gamma \times G)  \times_{(\Gamma \times G)_e} S\right)_r   \ar[d]^{u \circ q}
      \\
      \Gamma \times _H S \ar[r]^-{u}_-{\cong} 
      &
      V 
    }
    &
    \label{diagram_for_local_object}
  \end{eqnarray}
  Hence it remains to show that 
  $p_{\alpha} \colon \Gamma \times_H (S \times G) \to \Gamma/H \times S$ 
  is a principal $G$-bundle after forgetting the $\Gamma$-action.

  The Condition (S) ensures that there is an open quasi-regular 
neighborhood $U \subseteq \Gamma/H$ of
  $1H$ and a map $s \colon U \to \Gamma$ whose composite with the projection $\pr \colon
  \Gamma \to \Gamma/H$ is the identity.  Define $\overline{s} \colon \pr^{-1}(U) \to H$ by
  $\overline{s}(\gamma) = s \circ \pr(\gamma)^{-1} \cdot\gamma$.  It has the property
  $\overline{s}(\gamma \cdot h) = \overline{s}(\gamma) \cdot h$ for all $\gamma \in
  p^{-1}(U)$.  Define maps
  \begin{eqnarray*}
    \mu \colon (\pr^{-1}(U) \times_H S) \times G 
    & \to &
    \pr^{-1}(U) \times_H(S \times G);
    \\
    \nu \colon \pr^{-1}(U) \times_H(S \times G)
    & \to &
    (\pr^{-1}(U) \times_H S) \times G, 
  \end{eqnarray*}
  by
  \begin{eqnarray*}
    \mu \bigl((\gamma,s),g\bigr)  
    & := &
    \bigl(\gamma, (s, \alpha(\gamma)^{-1} \cdot g)\bigr);
    \\
    \nu \bigl(\gamma,(s,g)\bigr)
    & := &
    \bigl((\gamma,s), \alpha(\gamma) \cdot g\bigr).
  \end{eqnarray*}
  Then $\mu$ and $\nu$ are to one another inverse $G$-homeomorphism. They induce a
  trivialization of $p_{\alpha}$ restricted to $\pr^{-1}(U) \times_H S$ to the trivial
  principal $G$-bundle over $\pr^{-1}(U) \times_H S$.

  Since $\gamma H \in \Gamma/H$ is contained in the open subset $\gamma \cdot U$ and
  $p_{\sigma}$ is a $\Gamma$-equivariant map, we conclude that $p_{\alpha}$ is locally
  trivial and hence a principal $G$-bundle.

  This finishes the proof that $p \colon E_r \to E/G$ is a principal $G$-bundle.  Since
  $p$ is obviously $\Gamma$-equivariant, $p$ is a $\Gamma$-equivariant principal
  $G$-bundle.  It remains to prove $\calr'(p) \subseteq \calr$ since then $\calr(p)
  \subseteq \calr$ holds.

  Consider $e \in E$. Its isotropy subgroup $(\Gamma \times G)_e$ belongs to $\calf(\calr)$ by
  assumption.  Hence there exists $(H,\alpha) \in \calr$ with 
  $(\Gamma \times G)_e =   K(H,\alpha) := \{(h,\alpha(h) \mid h \in H\}$.  
  This implies that $\Gamma_{p(e)} =  H$. We conclude that $\rho_e \colon H \to G$ is given by 
  $\alpha$ since  for $h \in H$ we have 
  $e   = (h,\alpha(h))^{-1}  \cdot e  = (h^{-1},1) \cdot (1,\alpha(h^{-1})) \cdot e$ in the 
  $\Gamma \times G$-space $E$ and hence we get $h \cdot e = e \cdot \alpha(h)$ in $E_r$.  This
  finishes the proof of Theorem~\ref{the:bundles_and_Gamma_times_G-spaces}.

\end{proof}


\section{Local structure}
\label{sec:Local_structure}

Next we deal with the structure of a $\Gamma$-equivariant principal $G$-bundles on small
open $\Gamma_b$-invariant neighborhoods of points $b$ in the base space.

\begin{theorem}[Local structure] \label{the:local_objects} Let $\calr$ be a family of
  local representations for $(\Gamma,G)$ satisfying Condition (H) introduced in
  Definition~\ref{def:property_(H)}.  Let $p \colon E \to B$ be a $\Gamma$-equivariant
  principal $G$-bundle with $\calr(p) \subseteq \calr$.  Consider any point $b \in B$ and
  any $\Gamma_b$-invariant open neighborhood $W$ of $b \in B$.

  Then for every $b$ in $B$ the exists a commutative diagram
  \[
  \xymatrix{ \Gamma \times_{\Gamma_b} (T \times G)\ar[r]^-{f}_-{\cong} \ar[d]_{q} &
    p^{-1}(V) \ar[d]^{p|_{p^{-1}(V)}}
    \\
    \Gamma \times _{\Gamma_b} T \ar[r]^-{u}_-{\cong} & V }
  \]
  with the following properties:
  \begin{enumerate}

  \item The subset $T \subseteq B$ contains $b$, satisfies $T \subseteq W$, is
    $\Gamma_b$-invariant and $\Gamma_b$-contractible;

  \item The subset $V \subseteq B$ is an open $\Gamma$-invariant neighborhood of $b$;
     
  \item The group $\Gamma_b$ acts from the right on $T \times G$ by
    \[
    \gamma \cdot (t,g) := \bigl(\gamma \cdot t,\rho_e(\gamma) \cdot g\bigr),
    \]
    where $(\Gamma_e,\rho_e)$ is the local representation of $p$ associated to a fixed
    element $e \in E$ with $p(e) = b$, see~\eqref{rho_e};
  \item The upper vertical map is a homeomorphism compatible with the left
    $\Gamma$-actions and the right $G$-actions, which at the source is given by
    \[
    \gamma' \cdot \bigl( (\gamma,t),g\bigr) \cdot g' = \bigl( (\gamma'\gamma,t),gg'\bigr);
    \]

  \item The lower horizontal arrow is a homeomorphism compatible with the left
    $\Gamma$-actions, and $q$ sends $\bigl( (\gamma,t),g\bigr)$ to $(\gamma,t)$.
  \end{enumerate}
\end{theorem}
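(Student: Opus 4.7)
The plan is to deduce this from the Slice Theorem~\ref{the:Slice_Theorem} via the correspondence between $\Gamma$-equivariant principal $G$-bundles and $\Gamma \times G$-$CW$-complexes established in Theorem~\ref{the:bundles_and_Gamma_times_G-spaces}. By Theorem~\ref{the:bundles_and_Gamma_times_G-spaces}~\ref{the:bundles_and_Gamma_times_G-spaces:b_to_s}, the total space $E_l$ carries the structure of a $\Gamma \times G$-$CW$-complex whose isotropy groups belong to $\calf(\calr)$. Choose a lift $e \in p^{-1}(b)$ and set $H := \Gamma_b$ and $\alpha := \rho_e$, so that $(\Gamma \times G)_e = K(H, \alpha)$. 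Since $W$ is $\Gamma_b$-invariant and $G$ acts trivially on $B$, the open subset $p^{-1}(W) \subseteq E$ is $\Gamma_b \times G$-invariant, and in particular $(\Gamma \times G)_e$-invariant.

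I would then apply the Slice Theorem~\ref{the:Slice_Theorem} to $E_l$ at the point $e$ with the $(\Gamma \times G)_e$-invariant neighborhood $p^{-1}(W)$. The required Condition~(S) for the pair $(\Gamma \times G, K(H,\alpha))$ follows under Condition~(H) from Lemma~\ref{lem:condition_(S)}~\ref{lem:condition_(S):Gamma} applied to $\Gamma \times G$, which inherits local compactness, second countability, and finite covering dimension. The Slice Theorem then produces a $(\Gamma \times G)_e$-invariant subset $S$ of $E$ containing $e$ with $\overline{S} \subseteq p^{-1}(W)$, such that the inclusion $\{e\} \hookrightarrow S$ is a $(\Gamma \times G)_e$-homotopy equivalence, $(\Gamma \times G) \cdot S$ is a quasi-regular open subset of $E$, and the canonical map $(\Gamma \times G) \times_{K(H,\alpha)} S \to (\Gamma \times G) \cdot S$ is a $\Gamma \times G$-homeomorphism.

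I then set $T := p(S) \subseteq B$ and $V := p((\Gamma \times G) \cdot S) = \Gamma \cdot T$, and equip $S$ with the left $\Gamma_b$-action $h \cdot s := (h, \alpha(h)) \cdot s$. The crucial step is to show that $p|_S \colon S \to T$ is a $\Gamma_b$-equivariant homeomorphism. For injectivity, if $p(s_1) = p(s_2)$ with $s_1, s_2 \in S$, then $s_1 = (1, g^{-1}) \cdot s_2$ in $E_l$ for some $g \in G$; the slice decomposition forces $(1, g^{-1}) = (h, \alpha(h))^{-1}$ for some $h \in H$, whence $h = g = 1$ and $s_1 = s_2$. Continuity of the inverse is obtained by going through the $\Gamma$-homeomorphism $u \colon \Gamma \times_H S \xrightarrow{\cong} V$ constructed in the proof of Theorem~\ref{the:bundles_and_Gamma_times_G-spaces}~\ref{the:bundles_and_Gamma_times_G-spaces:s_to_b}: by Lemma~\ref{lem:condition_(S)_and_homeomorphisms} the fiber of $\Gamma \times_H S \to \Gamma/H$ over $1H$ is canonically homeomorphic to $S$, and $u$ identifies this fiber with $T \subseteq V$, so that $p|_S$ is the composite of two homeomorphisms. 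The remaining properties now transfer from those of $S$: $T \subseteq W$ from $\overline{S} \subseteq p^{-1}(W)$; $T$ is $\Gamma_b$-invariant because $p$ is $\Gamma$-equivariant and $G$ acts trivially on $B$; and $T$ is $\Gamma_b$-contractible because $\{e\} \hookrightarrow S$ is a $(\Gamma \times G)_e$-homotopy equivalence and projection onto the first factor is a topological group isomorphism $K(H,\alpha) \xrightarrow{\cong} H$.

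The commutative diagram in the conclusion is then obtained by transporting diagram~(\ref{diagram_for_local_object_for_Gamma_times_G}) from the proof of Theorem~\ref{the:bundles_and_Gamma_times_G-spaces}~\ref{the:bundles_and_Gamma_times_G-spaces:s_to_b} through the identification $p|_S \colon S \xrightarrow{\cong} T$: the lower horizontal map becomes $u \colon \Gamma \times_{\Gamma_b} T \to V$, $[\gamma, t] \mapsto \gamma \cdot t$, while the upper horizontal map becomes $f \colon \Gamma \times_{\Gamma_b}(T \times G) \to p^{-1}(V)$, $[(\gamma, t), g] \mapsto (\gamma, g) \cdot (p|_S)^{-1}(t)$. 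I expect the main obstacle to be the verification that $p|_S$ is a topological embedding; injectivity is immediate from the slice decomposition, but continuity of its inverse is the one genuinely new ingredient beyond the argument already carried out in Theorem~\ref{the:bundles_and_Gamma_times_G-spaces}~\ref{the:bundles_and_Gamma_times_G-spaces:s_to_b}, and relies on the local cross section supplied by Condition~(S) for $(\Gamma, H)$ together with Lemma~\ref{lem:condition_(S)_and_homeomorphisms}.
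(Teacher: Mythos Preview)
Your approach is essentially the paper's: use Theorem~\ref{the:bundles_and_Gamma_times_G-spaces}\ref{the:bundles_and_Gamma_times_G-spaces:b_to_s} to view $E_l$ as a $\Gamma\times G$-$CW$-complex, apply the Slice Theorem at a lift $e$ of $b$, and then transport the resulting diagram through $p|_S\colon S\xrightarrow{\cong}T$. The paper is terser---it simply says to combine diagrams~\eqref{diagram_for_local_object_for_Gamma_times_G} and~\eqref{diagram_for_local_object} and replace $S$ by $T$---whereas you spell out why $p|_S$ is a homeomorphism.

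One point needs correction. Your justification that $(\Gamma\times G,\,K(H,\alpha))$ satisfies Condition~(S) is not valid as written: Condition~(H) does \emph{not} assume that $\Gamma$ and $G$ are locally compact, second countable, and of finite covering dimension---that is only the sufficient criterion of Theorem~\ref{the:criterion_for_(H)}---so you may not invoke Lemma~\ref{lem:condition_(S)}\ref{lem:condition_(S):Gamma} for $\Gamma\times G$. The correct argument uses only Condition~(H)\ref{def:property_(H):S_for_Gamma}, i.e.\ Condition~(S) for $(\Gamma,H)$: a local section $s\colon U\to\Gamma$ of $\Gamma\to\Gamma/H$ gives rise to a local section of $\Gamma\times G\to(\Gamma\times G)/K(H,\alpha)$, since the latter quotient is locally identified with $(\Gamma/H)\times G$ by the maps $\mu,\nu$ constructed (for exactly this purpose) at the end of the proof of Theorem~\ref{the:bundles_and_Gamma_times_G-spaces}\ref{the:bundles_and_Gamma_times_G-spaces:s_to_b} and in Lemma~\ref{lem:equivariant_principal_G-bundles_over_Gamma/H_times_Z}\ref{lem:equivariant_principal_G-bundles_over_Gamma/H_times_Z:well-defined}. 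With this fix the rest of your argument goes through unchanged.
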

\begin{proof}
  Because of
  Theorem~\ref{the:bundles_and_Gamma_times_G-spaces}~\ref{the:bundles_and_Gamma_times_G-spaces:b_to_s}
  we can interpret $E_l$ as a $\Gamma \times G$-$CW$-complex.  Then the claim follows
  from the Slice Theorem~\ref{the:Slice_Theorem} and the proof of
   Theorem~\ref{the:bundles_and_Gamma_times_G-spaces}~\ref{the:bundles_and_Gamma_times_G-spaces:s_to_b}.
   More precisely, the desired diagram is the
  diagram of left $\Gamma \times G$-spaces coming from combining
  the diagrams~\eqref{diagram_for_local_object_for_Gamma_times_G} and~\eqref{diagram_for_local_object},
  if we replace $S$ by its image $T$ under $p \colon E \to B$. This is possible since the
  projection $\Gamma \times G \to \Gamma$ induces an isomorphism $\psi_e \colon (\Gamma
  \times G)_{e} \xrightarrow{\cong} \Gamma_b$ and $p|_S \colon S \to T$ is a
  $\psi_e$-homeomorphism.  
\end{proof}


\section{Homotopy invariance}
\label{sec:Homotopy_invariance}

Next we show that the pullback of a $\Gamma$-equivariant principal $G$-bundle with
$\Gamma$-homotopic maps yields isomorphic $\Gamma$-equivariant principal $G$-bundles.

\begin{theorem}[Homotopy invariance] \label{the:homotopy_invariance} Let $\calr$ be a
  family of local representations for $(\Gamma,G)$ satisfying Condition (H) introduced in
  Definition~\ref{def:property_(H)}. Let $B$ be a $\Gamma$-$CW$-complex and let $p \colon
  E \to B \times [0,1]$ be a $\Gamma$-equivariant principal $G$-bundle with $\calr(p)
  \subseteq \calr$.  Let $i_0 \colon B = B \times \{0\} \to B \times [0,1]$ be the
  inclusion.

  Then $i_0^*E \times [0,1] \xrightarrow{i_0^*p \times \id_{[0,1]}} B \times [0,1]$ is a
  $\Gamma$-equivariant principal $G$-bundle and there exists an isomorphism of
  $\Gamma$-equivariant principal $G$-bundles
  \[
  f \colon i_0^*E \times [0,1] \to E
  \]
  over $B \times [0,1]$ whose restriction to $B \times \{0\}$ is the identity.
\end{theorem}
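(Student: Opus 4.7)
The plan is to reduce the theorem to the classical non-equivariant homotopy invariance (Lemma~\ref{lem:homotopy_invariance_for_trivial_Gamma}) via the equivalence provided by Theorem~\ref{the:bundles_and_Gamma_times_G-spaces}, and then perform a cell-by-cell extension over the equivariant skeleta $B_n$ of $B$ using Lemma~\ref{lem_property_(H)_and_bundles_over_Gamma/H_times_Z}.

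First I would dispose of the easy half. By Theorem~\ref{the:bundles_and_Gamma_times_G-spaces}~\ref{the:bundles_and_Gamma_times_G-spaces:b_to_s}, the pullback bundle $i_0^*p \colon i_0^*E \to B$ corresponds to the $\Gamma \times G$-$CW$-complex $(i_0^*E)_l$ whose isotropy groups lie in $\calf(\calr)$. Forming the product with $[0,1]$ carrying trivial $G$-action yields a new $\Gamma \times G$-$CW$-complex $(i_0^*E)_l \times [0,1]$ with the same isotropy groups, so Theorem~\ref{the:bundles_and_Gamma_times_G-spaces}~\ref{the:bundles_and_Gamma_times_G-spaces:s_to_b} identifies it with a $\Gamma$-equivariant principal $G$-bundle over $B \times [0,1]$ whose local representations lie in $\calr$.

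For the isomorphism $f$, I would proceed by induction on $n$, constructing isomorphisms $f_n \colon i_0^*E|_{B_n} \times [0,1] \to E|_{B_n \times [0,1]}$ of $\Gamma$-equivariant principal $G$-bundles that restrict to the canonical identification on $B_n \times \{0\}$. The base case $B_{-1} = \emptyset$ is trivial. For the inductive step, fix an equivariant $n$-cell $\Gamma/H \times D^n$ attached via $q \colon \Gamma/H \times S^{n-1} \to B_{n-1}$. Pulling both bundles back to $\Gamma/H \times D^n \times [0,1]$, the contractibility of $D^n \times [0,1]$ together with Lemma~\ref{lem_property_(H)_and_bundles_over_Gamma/H_times_Z} supplies an $(H,\alpha) \in \calr$ and isomorphisms of each restricted bundle with the standard bundle $p_{\alpha}$ over $\Gamma/H \times D^n \times [0,1]$. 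Using Lemma~\ref{lem:equivariant_principal_G-bundles_over_Gamma/H_times_Z}~\ref{lem:equivariant_principal_G-bundles_over_Gamma/H_times_Z:local_system} and Remark~\ref{rem:local_representations}, one checks that the same $\alpha$ can be chosen for both bundles since the two agree at $B \times \{0\}$.

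With both bundles written as $p_{\alpha}$, Lemma~\ref{lem:equivariant_principal_G-bundles_over_Gamma/H_times_Z}~\ref{lem:equivariant_principal_G-bundles_over_Gamma/H_times_Z:uniqueness} identifies the already given partial isomorphism on $\Gamma/H \times Y$, where $Y := (S^{n-1} \times [0,1]) \cup (D^n \times \{0\})$, with a continuous map $\omega \colon Y \to C_G(\alpha)$ (the centralizer appears because $\sigma_0 = \sigma_1 = \alpha$ is constant). Since $Y$ is a strong deformation retract of $D^n \times [0,1]$, the map $\omega$ extends to $\widetilde{\omega} \colon D^n \times [0,1] \to C_G(\alpha)$, and running Lemma~\ref{lem:equivariant_principal_G-bundles_over_Gamma/H_times_Z}~\ref{lem:equivariant_principal_G-bundles_over_Gamma/H_times_Z:uniqueness} in reverse produces the desired bundle automorphism over this cell. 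Distinct cells are independent, so the extensions glue to yield $f_n$; passing to the colimit gives $f$, and Lemma~\ref{lem_maps_are_isos} certifies that the resulting bundle map is an isomorphism. The main obstacle will be to arrange the trivializations supplied by Lemma~\ref{lem_property_(H)_and_bundles_over_Gamma/H_times_Z} on each cell so that they are simultaneously compatible with the $f_{n-1}$-data on $\Gamma/H \times S^{n-1} \times [0,1]$ and with the identity on $\Gamma/H \times D^n \times \{0\}$, thereby placing the comparison cocycle in the centralizer $C_G(\alpha)$ rather than in all of $G$; this is precisely what makes the strong deformation retraction of $D^n \times [0,1]$ onto $Y$ the essential topological input.
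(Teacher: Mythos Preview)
Your approach is correct and essentially the same as the paper's: both perform cell-by-cell induction over the skeleta of $B$, trivialize the two bundles over each equivariant cell via Lemma~\ref{lem_property_(H)_and_bundles_over_Gamma/H_times_Z}, and extend the comparison automorphism from $Y = (S^{n-1} \times [0,1]) \cup (D^n \times \{0\})$ to $D^n \times [0,1]$ using the retraction $D^n \times [0,1] \to Y$. The only cosmetic difference is that the paper records the target of the extension problem as $\map_{\Gamma \times G}\bigl((\Gamma \times G)/K(H,\alpha),(\Gamma \times G)/K(H,\alpha)\bigr)$ rather than $C_G(\alpha)$; these agree on the relevant subspace of maps covering the identity on $\Gamma/H$, and in particular your ``main obstacle'' dissolves, since once both bundles are written as $p_{\alpha}$ the proof of Lemma~\ref{lem:equivariant_principal_G-bundles_over_Gamma/H_times_Z}~\ref{lem:equivariant_principal_G-bundles_over_Gamma/H_times_Z:uniqueness} forces any bundle automorphism over $\Gamma/H \times Y$ to come from a map $\omega \colon Y \to C_G(\alpha)$, independently of how the trivializations were chosen.
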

\begin{proof}
  Let $p_n \colon E_n \to B_n$ be the restriction of $i_0^*E$ to the $n$-skeleton $B_n$ of
  $B$.  We will construct inductively over $n$ an isomorphism of $\Gamma$-equivariant
  principal $G$-bundles
  \[f_n \colon E_n \times [0,1] \xrightarrow{\cong} E|_{B_n \times [0,1]}
  \]
  such that the restriction of $f_n$ to $B_n \times \{0\}$ is the identity and the
  restriction of $f_n$ to $B_{n-1} \times [0,1]$ is $f_{n-1}$. Then we can define the
  desired isomorphism $f$ by requiring that $f|_{B_n \times [0,1]} = f_n$, since $i_0^*E$
  has the weak topology with respect to the filtration by the $E_n$-s
  by Lemma~\ref{lem:filtrations}~\ref{lem:filtrations:equivariant_versu_non_equivariant}  
  and~\ref{lem:filtrations:preimages}. 

  The induction beginning $n = -1$ is trivial, the induction step from $(n-1)$ to $n$ done
  as follows.  Choose a $\Gamma$-pushout
  \[
  \xymatrix{\coprod_{i \in I} \Gamma/H_i \times S^{n-1} \ar[r]^-{\coprod_{i \in I} q_i}
    \ar[d] & B_{n-1} \ar[d]
    \\
    \coprod_{i \in I} \Gamma/H_i \times D^n \ar[r]^-{\coprod_{i \in I} Q_i} & B_n }
  \]
  From Lemma~\ref{lem_pushouts-pullbacks}~\ref{lem_pushouts-pullbacks:equivariant_versus_non-equivariant}
  and~\ref{lem_pushouts-pullbacks:pullbacks}  we obtain a $\Gamma \times G$-pushout
  \[
  \xymatrix{\coprod_{i \in I} q_i^*E_{n-1} \ar[r]^-{\coprod_{i \in I} \overline{q_i}}
    \ar[d] & E_{n-1} \ar[d]
    \\
    \coprod_{i \in I} Q_i^*E_n \ar[r]^-{\coprod_{i \in I} \overline{Q_i}} & E_n }
  \]
  and thus a $\Gamma \times G$-pushout
  \[
  \xymatrix@!C= 11em{\coprod_{i \in I} q_i^*E_{n-1} \times [0,1] \ar[r]^-{\coprod_{i \in
        I} \overline{q_i} \times \id_{[0,1]}} \ar[d] & E_{n-1} \times [0,1] \ar[d]
    \\
    \coprod_{i \in I} Q_i^*E_n \times [0,1] \ar[r]^-{\coprod_{i \in I} \overline{Q_i}
      \times \id_{[0,1]}} & E_n \times [0,1] }
  \]
  Hence it suffices to extend for each $i \in I$ the map of $\Gamma$-equivariant principal
  $G$-bundles over $\Gamma/H_i \times S^{n-1} \times [0,1]$
  \[
  x_i \colon q_i^*E_{n-1} \times [0,1] \xrightarrow{\overline{q_i} \times [0,1]} E_{n-1}
  \times [0,1] \xrightarrow{f_{n-1}} E|_{B_{n-1} \times [0,1]}
  \]
  covering $q_i \times \id_{[0,1]} \colon \Gamma/H_i \times S^{n-1} \times [0,1] \to
  B_{n-1} \times [0,1]$ to a map of $\Gamma$-equivariant principal $G$-bundles over $\Gamma/H_i \times
  D^n \times [0,1]$
  \[
  y_i \colon Q_i^*E_n \times [0,1] \to E|_{B_n \times [0,1]}
  \]
  covering $Q_i \times \id_{[0,1]} \colon \Gamma/H_i \times D^n \times [0,1] \to B_n
  \times [0,1]$ such that the restriction of $y_i$ to $\Gamma /H_i \times D^n \times
  \{0\}$ is fiberwise the identity.  We obtain from $x_i$ a map of $\Gamma$-equivariant
  principal $G$-bundles over $\Gamma/H_i \times S^{n-1} \times [0,1]$
  \[
  x_i' \colon q_i^*E_{n-1} \times [0,1] \to (Q_i \times \id_{[0,1]})^*E|_{\Gamma/H_i
    \times S^{n-1} \times [0,1]}
  \]
  covering the identity $\id \colon \Gamma/H_i \times S^{n-1} \times [0,1] \to \Gamma/H_i
  \times S^{n-1} \times [0,1]$ such that the restriction of $x_i'$ to $\Gamma/H_i \times
  S^{n-1} \times \{0\}$ is the identity. It remains to extend $x_i'$ to map of
  $\Gamma$-equivariant principal $G$-bundles over $\Gamma/H_i \times D^n\times [0,1]$
  \[
  y_i' \colon Q_i^*E_n \times [0,1] \to (Q_i \times \id_{[0,1]})^*E
  \]
  covering the identity $\id \colon \Gamma/H_i \times D^n \times [0,1] \to \Gamma/H_i
  \times D^n \times [0,1]$ such that the restriction of $y_i'$ to $\Gamma/H_i \times D^n
  \times \{0\}$ is the identity.  We obtain from~\eqref{desired_Gamma_times_G-homeo}, now
  applied to $D^n \times [0,1]$ instead of $D^n$, isomorphisms of $\Gamma$-equivariant
  principal $G$-bundles over $\Gamma/H_i \times D^n \times [0,1] $
  \begin{eqnarray*}
    a \colon (\Gamma \times G)/H_i \times D^n \times [0,1] & \xrightarrow{\cong} & Q_i^*E_n \times [0,1];
    \\
    b \colon (\Gamma \times G)/H_i \times D^n \times [0,1]  & \xrightarrow{\cong} & (Q_i \times \id_{[0,1]})^*E,
  \end{eqnarray*}
  for an appropriate subgroup $H_i \subseteq \Gamma \times G$ if we convert the left
  $G$-action into a right $G$-action in the usual way. By conjugation with the
  restrictions of $a$ and $b^{-1}$ to $\Gamma/H_i \times S^{n-1} \times [0,1]$, we obtain
  from $x_i'$ an isomorphism of $\Gamma$-equivariant principal $G$-bundles over
  $\Gamma/H_i \times S^{n-1} \times [0,1]$
  \[
  x_i'' \colon (\Gamma \times G)/H_i \times S^{n-1} \times [0,1] \xrightarrow{\cong}
  (\Gamma \times G)/H_i \times S^{n-1} \times [0,1].
  \]
  The remaining problem is to extend $x_i''$ to an isomorphism of $\Gamma$-equivariant
  principal $G$-bundles over $\Gamma/H_i \times D^n \times [0,1]$
  \[
  y_i'' \colon (\Gamma \times G)/H_i \times D^n \times [0,1] 
  \xrightarrow{\cong} (\Gamma \times G)/H_i \times D^n \times [0,1]
  \]
  whose restriction to $D^n \times \{0\}$ is the restriction of $b^{-1} \circ a$ to $D^n
  \times \{0\}$.  Notice that $x_i''$ is the same as a map $S^{n-1} \times [0,1]\to
  \map_{\Gamma \times G}\bigl((\Gamma \times G)/H_i,(\Gamma \times G)/H_i\bigr)$ and
  $y_i''$ is the same as map $D^n\times[0,1] \to \map_{\Gamma \times G}\bigl((\Gamma
  \times G)/H_i,(\Gamma \times G)/H_i\bigr)$.  Hence the remaining problem is to extend a
  given map
  \[
  S^{n-1} \times [0,1] \cup D^n \times \{0\} \to \map_{\Gamma \times G}\bigl((\Gamma
  \times G)/H_i,(\Gamma \times G)/H_i\bigr)
  \]
  to a map
  \[
  D^n \times [0,1] \to \map_{\Gamma \times G}\bigl((\Gamma \times G)/H_i,(\Gamma \times
  G)/H_i\bigr).
  \]
  This is possible since there is a retraction $D^n \times [0,1] \to S^{n-1} \times [0,1]
  \cup D^n \times \{0\}$.
\end{proof}


\section{Universal equivariant principal bundles}
\label{sec:Universal_equivariant_principal_bundles}

Fix a family of local representations $\calr$ satisfying Condition (H) introduced in
Definition~\ref{def:property_(H)}.  In this section we construct the universal
$\Gamma$-equivariant principal $G$-bundle and in particular the classifying space for
$\Gamma$-equivariant principal $G$-bundles with respect to $\calr$.

\begin{definition}[Compatibility] \label{def:compatibility} We call $\calr$
  \emph{compatible with the $\Gamma$-$CW$-complex $X$} if for any $x \in X$ and
  $(H,\alpha) \in \calr$ with $\Gamma_x \subseteq H$ the pair
  $(\Gamma_x,\alpha|_{\Gamma_x})$ belongs to $\calr$ again.
\end{definition}

\begin{remark}\label{rem:automatic}
  Notice that this condition is automatically satisfied for every $\Gamma$-$CW$ if $\calr$
  is closed under taking subgroups, i.e., for $(H,\alpha)$ in $\calr$ and $K \subseteq H$
  we have $(K,\alpha|_K) \in \calr$.
\end{remark}

Consider $\Gamma$-$CW$-complexes $X$ and $B$ and a $\Gamma$-equivariant principal
$G$-bundle $p \colon E \to B$ over the $\Gamma$-$CW$-complex $B$ with $\calr(p) \subseteq
\calr$.  Let $\Bundle_{\Gamma,G,\calr}(X)$ be the set of isomorphism classes of
$\Gamma$-equivariant principal $G$-bundles $q \colon E \to X$ with $\calr(q) \subseteq
\calr$.  Suppose that $\calr$ is compatible with $X$. Then for any $\Gamma$-map $f \colon
X \to B$ the pull back $f^*p$ is a $\Gamma$-equivariant principal $G$-bundle with
$\calr(f^*p) \subseteq \calr$. Because of Theorem~\ref{the:homotopy_invariance} the
pullback construction yields a well-defined map
\begin{eqnarray}
  c \colon [X,B]^{\Gamma} \to \Bundle_{\Gamma,G,\calr}(X), \quad [f] \mapsto [f^*p].
  \label{c_[X,B]_to_Bundles}
\end{eqnarray}

\begin{notation}[Classifying space of a family of subgroups]
  \label{not:Classifying_space_of_a_family_of_subgroups}
  Given group $G$ and a family $\calf$ of subgroups of  $G$, denote by
  $\EGF{G}{\calf}$ the \emph{classifying space of the family} $\calf$.
\end{notation}

Recall that $\EGF{G}{\calf}$ is a $G$-$CW$-complex whose isotropy groups belong to $\calf$
and for which the $H$-fixed point set $\EGF{G}{\calf}^H$ is non-empty and weakly
contractible for every $H \in \calf$.  A model always exists.  For any $G$-$CW$-complex
$X$ whose isotropy groups belong to $\calf$, there is up to $G$-homotopy precisely one
$G$-map $X \to \EGF{G}{\calf}$. In particular two models for $\EGF{G}{\calf}$ 
are $G$-homotopy equivalent. For
more information about classifying spaces of a family we refer for instance
to~\cite{Lueck(2005s)}.

\begin{theorem}[Classifying space for $\Gamma$-equivariant principal $G$-bundles with
  family of local representatives contained in $\calr$ satisfying Condition (H)]
  \label{the:Classifying_space_for_gamma-equivariant_principal_G-bundles}
  Let $\calr$ be a family of local representations for $(\Gamma,G)$ satisfying Condition
  (H) introduced in Definition~\ref{def:property_(H)}. Define
  \begin{eqnarray*}
    E(\Gamma,G,\calr) & := & E_{\calf(\calr)}(\Gamma \times G)_r;
    \\
    B(\Gamma,G,\calr) & := & E_{\calf(\calr)}(\Gamma \times G)_r/G,
  \end{eqnarray*}
  where $\calf(\calr)$ is the family of subgroups of $\Gamma \times G$ introduced in
  Lemma~\ref{lem:family_associated_to_calr}. Let 
  $p \colon E(\Gamma,G,\calr) \to B(\Gamma,G,\calr)$ be the projection.  Let $X$
  be a $\Gamma$-$CW$-complex such that $\calr$ is compatible with $X$ in the sense of
  Definition~\ref{def:compatibility}.

  Then $p \colon E(\Gamma,G,\calr) \to B(\Gamma,G,\calr)$ is a $\Gamma$-equivariant
  principal $G$-bundle and the map $c$ defined in~\eqref{c_[X,B]_to_Bundles}
  \begin{eqnarray*}
    c \colon [X,B]^{\Gamma} \to \Bundle_{\Gamma,G,\calr}(X), \quad [f] \mapsto [f^*p].
  \end{eqnarray*}
  is bijective.

\end{theorem}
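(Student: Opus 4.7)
The overall strategy is to reduce the problem to the universal property of the classifying space $\EGF{\Gamma \times G}{\calf(\calr)}$ via the dictionary established in Theorem~\ref{the:bundles_and_Gamma_times_G-spaces}. First I would check that the map $p$ is indeed a $\Gamma$-equivariant principal $G$-bundle with $\calr(p) \subseteq \calr$. By construction $E(\Gamma,G,\calr) = \EGF{\Gamma \times G}{\calf(\calr)}$ is a $\Gamma \times G$-$CW$-complex whose isotropy groups lie in $\calf(\calr)$, so Theorem~\ref{the:bundles_and_Gamma_times_G-spaces}~\ref{the:bundles_and_Gamma_times_G-spaces:s_to_b} applies directly and yields the claim together with $\calr(p) \subseteq \calr$.

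Next I would verify that $c$ is well-defined. For any $\Gamma$-map $f \colon X \to B(\Gamma,G,\calr)$, the compatibility of $\calr$ with $X$ in the sense of Definition~\ref{def:compatibility} together with Remark~\ref{rem:calr_and_pullbacks} ensures that $\calr(f^*p) \subseteq \calr$, so $[f^*p] \in \Bundle_{\Gamma,G,\calr}(X)$; invariance under $\Gamma$-homotopy of $f$ is exactly Theorem~\ref{the:homotopy_invariance}.

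For surjectivity of $c$ I would invert the pullback construction. Given a representative $q \colon E' \to X$ with $\calr(q) \subseteq \calr$, Theorem~\ref{the:bundles_and_Gamma_times_G-spaces}~\ref{the:bundles_and_Gamma_times_G-spaces:b_to_s} exhibits $E'_l$ as a $\Gamma \times G$-$CW$-complex with isotropy in $\calf(\calr)$. The universal property of $\EGF{\Gamma \times G}{\calf(\calr)}$ then supplies a $\Gamma \times G$-map $\tilde f \colon E'_l \to E(\Gamma,G,\calr)_l$, unique up to $\Gamma \times G$-homotopy. Taking $G$-quotients turns $\tilde f$ into a $\Gamma$-map $f \colon X \to B(\Gamma,G,\calr)$, and the pair $(\tilde f, f)$ factors through the pullback, giving a map of $\Gamma$-equivariant principal $G$-bundles $E' \to f^*E(\Gamma,G,\calr)$ over $X$ which is an isomorphism by Lemma~\ref{lem_maps_are_isos}. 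Hence $c([f]) = [q]$.

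For injectivity, suppose $f_0,f_1 \colon X \to B(\Gamma,G,\calr)$ satisfy $f_0^*p \cong f_1^*p$, witnessed by an isomorphism $\phi$ over $X$. Each pullback square produces a canonical $\Gamma \times G$-map $\tilde f_i \colon (f_i^*E(\Gamma,G,\calr))_l \to E(\Gamma,G,\calr)_l$, and then $\tilde f_0$ and $\tilde f_1 \circ \phi_l$ are two $\Gamma \times G$-maps from $(f_0^*E(\Gamma,G,\calr))_l$ into $E(\Gamma,G,\calr)_l$. By the universal property they are $\Gamma \times G$-homotopic; taking $G$-quotients, and using that $\phi$ covers the identity on $X$, yields a $\Gamma$-homotopy from $f_0$ to $f_1$. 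The main obstacle I anticipate is bookkeeping rather than conceptual: keeping the conventions $X_l$ and $Y_r$ from Notation~\ref{not:X_l:_and_Y_r} straight when translating between $\Gamma \times G$-maps and morphisms of $\Gamma$-equivariant principal $G$-bundles, and making sure that the compatibility hypothesis on $\calr$ is exactly what is required for the pullback step in the definition of $c$.
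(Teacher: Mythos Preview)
Your proposal is correct and follows essentially the same approach as the paper: both reduce the classification to the universal property of $\EGF{\Gamma \times G}{\calf(\calr)}$ via the translation of Theorem~\ref{the:bundles_and_Gamma_times_G-spaces}. The only cosmetic difference is that the paper packages surjectivity and injectivity into a single inverse map $d \colon \Bundle_{\Gamma,G,\calr}(X) \to [X,B]^{\Gamma}$ (sending $[q]$ to the $G$-quotient of the classifying $\Gamma \times G$-map for $E'_l$) and then checks $c \circ d = \id$ and $d \circ c = \id$, whereas you argue the two directions separately; the content is identical.
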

\begin{proof}
  We conclude from Theorem~\ref{the:bundles_and_Gamma_times_G-spaces}~%
\ref{the:bundles_and_Gamma_times_G-spaces:s_to_b} that $p \colon E(\Gamma,G,\calr) \to
  B(\Gamma,G,\calr)$ is a $\Gamma$-equivariant principal $G$-bundle.

  We construct an inverse map
  \begin{eqnarray*}
    d \colon \Bundle_{\Gamma,G,\calr}(X)  \to [X,B]^{\Gamma} 
  \end{eqnarray*}
  as follows.

  Let $q \colon E \to B$ be a $\Gamma$-equivariant principal $G$-bundle whose family of
  local representations is contained in $\calr$. We conclude from
  Theorem~\ref{the:bundles_and_Gamma_times_G-spaces}~\ref{the:bundles_and_Gamma_times_G-spaces:b_to_s}
  that $E_l$ is a left $\Gamma \times G$-$CW$-complex whose isotropy groups belong to
  $\calf(\calr)$.  Hence there is up to $\Gamma \times G$-homotopy precisely one
  $\Gamma\times G$-map $\overline{f} \colon E_l \to E_{\calf(\calr)}(\Gamma \times G)$
  which is the same a map $\overline{f} \colon E \to E(\Gamma,G,\calr)$ compatible with
  the left $\Gamma$-actions and right $G$-actions. Taking the $G$-quotient yields a map $f
  \colon B \to B(\Gamma,G,\calr)$ which is unique up to $\Gamma$-homotopy and for which
  the following diagram commutes
  \[
  \xymatrix{E \ar[r]^-{\overline{f}} \ar[d]_q & E(\Gamma,G,\calr) \ar[d]^p
    \\
    B \ar[r]_-{f} & B(\Gamma,G,\calr) }
  \]
  Hence $q$ is up to isomorphism the pullback of the universal bundle $p$ with the
  $\Gamma$-map $f \colon B \to B(\Gamma,G,\calr)$ and the construction of this $\Gamma$-map 
  $f$ is unique up to $\Gamma$-homotopy.  Define $d([q])$ by $[f]$. This is well-defined since
  obviously $[f]$ depends only on the isomorphism class $[q]$ of $q$.  Since $f^*p$ is
  isomorphic to $q$, we get $c \circ d = \id_{\Bundle_{\Gamma,G,\calr}(X)}$.  One easily
  checks $d \circ c = \id_{ [X,B]^{\Gamma}}$.

  This finishes the proof of
  Theorem~\ref{the:Classifying_space_for_gamma-equivariant_principal_G-bundles}.
\end{proof}

We will call $p \colon E(\Gamma,G,\calr) \to B(\Gamma,G,\calr)$ appearing in
Theorem~\ref{the:Classifying_space_for_gamma-equivariant_principal_G-bundles} the
\emph{universal $\Gamma$-equivariant principal $G$-bundle with respect to the family of
  local representations $\calr$}.

If $p' \colon E(\gamma,G,\calr)' \to B(\gamma,G,\calr)'$ is another such universal bundle,
then there is a commutative diagram of such bundles covering a $\Gamma$-homotopy
equivalence $f$
\[
\xymatrix{E(\Gamma,G,\calr) \ar[r]^{\overline{f}} \ar[d]^{p} & E(\Gamma,G,\calr)'
  \ar[d]^{p'}
  \\
  B(\Gamma,G,\calr) \ar[r]^f & B(\Gamma,G,\calr)' }
\]
i.e., $\overline{f}$ is compatible with the left $\Gamma$ and the right $G$-actions and is a
homeomorphism.  Such a diagram is unique up to $\Gamma \times G$-homotopy, in particular
the $\Gamma$-map $f$ is unique up to $\Gamma$-homotopy.


\section{Reduction of the structure group to a maximal compact subgroup}
\label{sec:Reduction_of_the_structure_group_to_a_maximal_compact_subgroup}

If $K$ is a subgroup of $G$ and $p_0 \colon E_0 \to B$ is a $\Gamma$-equivariant principal $K$-bundle, then
\[
\overline{p_0} \colon E_0 \times_K G \to B, \quad (e,g) \mapsto p_0(e)
\]
inherits the structure of a $\Gamma$-equivariant principal $G$-bundle in the obvious
way. One has just to verify local triviality of the underlying principal $G$-bundle
$\overline{p_0}$.

\begin{definition}[Reduction]
  Let $p \colon E \to B$ be a $\Gamma$-equivariant principal $G$-bundle. Given a subgroup
  $K \subseteq G$, a \emph{reduction} to a $\Gamma$-equivariant principal $K$-bundle is a
  $\Gamma$-equivariant principal $K$-bundle $p_0 \colon E_0 \to B$ together with an
  isomorphism of $\Gamma$-equivariant principal $G$-bundles from 
  $\overline{p_0}$ to $p$.
\end{definition}

\begin{lemma} \label{lem:criterion_for_reduction} 
   Let $p \colon E \to B$ be a
  $\Gamma$-equivariant principal $G$-bundle and $K \subseteq G$ be a subgroup.  Let 
  $f   \colon E \to G/K$ be a $\Gamma \times G$-map, where we equip the target with the 
  $\Gamma \times G$-action given by $(\gamma,g) \cdot g'K = gg'K$. Suppose that the pair $(G,K)$
  satisfies Condition (S), see Definition~\ref{ref:condition_(S)}. Put $E_f =  f^{-1}(1K)$. 
   Let $p_f \colon E_f \to B$ be the restriction of $p$ to $E_f$. Then:

  \begin{enumerate} 
  \item  \label{lem:criterion_for_reduction:is_reduction} 
  The map $p_f \colon E_f \to B$ is a $\Gamma$-equivariant principal $K$-bundle which is a
 $K$-reduction of $p$;

\item \label{lem:criterion_for_reduction:every_reduction_arises_this_way} Every
  $K$-reduction of $p \colon E \to B$ is up to isomorphism of $\Gamma$-equivariant
  principal $K$-bundles of the form $p_f \colon E_f \to B$ for appropriate $f$;

\item  \label{lem:criterion_for_reduction:homotopy_invariance} 
If $f_0,f_1 \colon E \to G/K$ are
$\Gamma \times G$-maps which are $\Gamma \times G$-homotopic, then
$p_{f_0} \colon E_{f_0} \to B$ and  $p_{f_1} \colon E_{f_1} \to B$ are isomorphic as  
$\Gamma$-equivariant principal $K$-bundles.
\end{enumerate}
\end{lemma}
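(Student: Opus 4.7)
The plan is to prove the three assertions in order, with part (i) providing the structural core and parts (ii), (iii) following by formal arguments.

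\emph{For part (i).} First, the stabilizer of $1K \in G/K$ under the left $G$-action is $K$ and $\Gamma$ acts trivially on $G/K$, so $E_f = f^{-1}(1K)$ is $\Gamma$-invariant and stable under the restricted (free) right $K$-action on $E$. To establish local triviality of $p|_{E_f}$, I would fix $b \in B$, take a $G$-equivariant local trivialization $\phi \colon p^{-1}(V) \xrightarrow{\cong} V \times G$ on a quasi-regular open neighborhood $V$ of $b$, and observe that the $\Gamma \times G$-equivariance of $f$ (together with the convention of Notation~\ref{not:X_l:_and_Y_r} that converts the right $G$-action into a left one) forces $\phi \circ f|_{p^{-1}(V)}$ to have the form $(v,g) \mapsto g^{-1}\psi(v)$ for a unique continuous $\psi \colon V \to G/K$. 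Consequently
\[
\phi\bigl(E_f \cap p^{-1}(V)\bigr) = \{(v,g) \in V \times G \mid gK = \psi(v)\}.
\]
Condition (S) for $(G,K)$ yields, after possibly shrinking $V$, a continuous lift $\widetilde\psi \colon V \to G$ of $\psi$, and the map $(v,k) \mapsto (v,\widetilde\psi(v)\cdot k)$ defines a right $K$-equivariant trivialization $V \times K \xrightarrow{\cong} \phi(E_f \cap p^{-1}(V))$ over $V$. This shows that $p_f$ is a $\Gamma$-equivariant principal $K$-bundle. For the reduction claim, the $\Gamma$- and $G$-equivariant map
\[
E_f \times_K G \to E, \quad [e,g] \mapsto e \cdot g,
\]
covers the identity on $B$, so by Lemma~\ref{lem_maps_are_isos} it is an isomorphism of $\Gamma$-equivariant principal $G$-bundles.

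\emph{For part (ii).} Given a reduction encoded by an isomorphism $\phi \colon E_0 \times_K G \xrightarrow{\cong} E$, I would define $f \colon E \to G/K$ by the rule $f(\phi([e_0,g])) := g^{-1}K$. Well-definedness on the $K$-balanced product is immediate since $(e_0 k, k^{-1}g) \mapsto g^{-1}kK = g^{-1}K$; $\Gamma$-invariance is automatic because $\Gamma$ acts only on the $E_0$-factor; and $G$-equivariance $f(e \cdot g') = g'^{-1} f(e)$ is a direct computation. Finally $f^{-1}(1K)$ consists precisely of the image of $\{[e_0,1] \mid e_0 \in E_0\}$ under $\phi$, which via $e_0 \mapsto [e_0,1]$ is $\Gamma$- and $K$-equivariantly homeomorphic to $E_0$.

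\emph{For part (iii).} I would apply part (i) to the $\Gamma$-equivariant principal $G$-bundle $p \times \id_{[0,1]} \colon E \times [0,1] \to B \times [0,1]$ together with a $\Gamma \times G$-homotopy $H \colon E \times [0,1] \to G/K$ from $f_0$ to $f_1$. This produces a $\Gamma$-equivariant principal $K$-reduction $p_H \colon H^{-1}(1K) \to B \times [0,1]$ whose restrictions over $B \times \{0\}$ and $B \times \{1\}$ are exactly $p_{f_0}$ and $p_{f_1}$. The equivariant homotopy invariance Theorem~\ref{the:homotopy_invariance}, applied to $p_H$ with $K$ playing the role of $G$ (and an appropriate family of local representations), then produces the required isomorphism $p_{f_0} \cong p_{f_1}$ of $\Gamma$-equivariant principal $K$-bundles.

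\emph{Main obstacle.} The decisive step is the construction of the local $K$-trivializations in part (i); this is precisely where Condition (S) for $(G,K)$ enters, via the local cross section of $G \to G/K$ used to lift $\psi \colon V \to G/K$ to $\widetilde\psi \colon V \to G$. Once (i) is in hand, (ii) is a direct unwinding of the balanced-product structure and (iii) reduces to an application of the previously proved homotopy invariance theorem.
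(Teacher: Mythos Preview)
Your proposal is correct and follows essentially the same route as the paper. One small slip: in part~(i) you write ``$\phi \circ f|_{p^{-1}(V)}$'', but $\phi$ and $f$ have incompatible domains/codomains; you mean $f \circ \phi^{-1} \colon V \times G \to G/K$, which indeed has the form $(v,g) \mapsto g^{-1}\psi(v)$ with $\psi(v) = f(\phi^{-1}(v,1))$. With this corrected, your local-triviality argument is exactly the paper's (the paper phrases it as reducing to $E = G \times B$ and building an automorphism $\overline{\alpha}$ of the trivial $G$-bundle via the section $s$, which is your $\widetilde\psi$). Your order in (i) is reversed from the paper's---you prove local triviality first and then deduce the reduction isomorphism $E_f \times_K G \cong E$ via Lemma~\ref{lem_maps_are_isos}, whereas the paper obtains that isomorphism first from Lemma~\ref{lem:condition_(S)_and_homeomorphisms}---but this is a cosmetic difference. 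Parts~(ii) and~(iii) match the paper verbatim in structure.
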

\begin{proof}~\ref{lem:criterion_for_reduction:is_reduction}  
From Lemma~\ref{lem:condition_(S)} we obtain a $\Gamma \times G$-homeomorphism
\[
u \colon E_0 \times_K G \xrightarrow{\cong} E, \quad (g,e) \mapsto e \cdot g.
\]
It remains to show that $p_0 \colon E_0 \to B$ is a principal $K$-bundle, i.e., to show
local triviality. Since $p$ is locally trivial, it suffices to treat the case, where $E = G \times B$ 
and  $p \colon G \times B \to B$ is the projection. Choose a quasi-regular open subset 
$U \subseteq G/K$ with $1K \in U$ and a map $s \colon U \to G$ satisfying 
$\pr \circ s = \id_U$, where $\pr \colon G \to G/K$ is the projection.
Let $V$ be the  open subset of $B$ given by  $p(f^{-1}(U))$. 
Since $V$ is $\Gamma$-invariant, it is quasi-regular by 
Lemma~\ref{lem:properties_of_quasi-regular_open_sets}~\ref{lem:properties_of_quasi-regular_open_sets:Gamma-CW}.
Let $\alpha \colon V \to G$ be the map sending $v$ to $s \circ f(v,1)$. 
Then we obtain an automorphism of the trivial principal $G$-bundle $G \times B \to G \times B$
\[
\overline{\alpha} \colon G \times V \xrightarrow{\cong} G \times V, 
\quad (g,v) \mapsto (g \cdot \alpha(v),v)
\]
If $\pi \colon G \times V \to G/K$ sends $(g,v)$ to $gK$,
then $\pi \circ \overline{\alpha} = f$. Hence $\overline{\alpha}$ induces a
commutative diagram of $K$-spaces
\[
\xymatrix{p_f^{-1}(V)  \ar[rd]_{p_f|_{p_f^{-1}(V)}} \ar[rr]^{\cong} 
& &
V \times K \ar[ld]
\\
& V & 
}
\]
such that the horizontal arrow is a $K$-homeomorphism and the right vertical arrow is the
projection.  
\\[1mm]~\ref{lem:criterion_for_reduction:every_reduction_arises_this_way} 
Let $p_0 \colon E_0 \to B$ be a $K$-reduction of $p$. Choose an isomorphism of
$\Gamma$-equivariant principal $G$-bundles $u \colon G \times_K E_0 \xrightarrow{\cong} E$.  
If we take $f \colon E \to G/K$ to be the composite of $u^{-1}$ with the obvious
projection $G \times_K E_0 \to G/K$, then $p_0$ is $p_f$.
\\[2mm]~\ref{lem:criterion_for_reduction:homotopy_invariance} 
Fix a $\Gamma \times G$-homotopy $h \colon E \times [0,1] \to G/K$ with $h_k = f_k$ for $k = 0,1$.  
Let $q \colon E \times [0,1] \to B \times [0,1]$ be the $\Gamma$-equivariant principal
$G$-bundle given by $p \times \id_{[0,1]}$. Then we obtain a $\Gamma$-equivariant principal
$K$-bundle $q_h \colon (E \times [0,1])_h \to B \times [0,1]$ whose restriction to $B \times \{k\}$ 
is $p_{f_k}$ for $k = 0,1$. Now the claim follows from
Theorem~\ref{the:homotopy_invariance}.
\end{proof}

\begin{definition}[Almost connected group] \label{def:almost_connected} 
  Given a group $G$,
  let $G^0$ be its \emph{component of the identity} and define the \emph{component group}
  $\overline{G}$ by $\overline{G} = G/G^0$.  We call $G$ \emph{almost connected}
   if its component group $\overline{G}$ is compact.
\end{definition}

The next result is due to Abels~\cite[Corollary~4.14]{Abels(1978)}.  
\begin{theorem}[Almost connected groups] \label{the:almost_connected_groups} 
  Let $G$ be a  locally compact Hausdorff topological group.  Suppose that $G$ is almost connected.

  Then  $G$ contains a maximal compact subgroup $K$ which is unique up to conjugation, and the
  $H$-fixed point set $(G/K)^H$ is contractible for every compact subgroup $H \subseteq   K$.
\end{theorem}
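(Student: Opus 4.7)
The plan is to reduce to the Lie group case via the Yamabe--Montgomery--Zippin structure theorem and then use the Cartan decomposition. Throughout, I would quote the cited work of Abels for the technical details and only sketch the strategy.

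First, I would recall that every almost connected locally compact Hausdorff group $G$ can be written as a projective limit $G = \varprojlim G/N_i$, where the $N_i$ range over compact normal subgroups of $G$ such that each $G/N_i$ is a Lie group. Since $G$ is almost connected, each $G/N_i$ is an almost connected Lie group. For such a Lie group $L$, the Cartan--Iwasawa--Malcev theorem provides a maximal compact subgroup $K_L \subseteq L$, unique up to conjugation, and the polar (Cartan) decomposition gives a $K_L$-equivariant diffeomorphism $L/K_L \cong V_L$, where $V_L$ is a real vector space on which $K_L$ acts linearly. Consequently, for any compact subgroup $H \subseteq K_L$, the fixed-point set $(L/K_L)^H \cong V_L^H$ is a linear subspace, hence contractible.

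Second, I would transfer existence and uniqueness of a maximal compact subgroup from the quotients to $G$. Fixing a cofinal tower of $N_i$'s, one lifts compatibly chosen maximal compact subgroups $K_{G/N_i}$: the preimage in $G$ of such a subgroup is an extension of a compact group by the compact group $N_i$, hence compact, and any compact subgroup of $G$ projects to a compact subgroup of each $G/N_i$, so maximality is preserved under the limit. Conjugacy in each quotient can be lifted modulo $N_i$; a compactness argument on the resulting net of conjugators yields conjugacy in $G$ itself.

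Finally, for the contractibility of $(G/K)^H$ with $H \subseteq K$ compact, the idea is to realize $G/K$ as the inverse limit of the spaces $G/(KN_i) \cong (G/N_i)/(KN_i/N_i)$. By the Lie group step each $G/(KN_i)$ is $(G/N_i)$-equivariantly homeomorphic to a real vector space on which $KN_i/N_i \supseteq H N_i/N_i$ acts linearly, so its $H$-fixed-point set is a linear subspace and hence contractible. The transition maps are $H$-equivariant, and taking $H$-fixed points commutes with the inverse limit (as $H$ is compact and the tower consists of proper surjections), so the limit $(G/K)^H$ is itself contractible. The main obstacle will be this last step: one must check carefully that the contractions of the fixed-point sets can be chosen compatibly along the tower so as to pass to the projective limit. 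This is essentially the content of the argument in \cite[Corollary~4.14]{Abels(1978)}, to which I would refer for the precise execution.
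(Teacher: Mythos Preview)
The paper does not prove this theorem at all: it simply states the result and attributes it to Abels~\cite[Corollary~4.14]{Abels(1978)}, with no argument given. Your proposal therefore already goes beyond what the paper does, since you outline the underlying structure-theoretic strategy (Yamabe--Montgomery--Zippin reduction to Lie quotients, Cartan--Iwasawa--Malcev in the Lie case, passage to the projective limit) before likewise deferring the technical execution to the same reference.

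Your sketch is a reasonable summary of the standard approach and of what Abels does. The one genuinely delicate point, which you correctly flag, is the final step: an inverse limit of contractible spaces is not contractible in general, so one cannot simply take the limit of arbitrary contractions. Abels handles this by exhibiting an explicit, compatible family of contractions coming from the linear structure on each $G/(KN_i)$ (radial contraction toward the common base point), so that the contractions are coherent along the tower; this is what makes the limit argument go through. Since you explicitly point to \cite[Corollary~4.14]{Abels(1978)} for precisely this step, your proposal is acceptable and in fact more informative than the paper's own treatment.
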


\begin{theorem}[Existence of a $K$-reduction]
  \label{the:reduction_to_maximal_compact_subgroup}
  Let $G$ and $\Gamma$ be locally compact second countable topological groups. Suppose
  that $G$ is almost connected. Let $K \subseteq G$ be a maximal compact subgroup.  Let
  $\calr$ be a family of local representation for $(\Gamma,G)$ such that for each element
  $(H,\alpha)$ in $\calr$ the subgroup $H \subseteq \Gamma$ is compact. Let $\calr_K$ be
  the family of local representations of $(\Gamma,K)$ which is given by 
  $\calr_K =  \{(H,\alpha) \mid \alpha(H) \subseteq K, (H,\alpha) \in \calr\}$.

  Then every $\Gamma$-equivariant principal $G$-bundle $p \colon E \to B$ with $\calr(p)
  \subseteq \calr$ has a (preferred) $K$-reduction $p_0 \colon E_0 \to B$ which is unique
  up to isomorphism of $\Gamma$-equivariant principal $K$-bundles and satisfies $\calr(p_0)
  \subseteq \calr_K$.
\end{theorem}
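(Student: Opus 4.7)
The plan is to translate the problem into an equivariant lifting problem via Lemma~\ref{lem:criterion_for_reduction}, and then solve it by recognizing $G/K$ as a model for the classifying space $\EGF{\Gamma\times G}{\calf(\calr)}$. First I would invoke Theorem~\ref{the:bundles_and_Gamma_times_G-spaces}~\ref{the:bundles_and_Gamma_times_G-spaces:b_to_s} to endow $E_l$ with the structure of a $\Gamma \times G$-$CW$-complex whose isotropy groups lie in $\calf(\calr)$. The pair $(G,K)$ satisfies Condition (S) (using Lemma~\ref{lem:condition_(S)}: $G$ is completely regular since it is locally compact Hausdorff, and $K$, being a compact subgroup of an almost connected locally compact second countable group, is a compact Lie group by the structure theory behind Theorem~\ref{the:almost_connected_groups}), so Lemma~\ref{lem:criterion_for_reduction} applies: $K$-reductions of $p$ correspond bijectively, up to isomorphism, to $\Gamma \times G$-homotopy classes of $\Gamma \times G$-maps $E_l \to G/K$, where $\Gamma \times G$ acts on $G/K$ via $(\gamma, g) \cdot g'K = g g'K$. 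It therefore suffices to produce such a map and show it is unique up to $\Gamma \times G$-homotopy.

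The key computation is that for every $K(H,\alpha) = \{(h,\alpha(h)) \mid h \in H\} \in \calf(\calr)$, the fixed point set $(G/K)^{K(H,\alpha)}$ is non-empty and contractible. Because $\Gamma$ acts trivially on $G/K$, this set coincides with $(G/K)^{\alpha(H)}$, and $\alpha(H) \subseteq G$ is compact because $H$ is compact by hypothesis. Theorem~\ref{the:almost_connected_groups} asserts that every compact subgroup of $G$ is conjugate into $K$; choose $g_0 \in G$ with $L := g_0^{-1}\alpha(H)g_0 \subseteq K$, so that $g_0 K \in (G/K)^{\alpha(H)}$, and observe that left multiplication by $g_0^{-1}$ gives a homeomorphism $(G/K)^{\alpha(H)} \xrightarrow{\cong} (G/K)^L$. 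The right hand side is contractible by Theorem~\ref{the:almost_connected_groups} applied to $L \subseteq K$, hence so is $(G/K)^{\alpha(H)}$.

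Consequently $G/K$ is, up to $\Gamma \times G$-homotopy equivalence, a model for $\EGF{\Gamma\times G}{\calf(\calr)}$. The universal property of classifying spaces for families then produces a $\Gamma \times G$-map $f \colon E_l \to G/K$, unique up to $\Gamma \times G$-homotopy. Applying Lemma~\ref{lem:criterion_for_reduction}~\ref{lem:criterion_for_reduction:is_reduction} yields the $K$-reduction $p_0 := p_f \colon E_f \to B$, while parts~\ref{lem:criterion_for_reduction:every_reduction_arises_this_way} and~\ref{lem:criterion_for_reduction:homotopy_invariance} together with the homotopy uniqueness of $f$ force $p_0$ to be unique up to isomorphism of $\Gamma$-equivariant principal $K$-bundles. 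Finally, for any $e_0 \in E_f$ the local representation $\rho^{p_0}_{e_0}$ is just the corestriction of $\rho^{p}_{e_0}$ to $K \subseteq G$, so $(\Gamma_{p_0(e_0)},\rho^{p_0}_{e_0}) \in \calr_K$, giving $\calr(p_0) \subseteq \calr_K$. The main obstacle is the contractibility computation in the middle paragraph: one must correctly exploit that mere compactness of $\alpha(H)$ (rather than a priori containment in $K$) suffices, via the conjugation trick, to reduce to the hypothesis of Theorem~\ref{the:almost_connected_groups}; this is the step that genuinely uses the almost-connectedness of $G$.
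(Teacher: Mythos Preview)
Your approach is correct and uses the same two key ingredients as the paper: Lemma~\ref{lem:criterion_for_reduction} to translate $K$-reductions into $\Gamma\times G$-maps $E_l \to G/K$, and Abels' Theorem~\ref{the:almost_connected_groups} to supply the needed contractibility. The paper organizes things slightly differently: it first reduces to the universal bundle via Theorem~\ref{the:Classifying_space_for_gamma-equivariant_principal_G-bundles} (using that $K$-reductions are compatible with pullback), then passes to the $\Gamma$-quotient $E(\Gamma,G,\calr)/\Gamma$, which is a $G$-$CW$-complex with compact isotropy by Lemma~\ref{lem:quotient_of_CW-complexes}, and recognizes $G/K$ as the classifying space for proper $G$-actions; the desired $\Gamma\times G$-map then arises from a $G$-map $E(\Gamma,G,\calr)/\Gamma \to G/K$. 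Your direct argument avoids the detour through the universal bundle. One caution on phrasing: $G/K$ is not literally a model for $\EGF{\Gamma\times G}{\calf(\calr)}$, since its isotropy groups $\Gamma \times gKg^{-1}$ do not lie in $\calf(\calr)$. What your fixed-point computation actually establishes, and what you need, is the weaker statement that $(G/K)^L$ is nonempty and contractible for every $L \in \calf(\calr)$; this suffices, by equivariant obstruction theory over the $\Gamma\times G$-$CW$-structure on $E_l$, to produce the map $f$ and prove its homotopy uniqueness. The paper's passage to the $\Gamma$-quotient sidesteps this nicety, since as a $G$-space $G/K$ genuinely is a model for $\EGF{G}{\calCOM}$.
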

\begin{proof}
  Condition $(H)$  is always satisfied under the conditions of 
  Lemma~\ref{the:reduction_to_maximal_compact_subgroup} because of
  Theorem~\ref{the:criterion_for_(H)}. It suffices to prove the claim for the universal
  $\Gamma$-equivariant principal $G$-bundle because of 
  Theorem~\ref{the:Classifying_space_for_gamma-equivariant_principal_G-bundles}, 
  since $K$-reductions are compatible with pullbacks. 

  Let $p \colon E(\Gamma,G,\calr) \to B(\Gamma,G,\calr)$ be the universal $\Gamma$-equivariant
   principal $G$-bundle with respect to the family $\calr$, see 
  Theorem~\ref{the:Classifying_space_for_gamma-equivariant_principal_G-bundles}. 
  Recall that  $E(\Gamma,G,\calr)$ is a $\Gamma$-$G$-$CW$-complex whose isotropy groups belong
  to the family of subgroups $\calf(\calr)$ associated to $\calr$. For any element in $K(H,\alpha) \in \calf$
  the image of $K(H,\alpha)$ under the projection $\Gamma \times G \to G$ is compact and 
  hence a closed subgroup  of $G$. Lemma~\ref{lem:quotient_of_CW-complexes} implies that 
  $E(\Gamma,G,\calr)/\Gamma$ is a $G$-$CW$-complex.
  Because of Theorem~\ref{the:almost_connected_groups}  the $G$-$CW$-complex $G/K$ is 
  the classifying space for proper $G$-actions. Hence there is up to $G$-homotopy precisely one map
  $E(\Gamma,G,\calr)/\Gamma \to G/K$. We conclude that there is up to $\Gamma\times G$-homotopy precisely
  one $\Gamma \times G$-map $f \colon  E(\Gamma,G,\calr) \to G/K$. From 
  Lemma~\ref{lem:criterion_for_reduction}   we obtain a $K$-reduction $p_f$ of $p$ which is unique 
  up to isomorphism of  $\Gamma$-equivariant principal $K$-bundles.   
  One easily checks $\calr(p_f) \subseteq \calr_K$.
\end{proof}

\begin{example}[Equivariant vector bundles and Riemannian metrics]
In the case $G = GL_n(\IR)$, we have as maximal compact subgroup $O(n)$
and Theorem~\ref{the:reduction_to_maximal_compact_subgroup}  implies the well-known the statement
that any equivariant vector bundle $\xi$ over a proper $\Gamma$-$CW$-complex can be equipped with 
a $\Gamma$-invariant Riemannian metric and that two equivariant vector bundles
over a proper $\Gamma$-$CW$-complex with $\Gamma$-invariant Riemannian metrics
admit an isomorphism respecting the $\Gamma$-invariant Riemannian  metrics if and only
the equivariant vector bundles are isomorphic (after forgetting the invariant Riemannian metrics).
\end{example}


\section{On the homotopy type of the classifying space}
\label{sec:On_the_homotopy_type_of_the_classifying_space}

In this section we want to establish for $H \subseteq \Gamma$ a weak homotopy equivalence
\begin{eqnarray*}
\bigsqcup_{[\alpha] \in \hom_\calr(H,G)/G} BC_G(\alpha)  & \simeq & B(\Gamma,G,\calr)^H,
\end{eqnarray*}
where $[\alpha]$ runs over the $G$-conjugacy classes of homomorphisms $\alpha \colon H \to G$
with $(H,\alpha) \in \calr$. This will follow from Theorem~\ref{the:Fixed_point_sets_of_B(Gamma,G,calr)}.

Let $\hom(H,G)$ be the space of homomorphisms of topological groups $H \to G$, endowed with
the subspace topology from $\hom(H,G) \subseteq \map(H;G)$,
see Subsection~\ref{subsec:Space_of_homomorphisms}. Denote by $\hom(H,G)/G$ the
quotient space under the conjugation action of $G$, i.e., the left $G$-action sending
$(g,\alpha) \in G \times \hom(H,G)$ to $c_g \circ \alpha$, where $c_g \colon G \to G$
sends $g'$ to $gg'g^{-1}$, see   Subsection~\ref{subsec:Space_of_homomorphisms}. 
Recall that the centralizer of $\alpha \in \hom(H,G)$ is
\[
C_G(\alpha) := \{g \in G \mid g\alpha(h)g^{-1} =\alpha(h) \; \text{for all}\; h \in H\}.
\]
For $\calr$ a family of local representations for $(\Gamma,G)$, and $(H,\alpha)$ in
$\calr$ define
\[
\hom_{\calr}(H,G) := \{ \alpha \in \hom(H,G)| (H,\alpha) \in \calr \}
\]
and note that $\hom_{\calr}(H,G)$ is closed under the conjugation of $G$.

\begin{theorem}[Fixed point sets of $B(\Gamma,G,\calr)$]
  \label{the:Fixed_point_sets_of_B(Gamma,G,calr)}
  Let $\calr$ be a family of local representations for $(\Gamma,G)$ satisfying Condition
  (H) introduced in Definition~\ref{def:property_(H)}.  Consider an element $(H,\alpha)$
  in $\calr$.
  \begin{enumerate}

  \item \label{the:Fixed_point_sets_of_B(Gamma,G,calr):pi_0} We obtain a bijection
    \[\hom_\calr (H,G)/G \xrightarrow{\cong} \pi_0\bigl(B(\Gamma,G,\calr)^H\bigr);
    \]
  \item \label{the:Fixed_point_sets_of_B(Gamma,G,calr):BC_Galpha} Given $(H,\alpha)$ in
    $\calr$, let $B(\Gamma,G,\calr)^H_{\alpha}$ be the path component of
    $B(\Gamma,G,\calr)^H$ that corresponds under the bijection of
    assertion~\ref{the:Fixed_point_sets_of_B(Gamma,G,calr):pi_0} to the class of $\alpha$
    in $\hom_\calr (H,G)/G$.

    Then there exists a weak homotopy equivalence
    \[
    BC_G(\alpha) \xrightarrow{\simeq} B(\Gamma,G,\calr)^H_{\alpha}.
    \]
  \end{enumerate}
\end{theorem}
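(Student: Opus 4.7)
The plan is to translate the $H$-fixed-point question on the quotient $B(\Gamma,G,\calr) = E_{\calf(\calr)}(\Gamma\times G)_r/G$ into a fixed-point question on the total space $E := E(\Gamma,G,\calr)$. For each $(H,\alpha)\in\calr$ set $L_\alpha := K(H,\alpha)\in \calf(\calr)$ and $Y_\alpha := E^{L_\alpha}$. Since $L_\alpha$ lies in $\calf(\calr)$, the defining universal property of the classifying space of a family makes $Y_\alpha$ non-empty and weakly contractible. Moreover $1\times C_G(\alpha)\subseteq \Gamma\times G$ centralizes $L_\alpha$, so it acts on $Y_\alpha$, and the action is free: any $(1,c)$ lying in $(\Gamma\times G)_y = K(H',\alpha')$ must be of the form $(h',\alpha'(h'))$ with $h'=1$, forcing $c = 1$. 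Thus $Y_\alpha/C_G(\alpha)$ is the candidate model for both $BC_G(\alpha)$ and for the component $B(\Gamma,G,\calr)^H_\alpha$.

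For (i), I would construct two mutually inverse maps. In one direction, to $[y]\in B^H$ I attach the $G$-conjugacy class $[\alpha'|_H]$, where $y\in E$ is any preimage and $(\Gamma\times G)_y = K(H',\alpha')$. The condition $[y]\in B^H$ is equivalent to $H\subseteq H'$, since for each $h\in H$ the element $(h,1)\cdot y$ must lie in the $G$-orbit of $y$. Independence of the lift $y$ within its $G$-orbit follows from the conjugation formulas of Remark~\ref{rem:local_representations}, and $(H,\alpha'|_H)\in\calr$ uses that $\calr$ is compatible in the sense of Definition~\ref{def:compatibility} (automatic when $\calr$ is closed under subgroups). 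The Local Structure Theorem~\ref{the:local_objects} shows that this assignment is locally constant on $B^H$ and hence descends to a map $\Psi\colon\pi_0(B^H)\to \hom_\calr(H,G)/G$. In the reverse direction, to $[\alpha]$ I associate the class of $f_\alpha(1H)\in B^H$, where $f_\alpha\colon\Gamma/H\to B(\Gamma,G,\calr)$ classifies the bundle $p_\alpha\colon\Gamma\times_H G\to\Gamma/H$ (with $H$-action via $\alpha$) produced by Theorem~\ref{the:Classifying_space_for_gamma-equivariant_principal_G-bundles}. Unravelling the constructions shows the two maps are mutually inverse.

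For (ii), the composite $Y_\alpha\hookrightarrow E\to B$ lands in the single stratum of $B^H$ with $\Psi$-value $[\alpha]$, which is path-connected because $Y_\alpha$ is. If $y_1,y_2\in Y_\alpha$ satisfy $y_2=(1,g)\cdot y_1$, then the isotropy of $y_1$ transforms to $K(H',c_g\alpha')$, and agreement of both $K(H',\alpha')$-restrictions to $H$ with $\alpha$ forces $c_g\alpha = \alpha$, i.e., $g\in C_G(\alpha)$; hence the map factors through an injection $Y_\alpha/C_G(\alpha)\hookrightarrow B^H_\alpha$. The remaining, and main, technical step is to verify that $Y_\alpha\to Y_\alpha/C_G(\alpha)$ is a principal $C_G(\alpha)$-bundle: local triviality then promotes the above injection to a homeomorphism and, together with the weak contractibility of $Y_\alpha$, produces the desired weak equivalence $BC_G(\alpha)\xrightarrow{\simeq} Y_\alpha/C_G(\alpha) = B(\Gamma,G,\calr)^H_\alpha$. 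I would prove local triviality by intersecting the slice neighborhoods of Theorem~\ref{the:local_objects}, which model $E$ locally as $\Gamma\times_{H'}(T\times G)$, with the $K(H,\alpha)$-fixed locus, and then applying Condition (S) for the pair $(G,C_G(\alpha))$ (part of Condition (H)) to produce local sections of the residual principal action.

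The main obstacle is precisely this last piece of bookkeeping: after restricting a slice $\Gamma\times_{H'}(T\times G)$ to its $K(H,\alpha)$-fixed subspace one must check that what remains is still an open set on which Condition (S) for $(G, C_G(\alpha))$ gives a $C_G(\alpha)$-trivialization compatible with the fixed-point structure. Once this local principal-bundle structure is established, the weak equivalence $BC_G(\alpha)\simeq B(\Gamma,G,\calr)^H_\alpha$ is forced by contractibility of $Y_\alpha$ together with uniqueness up to $C_G(\alpha)$-homotopy of maps from free $C_G(\alpha)$-spaces into $Y_\alpha$.
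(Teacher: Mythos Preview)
Your overall strategy is correct and close to the paper's. For part~(i) the paper takes a slightly more abstract route: it invokes Theorem~\ref{the:Classifying_space_for_gamma-equivariant_principal_G-bundles} to identify $\pi_0(B(\Gamma,G,\calr)^H)\cong[\Gamma/H,B(\Gamma,G,\calr)]^\Gamma\cong\Bundle_{\Gamma,G,\calr}(\Gamma/H)$, and then classifies the latter set via Lemma~\ref{lem:equivariant_principal_G-bundles_over_Gamma/H_times_Z}. Your direct isotropy-reading map $\Psi$ is exactly what unwinds when one traces through those bijections, so the two arguments agree.

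For part~(ii) the two approaches diverge in technique. The paper does \emph{not} compute $K(H,\alpha)$-fixed points slice by slice. Instead it builds a single continuous $G$-map $\rho_\alpha\colon p^{-1}(B^H_\alpha)\to G/C_G(\alpha)$: first one defines $\rho\colon E^{\langle H\rangle}\to\hom(H,G)$ by $e\mapsto\rho_e$ and checks continuity using the local-structure theorem; then parts~\ref{def:property_(H):component} and~\ref{def:property_(H):homeo} of Condition~(H) identify the relevant component of the image with $G/C_G(\alpha)$. Since $p^{-1}(B^H_\alpha)\to B^H_\alpha$ is already a principal $G$-bundle (it is the restriction of $p$), Lemma~\ref{lem:criterion_for_reduction} (the reduction lemma) then gives in one stroke that $Y_\alpha=\rho_\alpha^{-1}(eC_G(\alpha))\to B^H_\alpha$ is a principal $C_G(\alpha)$-bundle. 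Your slice-by-slice plan is also workable, but the paper's packaging via $\rho_\alpha$ and the reduction lemma is cleaner and makes the role of each clause of Condition~(H) transparent.

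One point in your write-up needs correction. You assert that proving $Y_\alpha\to Y_\alpha/C_G(\alpha)$ is a principal $C_G(\alpha)$-bundle ``promotes the above injection $Y_\alpha/C_G(\alpha)\hookrightarrow B^H_\alpha$ to a homeomorphism''. It does not: local triviality of the projection to the \emph{orbit space} says nothing about the continuous bijection from the orbit space to $B^H_\alpha$. What you actually need is local triviality of $Y_\alpha\to B^H_\alpha$ over open sets of $B^H_\alpha$. Your proposed slice computation can give this if you phrase it correctly, but the cleanest fix is the paper's: observe that $p^{-1}(B^H_\alpha)\to B^H_\alpha$ is a principal $G$-bundle and exhibit $Y_\alpha$ as a $C_G(\alpha)$-reduction via the $G$-map $\rho_\alpha$. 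Then $Y_\alpha/C_G(\alpha)\cong B^H_\alpha$ is automatic.
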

\begin{proof}~\ref{the:Fixed_point_sets_of_B(Gamma,G,calr):pi_0} We obtain from
  Theorem~\ref{the:Classifying_space_for_gamma-equivariant_principal_G-bundles} bijections
  \[
  \pi_0\bigl(B(\Gamma,G,\calr)^H\bigr) \xrightarrow{\cong}
  [\Gamma/H,B(\Gamma,G,\calr)]^{\Gamma} \xrightarrow{\cong}
  \Bundle_{\Gamma,G,\calr}(\Gamma/H).
  \]
  Given $\alpha \in \hom_\calr (H,G)$, we obtain a $\Gamma$-equivariant principal
  $G$-bundle
  \[
  \xi_{\alpha} \colon \Gamma \times_{\alpha} G \to \Gamma/H, \quad (\gamma,g) \mapsto
  \gamma H,
  \]
  where $\Gamma \times_{\alpha} G$ is the quotient of $\Gamma \times G$ under the left
  $H$-action given by $h \cdot (\gamma,g) = (\gamma\cdot h^{-1}, \alpha(h) \cdot g)$.  We
  conclude from Lemma~\ref{lem:equivariant_principal_G-bundles_over_Gamma/H_times_Z}~%
\ref{lem:equivariant_principal_G-bundles_over_Gamma/H_times_Z:given_by_rho}
  and~\ref{lem:equivariant_principal_G-bundles_over_Gamma/H_times_Z:uniqueness} that any
  $\Gamma$-equivariant principal $G$-bundle $q \colon E \to\Gamma/H$ with $\calr(q) \subset
  \calr$ is isomorphic to $\xi_{\alpha}$ for some $\alpha \in \hom_\calr (H,G)$, and for
  two elements $\alpha,\beta \in \hom_\calr(H,G)$ the $\Gamma$-equivariant principal
  $G$-bundles $\xi_{\alpha}$ and $\xi_{\beta}$ are isomorphic if and only if the classes
  of $\alpha$ and $\beta$ in $\hom_\calr (H,G)/G$ agree. Hence we obtain a bijection
  \[\hom_\calr (H,G)/G \xrightarrow{\cong} \Bundle_{\Gamma,G,\calr}(\Gamma/H), \quad
  [\alpha] \mapsto [\xi_{\alpha}].
  \]
  \\[2mm]~\ref{the:Fixed_point_sets_of_B(Gamma,G,calr):BC_Galpha} Let $p \colon
  E(\Gamma,G,\calr) \to B(\Gamma,G,\calr)$ be the canonical projection which is a
  principal $G$-bundle.  We want to show that  $p$ induces a principal $C_G(\alpha)$-bundle
  \[
  p_{(H,\alpha)} \colon E(\Gamma,G,\calr)^{K(H,\alpha)} \to B(\Gamma,G,\calr)^H_{\alpha}.
  \]

  Abbreviate $E = E(\Gamma,G,\calr)$. Consider $(H,\alpha) \in \calr$. Let $E^{\langle H \rangle}$
  be the subspace of $E$ consisting of those elements $e \in E$ such that for each 
  $h \in H$ there exists $g \in G$ with $h \cdot e = e \cdot g$.  For each $e \in E$ define
  $\rho_e \in \hom(H,G)$ by requiring $h \cdot e = e \cdot \rho_e(h)$.  Thus we can define
  a map of sets
  \[
  \rho \colon E^{\langle H \rangle} \to \hom(H,G).
  \]
  Next we show that $\rho$ is continuous.  We apply Theorem~\ref{the:local_objects} at the
  point $b = p(e)$ and obtain a commutative diagram
  \[
  \xymatrix{ \Gamma \times_{\Gamma_b} (T \times G)\ar[r]^-{f}_-{\cong} \ar[d]_{q} &
    p^{-1}(V) \ar[d]^{p|_{p^{-1}(V)}}
    \\
    \Gamma \times _{\Gamma_b} T \ar[r]^-{u}_-{\cong} & V }
  \]
  It suffices to show that the composite of $\rho$ with the map $f^{\langle H \rangle}
  \colon \bigl(\Gamma \times_{\Gamma_b} (T \times G)\bigr)^{\langle H \rangle} \to
  E^{\langle H \rangle}$ is continuous.

  Put $\Gamma^{\langle H \rangle} = \{\gamma \in \Gamma \mid \gamma^{-1} \cdot H \cdot
  \gamma \subseteq \Gamma_b\}$.  
  Consider the commutative diagram
  \[
  \xymatrix{\Gamma^{\langle H \rangle} \ar[r]^-{\nu'} \ar@{^{(}->}[d] & \hom(H,\Gamma_b)
    \ar@{^{(}->}[d]
    \\
    \Gamma \ar[r]_-{\nu} & \hom(H,\Gamma) }
  \]
  where $\nu'$ and $\nu$ respectively are given by conjugating the inclusion homomorphism
  $i\colon H \to \Gamma_b$ with $\gamma \in \Gamma^{\langle H \rangle}$ and $\gamma \in \Gamma$
  respectively, the left vertical arrow is the inclusion of subgroups and the right vertical
  arrow is the injection induced by the inclusion $\Gamma_b \to \Gamma$. Since
  $\map(H,\Gamma_b)$ is the preimage of the constant map under $\map(H,\Gamma) \to \map(H,\Gamma/\Gamma_b)$,
  we conclude from Subsection~\ref{lem:hom(H,G)_is_closed_in_map(H,G)} that the right vertical arrow is the inclusion
  of a closed subspace. The conjugation map $\Gamma \times \hom(H,\Gamma) \to \hom(H,\Gamma)$ 
  is continuous as explained in  Subsection~\ref{lem:hom(H,G)_is_closed_in_map(H,G)},
  We conclude that $\nu$ and hence $\nu'$ are  continuous.  Define a map of sets
  \[
  \mu \colon \Gamma^{\langle H \rangle} \times G \to \hom(H,G)
  \]
  by sending $(\gamma, g)$ to the composite $H \xrightarrow{c_{\gamma} \circ i} \Gamma_b
  \xrightarrow{\rho_e} G \xrightarrow{c_g} G$, where $i \colon H \to \Gamma_b$ is the
  inclusion.  Since $\nu'$ is continuous, $\mu$ is continuous. The map $\mu$
  factorizes through the obvious projection $\Gamma^{\langle H \rangle} \times G \to
  \bigl(\Gamma \times_{\Gamma_b} G\bigr)^{\langle H \rangle}$, which is an identification,
  to a continuous map $\overline{\mu} \colon \bigl(\Gamma \times_{\Gamma_b}
  G\bigr)^{\langle H \rangle} \to \hom(H,G)$ making the following diagram commutative
  \[
  \xymatrix{\Gamma^{\langle H \rangle}  \times G \ar[d] \ar[rd]^{\mu} & \\
    \bigl(\Gamma \times_{\Gamma_b} G\bigr)^{\langle H \rangle} \ar[r]^{\overline{\mu}} &
    \hom(H,G)}
  \]

  If $\pr \colon \Gamma \times_{\Gamma_b} (T \times G) \to \Gamma \times _{\Gamma_b} G$ is
  the projection,  the following diagram is commutative
  \[
  \xymatrix{\bigl(\Gamma \times_{\Gamma_b} (T \times G)\big)^{\langle H \rangle}
  \ar[r]^-{f^{\langle H \rangle}} \ar[d]^{\pr^{\langle H \rangle}}
   & E^{\langle H \rangle} \ar[d]^\rho 
   \\
    \bigl(\Gamma \times _{\Gamma_b} G\bigr)^{\langle H \rangle} \ar[r]^{\overline{\mu}} 
    & 
   \hom(H,G)
    }
   \] 
   Hence  $\overline{\mu} \circ \pr^{\langle H \rangle} = \rho \circ f^{\langle H \rangle}$.  This
  implies that $ \rho \circ f^{\langle H \rangle}$ and hence $\rho \colon E^{\langle H
    \rangle} \to \hom(H,G)$ is continuous. One easily checks that $\rho$ is a $G$-map.

  Let $E^{\langle H,\alpha \rangle}$ be the preimage under $\rho$ of the orbit $G \cdot
  \alpha \subseteq \hom(H,G)$ with respect to the action of $G$ on $\hom(H,G)$ given by
  composing with conjugation automorphisms.  Hence $\rho$ induces a $G$-map
  \[
  \rho_{\alpha}' \colon E^{\langle H,\alpha \rangle} \to G \cdot \alpha.
  \]
  Since $\calr$ satisfies Condition (H), see Definition~\ref{def:property_(H)}, the $G$-map
  $\iota_{\alpha} \colon G/C_G(\alpha) \to G \cdot \alpha, g \mapsto c_g \circ \alpha$ is
  a homeomorphism.  Define the $G$-map
  \[
  \rho_{\alpha} = \iota_{\alpha}^{-1} \circ \rho_{\alpha}'  \colon E^{\langle H,\alpha \rangle} 
  \to G/C_G(\alpha).
  \]
  One easily checks that the preimage of $1\cdot C_G(\alpha) \in G/C_G(\alpha)$ under
  $\rho_{\alpha}$ is $E^{K(H,\alpha)}$.  Since $(G,C_G(\alpha))$ satisfies Condition (S),
  it implies that the canonical $G$-map
  \[
  G \times_{C_G(\alpha)} E^{K(H,\alpha)} \to E^{\langle H,\alpha \rangle}
  \]
  is a $G$-homeomorphism. One easily checks that $E^{\langle H,\alpha \rangle}$ is the
  preimage of $B(\Gamma,G,\calr)^H_{\alpha}$ under $p \colon E(\Gamma,G,\calr) \to
  B(\Gamma,G,\calr)$.  Hence $p$ induces a $C_G(\alpha)$-map 
  \[
  p_{(H,\alpha)} \colon   E(\Gamma,G,\calr)^{K(H,\alpha)} \to B(\Gamma,G,\calr)^H_{\alpha}
  \]
   for which the following diagram  of $G$-spaces commutes
  \[
  \xymatrix@!C = 10em{G \times_{C_G(\alpha)} E^{K(H,\alpha)} \ar[rr]^{\cong} \ar[rd]_-{G \times_{G_{C_G(\alpha)}} p_{(H,\alpha)}}
  & &  E^{\langle H,\alpha \rangle} \ar[ld]^{\quad p|_{p^{-1}(B(\Gamma,G,\calr)^H_{\alpha})}}
  \\
  & B(\Gamma,G,\calr)^H_{\alpha} &
  }
  \]
  Since the right vertical arrow is a principal $G$-bundle, Lemma~\ref{lem:criterion_for_reduction} 
  implies that 
  $p_{(H,\alpha)} \colon E(\Gamma,G,\calr)^{K(H,\alpha)} \to B(\Gamma,G,\calr)^H_{\alpha}$ 
  is a principal $C_G(\alpha)$-bundle.

  Let $X$ be a $CW$-complex and $f \colon X \to B(\Gamma,G,\calr)^H_{\alpha}$ be a weak
  homotopy equivalence.  Let $q \colon f^* E(\Gamma,G,\calr)^{K(H,\alpha)} \to X$ be the
  pullback of the principal $C_G(\alpha)$-bundle $p_{(H,\alpha)}$. Every principal
  $G$-bundle is a fibration. We conclude from the long exact sequence of homotopy groups
  associated to $p_{(H,\alpha)}$ and $q$ and the Five-Lemma that the induced map $f^*
  E(\Gamma,G,\calr)^{K(H,\alpha)} \to E(\Gamma,G,\calr)^{K(H,\alpha)}$ is a weak homotopy
  equivalence. Since $E(\Gamma,G,\calr)^{K(H,\alpha)}$ is weakly contractible, the same is
  true for $f^* E(\Gamma,G,\calr)^{K(H,\alpha)}$. Since $q$ is a principal $G$-bundle over
  the $CW$-complex $X$ with weakly contractible total space, it is a model for the
  universal principal $C_G(\alpha)$-bundle and $X$ is a model for $BC_G(\alpha)$.
\end{proof}


\section{Examples}
\label{sec:examples}

\subsection{Some special cases for $G$, $\Gamma$, and $\calr$}
\label{subsec:Some_special_cases_for_G_Gamma_calr}

\begin{example}[Trivial $\Gamma$] \label{exa:trivial_Gamma} Suppose $\Gamma$ is
  trivial. Then there is only one family $\calr$ of local representations consisting of
  $\{1\} \to G$, and the universal $\Gamma$-equivariant principal $G$-bundle with respect
  to the family of local representations $\calr$ is the same as the universal principal
  $G$-bundle $EG \to BG$.
\end{example}

\begin{example}[Trivial $G$] \label{exa:trivial_G} Suppose $G$ is trivial. Then a family
  of local representations is the same as a family $\calf$ of subgroups of $\Gamma$, and
  the universal $\Gamma$-equivariant principal $G$-bundle with respect to the family of
  local representations $\calr = \calf$ is the identity $\EGF{\Gamma}{\calf} \to
  \EGF{\Gamma}{\calf}$.
\end{example}

\begin{example}[Trivial local representations]
  \label{exa:trivial_local_representations}
  Let $\calf$ be a family of subgroups of $\Gamma$. Let $\caltr(\calf)$ be the system of
  local representations given by pairs $(H,\rho_H)$, where $H$ belongs to $\calf$ and
  $\rho_H$ is the trivial representation. Then a $\Gamma$-equivariant principal $G$-bundle
  $q \colon E \to B$ satisfies $\calr(p) \subseteq \caltr(\calf)$ if and only if all
  isotropy groups of $B$ belong to $\calf$, the induced map $q/\Gamma \colon E/\Gamma \to
  B/\Gamma$ is a principal $G$-bundle, and $q$ is the pullback of $q/\Gamma$ with the
  projection $\pr \colon B \to B/\Gamma$. This follows from
  Theorem~\ref{the:local_objects}.

  A model for the universal $\Gamma$-equivariant principal $G$-bundle associated to
  $\caltr(\calf)$ is
  \[
  \id \times p \colon \EGF{\Gamma}{\calf} \times EG \to \EGF{\Gamma}{\calf} \times BG,
  \]
  where $p \colon EG \to BG$ is the universal principal $G$-bundle, the left $\Gamma$- and
  right $G$-actions are given on the total space by $\gamma \cdot (x,e) \cdot g = (\gamma
  \cdot x, e\cdot g)$ and on the base space by $\gamma \cdot (x,b) \cdot g = (\gamma \cdot
  x, b)$.
\end{example}

\begin{example}[$\calr(\calf)$] \label{exa:proper_spaces} 
  Consider a family $\calf$ of
  subgroups of $\Gamma$.  Define the associated family of local representations
  \[
  \calr(\calf) := \{(H,\alpha) \mid H \in \calf\; \text{and} \; \alpha \colon H \to G \;
  \text{any group homomorphism}\},
  \]
  in other words, $H$ runs through elements in $\calf$ and $\alpha$ through all possible
  group homomorphisms.

  Then a $\Gamma$-equivariant principle $G$-bundle $p \colon E \to B$ over the
  $\Gamma$-$CW$-complex $B$ satisfies $\calr(p) \subseteq \calr(\calf)$ if and only if all
  isotropy groups of $B$ belong to $\calf$.  If we suppose that $\calr(\calf)$ satisfies
  Condition (H) introduced in Definition~\ref{def:property_(H)}, then in this situation
  the universal $\Gamma$-equivariant principal $G$-bundle with respect to $\calr(\calf)$
  classifies $\Gamma$-equivariant principal $G$-bundles over $\Gamma$-$CW$-complexes whose
  isotropy groups belong to $\calf$.
  
  If we choose $\calf$ to be the family of compact subgroups and $\calr(\calf)$ satisfies
  Condition (H), these are precisely the $\Gamma$-equivariant principal $G$-bundles over
  proper $\Gamma$-$CW$-complexes.
\end{example}

\begin{example}[$n$-dimensional complex vector bundles with Hermitian metric over proper $\Gamma$-$CW$-complexes]
  \label{exa:Gamma_equivariant_U(n)_bundles} 
  Let $\Gamma$  be a topological group. Assume that all compact subgroups of $\Gamma$ are Lie groups and
  $\Gamma$ is completely regular, e.g., $\Gamma$ is a Kac-Moody group, or that $\Gamma$ is 
   locally compact, second countable and has finite
    covering dimension, e.g., is a  Lie groups.  Let $G = \calu(n)$ be the
  Lie group of unitary automorphisms of $\mathbb{C}^n$. Consider the family of   representations
  \[
  \calr := \{(H,\alpha) \mid H \; \text{is compact} \; \text{and} \; \alpha \colon H \to G \;
  \text{any group homomorphism}\},
  \]
   which is the family $\calr(\calCOM)$ of Example~\ref{exa:proper_spaces}  associated to the family $\calCOM$ of
  compact subgroups. 

  In this case $ \hom(H,\calu(n))$ is isomorphic to the space of unitary representations of $H$ on 
  $\mathbb{C}^n$, and $\hom (H,\calu(n))/\calu(n)$ is isomorphic to the set of isomorphism classes of
  $n$-dimensional unitary $H$-representations. Denote by $V_1,V_2, \ldots,V_k, \ldots$ the
  irreducible unitary representations of $H$ and  $d_i:= \dim_{\mathbb{C}}V_i$. The 
  set of isomorphism classes unitary $n$-dimensional $H$-representations can be
  parametrized with the set of partitions of $n$ using the dimensions $d_i$, i.e.,
  \[ \hom(H,\calu(n))/\calu(n) \cong \{(n_1, n_2, \ldots, n_k,\ldots) \mid n_1d_1+ \cdots +n_kd_k+ \cdots =n, n_i\geq 0\}.
  \]

  If the homomorphism $\alpha\colon H \to \calu(n)$ induces the representation $V=
  \bigoplus_{i}V_i^{\oplus n_i}$, then the isotropy group
  \[
  C_{\calu(n)}(\alpha) \cong \prod_{i} \calu(n_i)
  \]
  and therefore we have that
  \[
  B(\Gamma,\calu(n),\calr)^H \cong \bigsqcup_{ \{(n_1, n_2, \ldots, n_k,\ldots) \mid n_1d_1+ \cdots +n_kd_k+ \cdots =n, n_i\geq 0\}}\; \prod_i B\calu(n_i).
  \]
\end{example}


\subsection{Compact abelian Lie group $G$}
\label{subsec:Compact_abelian_Lie_group_G}

  Let $\Gamma$  be a topological group. Assume that all compact subgroups of $\Gamma$ are Lie groups and
  $\Gamma$ is completely regular, e.g., $\Gamma$ is a Kac-Moody group, or that $\Gamma$ is 
   locally compact, second countable and has finite
    covering dimension, e.g., is a  Lie groups. Let $G$ be a compact abelian Lie group.

  Consider the family of   representations
  \[
  \calr := \{(H,\alpha) \mid H \; \text{is compact} \; \text{and} \; \alpha \colon H \to G \;
  \text{any group homomorphism}\},
  \]
   which is the family $\calr(\calCOM)$ of Example~\ref{exa:proper_spaces}  associated to the family $\calCOM$ of
  compact subgroups.
  We  have 
  \[
  \hom_\calr(H,G) = \hom(H,G)
  \]
  and therefore we obtain from Theorem~\ref{the:Fixed_point_sets_of_B(Gamma,G,calr)} 
  a weak homotopy equivalence
  \[
  \bigsqcup_{\hom(H,G)} BG \simeq B(\Gamma,G,\calr)^H.
  \]
  Whenever $G=S^1$ we have that $\hom(H,S^1)\cong H^2(BH,\mathbb{Z})$ and $BS^1 \simeq
  K(\mathbb{Z},2)$. Therefore we obtain a  weak homotopy equivalence
  \[
  \bigsqcup_{H^2(BH,\mathbb{Z})} K(\mathbb{Z},2) \simeq  B(\Gamma,S^1,\calr)^H.
  \]

Now make the stronger assumption that $\Gamma$ is a Lie group.
Then for every compact subgroup $H \subseteq \Gamma$
the homogeneous space $\Gamma/H$ is 
a smooth manifold and hence a $CW$-complex. 
This implies that $E \Gamma \times_\Gamma B(\Gamma,G,\calr)$ is a $CW$-complex.
Because of Theorem~\ref{the:criterion_for_(H)} and
Theorem~\ref{the:Classifying_space_for_gamma-equivariant_principal_G-bundles}
we have the universal $\Gamma$-equivariant $G$-bundle with respect to $\calr$
\[
p \colon E(\Gamma,G,\calr) \to B(\Gamma,G,\calr)
\]
Applying the homotopy quotient with respect to the group $\Gamma$ we obtain a principal $G$-bundle
over a $CW$-complex 
\[
G \to E \Gamma \times_\Gamma E(\Gamma,G,\calr) \to E \Gamma \times_\Gamma
B(\Gamma,G,\calr)
\]
which can be classified by a map
\[
E \Gamma \times_\Gamma B(\Gamma,G,\calr) \to BG.
\]
 This map induces an adjoint map
\[
\psi\colon B(\Gamma,G,\calr) \to \map(E \Gamma, BG)
\]
which is $\Gamma$-equivariant. In general the map $\psi$ is not a $\Gamma$-equivariant
homotopy equivalence, but under the specific choices of $\Gamma$, $G$ and $\calr$ above indeed
they are.

\begin{theorem} \label{the:compact_abelian_G} 
   Let $\Gamma$ be a Lie group and let $G$
  be a compact abelian Lie group.  Put $\calr = \calr(\calCOM)$ for $\calCOM$ the family
  of compact subgroups of $\Gamma$.  Let $X$ be a proper $\Gamma$-CW complex.

     Then the map
    \[
    \map(\id_X,\psi) \colon \map (X, B(\Gamma,G,\calr)) \to \map  (X,\map(E\Gamma, BG)), 
    \quad f \mapsto \psi \circ f
    \]
    is a weak $\Gamma$-homotopy equivalence. 

    In particular we obtain bijections
    \[
     \Bundle_{\Gamma,G,\calr}(X) \cong  [X, B(\Gamma,G,\calr))]^{\Gamma}  \xrightarrow{\cong}  [X,\map(E\Gamma, BG)]^{\Gamma} = [E\Gamma \times_{\Gamma} X,BG],
     \]
     and, when $G=S^1$
    \[
    \Bundle_{\Gamma,S^1,\calr}(X) \cong H^2(E\Gamma \times_\Gamma X, \mathbb{Z}).
    \]
  \end{theorem}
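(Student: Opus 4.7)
The plan is to reduce the problem to showing that $\psi \colon B(\Gamma,G,\calr) \to \map(E\Gamma, BG)$ itself is a weak $\Gamma$-homotopy equivalence, and then verify this on $H$-fixed points for every compact subgroup $H \subseteq \Gamma$.  Since $X$ is proper, all its isotropy groups are compact, so an inductive equivariant obstruction-theoretic argument (Whitehead-type, built by attaching equivariant cells of the form $\Gamma/H \times D^n$ with $H$ compact) will transfer a weak $\Gamma$-equivalence between target spaces to a weak $\Gamma$-equivalence of the mapping spaces $\map(X,-)$.

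For the identification of the $H$-fixed points, I would apply Theorem~\ref{the:Fixed_point_sets_of_B(Gamma,G,calr)}: since $G$ is abelian, $C_G(\alpha)=G$ for every $\alpha$ and the $G$-conjugation action on $\hom_{\calr}(H,G) = \hom(H,G)$ is trivial, producing a weak equivalence
\[
B(\Gamma,G,\calr)^H \;\simeq\; \bigsqcup_{\alpha \in \hom(H,G)} BG.
\]
On the target side, since $BG$ has trivial $\Gamma$-action, $\map(E\Gamma,BG)^H$ equals $\map(E\Gamma/H, BG)$; and $E\Gamma/H$ is a model for $BH$ because $E\Gamma$ restricted to $H$ is a free contractible $H$-CW complex (using that $\Gamma$ is a Lie group and $H$ is compact). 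Writing $G = T \times F$ with $T$ a torus and $F$ a finite abelian group, so that $BG = K(\IZ^n,2) \times K(F,1)$, a direct cohomological computation (using $H^1(BH,\IZ^n)=0$ for compact Lie $H$) will show that the components of $\map(BH,BG)$ are indexed by $\hom(H,T) \times \hom(H,F) = \hom(H,G)$, each component being weakly equivalent to $BG$.

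The heart of the argument is then to verify that $\psi^H$ realizes this identification.  Given $b \in B(\Gamma,G,\calr)^H$ with local representation $\alpha = \rho_e$ coming from Theorem~\ref{the:local_objects}, applying the Borel construction to the local model $\Gamma \times_H (T \times G) \to \Gamma/H \times T$ identifies $\psi(b) \colon E\Gamma/H \simeq BH \to BG$ with the map $B\alpha$, so $\psi^H$ matches the $\hom(H,G)$ parametrizations of the components on the two sides. Within each component, both $BC_G(\alpha)=BG$ on the left and the corresponding component of $\map(BH,BG)$ on the right classify automorphisms of the universal principal $G$-bundle over $BH$, and $\psi^H$ should be compatible with this identification. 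I expect this last compatibility check to be the main obstacle: one must trace the universal equivariant bundle through the Borel construction carefully enough to match it, component by component, with the Dwyer--Zabrodsky/Notbohm-style identification $\map_{B\alpha}(BH,BG) \simeq BC_G(\alpha)$, which holds here precisely because $G$ is abelian.

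The displayed bijections then follow by taking $\pi_0$ of $\Gamma$-fixed points, combining Theorem~\ref{the:Classifying_space_for_gamma-equivariant_principal_G-bundles}, the main statement, and the standard adjunction $[X,\map(E\Gamma,BG)]^\Gamma = [E\Gamma\times_\Gamma X, BG]$ (valid because $BG$ carries the trivial $\Gamma$-action, so $\Gamma$-equivariant maps out of the free $\Gamma$-space $E\Gamma\times X$ are the same as maps out of the quotient).  The case $G=S^1$ then reduces to $BS^1=K(\IZ,2)$ and Brown representability.
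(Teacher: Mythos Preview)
Your proposal follows essentially the same route as the paper's proof: reduce to showing that $\psi^H$ is a weak equivalence for each compact $H \subseteq \Gamma$, identify both sides as $\coprod_{\hom(H,G)} BG$, and check that $\psi^H$ realizes this identification component by component. Two points of comparison are worth noting.

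First, for the analysis of $\map(BH,BG)$ and the compatibility check you flag as the main obstacle, the paper does not do the direct cohomological computation via $G \cong T \times F$ that you propose. Instead it builds the diagram of associated $G$-bundles
\[
E\Gamma \times_{K(H,\alpha)} E(\Gamma,G,\calr)^{K(H,\alpha)} \to E\Gamma \times_{H,\alpha} EG \to EG
\]
over $E\Gamma/H \times B(\Gamma,G,\calr)^H_\alpha \simeq E\Gamma/H \times BG \to BG$, and then cites \cite{Lashof-May-Segal(1983)} (Theorem~2 and Proposition~4) for the two facts that the adjoint $BG \to \map(BH,BG)$ is a weak equivalence onto its component and that $B\colon \hom(H,G)\to [BH,BG]$ is a bijection. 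Your hands-on computation would certainly work for compact abelian $G$, but the citation is what makes the compatibility check short.

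Second, for the step from ``$\psi^H$ is a weak equivalence for all compact $H$'' to ``$\map(\id_X,\psi)$ is a weak $\Gamma$-equivalence'', the paper does not run a cell-by-cell Whitehead argument. Instead it uses the adjunction $[Z,\map(X,-)]^\Gamma \cong [X\times Z,-]^\Gamma$ and observes that, because $\Gamma$ is a Lie group and $X$ is proper, the diagonal $\Gamma$-space $X\times Z$ is $\Gamma$-homotopy equivalent to a proper $\Gamma$-CW complex; then the known bijectivity of $\psi_*$ on $[Y,-]^\Gamma$ for proper $Y$ finishes it. This adjunction-plus-product step is exactly where the Lie hypothesis on $\Gamma$ is used, and your sketch should make that dependence explicit.
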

    \begin{proof}
    For $H$ a compact subgroup of $\Gamma$, we claim that the induced map
    \[
    \psi^H\colon B(\Gamma,G,\calr)^H \to \map(E\Gamma, BG)^H \simeq \map(BH, BG)
    \]
    is a weak homotopy equivalence; its proof is based on the proofs and results
    of~\cite[Theorem~2 and Proposition4]{Lashof-May-Segal(1983)}.  From
    Theorem~\ref{the:Fixed_point_sets_of_B(Gamma,G,calr)} we know that
    \[
    B(\Gamma,G,\calr)^H \cong \bigsqcup_{\alpha \in \hom(H,G)} B(\Gamma,G,\calr)^H_\alpha,
    \]
    and since $G$ is abelian we know that 
    \[
    E(\Gamma, G, \calr)^{K(H,\alpha)} \to B(\Gamma,G,\calr)^H_\alpha
    \]
    is a $G$-principal bundle. Therefore we have a commutative square of principal $G$-bundles
    \[\xymatrix{E(\Gamma, G, \calr)^{K(H,\alpha)} \ar[r] \ar[d] & EG \ar[d] \\
    B(\Gamma,G,\calr)^H_\alpha \ar[r]^{\simeq} & BG
    }
    \]
    where the bottom arrow is a homotopy equivalence. This square induces another square
    of associated principal $G$-bundles
    \begin{align}\xymatrix{E \Gamma \times_{K(H,\alpha)} E(\Gamma, G, \calr)^{K(H,\alpha)} \ar[r] \ar[d] 
    & E \Gamma \times_{H,\alpha}EG \ar[d]  \ar[r]
    & EG \ar[d]
   \\
    E \Gamma/H \times B(\Gamma,G,\calr)^H_\alpha \ar[r]^{\simeq} 
    & E \Gamma/ H \times BG \ar[r] 
    & BG
    } \label{fig:classifying_map_for_associated_bundle_of_B_alpha}
    \end{align} 
    where on the upper left corner the group $K(H,\alpha)$ acts on $E \Gamma$ via the
    canonical isomorphism $H \cong K(H,\alpha)$, on the upper middle term the group $H$
    acts on $EG$ via the homomorphism $\alpha\colon  H \to G$ and the horizontal maps on the
    right hand side are classifying maps.  In~\cite[Proof of Theorem~2, page~173]{Lashof-May-Segal(1983)} 
    it is shown that the adjoint map of the lower horizontal
    maps of diagram~\eqref{fig:classifying_map_for_associated_bundle_of_B_alpha}
    \[
    BG \to \map(E \Gamma, EG)^H \simeq \map(BH,BG)
    \]  
    is a weak equivalence on basepoint components; therefore the map
    \[
    \psi^H|_{B(\Gamma,G,\calr)^H_\alpha}\colon B(\Gamma,G,\calr)^H_\alpha \to \map(E\Gamma, BG)^H \simeq \map(BH, BG)
    \]
    is also a weak equivalence of basepoint components. In~\cite[Proposition~4]{Lashof-May-Segal(1983)} 
    it is shown that the map
    \[
    B\colon  \hom(H,G) \to [BH,BG] 
    \]
    given by the classifying space functor is an isomorphism, and since the bundle 
   $E   \Gamma \times_{H,\alpha}EG$ is constructed through the action defined by the
    homomorphism $\alpha$, then we conclude that the map
    \[
    \psi^H\colon B(\Gamma,G,\calr)^H \to \map(E\Gamma, BG)^H \simeq \map(BH, BG)
    \]
    is indeed a weak homotopy equivalence.
    
    Hence for every proper $\Gamma$-$CW$-complex $Y$ the induced map
    \begin{eqnarray}
    & \psi_* \colon [Y,B(\Gamma,G,\calr)]^{\Gamma} \to [Y,\map(E \Gamma, BG)]^{\Gamma}, \quad [f] \mapsto [\psi \circ f]& 
    \label{psi_ast}
  \end{eqnarray}
  is bijective, see~\cite[Proposition~2.3 on page~35]{Lueck(1989)}. 

    In order to show that $\map(\id_X,\psi)$ is a weak $\Gamma$-homotopy equivalence, it suffices to show
    for any $\Gamma$-$CW$-complex $Z$ that the induced map
    \begin{multline*}
    \map(\id_X,\psi)_*  \colon [Z,\map (X, B(\Gamma,G,\calr))]^{\Gamma} \to [Z,\map  (X,\map(E\Gamma, BG))]^{\Gamma}, 
    \\ 
    [f] \mapsto [\map(\id_X,\psi) \circ f]
    \end{multline*}
    is bijective, see~\cite[Proposition~2.3 on page~35]{Lueck(1989)}. 
   Because of the adjunctions appearing in 
   Subsections~\ref{subsec:Basic_feature_of_the_category_of_compactly_generated_spaces}
  and the identification $\pi_0(\map(A,B)^{\Gamma}) = [A,B]^{\Gamma}$ for $\Gamma$-spaces $A$ and $B$,
   this is equivalent to showing that adjoint map 
    \[
   \psi_* \colon [X \times Z,B(\Gamma,G,\calr)]^{\Gamma} \to [X \times Z,\map(E\Gamma, BG)]^{\Gamma} ,
    \quad [f] \mapsto [\psi \circ f]
  \]
   is bijective, where $\Gamma$ acts diagonally on $X \times Z$. Since $\Gamma$ is a Lie group and $X$ is a proper
   $\Gamma$-$CW$-complex, $X \times Y$ is $\Gamma$-homotopy equivalent to a proper $\Gamma$-$CW$-complex $Y$.
     Hence  $\map(\id_X,\psi)$ is a weak $\Gamma$-homotopy equivalence because of the bijectivity of~\eqref{psi_ast}.

    The other claims follow using Theorem~\ref{the:Fixed_point_sets_of_B(Gamma,G,calr)}.
  \end{proof}


\section{The case $G=\calp \calu(\calh)$ the projective unitary group. }
\label{Example:The_case_G_is_PU(H)_the_projective_unitary_group}

Twisted versions of K-theory may be defined via a specific type of
projective unitary bundles. The key point is that the space $\Fred(\calh)$ of Fredholm
operators on a separable Hilbert space $\calh$ endowed with the norm topology, which
itself has the homotopy type of $\mathbb{Z} \times BU$~\cite{Atiyah(1969b),Jaenich(1965)},
carries a conjugation action by the group $\calp \calu(\calh)$ of projective unitary
operators. Thus, to a pair $(X,P)$ of a CW-complex
$X$ together with a principal $\calp \calu(\calh)$-bundle
\[
\calp \calu(\calh) \to P \to X
\]
over $X$, one can associate the \emph{twisted K-theory groups} $K^{-i}(X,P)$ 
(see~\cite{Atiyah-Segal(2004)}) defined as the homotopy groups
\[
K^{-i}(X,P) := \pi_i\bigl( \Gamma(P \times_{\calp \calu(\calh)} \Fred(\calh))\bigr)
\]
of the space of sections of the associated $\Fred(\calh)$-bundle
\[
\Fred(\calh) \to P \times_{\calp \calu(\calh)} \Fred(\calh) \to X.
\]

The equivariant version of the previous construction requires equivariant projective
unitary bundles of a certain kind, and in order to construct their universal and
classifying space we need to show that the group $\calp \calu(\calh)$ satisfies
items~\ref{def:property_(H):component},~\ref{def:property_(H):S_for_G}
and~\ref{def:property_(H):homeo} of Condition (H) introduced in
Definition~\ref{def:property_(H)}. In what follows, we will show that $\calp \calu(\calh)$
satisfies items~\ref{def:property_(H):component},~\ref{def:property_(H):S_for_G}
and~\ref{def:property_(H):homeo} of Condition (H), whenever we consider homomorphisms
$\alpha \colon H \to \calp \calu(\calh)$ from finite groups $H$.


\subsection{Existence of local cross  sections for $H$ finite}
\label{subsec:Existence_of_local_cross_sections_for_H_finite}

Let $\calu(\calh)$ and $\calp \calu(\calh)$ be respectively the unitary and projective
unitary groups of a separable Hilbert space $\calh$. The group $\calu(\calh)$ is defined
as
\[
\calu(\calh):= \{U \in \calb(\calh) \mid U U^*=U^*U=1 \}
\]
where $\calb(\calh)$ denotes the space of bounded operators on $\calh$, its center
$Z(\calu(\calh))$ is $S^1$, and $\calp \calu(\calh)$ is the quotient $ \calu(\calh) /S^1$.
We endow $ \calu(\calh)$ with the norm topology, i.e., a sub-base for the topology is
given by the sets
\[
B_\epsilon(T) :=\{ S \in \calu(\calh) \mid \| S-T \| < \epsilon \}
\]
where \[
\|T \|:= \sup \{ \|Tx\|  \mid x \in \calh \text{ such that } \|x\|  \leq 1  \}.
\]

Endow $\calp \calu(\calh)$ with the quotient topology and note this topology can be
recovered with the metric defined by the distance between the $S^1$-orbits, i.e., for $T,U
\in \calp \calu(\calh)$ define
\[
(T,U)_{\min}:= \min \{ \| \widetilde{T} - \widetilde{U} \| 
\mid \widetilde{T} , \widetilde{U} \in \calu(\calh) \text{ lifts of } T,U \text{ respectively} \}.
\]

With these topologies the groups $ \calu(\calh)$ and $\calp \calu(\calh)$ become a topological groups.
The short exact sequence of topological groups
\[
1 \to S^1 \to \calu(\calh) \stackrel{p}{\to} \calp \calu(\calh) \to 1
\]
is  a $S^1$-principal bundle (cf.~\cite{Simms(1970)}).

The group $\calu(\calh)$ endowed with the norm topology is moreover a Banach Lie 
group\footnote{A reference for the foundations of Banach manifolds may be found on~\cite{Lang(1972)}, and 
a reference on the properties the properties of Banach Lie groups and their Banach Lie algebras may 
be found on~\cite{de_la_Harpe(1972), Neeb(2006)} and the references therein.}; 
namely, $\calu(\calh)$ is a Banach manifold
whose structural maps are maps of Banach manifolds, see for instance~\cite[Example~V.1.6, page~391]{Neeb(2006)}. It
can be modeled locally by the vector space $ \cala$ of skew-adjoint operators
\[
\cala := \{ L \in \calb(\calh) \mid L +L^*=0 \};
\]
via the exponential map
\[ \exp\colon  \cala \to \calu(\calh), \quad L \mapsto \exp(L);
\]
in this way we could think of the skew-adjoint operators as the tangent space of
$\calu(\calh)$ at the identity, $T_1\calu(\calh) = \cala$.  

This in particular implies that a base of open sets around $1 \in \calu(\calh)$ may be
obtained by the image of the exponential map of a base of open sets around $0 \in \cala$. 
Denoting by $i \mathbb{R}$ the operators of the form $r\sqrt{-1} \cdot \id_\calh$
for $r \in \mathbb{R}$, we obtain the diagram
\begin{align} \label{diagram_tangent_space_U(H)}
  \xymatrix{0 \ar[r] & i \mathbb{R} \ar[r] \ar[d]^\exp & \cala \ar[r]^\pi \ar[d]^\exp & \cala/i\mathbb{R} \ar[r] &0 
   \\
    1 \ar[r] & S^1 \ar[r] & \calu(\calh) \ar[r]^p & \calp \calu (\calh) \ar[r] & 1 \\
  }\end{align} 
which in particular permits us to model locally $\calp \calu(\calh)$ through any section
\[
\xymatrix{0 \ar[r] & i \mathbb{R} \ar[r]  & \cala \ar[r]_\pi  & \cala/i\mathbb{R}  \ar@/_1pc/[l]_\sigma \ar[r] &0 
}
\]
 by the composition 
 \[
 p \circ \exp \circ \sigma \colon  \cala / i \mathbb{R} \to \calp \calu (\calh)
 \]
 making the following diagram commutative
\[
\xymatrix{\cala \ar[r]_\pi \ar[d]^\exp & \cala/i\mathbb{R} \ar@/_1pc/[l]_\sigma \ar[d]^{p \circ \exp \circ \sigma}   
\\
 \calu(\calh) \ar[r]^p & \calp \calu (\calh).  
}
\]

Now let $H$ be a finite group. Recall that $\hom(H,\calp \calu(\calh))$ obtains the
subspace topology from its embedding into $\map(H,\calp \calu(\calh))$, see
Subsection~\ref{subsec:Space_of_homomorphisms}; since $\calp \calu(\calh)$ is metric, this
topology can also be defined with the supremums metric of~\eqref{supremums_metric}.

Consider the conjugation action
\begin{align*}
\calp \calu(\calh) \times \hom(H,\calp \calu(\calh)) & \to \hom(H,\calp \calu(\calh)) \\
(g, \alpha) & \mapsto g \alpha g^{-1} 
\end{align*}
and for $\alpha \in  \hom(H,\calp \calu(\calh)) $ denote by 
\[
\calp \calu(\calh) \cdot \alpha := \{ g \alpha g^{-1} \mid g \in \calp \calu(\calh) \}
\]
 the orbit of $\alpha$ under the conjugation action. We claim

\begin{theorem} \label{the:local_cross_sections_for_Hom(H,PU(H))}
For any finite group $H$ and any $\alpha \in  \hom(H,\calp \calu(\calh)) $, the projection
\[
\pr \colon \calp\calu(\calh) \to \calp\calu(H)/C_{\calp\calu(\calh))}(\alpha)
\]
is a principal $C_{\calp\calu(\calh)}(\alpha)$-bundle and the canonical map
\[\iota_{\alpha} \colon  \calp\calu(H)/C_{\calp\calu(\calh)}(\alpha) \xrightarrow{\cong}  \calp\calu(H) \cdot \alpha
\]
is a homeomorphism.
\end{theorem}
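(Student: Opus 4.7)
The plan is to reduce the statement to a corresponding assertion for $\calu(\calh)$, which in the norm topology is a Banach Lie group, and to exploit the finiteness of $H$ through an averaging construction yielding a topological complement to the Lie algebra of the centralizer. Since $p \colon \calu(\calh) \to \calp\calu(\calh)$ is a principal $S^1$-bundle, writing $C := p^{-1}(C_{\calp\calu(\calh)}(\alpha))$, the induced map $\calu(\calh)/C \to \calp\calu(\calh)/C_{\calp\calu(\calh)}(\alpha)$ is a homeomorphism; hence producing a principal $C$-bundle structure on $\calu(\calh) \to \calu(\calh)/C$ immediately yields the desired principal $C_{\calp\calu(\calh)}(\alpha)$-bundle structure on $\calp\calu(\calh) \to \calp\calu(\calh)/C_{\calp\calu(\calh)}(\alpha)$.

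First I would lift $\alpha$ by forming the pullback central extension $\widetilde{H} := H \times_{\calp\calu(\calh)} \calu(\calh)$, which is a compact Lie group (a finite disjoint union of circles) fitting in a short exact sequence $1 \to S^1 \to \widetilde{H} \to H \to 1$ and carrying a canonical unitary representation $\widetilde{\alpha} \colon \widetilde{H} \to \calu(\calh)$. The Peter--Weyl theorem decomposes $\calh$ as a Hilbert direct sum $\calh = \bigoplus_V V \otimes K_V$ of isotypic components, where $V$ ranges over the irreducible unitary representations of $\widetilde{H}$ on which the central $S^1$ acts by the standard character. Schur's lemma then identifies $C_{\calu(\calh)}(\widetilde{\alpha}) = \prod_V \calu(K_V)$, which is a closed Banach Lie subgroup of $\calu(\calh)$ whose Lie algebra is the fixed point space $\cala^{\widetilde{H}}$ of the adjoint $\widetilde{H}$-action.

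Next I would observe that $C$ contains $C_{\calu(\calh)}(\widetilde{\alpha})$ as an open subgroup of finite index: the map $g \mapsto \bigl(\tilde h \mapsto g\widetilde{\alpha}(\tilde h)g^{-1}\widetilde{\alpha}(\tilde h)^{-1}\bigr)$ is a continuous homomorphism from $C$ into the finite discrete Pontryagin dual $\widehat{H}$ whose kernel is $C_{\calu(\calh)}(\widetilde{\alpha})$. Thus $C$ is itself a closed Banach Lie subgroup of $\calu(\calh)$ with Lie algebra $\cala^{\widetilde{H}}$. Since the central $S^1 \subseteq \widetilde{H}$ acts trivially via $\mathrm{Ad}$, the averaging operator
\[
P \colon \cala \to \cala, \qquad P(L) := \frac{1}{|H|} \sum_{h \in H} \mathrm{Ad}(\widetilde{\alpha}(\tilde h))(L),
\]
is a well-defined continuous linear projection onto $\cala^{\widetilde{H}}$, so $\mathfrak{m} := \ker P$ is a closed topological complement. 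Standard Banach Lie theory then shows that $(m,X) \mapsto \exp(m)\exp(X)$ is a local diffeomorphism $\mathfrak{m} \times \cala^{\widetilde{H}} \to \calu(\calh)$ at the origin, which produces a local cross section of $\calu(\calh) \to \calu(\calh)/C$ near the identity coset; left translation furnishes sections everywhere, establishing the principal $C$-bundle structure.

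Finally, $\iota_{\alpha}$ is a continuous bijection from $\calp\calu(\calh)/C_{\calp\calu(\calh)}(\alpha)$ onto the orbit $\calp\calu(\calh) \cdot \alpha \subseteq \hom(H,\calp\calu(\calh))$. Post-composing the local cross section constructed above with the orbit map $g \mapsto c_g \circ \alpha$ yields a continuous local section of the orbit map near $\alpha$, hence a continuous local inverse to $\iota_{\alpha}$ near the identity coset; $\calp\calu(\calh)$-equivariance extends this to all of the orbit, making $\iota_{\alpha}$ a homeomorphism. The main technical obstacle will be checking that the Banach Lie theoretic cross section, viewed through the $\exp$ of the closed complement $\mathfrak{m}$, satisfies the quasi-regular open neighborhood condition of Definition~\ref{ref:condition_(S)} and translates cleanly into the compactly generated setting used throughout the paper.
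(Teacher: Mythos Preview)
Your Lie-algebraic route to the principal bundle assertion is sound and genuinely different from the paper's: you split the Lie algebra $\cala$ using the averaging projection $P$ and invoke the local diffeomorphism $(m,X)\mapsto\exp(m)\exp(X)$, whereas the paper never touches the Lie algebra and instead works entirely on the group level. That part is fine.

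The gap is in your final paragraph. You write that post-composing your local section $\sigma\colon U\to\calp\calu(\calh)$ of $\pr$ with the orbit map $g\mapsto c_g\circ\alpha$ ``yields a continuous local section of the orbit map,'' and hence a local inverse to $\iota_\alpha$. But $\mathrm{orb}\circ\sigma$ is a map $U\to\calp\calu(\calh)\cdot\alpha$, and since $\mathrm{orb}=\iota_\alpha\circ\pr$ and $\pr\circ\sigma=\id_U$, this composite is literally $\iota_\alpha|_U$ itself, not an inverse to it. What you need, and what you have not produced, is a continuous map going the \emph{other} way: from a neighborhood of $\alpha$ in $\hom(H,\calp\calu(\calh))$ with its subspace topology back to $\calp\calu(\calh)$. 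Your section $\sigma$ has domain an open set in the quotient $\calp\calu(\calh)/C_{\calp\calu(\calh)}(\alpha)$, and nothing in your argument compares the quotient topology with the subspace topology on the orbit inside $\hom(H,\calp\calu(\calh))$; that comparison is exactly the content of the assertion that $\iota_\alpha$ is a homeomorphism.

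The paper closes this gap by building the section in the correct direction from the start. For $\beta$ near $\widetilde\alpha$ in $\hom_{S^1}(\widetilde H,\calu(\calh))$ it forms the intertwining operator
\[
T_{\beta,\widetilde\alpha}=\frac{1}{|H|}\sum_{h\in H}\beta(\widetilde h)\,\widetilde\alpha(\widetilde h)^{-1},
\]
which is invertible once $\|\beta(\widetilde h)-\widetilde\alpha(\widetilde h)\|<1/|H|$ for all $h$, satisfies $T_{\beta,\widetilde\alpha}\,\widetilde\alpha\,T_{\beta,\widetilde\alpha}^{-1}=\beta$, and depends continuously on $\beta$; its unitary part gives a continuous map $\sigma\colon V\to\calu(\calh)$ with $c_{\sigma(\beta)}\circ\widetilde\alpha=\beta$. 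This is a section of the orbit map, so $\pr\circ\sigma$ is a local continuous inverse to $\iota_{\widetilde\alpha}$, and then descent along the local homeomorphism $\hom_{S^1}(\widetilde H,\calu(\calh))\to\hom(H,\calp\calu(\calh))_{\widetilde H}$ yields the same for $\iota_\alpha$. Note that this averaging is your operator $P$ promoted from the Lie algebra to the group, and it is precisely this group-level version that makes the domain of the section an open set in $\hom$ rather than in the quotient. If you want to rescue your approach, you must show that $m\mapsto c_{\exp(m)}\circ\widetilde\alpha$ is a homeomorphism from a neighborhood of $0$ in $\mathfrak m$ onto a neighborhood of $\widetilde\alpha$ in the orbit with its subspace topology; this can be done (the derivative $m\mapsto(m-\mathrm{Ad}(\widetilde\alpha(\widetilde h))m)_h$ is bounded below on $\mathfrak m$ by the same averaging trick), but it is real additional work that your proposal omits.
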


Its proof needs some preparation. 
For $\alpha \colon H \to \calp \calu(\calh)$ denote by 
\[
\widetilde{H}:=\alpha^*\calu(\calh)
\]
the central $S^1$-extension of $H$ defined by the 
pullback of $\alpha$ making the following diagram commutative
\begin{align} \label{diagram_tilde(H)_wolfgang}
\xymatrix{1  \ar[r] & S^1 \ar[d]^= \ar[r] & \widetilde{H} \ar[d]^{\widetilde{\alpha}} \ar[r]^\gamma & H \ar[r] \ar[d]^\alpha & 1\\
1 \ar[r] & S^1 \ar[r] & \calu (\calh) \ar[r] & \calp \calu(\calh) \ar[r] & 1,
}\end{align}

Denote the space of homomorphisms from $\widetilde{H}$ to $\calu(\calh)$ on which the 
kernel of the map $\widetilde{H} \stackrel{\gamma}{\to} H$ acts by multiplication
\[
\hom_{S^1}(\widetilde{H},\calu(\calh)) 
:= \{ f \in \hom(\widetilde{H},\calu(\calh)) \mid f(x)=x \;\text{for all}\;  x \in \ker(\gamma) \},
\]
and endow it with subspace topology 
$\hom_{S^1}(\widetilde{H},\calu(\calh)) \subseteq \hom(\widetilde{H},\calu(\calh))$.
(This is automatically compactly generated by the next result.)

\begin{lemma}
The space $\hom_{S^1}(\widetilde{H},\calu(\calh))$ is closed and open in  $ \hom(\widetilde{H},\calu(\calh))$.
\end{lemma}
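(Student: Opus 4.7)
The plan is to treat closedness and openness separately, reducing both to the behaviour of $f$ on the central circle $\ker(\gamma)\cong S^1\subseteq\widetilde{H}$. Because $\widetilde{H}$ is compact and $\calu(\calh)$ is a metric group in the norm topology, the subspace topology on $\hom(\widetilde{H},\calu(\calh))\subseteq\map(\widetilde{H},\calu(\calh))$ coincides with the topology of the supremum metric $d(f,g)=\sup_{x\in\widetilde{H}}\|f(x)-g(x)\|$, and every evaluation map $\ev_x\colon\hom(\widetilde{H},\calu(\calh))\to\calu(\calh)$ is continuous.

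First, for closedness, I would note that for each fixed $x\in\ker(\gamma)$ the set $\ev_x^{-1}(\{x\})$ is closed in $\hom(\widetilde{H},\calu(\calh))$; intersecting these closed sets over $x\in\ker(\gamma)$ yields $\hom_{S^1}(\widetilde{H},\calu(\calh))$, so it is closed.

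Next, for openness, I would fix $f_0\in\hom_{S^1}(\widetilde{H},\calu(\calh))$ and prove that the open $d$-ball of radius $2$ around $f_0$ is contained in $\hom_{S^1}(\widetilde{H},\calu(\calh))$. For any $f$ in this ball the restriction $f|_{S^1}$ is a norm-continuous unitary representation of the compact abelian group $S^1$; by the spectral theorem it decomposes as $f(z)=\sum_{n\in F}z^{n}P_n$, where the $P_n$ are orthogonal nonzero projections summing to the identity, and norm continuity at $z=1$ forces $F\subseteq\IZ$ to be finite (otherwise, picking $n\in F$ with $|n|$ arbitrarily large, the point $z=e^{i\pi/n}$ satisfies $\|f(z)-1\|\ge |z^{n}-1|=2$ while lying arbitrarily close to $1$). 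Using $\|\sum_{n}\lambda_{n}P_{n}\|=\max_{n\in F}|\lambda_n|$ for orthogonal nonzero projections, one then computes
\[
\sup_{z\in S^1}\|f(z)-z\|=\max_{n\in F}\sup_{z\in S^1}|z^{n-1}-1|,
\]
which is $0$ if $F=\{1\}$ and equals $2$ otherwise, since $z\mapsto z^{n-1}$ surjects onto $S^1$ whenever $n\ne 1$. Because the left-hand side is bounded by $d(f,f_0)<2$, this forces $F=\{1\}$ and $P_1=1$, so $f(z)=z$ for every $z\in S^1$, i.e.\ $f\in\hom_{S^1}(\widetilde{H},\calu(\calh))$.

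The main obstacle will be the spectral rigidity statement that every norm-continuous unitary representation of $S^1$ has only finitely many nontrivial weights; once this is granted, the sharp identity $\sup_{z\in S^1}|z^{n-1}-1|=2$ for $n\ne 1$ is precisely what pins down the radius $2$ of the open ball and makes both halves of the lemma drop out uniformly.
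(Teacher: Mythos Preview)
Your argument is correct and rests on the same core observation as the paper: restricting to the central circle $S^1=\ker(\gamma)$, the standard representation $\rho(\lambda)=\lambda\cdot\id_{\calh}$ is an isolated point of $\hom(S^1,\calu(\calh))$ in the sup-norm. The paper packages this more economically by considering the continuous restriction map $A\colon \hom(\widetilde{H},\calu(\calh))\to\hom(S^1,\calu(\calh))$, $f\mapsto f|_{\ker(\gamma)}$, and observing that $\hom_{S^1}(\widetilde{H},\calu(\calh))=A^{-1}(\{\rho\})$; once $\{\rho\}$ is shown to be clopen, both closedness and openness follow at once. To see isolation the paper simply picks, for any $\beta\neq\rho$, a single unit eigenvector with weight $k\neq 1$ and bounds $\|\rho-\beta\|_{\sup}\geq\sup_\lambda|1-\lambda^{k-1}|$. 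Your route through the full spectral decomposition and the finiteness of the weight set $F$ is valid but not needed: the existence of one weight $k\neq1$ already gives $\sup_{z}\|f(z)-z\|\geq 2$, so the detour establishing that norm continuity forces $F$ finite can be dropped. In short, same idea, slightly heavier machinery on your side; the paper's single-map formulation buys you both halves of the lemma in one stroke.
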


\begin{proof}
Consider the restriction map
\begin{align*}
A:\hom(\widetilde{H},\calu(\calh)) & \to \hom(S^1,\calu(\calh))\\
f  &\mapsto  f|_{\ker(\gamma)}
\end{align*}
and note that $\hom_{S^1}(\widetilde{H},\calu(\calh))= A^{-1}(\rho)$ where
$\rho(\lambda)(x) =\lambda \cdot x$ for any $x \in \calh$.

Take any other $\beta \in \hom(S^1,\calu(\calh))$ and since it defines a representation of
$S^1$ different from $\rho$, there must exists an element $x \in \calh$ of norm $1$
and an integer $k \neq 1$ such that $\beta(\lambda)x= \lambda^k x$. We have then
\[
\|\rho, \beta \|_{\sup} \geq \sup_{\lambda \in S^1} \| \rho(\lambda)x - \beta(\lambda)x \| =\sup_{\lambda \in S^1} | 1 - \lambda^{k-1} | >1
\]
and therefore we can conclude that $\rho$ is an isolated point in
$\hom(S^1,\calu(\calh))$. Hence $A^{-1}(\rho)$ is closed and open.
\end{proof}

Define also the space of homomorphisms from $H$ to $\calp \calu(\calh)$ which induce
isomorphic central $S^1$-extensions of $H$
\[
\hom(H,\calp \calu(\calh))_{\widetilde{H}}  := \{ \beta  \colon H \to\calp \calu(\calh) 
\mid \beta^*\calu(\calh)\cong \widetilde{H} \;\text{as}\;  \text{central}\; S^1\text{-extensions of}\; H\}
\]
and endow it with the subspace topology of $\hom(H,\calp \calu(\calh))$. 
(This is automatically compactly generated by the next result.)

\begin{lemma} \label{lem:hom(H,PU(H))_wilde(H)_open_closed}
The space $\hom(H,\calp \calu(\calh))_{\widetilde{H}}$ is closed and open in  $ \hom(H,\calp \calu(\calh))$.
\end{lemma}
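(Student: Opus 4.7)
My strategy is to construct a continuous map
\[
\Pi\colon \hom(H, \calp\calu(\calh)) \to H^2(H;S^1), \quad \beta \mapsto [\beta^*\calu(\calh)],
\]
with target endowed with the discrete topology, where $H^2(H; S^1)$ classifies central $S^1$-extensions of $H$ up to isomorphism. Once continuity of $\Pi$ is established, $\hom(H, \calp\calu(\calh))_{\widetilde{H}} = \Pi^{-1}([\widetilde{H}])$ will be clopen because singletons are clopen in a discrete space.

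First I would verify that $H^2(H; S^1)$ is in fact finite and discrete. Here $Z^2(H; S^1) \subseteq (S^1)^{H\times H}$ is a closed, hence compact Lie, subgroup, and the coboundary map $(S^1)^H \to Z^2(H; S^1)$ is a continuous homomorphism out of a connected compact group, so its image $B^2(H; S^1)$ is a closed connected subgroup contained in the identity component $Z^2_0$. Since $H^2(H; S^1)$ is the Schur multiplier of the finite group $H$ and hence finite, $B^2$ must coincide with $Z^2_0$, so $H^2(H; S^1) = Z^2/Z^2_0$ is a finite discrete group and the projection $Z^2(H; S^1) \to H^2(H; S^1)$ is continuous.

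To show $\Pi$ is continuous I argue locally. Fix $\beta_0$. Because $p\colon \calu(\calh) \to \calp\calu(\calh)$ is a principal $S^1$-bundle, for each of the finitely many $g \in H$ I choose a continuous local section $s_g$ of $p$ on an open neighborhood $V_g$ of $\beta_0(g)$. On the open neighborhood $U := \{\beta \mid \beta(g) \in V_g \text{ for all } g \in H\}$ of $\beta_0$, the rule $\widetilde{\beta}(g) := s_g(\beta(g))$ yields a continuous set-theoretic lift of $\beta$ into $\calu(\calh)$. The standard $2$-cocycle
\[
c_\beta(g,h) := \widetilde{\beta}(g)\,\widetilde{\beta}(h)\,\widetilde{\beta}(gh)^{-1} \in S^1
\]
then depends continuously on $\beta \in U$ with values in $Z^2(H; S^1)$, and by construction its cohomology class in $H^2(H; S^1)$ agrees with $[\beta^*\calu(\calh)]$. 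Composing with the continuous discrete-valued quotient gives $\Pi|_U$ as a continuous (hence locally constant) map.

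The main obstacle I anticipate is confirming the discreteness of $H^2(H; S^1)$; the remainder is routine continuous-cocycle bookkeeping combined with the principal $S^1$-bundle structure of $\calu(\calh) \to \calp\calu(\calh)$ already recorded in the excerpt. As an alternative bypassing the Schur-multiplier input, one could employ the preceding lemma: the descent map $\hom_{S^1}(\widetilde{H},\calu(\calh)) \to \hom(H, \calp\calu(\calh))$ sending $\widetilde{\beta}$ to the induced map has image exactly $\hom(H, \calp\calu(\calh))_{\widetilde{H}}$, and showing it is an open map via the same local sections $s_g$ would yield the openness half of the claim directly.
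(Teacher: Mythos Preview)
Your proposal is correct and follows essentially the same approach as the paper: both construct the classifying map $\hom(H,\calp\calu(\calh)) \to H^2(H;S^1)$ via the $2$-cocycle obtained from local lifts through the principal $S^1$-bundle $\calu(\calh)\to\calp\calu(\calh)$, and both use finiteness of $H^2(H;S^1)$ to conclude that the preimage of a point is clopen. Your treatment is slightly more careful in explicitly arguing that the quotient topology on $H^2(H;S^1)$ is discrete (via $B^2 = Z^2_0$), whereas the paper tacitly relies on this when invoking finiteness.
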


\begin{proof}
  The isomorphic classes of central extensions of a finite group are in one to one
  correspondence with the group $H^2(H,S^1)$.  This cohomology group can be calculated by
  the continuous cohomology~\cite[Example 1.4]{Segal(1970cohtopgr)} as the quotient of
  the topological groups
\[
\frac{\ker\bigl(\map(H^2, S^1) \xrightarrow{\delta} \map(H^3,S^1)\bigr)}{\im\bigl(\map(H,S^1) 
\xrightarrow{\delta} \map(H^2,S^1)\bigr)}
\]
where $\delta$ denotes the standard group cohomology differential. For a homomorphism
$\alpha \in \hom(H,\calp \calu(\calh))$ define the 2-cocycle $\widetilde{c}_\alpha \in
\map(H^2,S^1)$
\[
\widetilde{c}_\alpha (g,h) := \widetilde{\alpha(g)}\widetilde{\alpha(h)}\widetilde{\alpha(gh)}^{-1}
\]
where $ \widetilde{\alpha(g)}\in \calu(\calh)$ denotes fixed lifts of $\alpha(g)$ for all $g \in H$. If 
\[
\overline{c}_\alpha (g,h) := \overline{\alpha(g)} \ \overline{\alpha(h)} \ \overline{\alpha(gh)}^{-1}
\]
denotes the 2-cocycle defined by a different choice of lifts, then, since
$\overline{\alpha(g)} \widetilde{\alpha(g)}^{-1}$ is in the center of $\calu(\calh)$ for
all $g \in H$, the map $e \in \map(H,S^1)$
\[
e(g):= \overline{\alpha(g)} \widetilde{\alpha(g)}^{-1}
\]
satisfies the equation $\delta(e) \cdot \widetilde{c}_\alpha=  \overline{c}_\alpha$. Therefore we have the equality
cohomology classes $[\widetilde{c}_\alpha]=[\overline{c}_\alpha]$. Thus we can define a map
\begin{align*}
 \rho:\hom(H,\calp \calu(\calh)) & \to H^2(H,S^1)\\
 \alpha & \mapsto [\widetilde{c}_\alpha]
\end{align*}
Next we prove that it is continuous. Since $H$ is finite and $\calu(\calh) \to \calp
\calu(\calh)$ is a principal $S^1$-bundle, we can define a small neighborhood $V$ of
$\alpha$ in $ \hom(H,\calp \calu(\calh))$ such that for every $h$ there exists a
continuous map $V \to \calu, \; \beta \mapsto \widetilde{\beta(h)}$ whose value for 
$\beta = \alpha$ is the given lift $\widetilde{\alpha(h)}$ and whose composite with the
projection $\calu(H) \to \calp\calu(H)$ is the evaluation map $\beta \mapsto \beta(h)$.
Now we can define a continuous map
\begin{align*}
V & \to \map(H^2,S^1)_0:= \ker(\map(H^2, S^1) \stackrel{\delta}{\to} \map(H^3,S^1))\\
\beta & \mapsto \widetilde{c}_\beta:= \widetilde{\beta(g)}\widetilde{\beta(h)}\widetilde{\beta(gh)}^{-1}.
\end{align*}
Since the quotient map 
\[
\map(H^2,S^1)_0 \to \map(H^2,S^1)_0 / Im(\delta) = H^2(H,S^1)
\]
is continuous, and $H^2(H,S^1)$ is finite, we conclude that the map $\rho$ is continuous.

Any 2-cocycle $\widetilde{c}_\alpha$ defines a $S^1$-central extension
$H\times_{\widetilde{c}_\alpha} S^1$ of $H$, and it is classical result in group theory
that $\widetilde{H}$ and $H\times_{\widetilde{c}_\alpha} S^1$ are isomorphic as central
extensions of $H$.

Therefore $\hom(H,\calp \calu(\calh))_{\widetilde{H}}=\rho^{-1}([\widetilde{c}_\alpha])$
and hence it is open and closed in $ \hom(H,\calp \calu(\calh))$.

\end{proof}

Before we proceed with the study of the canonical map $ \phi\colon \hom_{S^1}(\widetilde{H},\calu(\calh)) \to \hom(H,\calp \calu(\calh))_{\widetilde{H}}$, we will make use of Banach Lie group structure of $\calu(\calh)$ to prove the following lemma.

\begin{lemma} \label{lem:U(H)_n_local_homeo_PU(H)_n} For $n >1$ let 
  $\calu(\calh)_{n}:=  \{U \in \calu(\calh) \colon U^{n}=1\}$ and $\calp\calu(\calh)_{n}:= \{U \in   \calp\calu(\calh) \mid  U^{n}=1 \}$. 
  Then the projection map
  \[
  p|_{\calu(\calh)_n}\colon  \calu(\calh)_n \to \calp \calu(\calh)_n
  \]
  is a local homeomorphism.
\end{lemma}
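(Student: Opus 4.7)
The key ingredient is that $p \colon \calu(\calh) \to \calp\calu(\calh)$ is a principal $S^1$-bundle, as recalled earlier in the text. The plan is to promote a local section of $p$ near $[U]$ to a local section of $p|_{\calu(\calh)_n}$ by correcting it with a continuous $n$-th root; local homeomorphy of $p|_{\calu(\calh)_n}$ then follows from the existence of such a section through every point of $\calu(\calh)_n$.

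Fix $U \in \calu(\calh)_n$ and choose a continuous local section $s\colon \mathcal{O} \to \calu(\calh)$ of $p$ on an open neighborhood $\mathcal{O}$ of $[U]$ with $s([U]) = U$. For any $[V] \in \mathcal{O} \cap \calp\calu(\calh)_n$ the element $s([V])^n$ lies in the kernel of $p$, hence equals $\lambda([V]) \cdot \id$ for some $\lambda([V]) \in S^1$; this defines a continuous map $\lambda \colon \mathcal{O} \cap \calp\calu(\calh)_n \to S^1$ with $\lambda([U]) = 1$. Shrinking $\mathcal{O}$, I may assume $\lambda$ takes values in a small arc around $1$ on which the principal branch of the $n$-th root is defined, producing a continuous $\mu \colon \mathcal{O} \cap \calp\calu(\calh)_n \to S^1$ with $\mu([V])^n = \lambda([V])^{-1}$ and $\mu([U]) = 1$. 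Setting $\sigma([V]) := \mu([V]) \cdot s([V])$ gives a continuous map into $\calu(\calh)_n$, since $\sigma([V])^n = \mu([V])^n \cdot s([V])^n = 1$, with $p \circ \sigma = \id$ and $\sigma([U]) = U$.

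To conclude that $p|_{\calu(\calh)_n}$ is a local homeomorphism near $U$, I use the trivialization $S^1 \times \mathcal{O} \xrightarrow{\cong} p^{-1}(\mathcal{O})$, $(z,[V]) \mapsto z \cdot s([V])$. Under this homeomorphism, the subset $p^{-1}(\mathcal{O}) \cap \calu(\calh)_n$ corresponds exactly to $\{(z,[V]) \mid [V] \in \mathcal{O} \cap \calp\calu(\calh)_n,\ z^n = \lambda([V])^{-1}\}$, and with $\omega := e^{2\pi i/n}$ this set decomposes as the disjoint union of the $n$ graphs $\Gamma_k = \{(\omega^k \mu([V]), [V]) \mid [V] \in \mathcal{O} \cap \calp\calu(\calh)_n\}$ for $k = 0, \ldots, n-1$. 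Each $\Gamma_k$ is clopen in $p^{-1}(\mathcal{O}) \cap \calu(\calh)_n$ (as the graph of a continuous map), and $p$ restricted to $\Gamma_k$ is inverse to this graphing, hence a homeomorphism onto $\mathcal{O} \cap \calp\calu(\calh)_n$. The component $\Gamma_0$ (containing $U$) supplies the desired open neighborhood.

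I do not expect any genuine obstacle here: the only substantive input is that $\lambda([U]) = 1$, which is precisely the condition $U^n = 1$, and this is what allows the canonical continuous choice of $n$-th root near $[U]$. Without it one could always extract $n$-th roots locally, but there would be no distinguished one, so no canonical section; with it one obtains the clean description of $p^{-1}(\mathcal{O}) \cap \calu(\calh)_n$ as $n$ separated sheets and the local homeomorphism statement is immediate.
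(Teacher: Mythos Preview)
Your proof is correct and takes a genuinely different route from the paper. The paper invokes the Banach Lie group structure of $\calu(\calh)$: it argues that $\calu(\calh)_n$ is a Banach submanifold (as the level set of the analytic map $U \mapsto U^n$), computes its tangent space at a point $U \in \calu(\calh)_n$, and checks that after right translation to the identity this tangent space meets the vertical direction $i\IR$ only in $0$; the inverse function theorem for Banach manifolds then yields the local inverse. Your argument, by contrast, is entirely elementary: it uses only the principal $S^1$-bundle structure of $p$ together with the fact that the $n$-th power map on $S^1$ admits a continuous local section near $1$. What your approach buys is the avoidance of infinite-dimensional differential topology; what the paper's approach buys is consistency with the Banach--Lie framework already set up earlier in that section. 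Your argument also makes visible the extra fact that $p|_{\calu(\calh)_n}$ is locally an $n$-sheeted cover of $\calp\calu(\calh)_n$, which is slightly more than is needed. One small point worth making explicit: the ``shrinking $\mathcal{O}$'' step works because $[V] \mapsto s([V])^n$ is continuous on all of $\mathcal{O}$ (not just on $\mathcal{O} \cap \calp\calu(\calh)_n$) with value $1$ at $[U]$, so one may pass to a smaller $\mathcal{O}$ on which $\|s([V])^n - 1\|$ is uniformly small, forcing $\lambda$ to land in the desired arc.
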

\begin{proof}
The subspace $\calu(\calh)_{n}$ is a Banach submanifold of $\calu(\calh)$ since it is
the inverse image $f^{-1}(\{1\}) =\calu(\calh)_{n} $ for the function
\[f\colon  \calu(\calh) \to \calu(\calh), \quad U \mapsto U^n,
\]
that is analytical. Take $U \in \calu(\calh)_{n}$ and consider the isomorphism of tangent spaces
\[
R_{U^{-1}} \colon  T_U \calu(\calh) \stackrel{\cong}{\to}T_1 \calu(\calh), \quad  L \mapsto L U^{-1}.
\]
Since the tangent space of $U \in \calu(\calh)_{n}$ at $U$ is 
\[
T_U \calu(\calh)_n = \{ L \in \calb(\calh) \mid LU^*+UL^*=0 \text{ and } \sum_{k=0}^{n-1} U^k L U^{n-k-1}=0 \},
\]
its image under $R_{U^{-1}}$ becomes
\[
R_{U^{-1}} T_U \calu(\calh)_n = \{ A + A^*=0 \text{ and } \sum_{k=0}^{n-1} U^k A U^{n-k}=0 \}.
\]
The vector space $i \mathbb{R}$ of diagram~\eqref{diagram_tangent_space_U(H)} is not
included in $R_{U^{-1}} T_U \calu(\calh)_n$ since for $A= r\sqrt{-1} $ with $r \neq 0$ the
sum $\sum_{k=0}^{n-1} U^k A U^{n-k}= nr \sqrt{-1} \neq 0$. Therefore the vector space
$R_{U^{-1}} T_U \calu(\calh)_n$ maps isomorphically to $\pi(R_{U^{-1}} T_U
\calu(\calh)_n)$ under the map $\pi$ of diagram~\eqref{diagram_tangent_space_U(H)}.
By the inverse function theorem for Banach manifolds~\cite[Theorem~1~\S I.5, page~13]{Lang(1972)} the map $p$ restricted to
$\calu(\calh)_n \cdot U^{-1}$ has a local inverse around $1$, and hence the map
$p|_{\calu(\calh)_n}\colon  \calu(\calh)_n \to \calp \calu(\calh)_n$ has a local inverse around
a neighborhood of $U$ in $\calu(\calh)_n$. 
\end{proof}

\begin{proposition} \label{pro:hom(c,U(H))_to_hom(C,PU(H))_principal_bundle}
The canonical map
\begin{align*}
\phi\colon \hom_{S^1}(\widetilde{H},\calu(\calh)) & \to \hom(H,\calp \calu(\calh))_{\widetilde{H}}\\
f \colon \widetilde{H}\to \calu(\calh) & \mapsto f/S^1 \colon \widetilde{H}/S^1 \to \calu(\calh)/S^1
 \end{align*}
is a principal $\hom(H,S^1)$-bundle,  and since $\hom(H,S^1)$ is finite, it is also a local homeomorphism.
\end{proposition}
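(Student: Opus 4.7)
The plan is to exhibit $\phi$ as a principal $\widehat{H}$-bundle, where $\widehat{H} := \hom(H, S^1)$, by producing a continuous local section of $\phi$ near each point of the base. Once this is done, finiteness of $\widehat{H}$ immediately yields the local homeomorphism statement, since a principal bundle with discrete fibre is a covering. The free $\widehat{H}$-action on $\hom_{S^1}(\widetilde{H}, \calu(\calh))$ is given by $(\chi \cdot f)(x) := \chi(\gamma(x)) f(x)$; its orbits are exactly the fibres of $\phi$, because if $f'/f \colon \widetilde{H} \to S^1$ is defined it is a homomorphism trivial on $\ker \gamma$ and hence factors through $H$.

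To construct the local section, I first fix a set-theoretic section $s \colon H \to \widetilde{H}$ of $\gamma$, which is automatically continuous since $H$ is discrete, with associated $2$-cocycle $c(g,h) := s(g) s(h) s(gh)^{-1} \in S^1$. Via $s$, the space $\hom_{S^1}(\widetilde{H}, \calu(\calh))$ is identified with the closed subspace of $\calu(\calh)^H$ of functions $\rho$ satisfying $\rho(g)\rho(h) = c(g,h)\rho(gh)$. Given $\alpha_0$ with lift $f_0$, I use that $p \colon \calu(\calh) \to \calp\calu(\calh)$ is a principal $S^1$-bundle to choose, for each $h \in H$, a continuous local section $\sigma_h$ of $p$ on a neighborhood $W_h$ of $\alpha_0(h)$ with $\sigma_h(\alpha_0(h)) = f_0(s(h))$. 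Letting $V$ be the open neighborhood of $\alpha_0$ consisting of those $\alpha \in \hom(H, \calp\calu(\calh))_{\widetilde{H}}$ with $\alpha(h) \in W_h$ for all $h$, the assignment $\widetilde{\alpha}(h) := \sigma_h(\alpha(h))$ gives a continuous pointwise lift of $\alpha$, whose defect
\[
\lambda_\alpha(g,h) := \widetilde{\alpha}(g) \widetilde{\alpha}(h) \widetilde{\alpha}(gh)^{-1} \in S^1
\]
is a $2$-cocycle (by the usual associativity check), depends continuously on $\alpha$, and equals $c$ at $\alpha = \alpha_0$.

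Crucially, $\lambda_\alpha$ is exactly the representative of the cohomology class $[\widetilde{c}_\alpha]$ used in the proof of Lemma \ref{lem:hom(H,PU(H))_wilde(H)_open_closed}, so the hypothesis $\alpha \in \hom(H, \calp\calu(\calh))_{\widetilde{H}}$ forces $[\lambda_\alpha] = [c]$, meaning $\eta_\alpha := \lambda_\alpha c^{-1}$ is a coboundary in $B^2(H, S^1)$, with $\eta_{\alpha_0} = 1$. The coboundary map $\delta \colon \map(H, S^1) \to Z^2(H, S^1)$ is a continuous homomorphism of compact abelian Lie groups (tori) whose image is $B^2(H, S^1)$ and whose kernel is the finite group $\widehat{H}$; hence $\delta$ is a covering onto its image and admits a continuous local section $\tau$ near $1$ with $\tau(1) = 1$. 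After shrinking $V$, composing with $\eta_{(\cdot)}$ produces a continuous $\mu \colon V \to \map(H, S^1)$ with $\delta(\mu_\alpha) = \eta_\alpha$ and $\mu_{\alpha_0} = 1$. The corrected lift $\tilde{\sigma}(\alpha)(h) := \mu_\alpha(h)^{-1} \widetilde{\alpha}(h)$ then satisfies the $c$-twisted relation, so defines a continuous section $\tilde{\sigma} \colon V \to \hom_{S^1}(\widetilde{H}, \calu(\calh))$ of $\phi$ with $\tilde{\sigma}(\alpha_0) = f_0$. The map $V \times \widehat{H} \to \phi^{-1}(V)$, $(\alpha, \chi) \mapsto \chi \cdot \tilde{\sigma}(\alpha)$, is then a homeomorphism, establishing the principal bundle structure.

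The main obstacle is the cocycle bookkeeping: I have to verify that the pointwise-defined $\lambda_\alpha$ really represents the class $[\widetilde{c}_\alpha]$ used in Lemma \ref{lem:hom(H,PU(H))_wilde(H)_open_closed}, so that the hypothesis $\alpha \in \hom(H, \calp\calu(\calh))_{\widetilde{H}}$ genuinely forces $\eta_\alpha$ into $B^2(H, S^1)$, and that the covering-space lift $\tau(\eta_\alpha)$ produces a pointwise multiplier which exactly corrects the twisted-rep failure of $\widetilde{\alpha}$. Everything else is routine continuity in the subspace topology on $\hom_{S^1}(\widetilde{H}, \calu(\calh)) \subseteq \calu(\calh)^H$ and $\hom(H, \calp\calu(\calh))_{\widetilde{H}} \subseteq \calp\calu(\calh)^H$.
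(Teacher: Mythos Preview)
Your argument is correct and takes a genuinely different route from the paper's. The paper does not build a section of $\phi$ by cocycle correction; instead it proves $\phi$ is a local homeomorphism by restricting to cyclic subgroups. Concretely, it chooses for each $h\in H$ a lift $\widetilde h\in\widetilde H$ of the same finite order, forms the restriction maps $\Psi\colon \hom_{S^1}(\widetilde H,\calu(\calh))\to\prod_h\hom(\langle\widetilde h\rangle,\calu(\calh))$ and $\psi\colon \hom(H,\calp\calu(\calh))_{\widetilde H}\to\prod_h\hom(\langle h\rangle,\calp\calu(\calh))$, shows $\Psi$ is a homeomorphism onto its image, and then uses a separate Banach--Lie argument (the inverse function theorem applied to $U\mapsto U^n$ on $\calu(\calh)$) to show each $\calu(\calh)_n\to\calp\calu(\calh)_n$ is a local homeomorphism. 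Composing the local inverses $\tau_h$ with $\psi$ and $\Psi^{-1}$ produces a local inverse of $\phi$.

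Your approach replaces the Banach--Lie input by the much softer observation that $\delta\colon\map(H,S^1)\to B^2(H,S^1)$ is a finite covering of compact abelian Lie groups. This is more elementary and makes the role of the hypothesis $\alpha\in\hom(H,\calp\calu(\calh))_{\widetilde H}$ completely transparent: it is precisely what forces the defect $\eta_\alpha$ into $B^2$ so that the covering lift $\tau$ applies. The paper's route, on the other hand, yields the independent fact that $\calu(\calh)_n\to\calp\calu(\calh)_n$ is a local homeomorphism, and never explicitly invokes the cohomological constraint on $\alpha$ when building the local inverse (it enters only to ensure the domain and codomain of $\phi$ match up). Both arguments establish the free $\widehat H$-action and fibre identification in the same way.
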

\begin{proof} 
  The map $\phi\colon \hom_{S^1}(\widetilde{H},\calu(\calh))  \to \hom(H,\calp   \calu(\calh))_{\widetilde{H}}$ 
  is continuous because of   Lemma~\ref{lem:quotient_map_is_continuous}. One easily checks that it  is surjective.

Define the $\hom(H,S^1)$-action on  $\hom_{S^1}(\widetilde{H},\calu(\calh))$ by the map
\[
\hom(H,S^1) \times \hom_{S^1}(\widetilde{H},\calu(\calh))  \to \hom_{S^1}(\widetilde{H},\calu(\calh)),
\quad (r,f)  \mapsto r \cdot f,
\]
where $(r \cdot f)(\widetilde{h}) := r(\gamma(\widetilde{h})) \cdot f(\widetilde{h})$ for
all $\widetilde{h} \in \widetilde{H}$.  One easily checks that the action is free and that
$\phi$ and $\phi(f)=\phi(r\cdot f)$ holds for all $r \in \hom(H,S^1)$ and 
$f \in \hom_{S^1}(\widetilde{H},\calu(\calh))$. Hence $\phi$ induces a surjective map
\[
\overline{\phi} \colon \hom_{S^1}(\widetilde{H},\calu(\calh))/\hom(H,S^1)  
\to \hom_{S^1}(\widetilde{H},\calu(\calh))
\]
Next we show that $\overline{\phi}$ is injective and hence $\phi$ is locally bijective. 
Consider $f,g \in\hom_{S^1}(\widetilde{H},\calu(\calh))$ with $\phi(f)=\phi(g)$. For all $h \in H$,
take any lift $\widetilde{h} \in \widetilde{H}$ such that $\gamma(\widetilde{h})=h$ and
define the element $r(h)\in S^1$ by the equation $f(\widetilde{h})= r(h) g(\widetilde{h})$.
Note that $r(h)$ does not depend on the choice of lift and therefore we obtain a map 
$r \colon H \to S^1$. For $h,k \in H$ with respective lifts $\widetilde{h}, \widetilde{k} \in
\widetilde{H}$ one has
\[ r(hk)  g(\widetilde{h} \widetilde{k}) = f(\widetilde{h} \widetilde{k})
= r(h) g(\widetilde{h} ) r(k) g(\widetilde{k})  = r(h)r(k) g(\widetilde{h} \widetilde{k}),
\]
and therefore we see that $r \in \hom(H,S^1)$ and $r \cdot f = g$.

We are left with showing that $\phi$ is a local homeomorphism.

For $h \in H$ denote by $|h|$ its order and by $\langle h \rangle$ the cyclic group it
generates. Since all central $S^1$-extensions of cyclic groups are trivializable,  the
restriction of $\widetilde{H}$ to $\langle h \rangle$ must be isomorphic to $S^1 \times
\langle h \rangle$; therefore we can choose a lift $\widetilde{h} \in \widetilde{H}$ for
each $h \in H$ such that $\widetilde{h}$ generates a cyclic group of order ${|h|}$. 

 Define the restriction maps
\begin{align*}
\Psi\colon  \hom_{S^1}(\widetilde{H},\calu(\calh)) 
& \to \prod_{h \in H} \hom(\langle \widetilde{h} \rangle, \calu(\calh)),
&
\widetilde{\alpha}  \mapsto \prod_{h \in H} \widetilde{\alpha}|_{\langle \widetilde{h} \rangle};
\\
\psi \colon \hom(H, \calp\calu(\calh))_{\widetilde{H}}  
& \to \prod_{h \in H} \hom(\langle h \rangle, \calp\calu(\calh)),
& 
\alpha \mapsto \prod_{h \in H}\alpha |_{\langle h \rangle},
\end{align*}
and note that both maps $\Psi$ and $\psi$ are injective. We claim
moreover that the map $\Psi$ induces a homeomorphism onto its
image. The proof is as follows: for $\epsilon >0$ and
$\widetilde{\alpha} \in \hom_{S^1}(\widetilde{H},\calu(\calh))$,
consider the open ball of radius $\epsilon$ defined by the supremum
metric of~\eqref{supremums_metric}
 \[
B_\epsilon(\widetilde{\alpha}):=\{ f \in  \hom_{S^1}(\widetilde{H},\calu(\calh)) \mid 
\|\widetilde{\alpha}(g) - f(g)\| < \epsilon \;\text{for all} \; g \in \widetilde{H} \},
\]
and note that this open ball can also be defined as 
\[
B_\epsilon(\widetilde{\alpha}):=\{ f \in  \hom_{S^1}(\widetilde{H},\calu(\calh)) \mid 
\|\widetilde{\alpha}(\widetilde{h}) - f(\widetilde{h})\| < \epsilon \;\text{for all}\;  h \in H \},
\]
since for all $\lambda \in S^1$ we have that 
\[
\|\widetilde{\alpha}(\widetilde{h} ) - f( \widetilde{h})\|  
= \|\widetilde{\alpha}(\lambda \cdot \widetilde{h} ) - f(\lambda \cdot \widetilde{h})\|.
\]
For the restricted homomorphisms $\widetilde{\alpha}|_{\langle \widetilde{h} \rangle}$ we
can also consider the open balls of radius $\epsilon$
   \[
B_\epsilon(\widetilde{\alpha}|_{\langle \widetilde{h} \rangle})
:=\{f \in  \hom(\langle \widetilde{h} \rangle, \calu(\calh)) \mid
 \|\widetilde{\alpha}( g) - f(g)\| < \epsilon \; \text{for all} \; g \in \langle \widetilde{h} \rangle \},
\]
and therefore we get the following equality of sets
\[
\Psi(B_\epsilon(\widetilde{\alpha})) =
\left( \prod_{h \in H} B_\epsilon(\widetilde{\alpha}|_{\langle \widetilde{h} \rangle}) \right) 
\cap \Psi\bigl(\hom_{S^1}(\widetilde{H},\calu(\calh))\bigr).
\]
This implies that the map $\Psi$ induces an open map onto its image, and since it is injective and
continuous, it induces a homeomorphism onto  its image.


Denoting by $\phi_h \colon \hom( \langle \widetilde h \rangle, \calu(\calh)) \to \hom( \langle
h \rangle, \calp\calu(\calh))$ the canonical map defined by 
$\phi_h(\gamma)(h)= \gamma(\widetilde{h})$,
we obtain the following commutative diagram
\begin{align} \label{diagram:maps_Psi_and_psi}
\xymatrix{\hom_{S^1}(\widetilde{H},\calu(\calh)) \ar[r]^-\Psi \ar[d]_\phi 
&  \prod_{h \in H} \hom( \langle \widetilde{h} \rangle, \calu(\calh)) \ar[d]^{\prod_h \phi_h} 
\\
\hom(H, \calp\calu(\calh))_{\widetilde{H}} \ar[r]_-\psi 
& \prod_{h \in H} \hom( \langle h \rangle, \calp\calu(\calh)).
}
\end{align}
Since there are canonical homeomorphisms 
\[
\hom( \langle \widetilde{h} \rangle, \calu(\calh)) 
\xrightarrow{\cong} \calu(\calh))_{|h|} \quad f \mapsto f(\widetilde{h}),
\]
by Lemma~\ref{lem:U(H)_n_local_homeo_PU(H)_n} we know that there must exist 
$\epsilon$ such that the maps
\[
\phi_h \colon B_\epsilon(\widetilde{\alpha}|_{\langle \widetilde{h} \rangle}) 
\to \phi_h(B_\epsilon(\widetilde{\alpha}|_{\langle \widetilde{h} \rangle}))
\]
are homeomorphisms for all $h \in H$; denote by 
\[
\tau_h: \phi_h(B_\epsilon(\widetilde{\alpha}|_{\langle \widetilde{h} \rangle}))
 \to B_\epsilon(\widetilde{\alpha}|_{\langle \widetilde{h} \rangle})
\] 
these inverse maps. Restricting to the open subset $B_\epsilon(\widetilde{\alpha})$ of the top left corner 
of the diagram~\eqref{diagram:maps_Psi_and_psi}, we obtain the following diagram
\begin{align*}\xymatrix@!C=7em{B_\epsilon(\widetilde{\alpha})  \ar[d]_\phi 
& \ar[l]^-\cong_-{(\Psi|_{B_\epsilon(\widetilde{\alpha})})^{-1}} \Psi(B_\epsilon(\widetilde{\alpha})) \\
\phi(B_\epsilon(\widetilde{\alpha})) \ar[r]_-\psi 
& \psi(\phi(B_\epsilon(\widetilde{\alpha}))) \ar[u]^\cong_{(\prod_h \tau_h)|_{\psi(\phi(B_\epsilon(\widetilde{\alpha})))}}
}\end{align*}
where the right hand side vertical arrow is the homeomorphism 
that the maps $\tau_h$ induce once restricted to the open set
\[
 \psi(\phi(B_\epsilon(\widetilde{\alpha}))) =\left(\prod_h \phi_h(B_\epsilon(\widetilde{\alpha}|_{\langle \widetilde{h} \rangle})) \right) 
\cap  \bigl(\prod_h \phi_h\bigr)  \Psi(B_\epsilon(\widetilde{\alpha})),\]
and the upper horizontal arrow is the inverse of $\Psi$ restricted to $B_\epsilon(\widetilde{\alpha})$.

Since the map $\psi$ is injective, we can define the map
\[
{(\Psi|_{B_\epsilon(\widetilde{\alpha})})^{-1}} \circ \bigl(\prod_h \tau_h\bigr)|_{\psi(\phi(B_\epsilon(\widetilde{\alpha})))} 
\circ \psi \colon  \phi(B_\epsilon(\widetilde{\alpha}))  \to B_\epsilon(\widetilde{\alpha})
\]
which clearly is the inverse map of $\phi$ once it is restricted to $B_\epsilon(\widetilde{\alpha})$.
This proves that $\phi$ is a local homeomorphism.
\end{proof}

The conjugation action of $\calu(\calh)$ on
$\hom_{S^1}(\widetilde{H},\calu(\calh))$ factors through the projection $\calu(\calh) \to \calp\calu(\calh)$
to an action
\[
\calp\calu(\calh)) \times \hom_{S^1}(\widetilde{H},\calu(\calh)) \to \hom_{S^1}(\widetilde{H},\calu(\calh)),
\]
since the conjugation action restricted to the center of $\calu(\calh)$ is trivial.

\begin{lemma} \label{lem:map_hom(C,U(H))_to_hom(C,PU(H))_wolfgang} 
  Let $\alpha \colon H \to \calp   \calu(\calh)$ and 
  $\widetilde{\alpha} \colon \widetilde{H} \to \calu(\calh)$ be as in
  diagram~\eqref{diagram_tilde(H)_wolfgang}. Then the existence of a local cross section for the
  conjugation map $\calu(\calh) \to \calu(\calh) \cdot \widetilde{\alpha}$ implies the
  existence of a local cross section of the conjugation map 
  $\calp \calu(\calh) \to \calp   \calu(\calh) \cdot \alpha$.
\end{lemma}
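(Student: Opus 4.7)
The plan is to transport the given local cross section upstairs to one downstairs using the local homeomorphism $\phi$ of Proposition~\ref{pro:hom(c,U(H))_to_hom(C,PU(H))_principal_bundle} together with the $S^1$-principal bundle projection $p \colon \calu(\calh) \to \calp\calu(\calh)$.

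First I would observe that $\phi$ is $\calp\calu(\calh)$-equivariant with respect to conjugation actions (the $\calu(\calh)$-conjugation on $\hom_{S^1}(\widetilde{H},\calu(\calh))$ factors through $\calp\calu(\calh)$ because $S^1$ is central), and the formula $\phi(c_g \circ \widetilde{\alpha}) = c_{p(g)} \circ \alpha$ shows that $\phi$ restricts to a continuous surjection $\phi|_{\calu(\calh)\cdot\widetilde{\alpha}} \colon \calu(\calh)\cdot\widetilde{\alpha} \to \calp\calu(\calh)\cdot\alpha$. The crucial step is to show that this restriction is a finite covering, hence in particular admits local sections. Using the hypothesized local cross section for $\calu(\calh) \to \calu(\calh)\cdot\widetilde{\alpha}$, the top orbit can be identified topologically with $\calp\calu(\calh)/p(C_{\calu(\calh)}(\widetilde{\alpha}))$, and the restricted $\phi$ becomes the natural finite quotient
\[
\calp\calu(\calh)/p(C_{\calu(\calh)}(\widetilde{\alpha})) \to \calp\calu(\calh)/C_{\calp\calu(\calh)}(\alpha)
\]
by the group $C_{\calp\calu(\calh)}(\alpha)/p(C_{\calu(\calh)}(\widetilde{\alpha}))$, which embeds into $\hom(H,S^1)$ via the map sending $\overline{g}$ to the character $r$ determined by $c_g \circ \widetilde{\alpha} = r \cdot \widetilde{\alpha}$.

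Given a local section $\eta \colon \overline{W} \to \calu(\calh)\cdot\widetilde{\alpha}$ of this finite covering at $\alpha$ with $\eta(\alpha) = \widetilde{\alpha}$, I would shrink $\overline{W}$ so that $\eta(\overline{W}) \subseteq W$, where $s \colon W \to \calu(\calh)$ is the given cross section, and then define $\overline{s} \colon \overline{W} \to \calp\calu(\calh)$ by $\overline{s}(\overline{f}) := p(s(\eta(\overline{f})))$. Continuity is clear, and
\[
c_{\overline{s}(\overline{f})} \circ \alpha = c_{p(s(\eta(\overline{f})))} \circ \alpha = \phi\bigl(c_{s(\eta(\overline{f}))} \circ \widetilde{\alpha}\bigr) = \phi(\eta(\overline{f})) = \overline{f},
\]
so $\overline{s}$ is the desired local cross section for $\calp\calu(\calh) \to \calp\calu(\calh)\cdot\alpha$. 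The main subtlety is verifying that $\phi|_{\calu(\calh)\cdot\widetilde{\alpha}}$ really is a covering map: an arbitrary local inverse of $\phi$ at $\widetilde{\alpha}$ need not take values in the orbit $\calu(\calh)\cdot\widetilde{\alpha}$, since the $\hom(H,S^1)$-translates of $\widetilde{\alpha}$ typically lie in distinct $\calu(\calh)$-orbits; the orbit-theoretic identifications above circumvent this by working with the global quotient structure on both sides.
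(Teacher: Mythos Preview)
Your overall architecture is correct and matches the paper's: chase the commutative square relating the two conjugation maps via $p$ and $\phi$, and use that $\phi$ is a local homeomorphism (Proposition~\ref{pro:hom(c,U(H))_to_hom(C,PU(H))_principal_bundle}) to transport the section. Your final formula $\overline{s}=p\circ s\circ\eta$ is exactly the right one. You also correctly isolate the only nontrivial point: a local inverse of $\phi$ at $\widetilde{\alpha}$ need not land in the orbit $\calu(\calh)\cdot\widetilde{\alpha}$, so one must produce a local section $\eta$ of the \emph{restricted} map $\phi|_{\calu(\calh)\cdot\widetilde{\alpha}}$.

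However, your resolution of this point is circular. You argue that $\phi|_{\calu(\calh)\cdot\widetilde{\alpha}}$ ``becomes'' the finite quotient $\calp\calu(\calh)/p(C_{\calu(\calh)}(\widetilde{\alpha}))\to\calp\calu(\calh)/C_{\calp\calu(\calh)}(\alpha)$ and is therefore a covering. The identification on the source side is fine (it uses only the hypothesized section), but the identification of the \emph{target} $\calp\calu(\calh)\cdot\alpha$ with $\calp\calu(\calh)/C_{\calp\calu(\calh)}(\alpha)$ as topological spaces is precisely the statement that $\iota_\alpha$ is a homeomorphism --- i.e.\ part of Theorem~\ref{the:local_cross_sections_for_Hom(H,PU(H))}, which the present lemma is being proved in order to establish. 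Without that identification, all you know is that $\phi|_{\calu(\calh)\cdot\widetilde{\alpha}}$ is a continuous surjection with finite fibers; that is not enough to conclude it is a covering or even that it admits local sections.

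A non-circular fix is short: since $\calp\calu(\calh)$ is connected, each orbit $r\cdot(\calu(\calh)\cdot\widetilde{\alpha})$ is connected, and $\phi^{-1}(\calp\calu(\calh)\cdot\alpha)$ is the finite disjoint union of these orbits. Hence $\calu(\calh)\cdot\widetilde{\alpha}$ is a connected component of $\phi^{-1}(\calp\calu(\calh)\cdot\alpha)$ and is therefore open there. The restriction of the local homeomorphism $\phi$ to this open subset is again a local homeomorphism onto $\calp\calu(\calh)\cdot\alpha$, giving the section $\eta$ you need; the rest of your argument then goes through. (The paper's one-line proof invokes only the diagram and that $\phi$ is a local homeomorphism, leaving this openness step implicit.)
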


\begin{proof}
This follows from the commutativity of the diagram
\[\xymatrix{
\calu (\calh) \ar[r] \ar[d] & \calp \calu (\calh) \ar[d] \ar[r]^=& \calp \calu (\calh) \ar[d] \\
\calu (\calh) \cdot \widetilde{\alpha} \ar[r]^= & \calp \calu(\calh) \cdot \widetilde{\alpha} \ar[r]^\phi &  \calp \calu(\calh) \cdot \alpha,
}
\]
where the vertical arrows are defined by conjugation on $\widetilde{\alpha}$ and $ \alpha$
respectively, and the conclusion of
Proposition~\ref{pro:hom(c,U(H))_to_hom(C,PU(H))_principal_bundle} 
that the map $\phi$ is a local homeomorphism.
\end{proof}

\begin{lemma} \label{lem:Hom(tilde(H),U(H))_has_local_cross_sections_wolfgang}
For  $\widetilde{\alpha} \in \hom_{S^1}(\widetilde{H},\calu(\calh))$ the map induced by the conjugation action
\[
\calu(\calh) \to \calu(\calh) \cdot \widetilde{\alpha}, \quad g \mapsto  g \widetilde{\alpha} g^{-1}
\]
has a local cross section.
\end{lemma}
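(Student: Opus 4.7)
The plan is to produce an explicit local cross section of the orbit map at $\widetilde{\alpha}$ by averaging over $\widetilde{H}$ and applying polar decomposition; local cross sections at other points of the orbit then follow by left-translation in $\calu(\calh)$. The key observation is that $\widetilde{H}$ is compact: as a central $S^1$-extension of the finite group $H$, it sits in an exact sequence $1 \to S^1 \to \widetilde{H} \to H \to 1$, and hence carries a normalized Haar measure $dh$.

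For $\beta \in \hom_{S^1}(\widetilde{H},\calu(\calh))$ close to $\widetilde{\alpha}$, I would form the averaged intertwiner
\[
T_\beta := \int_{\widetilde{H}} \beta(h)\, \widetilde{\alpha}(h)^{-1}\, dh \;\in\; \calb(\calh).
\]
A change of variable in the Haar integral yields $\beta(h')\, T_\beta = T_\beta\, \widetilde{\alpha}(h')$ for every $h' \in \widetilde{H}$. Taking adjoints in this identity (and using $\widetilde{H}$-invariance to replace $h$ by $h^{-1}$) shows that $T_\beta^* T_\beta$ commutes with $\widetilde{\alpha}(\widetilde{H})$, and therefore so does $|T_\beta| := (T_\beta^* T_\beta)^{1/2}$ by functional calculus. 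Since $T_{\widetilde{\alpha}} = \id_\calh$ and the invertible operators form a norm-open subset of $\calb(\calh)$, there is an open neighborhood $V$ of $\widetilde{\alpha}$ in $\hom_{S^1}(\widetilde{H},\calu(\calh))$ on which $T_\beta$ is invertible.

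On $V$, I would define
\[
s(\beta) := T_\beta \cdot |T_\beta|^{-1},
\]
which is the unitary part of the polar decomposition of $T_\beta$. Because $|T_\beta|$ commutes with $\widetilde{\alpha}(\widetilde{H})$, the intertwining relation transfers directly to $s(\beta)$, giving $s(\beta)\, \widetilde{\alpha}(h')\, s(\beta)^{-1} = \beta(h')$ for every $h' \in \widetilde{H}$; and $s(\widetilde{\alpha}) = \id_\calh$. Restricting $s$ to $V \cap \bigl(\calu(\calh)\cdot\widetilde{\alpha}\bigr)$ yields a continuous local cross section at $\widetilde{\alpha}$ of the orbit map $g \mapsto g\widetilde{\alpha} g^{-1}$, and translating by elements of $\calu(\calh)$ produces local sections at every other orbit point.

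The main technical point is verifying the two continuity claims: that $\beta \mapsto T_\beta$ is norm-continuous, which follows from compactness of $\widetilde{H}$ together with the fact that the supremum metric on $\hom(\widetilde{H},\calu(\calh))$ controls $\sup_{h \in \widetilde{H}}\|\beta(h) - \widetilde{\alpha}(h)\|$; and that $T \mapsto |T|^{-1}$ is norm-continuous on the open set of invertible operators, which is a standard consequence of the holomorphic functional calculus. Both are routine, so the construction above delivers the lemma.
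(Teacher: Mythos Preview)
Your proof is correct and follows essentially the same route as the paper: both form the averaged intertwiner $T_\beta=\int_{\widetilde{H}}\beta(h)\widetilde{\alpha}(h)^{-1}\,dh$, observe it is invertible near $\widetilde{\alpha}$, and pass to its unitary polar part $T_\beta|T_\beta|^{-1}$ to obtain the local section. The only cosmetic difference is that the paper notes the integral collapses to the finite sum $\tfrac{1}{|H|}\sum_{h\in H}\beta(\widetilde{h})\widetilde{\alpha}(\widetilde{h})^{-1}$ (since the $S^1$-factor cancels), which makes continuity of $\beta\mapsto T_\beta$ immediate, whereas you argue continuity directly from compactness of $\widetilde{H}$; both are fine.
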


\begin{proof}
For $\beta, \gamma  \in \hom_{S^1}(\widetilde{H},\calu(\calh))$ let
\[
T_{\beta,\gamma} := \int_{\widetilde{H}} \beta(\widetilde{h}) \circ \gamma(\widetilde{h}^{-1})  \, d\widetilde{h}
\quad \in \calu(\calh)
\]
be the intertwining operator between $\beta$ and $\gamma$ 
appearing in~\cite[VII.1.Proposition 21]{Gaal(1973)} where $d\widetilde{h}$ is the normalized 
Haar measure on $\widetilde{H}$. We have 
\begin{eqnarray*}
\beta \circ T_{\beta,\gamma} & = &  T_{\beta,\gamma} \circ \gamma;
\\
T_{\beta,\gamma}^* & = & T_{\gamma,\beta},
\end{eqnarray*} 
since for all $a \in \widetilde{H}$ one has
\begin{eqnarray*}
T_{\beta,\gamma}  \circ \gamma(a) 
& = & 
\int_{\widetilde{H}} \beta(\widetilde{h}) \circ \gamma(\widetilde{h}^{-1}a) \, d\widetilde{h} 
\\
& = &
\int_{\widetilde{H}}\beta(a\widetilde{h}) \circ \gamma(\widetilde{h}^{-1}) \,d\widetilde{h}
\\
& = &
 \beta(a)  \circ T_{\beta.\gamma},
\end{eqnarray*}
and
\begin{eqnarray*}
T_{\beta,\gamma}^* 
& = &
\int_{\widetilde{H}} \left(\beta(\widetilde{h}) \circ \gamma(\widetilde{h}^{-1})\right)^*  \, d\widetilde{h}
\\
& = & 
\int_{\widetilde{H}} \gamma(\widetilde{h}^{-1})^* \cdot \beta(\widetilde{h})^*  \, d\widetilde{h}
\\
& = & 
\int_{\widetilde{H}} \gamma(\widetilde{h}^{-1})^{-1}\cdot \beta(\widetilde{h})^{-1}  \, d\widetilde{h}
\\
& = & 
\int_{\widetilde{H}} \gamma(\widetilde{h}) \cdot \beta(\widetilde{h}^{-1})  \, d\widetilde{h}
\\
& = & 
T_{\gamma,\beta}.
\end{eqnarray*}
Notice that since $\beta$ and $\gamma$ belong to $\hom_{S^1}(\widetilde{H},\calu(\calh))$, 
then the intertwiner $T_{\beta,\gamma}$ can also be defined by the finite sum
\begin{align} \label{finite_sum_definition_of_T_wolfgang}
T_{\beta,\gamma} := \frac{1}{|H|} \cdot \sum_{h \in H}  \beta(\widetilde{h})\circ \gamma(\widetilde{h}^{-1})  
\end{align}
where $\widetilde{h}$ is any fixed choice of lift of $h$ in $\widetilde{H}$. 
We can choose an open neighborhood of $\widetilde{\alpha}$
\[
V := \{ \beta \in  \hom_{S^1}(\widetilde{H},\calu(\calh)) 
\mid \| \alpha(\widetilde{h})-\beta(\widetilde{h}) \| < \frac{1}{|H|} \text{ for all } h \in H\}.
\]
The triangle  inequality implies that for all $\beta$ in $V$ we have $\| 1 - T_{\beta,\widetilde{\alpha}}\| < 1$
and therefore the operator $T_{\beta,\widetilde{\alpha}}$ is invertible. Hence we get  a map
\[
\tau  \colon V \to GL(\calh) , \quad \beta \mapsto T_{\beta,\widetilde{\alpha}}, 
\]
whose continuity follows from the finite sum definition of $T_{\beta,\widetilde{\alpha}}$ 
of~\eqref{finite_sum_definition_of_T_wolfgang}.  It satisfies for every $\beta \in V$ 
\begin{eqnarray*}
\tau(\beta)  \circ \widetilde{\alpha}\circ \tau(\beta)^{-1}  
& = & 
\beta
\\
\tau(\beta)^* \circ \beta  & = & \widetilde{\alpha}  \circ \tau(\beta)^*.
\end{eqnarray*}
Composing the map $\tau$ with the retraction 
\[
\rho \colon GL(\calh) \to \calu(\calh), \quad T \mapsto T \sqrt{(T^*T)}^{-1}
\]
defined in~\cite[Chapter~4]{Kuiper(1965)}, we obtain a continuous map 
\[
\sigma:= \rho \circ \tau  \colon V \to \calu(\calh)
\]
from a open neighborhood $V$ of $\widetilde{\alpha}$ in $\hom_{S^1}(\widetilde{H},\calu(\calh))$
to $\calu(\calh)$ with the desired property
\[
\sigma(\beta)  \circ \widetilde{\alpha} \circ\sigma(\beta))^{-1} = \beta.
\]
Since the action of $\calu(\calh)$ is transitive on $\calu(\calh) \cdot \widetilde{\alpha}$, the translations of the previously 
defined local cross section define local cross sections around any point in the orbit space.
\end{proof}

\begin{corollary} \label{cor:hom_S1(tilde(H),U(H))_homeo_to_disjoint_union_of_orbits} 
  The space $\hom_{S^1}(\widetilde{H},\calu(\calh))$ is homeomorphic to the disjoint union of
  its orbits under the conjugation action of $\calu(\calh)$. Hence each orbit
  $\calu(\calh) \cdot \widetilde{\alpha}$ is open and closed in
  $\hom_{S^1}(\widetilde{H},\calu(\calh))$.
\end{corollary}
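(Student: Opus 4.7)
The plan is to leverage Lemma~\ref{lem:Hom(tilde(H),U(H))_has_local_cross_sections_wolfgang} to show that every orbit of the conjugation action on $\hom_{S^1}(\widetilde{H},\calu(\calh))$ is open, and then deduce from this that each orbit is also closed.

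First I would fix $\widetilde{\alpha} \in \hom_{S^1}(\widetilde{H},\calu(\calh))$ and apply Lemma~\ref{lem:Hom(tilde(H),U(H))_has_local_cross_sections_wolfgang} to obtain an open neighborhood $V \subseteq \hom_{S^1}(\widetilde{H},\calu(\calh))$ of $\widetilde{\alpha}$ together with a continuous map $\sigma \colon V \to \calu(\calh)$ satisfying $\sigma(\beta) \circ \widetilde{\alpha} \circ \sigma(\beta)^{-1} = \beta$ for every $\beta \in V$. The crucial observation is that this identity forces every $\beta \in V$ to lie in the orbit $\calu(\calh) \cdot \widetilde{\alpha}$. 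Hence $V \subseteq \calu(\calh) \cdot \widetilde{\alpha}$, which shows that $\widetilde{\alpha}$ is an interior point of its orbit. Since $\widetilde{\alpha}$ was arbitrary, every orbit $\calu(\calh) \cdot \widetilde{\alpha}$ is an open subset of $\hom_{S^1}(\widetilde{H},\calu(\calh))$.

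Next I would note that the orbits of a group action always form a partition of the space. Given this partition and the fact just established that every orbit is open, the complement of any single orbit is a union of other orbits, hence open; therefore each orbit is also closed. Consequently $\hom_{S^1}(\widetilde{H},\calu(\calh))$ is the disjoint union of clopen subsets given by its $\calu(\calh)$-orbits, which yields the desired homeomorphism with the topological coproduct of the orbits.

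There is no serious obstacle here: the entire content has been packaged into the preceding lemma, which produced not merely a section over an open subset of the orbit but a section defined on an open neighborhood in the ambient space $\hom_{S^1}(\widetilde{H},\calu(\calh))$. The only subtlety to check is the logical step that such a section witnesses the orbit as a neighborhood of $\widetilde{\alpha}$, which is immediate from the defining property of $\sigma$.
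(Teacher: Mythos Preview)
Your proposal is correct and relies on the same key ingredient as the paper: the observation, extracted from the \emph{proof} of Lemma~\ref{lem:Hom(tilde(H),U(H))_has_local_cross_sections_wolfgang}, that the open neighborhood $V$ constructed there lies in the ambient space $\hom_{S^1}(\widetilde{H},\calu(\calh))$ and is contained in the orbit of $\widetilde{\alpha}$. You then deduce that every orbit is open and hence, by the partition property, clopen.

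The paper's proof is a mild variation of the same idea. Instead of phrasing the conclusion topologically, it records the quantitative content of that same neighborhood $V$: since $V$ is the $\tfrac{1}{|H|}$-ball around $\widetilde{\alpha}$ in the supremum metric, any $\beta$ in a different orbit must satisfy $\|\alpha(\widetilde{h})-\beta(\widetilde{h})\|\ge \tfrac{1}{|H|}$ for some $h$, so distinct orbits are at least $\tfrac{1}{|H|}$ apart. This yields the same clopen decomposition, with the bonus of a uniform metric separation bound. Your argument is cleaner and entirely sufficient for the corollary as stated; the paper's version simply retains a little more information from the lemma's proof.
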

\begin{proof} From the proof of
  Lemma~\ref{lem:Hom(tilde(H),U(H))_has_local_cross_sections_wolfgang} we know that for
  $\alpha,\beta \in \hom_{S^1}(\widetilde{H},\calu(\calh))$ lying in different orbits
  there must exist $h \in H$ such that for all lifts $\widetilde{h}$
  \[
  \| \alpha(\widetilde{h}) - \beta(\widetilde{h})\| \geq \frac{1}{|H|}
  \]
  and therefore the distance between $\alpha$ and $\beta$ is greater or equal than
  $\frac{1}{|H|}$. This implies that different orbits are separated by at least a distance
  of $\frac{1}{|H|}$ and therefore $\hom_{S^1}(\widetilde{H},\calu(\calh))$ is
  homeomorphic to the disjoint union of its orbits under the conjugation action of
  $\calu(\calh)$.
  \end{proof}

\begin{proof}[Proof of Theorem~\ref{the:local_cross_sections_for_Hom(H,PU(H))}]
  Lemma~\ref{lem:map_hom(C,U(H))_to_hom(C,PU(H))_wolfgang} and
  Lemma~\ref{lem:Hom(tilde(H),U(H))_has_local_cross_sections_wolfgang} imply the existence of
  local cross sections for the map
  \[
  \calp \calu(\calh) \to \calp \calu(\calh) \cdot \alpha, \quad g \mapsto g \alpha g^{-1}.
  \]
  This map is the composite of the projection 
  $\pr \colon \calp\calu(\calh) \to \calp\calu(H)/C_{\calp\calu(\calh))}(\alpha)$
  with the bijective continuous map 
  $\iota_{\alpha} \colon  \calp\calu(H)/C_{\calp\calu(\calh))}(\alpha) \to  \calp\calu(H) \cdot \alpha$.
  This implies that $\iota_{\alpha}$ has local cross sections and hence is a homeomorphism. We conclude that 
  $\pr \colon \calp\calu(\calh) \to \calp\calu(H)/C_{\calp\calu(\calh))}(\alpha)$ has local cross sections and hence
  is a principal $C_{\calp\calu(\calh))}(\alpha)$-bundle by Remark~\ref{rem:(S)_and_principal}. 
  This finishes the proof of Theorem~\ref{the:local_cross_sections_for_Hom(H,PU(H))}.
\end{proof}

\begin{proposition} \label{pro:three_items_of_Condition(H)_for_finite_H_and_PU(H)_wolfgang} Let $H$
  be a finite group and $\alpha \in \hom(H, \calp \calu(\calh))$. Then
  items~\ref{def:property_(H):component},~\ref{def:property_(H):S_for_G}
  and~\ref{def:property_(H):homeo} of Condition (H) are satisfied:
  \begin{itemize}
   
  \item[\ref{def:property_(H):component}] The path component of $\alpha$ in $\hom(H,\calp
    \calu(\calh))$ is contained in the orbit $\calp \calu(\calh) \cdot \alpha$;
 
   \item[\ref{def:property_(H):S_for_G}] The canonical map $\calp \calu(\calh) \to \calp
    \calu(\calh)/C_{\calp \calu(\calh)}(\alpha)$ is a $C_{\calp
      \calu(\calh)}(\alpha)$-principal bundle;
  
   \item[\ref{def:property_(H):homeo}] The canonical map
    \[\iota_{\alpha} \colon \calp \calu(\calh)/C_{\calp \calu(\calh)}(\alpha) \to
    \hom(H,\calp \calu(\calh)), \quad gC_{\calp \calu(\calh)}(\alpha) \mapsto g \alpha
    g^{-1}
    \]
    is a homeomorphism onto its image.
  \end{itemize}
\end{proposition}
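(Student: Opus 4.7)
The plan is to observe first that items~\ref{def:property_(H):S_for_G} and~\ref{def:property_(H):homeo} are essentially the two conclusions of Theorem~\ref{the:local_cross_sections_for_Hom(H,PU(H))}. Indeed, that theorem says that $\pr \colon \calp\calu(\calh) \to \calp\calu(\calh)/C_{\calp\calu(\calh)}(\alpha)$ is a principal $C_{\calp\calu(\calh)}(\alpha)$-bundle, which is item~\ref{def:property_(H):S_for_G}, and that $\iota_{\alpha} \colon \calp\calu(\calh)/C_{\calp\calu(\calh)}(\alpha) \to \calp\calu(\calh) \cdot \alpha$ is a homeomorphism, which gives item~\ref{def:property_(H):homeo} once we view $\calp\calu(\calh) \cdot \alpha$ as a subspace of $\hom(H,\calp\calu(\calh))$.

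For item~\ref{def:property_(H):component} the strategy is to lift paths through the principal bundle $\phi$ of Proposition~\ref{pro:hom(c,U(H))_to_hom(C,PU(H))_principal_bundle}. First I would note that by Lemma~\ref{lem:hom(H,PU(H))_wilde(H)_open_closed} the subspace $\hom(H,\calp\calu(\calh))_{\widetilde{H}}$ is both open and closed in $\hom(H,\calp\calu(\calh))$, so the path component of $\alpha$ is entirely contained in $\hom(H,\calp\calu(\calh))_{\widetilde{H}}$. Next, because $H$ is finite and $S^1$ is abelian, the group $\hom(H,S^1)$ is finite and discrete; together with Proposition~\ref{pro:hom(c,U(H))_to_hom(C,PU(H))_principal_bundle} this shows that
\[
\phi \colon \hom_{S^1}(\widetilde{H},\calu(\calh)) \to \hom(H,\calp\calu(\calh))_{\widetilde{H}}
\]
is a finite-sheeted covering map. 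Pick any lift $\widetilde{\alpha} \in \hom_{S^1}(\widetilde{H},\calu(\calh))$ of $\alpha$; such a lift exists by the very definition of $\hom(H,\calp\calu(\calh))_{\widetilde{H}}$.

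Given any continuous path $\omega \colon [0,1] \to \hom(H,\calp\calu(\calh))$ with $\omega(0) = \alpha$, the path stays in $\hom(H,\calp\calu(\calh))_{\widetilde{H}}$ by the previous step, and hence lifts uniquely to a path $\widetilde{\omega} \colon [0,1] \to \hom_{S^1}(\widetilde{H},\calu(\calh))$ with $\widetilde{\omega}(0) = \widetilde{\alpha}$. By Corollary~\ref{cor:hom_S1(tilde(H),U(H))_homeo_to_disjoint_union_of_orbits}, the $\calu(\calh)$-orbit of $\widetilde{\alpha}$ is open and closed in $\hom_{S^1}(\widetilde{H},\calu(\calh))$, so $\widetilde{\omega}$ stays inside $\calu(\calh) \cdot \widetilde{\alpha}$, giving $\widetilde{\omega}(1) = g \widetilde{\alpha} g^{-1}$ for some $g \in \calu(\calh)$. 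Projecting under $\phi$ yields $\omega(1) = [g]\,\alpha\,[g]^{-1} \in \calp\calu(\calh) \cdot \alpha$, proving item~\ref{def:property_(H):component}.

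The heart of the argument has already been done in the preceding sections; the only point requiring care is the path-lifting step, and it rests on the key observation that $\hom(H,S^1)$ is discrete. Once that is in hand, the proposition is essentially an assembly of Theorem~\ref{the:local_cross_sections_for_Hom(H,PU(H))}, Proposition~\ref{pro:hom(c,U(H))_to_hom(C,PU(H))_principal_bundle}, Lemma~\ref{lem:hom(H,PU(H))_wilde(H)_open_closed}, and Corollary~\ref{cor:hom_S1(tilde(H),U(H))_homeo_to_disjoint_union_of_orbits}.
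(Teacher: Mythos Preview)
Your proposal is correct and uses essentially the same ingredients as the paper's proof: items~\ref{def:property_(H):S_for_G} and~\ref{def:property_(H):homeo} are referred to Theorem~\ref{the:local_cross_sections_for_Hom(H,PU(H))}, and item~\ref{def:property_(H):component} is deduced from Lemma~\ref{lem:hom(H,PU(H))_wilde(H)_open_closed}, Proposition~\ref{pro:hom(c,U(H))_to_hom(C,PU(H))_principal_bundle}, and Corollary~\ref{cor:hom_S1(tilde(H),U(H))_homeo_to_disjoint_union_of_orbits}. The only difference is that the paper, instead of lifting paths through the covering $\phi$, observes directly that $\calp\calu(\calh)\cdot\alpha$ is open and closed in $\hom(H,\calp\calu(\calh))$ (as the image under the open map $\phi$ of the clopen subset $\calu(\calh)\cdot\widetilde{\alpha}$), which immediately yields item~\ref{def:property_(H):component}.
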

\begin{proof}
Consider the commutative diagram
\[\xymatrix{
\calu(\calh) \cdot \widetilde{\alpha} \ar@{^{(}->}[r] \ar[d]^\phi 
& \hom_{S^1}(\widetilde{H},\calu (\calh)) \ar@{^{(}->}[r] \ar[d]^\phi
& \hom(\widetilde{H},\calu (\calh)) \ar[d] 
\\
\calp \calu(\calh) \cdot {\alpha} \ar@{^{(}->}[r]
&  \hom(H,\calp \calu (\calh))_{\widetilde{H}} \ar@{^{(}->}[r] 
& \hom(H,\calp \calu (\calh).
} \]
By Lemma~\ref{lem:hom(H,PU(H))_wilde(H)_open_closed} we know that $\hom(H,\calp \calu
(\calh))_{\widetilde{H}}$ is open and closed in $\hom(H,\calp \calu (\calh)$, by
Proposition~\ref{pro:hom(c,U(H))_to_hom(C,PU(H))_principal_bundle} we know that the map
$\phi$ is a principal $\hom(H,S^1)$-bundle and by
Corollary~\ref{cor:hom_S1(tilde(H),U(H))_homeo_to_disjoint_union_of_orbits} we know that
$\calu(\calh) \cdot \widetilde{\alpha}$ is both open and closed in
$\hom_{S^1}(\widetilde{H},\calu(\calh))$. This implies that $\calp \calu(\calh) \cdot
{\alpha}$ is open and closed in $\hom(H,\calp \calu (\calh)$; this
proves~\ref{def:property_(H):component}.

The other two conditions~\ref{def:property_(H):S_for_G} and~\ref{def:property_(H):S_for_Gamma}  have already been proved in
Theorem~\ref{the:local_cross_sections_for_Hom(H,PU(H))}.
\end{proof}


\subsection{Almost free $\Gamma$-equivariant stable projective unitary bundles.}

Let $\Gamma$ be a Hausdorff topological group and consider the family of all finite
subgroups $\calf \cali \caln(\Gamma)$ of $\Gamma$. Let $\calr=\calr(\calf \cali
\caln(\Gamma))$ be the associated family of local representations
\[
\calr=\{ (H,\alpha) \mid H \in \calf \cali \caln(\Gamma) \text{ and } 
\alpha \colon H \to \calp \calu(\calh) \text{ any group homomorphism} \}.
\]
A $\Gamma$-$CW$-complex $X$ is called \emph{almost free} if all its isotropy groups are finite.

\begin{theorem}[Universal $\Gamma$-equivariant projective unitary 
 bundle for almost free $\Gamma$-$CW$-complexes]
\label{the:universal_bundle_projective_unitary_almost_free_wolfgang}
Let $\Gamma$ be a Hausdorff topological group. Then the bundle
  \[
  \calp \calu(\calh) \to E(\Gamma,\calp\calu(\calh),\calr) \stackrel{p}{\to}
  B(\Gamma,\calp\calu(\calh),\calr)
  \]
  is a universal $\Gamma$-equivariant projective unitary bundle for almost free   $\Gamma$-$CW$-complexes
\end{theorem}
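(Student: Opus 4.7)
The plan is to reduce the statement to Theorem~\ref{the:Classifying_space_for_gamma-equivariant_principal_G-bundles} applied to the pair $(\Gamma,\calp\calu(\calh))$ with the family $\calr = \calr(\calf\cali\caln(\Gamma))$. For this we must check two things: that $\calr$ satisfies Condition~(H) of Definition~\ref{def:property_(H)}, and that the classification set $\Bundle_{\Gamma,\calp\calu(\calh),\calr}(X)$ actually captures all $\Gamma$-equivariant principal $\calp\calu(\calh)$-bundles over an almost free $\Gamma$-$CW$-complex $X$ (not merely those with prescribed local representations).

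I would verify Condition~(H) as follows. Every $(H,\alpha) \in \calr$ has $H$ finite, so $H$ is a compact Lie group. Items \ref{def:property_(H):component}, \ref{def:property_(H):S_for_G} and \ref{def:property_(H):homeo} of Condition~(H) are then provided directly by Proposition~\ref{pro:three_items_of_Condition(H)_for_finite_H_and_PU(H)_wolfgang}. For item \ref{def:property_(H):S_for_Gamma} one needs the pair $(\Gamma,H)$ to satisfy Condition~(S); since $\Gamma$ is a Hausdorff topological group it is completely regular (this is the classical fact that every Hausdorff topological group is Tychonoff, via its translation-invariant uniformity, cf.\ Theorem~\ref{the:Birkhoff-Kakutani} in the first countable case), so Lemma~\ref{lem:condition_(S)}~\ref{lem:condition_(S):H_compact_Lie} applied to the finite (hence compact Lie) subgroup $H$ yields the required local cross section.

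Next I would observe that $\calr$ is closed under taking subgroups: if $(H,\alpha) \in \calr$ and $K \subseteq H$, then $K$ is finite and $\alpha|_K \colon K \to \calp\calu(\calh)$ is a continuous homomorphism, so $(K,\alpha|_K) \in \calr$. By Remark~\ref{rem:automatic} this makes $\calr$ compatible with every $\Gamma$-$CW$-complex. Moreover, if $X$ is almost free and $q \colon E \to X$ is any $\Gamma$-equivariant principal $\calp\calu(\calh)$-bundle, then for each $e \in E$ the isotropy group $\Gamma_{q(e)}$ is a subgroup of a finite group and hence itself finite, so $(\Gamma_{q(e)},\rho_e) \in \calr$ by the very definition of $\calr(\calf\cali\caln(\Gamma))$. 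Therefore $\calr(q) \subseteq \calr$ automatically, and $\Bundle_{\Gamma,\calp\calu(\calh),\calr}(X)$ coincides with the set of isomorphism classes of all $\Gamma$-equivariant principal $\calp\calu(\calh)$-bundles over $X$ (this is the observation underlying Example~\ref{exa:proper_spaces}).

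With both hypotheses of Theorem~\ref{the:Classifying_space_for_gamma-equivariant_principal_G-bundles} now in place, it gives the desired bijection
\[
[X,B(\Gamma,\calp\calu(\calh),\calr)]^{\Gamma} \xrightarrow{\cong} \Bundle_{\Gamma,\calp\calu(\calh),\calr}(X)
\]
for every almost free $\Gamma$-$CW$-complex $X$, which is precisely the universality asserted in the theorem. The only non-routine step is item~\ref{def:property_(H):S_for_Gamma} of Condition~(H) under the sole hypothesis that $\Gamma$ be Hausdorff; this is what makes the finiteness of $H$ essential, since it lets us invoke the Palais-type result in Lemma~\ref{lem:condition_(S)}~\ref{lem:condition_(S):H_compact_Lie} rather than needing local compactness, second countability and finite covering dimension on $\Gamma$.
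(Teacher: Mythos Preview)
Your proof is correct and follows essentially the same route as the paper's: reduce to Theorem~\ref{the:Classifying_space_for_gamma-equivariant_principal_G-bundles}, verify items~\ref{def:property_(H):component}, \ref{def:property_(H):S_for_G}, \ref{def:property_(H):homeo} of Condition~(H) via Proposition~\ref{pro:three_items_of_Condition(H)_for_finite_H_and_PU(H)_wolfgang}, and obtain item~\ref{def:property_(H):S_for_Gamma} from the fact that a Hausdorff topological group is completely regular together with Lemma~\ref{lem:condition_(S)}~\ref{lem:condition_(S):H_compact_Lie} applied to the finite (hence compact Lie) subgroup $H$. Your additional remarks on compatibility and on $\calr(q)\subseteq\calr$ holding automatically for almost free $X$ just make explicit what the paper leaves implicit via Example~\ref{exa:proper_spaces}; the only quibble is that the pointer to Theorem~\ref{the:Birkhoff-Kakutani} is misplaced, since that result concerns metrizability rather than complete regularity.
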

\begin{proof}
  The result follows from
  Theorem~\ref{the:Classifying_space_for_gamma-equivariant_principal_G-bundles} since
  items~\ref{def:property_(H):component},~\ref{def:property_(H):S_for_G}
  and~\ref{def:property_(H):homeo} of Condition (H) were proved in
  Proposition~\ref{pro:three_items_of_Condition(H)_for_finite_H_and_PU(H)_wolfgang} and
  item~\ref{def:property_(H):S_for_Gamma} follows from the fact that $\Gamma$ is Hausdorff
  and the subgroups $ H \in \calf \cali \caln(\Gamma)$ are finite.
\end{proof}

Certainly one can associate a $\Fred(\calh)$-bundle to any $\Gamma$-equivariant projective
unitary bundle and one can take the homotopy groups of its $\Gamma$-equivariant
sections. This procedure applied to some $\Gamma$-equivariant projective unitary bundles
may fail to produce the expected twisted equivariant K-theory groups, let us see why: 
for $H$ finite subgroup of $\Gamma$ and $\alpha \colon H \to \calp\calu(\calh)$ any group
homomorphism, we can construct the $\Gamma$-equivariant principal
$\calp\calu(\calh)$-bundle
\[
\Gamma \times_\alpha \calp\calu(\calh) \to \Gamma/H 
\]
where $\Gamma \times_\alpha \calp\calu(\calh)$ is the quotient of $\Gamma \times
\calp\calu(\calh)$ under the left $H$-action given by $h\cdot (\gamma, g) = (\gamma
h^{-1},\alpha(h)g)$. The associated bundle
\[
\left(\Gamma \times_\alpha \calp\calu(\calh) \right) \times_{\calp\calu(\calh)} \Fred(\calh) \cong \Gamma \times_\alpha  \Fred(\calh)
\]
is equivalent to the quotient of $ \Gamma \times \Fred(\calh)$ under the left $H$-action
given by $h \cdot (\gamma, F) = (\gamma h^{-1},\alpha(h)F \alpha(h)^{-1})$. Therefore the
space of $\Gamma$-equivariant sections of the bundle
 \[
 \Gamma \times_\alpha  \Fred(\calh) \to \Gamma/H
 \]
is homeomorphic to the space of $\alpha$-invariant operators
\[
\Fred(\calh)^\alpha := \{F \in \Fred(\calh) \mid \alpha(h)F \alpha(h)^{-1}=F \text{ for all } h \in H \}.
\]

Now, the index map
\begin{align*}
\Index\colon \Fred(\calh)^\alpha \to R(\widetilde{H}) & & F \mapsto \ker(F) - \coker(F)
\end{align*}
maps an $\alpha$-invariant operator to an element of the Grothendieck ring of
representations of $\widetilde{H}:=\alpha^* \calu(\calh)$, group which was defined in
diagram~\eqref{diagram_tilde(H)_wolfgang}.

In order for the homotopy groups of $\Fred(\calh)^\alpha$ to represent the local
coefficients for twisted equivariant K-theory, we know that we need to impose two
conditions on the homomorphism $\alpha\colon  H \to \calp \calu(\calh)$:
\begin{itemize}

\item The image of the index map is a subgroup of $R(\widetilde{H})$; this in order for
  the connected components $\pi_0( \Fred(\calh)^\alpha)$ to be a group, and

\item All the representations of $\widetilde{H}$, on which $S^1=\ker(\widetilde{H} \to H)$
  acts by multiplication, must appear on the image of the index map; this in order to
  obtain a theory compatible with restriction of representations.

\end{itemize}
If a homomorphism $\alpha$ satisfies these two conditions, then the index map
\[
\Index \colon \pi_0(\Fred(\calh)^\alpha) \stackrel{\cong}{\to} R_\alpha(H),
\]
 induces an isomorphism between the path components of $\Fred(\calh)^\alpha$ and the
Grothendieck group
\[
R_\alpha(H) := \{ V \in R(\widetilde{H}) \mid S^1=\ker(\widetilde{H} \to H)\text{ acts by multiplication on } V \}
\]
of so called $\alpha$-twisted $H$-representations 

Homomorphisms that satisfy these two
conditions will be called {\it{stable}}, cf.~\cite[Section~6]{Atiyah-Segal(2004)}.  

\begin{definition}[Stable homomorphism] \label{def:stable_homomorphism_wolfgang} A homomorphism
  $\alpha \colon H \to \calp \calu(\calh)$ is \emph{stable} if all irreducible
  representations of $\widetilde{H}$ on which $S^1=\ker(\widetilde{H} \to H)$ acts by
  multiplication appear infinitely number of times.
\end{definition}

Define the set $\cals$ of stable local representations for almost free $\Gamma$-actions
\[
\cals=\{ (H,\alpha) \mid H \in \calf \cali \caln(\Gamma) \text{ and } 
\alpha \colon H \to \calp \calu(\calh) \;\text{is a stable homomorphism} \}.
\]
Note that $\cals$ is indeed a family of local representations since conjugation of stable
homomorphisms is also stable and the restriction of a stable homomorphism to a subgroup is
stable; this last statement follows from the Frobenius Reciprocity Theorem since all
irreducible $\alpha|_K$-twisted representations of $K$ subgroup of $H$ may be obtained
from the $\alpha$-twisted representations of $H$.

Now let $B$ be an almost free $\Gamma$-$CW$-complex.  Then a $\Gamma$-equivariant principle
$\calp\calu(\calh)$-bundle $p \colon E \to B$ whose family of local representations is
contained in $\cals$ is the same as a $\Gamma$-equivariant stable projective unitary
bundle over $B$ in the sense
of~\cite[Definition~2.2]{Barcenas-Espinoza-Joachim-Uribe(2012)}. Note that the existence of the
local data appearing in~\cite[Definition~2.2]{Barcenas-Espinoza-Joachim-Uribe(2012)} is
automatically satisfied by Theorem~\ref{the:local_objects}.

We obtain from Theorem~\ref{the:Classifying_space_for_gamma-equivariant_principal_G-bundles} 
a universal $\Gamma$-equivariant stable projective unitary bundle $p \colon
E(\Gamma,\calp\calu(\calh),\cals) \to B(\Gamma,\calp\calu(\calh),\cals)$ such that for any
almost free $\Gamma$-$CW$-complex $X$ the pullback construction yields a bijection from
$\bigl[X,B(\Gamma,\calp\calu(\calh),\cals)\bigr]^{\Gamma}$ to the set of isomorphism
classes of $\Gamma$-equivariant stable projective unitary bundles over $X$.  This
generalizes~\cite[Theorem~3.4]{Barcenas-Espinoza-Joachim-Uribe(2012)}, where $\Gamma$ is
assumed to be discrete.  Summarizing:

\begin{theorem}[Universal $\Gamma$-equivariant stable projective bundle]
 \label{the:classification_of_equivariant_stable_projective_unitary_bundles} 
  Let $\Gamma$ be a Hausdorff topological group. 
 Then the bundle
  \[
  \calp \calu(\calh) \to E(\Gamma,\calp\calu(\calh),\cals) \stackrel{p}{\to}
  B(\Gamma,\calp\calu(\calh),\cals)
  \]
  is a universal $\Gamma$-equivariant stable projective unitary bundle for almost free
  $\Gamma$-$CW$-complexes.
\end{theorem}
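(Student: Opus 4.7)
The plan is to deduce the result directly from Theorem~\ref{the:Classifying_space_for_gamma-equivariant_principal_G-bundles} applied to the family $\cals$, essentially mirroring the argument already used in the proof of Theorem~\ref{the:universal_bundle_projective_unitary_almost_free_wolfgang} for the larger family $\calr$. To do this I first need to verify that $\cals$ is actually a family of local representations in the sense of Definition~\ref{def:family_of_local_representations}, and then that $\cals$ satisfies Condition~(H) of Definition~\ref{def:property_(H)}.

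For the first point, the discussion preceding the statement already argues that stability is preserved under conjugation in $\calp\calu(\calh)$ and in $\Gamma$ (since conjugation leaves the isomorphism type of the associated $\widetilde{H}$-representation unchanged), and that restriction of a stable homomorphism to a subgroup is stable by Frobenius Reciprocity. I would spell out that the finite-intersection axiom reduces to the restriction case: given $(H_0,\alpha_0),(H_1,\alpha_1)\in\cals$, the pair $(H,\alpha)$ with $H=\{h\in H_0\cap H_1\mid\alpha_0(h)=\alpha_1(h)\}$ and $\alpha=\alpha_0|_H$ is a restriction of $\alpha_0$ to the subgroup $H\subseteq H_0$ and therefore belongs to $\cals$.

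Next I would check the four items of Condition~(H) for an arbitrary $(H,\alpha)\in\cals$. Since $\cals\subseteq\calr$ and items~\ref{def:property_(H):component}, \ref{def:property_(H):S_for_G}, and~\ref{def:property_(H):homeo} of Condition~(H) depend only on the individual pair $(H,\alpha)$ and on the ambient groups (not on the surrounding family), they have already been established in Proposition~\ref{pro:three_items_of_Condition(H)_for_finite_H_and_PU(H)_wolfgang}, which applies to every homomorphism of a finite group $H$ into $\calp\calu(\calh)$. Item~\ref{def:property_(H):S_for_Gamma}, namely that $(\Gamma,H)$ satisfies Condition~(S), is automatic because $\Gamma$ is Hausdorff and $H$ is finite, so $H$ sits discretely in $\Gamma$ and the projection $\Gamma\to\Gamma/H$ admits local sections (the Hausdorff hypothesis separates the finitely many points of $H$ into disjoint open neighborhoods, producing the required local cross section around $1H$).

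With Condition~(H) verified for $\cals$, Theorem~\ref{the:Classifying_space_for_gamma-equivariant_principal_G-bundles} directly yields that the canonical projection $p\colon E(\Gamma,\calp\calu(\calh),\cals)\to B(\Gamma,\calp\calu(\calh),\cals)$ is a $\Gamma$-equivariant principal $\calp\calu(\calh)$-bundle and that pullback along $\Gamma$-maps induces a bijection between $[X,B(\Gamma,\calp\calu(\calh),\cals)]^\Gamma$ and isomorphism classes of $\Gamma$-equivariant principal $\calp\calu(\calh)$-bundles $q\colon E\to X$ with $\calr(q)\subseteq\cals$ for every almost free $\Gamma$-$CW$-complex $X$ (compatibility with $X$ in the sense of Definition~\ref{def:compatibility} holds because $\cals$ is closed under restriction to subgroups, as noted above, and the isotropy groups of $X$ are finite). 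To conclude, I would invoke the identification, established in the paragraphs preceding the statement, between $\Gamma$-equivariant principal $\calp\calu(\calh)$-bundles with local representations in $\cals$ and $\Gamma$-equivariant stable projective unitary bundles in the sense of~\cite[Definition~2.2]{Barcenas-Espinoza-Joachim-Uribe(2012)}, where the required local trivializing data is provided by Theorem~\ref{the:local_objects}. The main (modest) obstacle is bookkeeping: making sure that every ingredient previously proved for $\calr$ genuinely restricts to the subfamily $\cals$, and that the stability condition is indeed preserved by the operations (especially restriction) required by Definition~\ref{def:family_of_local_representations}.
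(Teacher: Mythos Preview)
Your proposal is correct and follows essentially the same approach as the paper: the theorem is stated as a summary of the preceding discussion, which verifies that $\cals$ is a family of local representations satisfying Condition~(H) (via Proposition~\ref{pro:three_items_of_Condition(H)_for_finite_H_and_PU(H)_wolfgang} for items~\ref{def:property_(H):component}, \ref{def:property_(H):S_for_G}, \ref{def:property_(H):homeo} and the Hausdorff hypothesis plus finiteness of $H$ for item~\ref{def:property_(H):S_for_Gamma}) and then applies Theorem~\ref{the:Classifying_space_for_gamma-equivariant_principal_G-bundles}. Your explicit unpacking of the finite-intersection axiom and of compatibility with almost free $X$ is a welcome clarification but does not deviate from the paper's line of argument.
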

For $X$ a proper $\Gamma$-$CW$-complex,  the set of 
the isomorphism classes of projective unitary $\Gamma$-equivariant stable
bundles over $X$ has been denoted by  ${\rm{Bun}}^\Gamma_{st}(X, \calp\calu(\calh))$ 
in~\cite[Definition~2.2]{Barcenas-Espinoza-Joachim-Uribe(2012)}; we have
then the canonical isomorphisms of sets
\[
\\{\rm{Bun}}^\Gamma_{st}(X, \calp \calu(\calh)) =  \Bundle_{\Gamma,\calp
  \calu(\calh),\cals}(X)
\stackrel{\cong}{\to}\bigl[X,B(\Gamma,\calp\calu(\calh),\cals)\bigr]^{\Gamma}.
\]

Therefore the $\Gamma$-equivariant stable projective unitary bundle
 \[
  \calp \calu(\calh) \to E(\Gamma,\calp\calu(\calh),\cals) \stackrel{p}{\to}
  B(\Gamma,\calp\calu(\calh),\cals)
  \]
  is the universal twist in equivariant K-theory for almost free $\Gamma$-CW-complexes. In
  particular, for a fixed $\Gamma$-equivariant map $f\colon X \to   B(\Gamma,\calp\calu(\calh),\cals)$ 
  with $X$ almost free $\Gamma$-CW-complex, the twisted
  equivariant K-theory groups of the pair $(X,f)$ can be defined as
\[
K_\Gamma^{-i}(X,f) := \pi_i\bigl({\rm{Sections}}(f^*E(\Gamma,\calp\calu(\calh),\cals) \times_{\calp\calu(\calh)} \Fred(\calh))^\Gamma\bigr),
\]
namely, as the homotopy groups of the space of $\Gamma$-equivariant sections of the
associated $\Fred(\calh)$-bundle; see~\cite[Appendix~A]{Barcenas-Espinoza-Joachim-Uribe(2012)} 
for further properties of the twisted equivariant K-theory groups defined in this way.

Now, in the case that $\Gamma$ is furthermore a Lie group we can show that there is an
isomorphism between the set $\Bundle^\Gamma_{st}(X, \calp \calu(\calh))$ and $H^3( E
\Gamma \times_\Gamma X, \mathbb{Z})$.

\begin{theorem} \label{the:Bundles_and_H_upper_three}
   Let $\Gamma$ be a Lie group and $X$ be an almost free $\Gamma$-CW complex.

     Then the map
    \[
    \map(\id_X,\psi) \colon \map (X, B(\Gamma,\calp\calu(\calh),\cals)) \to \map  (X,\map(E\Gamma, B\calp\calu(\calh))), 
    \quad f \mapsto \psi \circ f
    \]
    is a weak $\Gamma$-homotopy equivalence. 

    In particular we obtain bijections
    \begin{multline*}
     \Bundle_{\Gamma,\calp\calu(\calh,\cals}(X) \cong  [X, B(\Gamma, \calp\calu(\calh) ,\cals)]^{\Gamma}  
     \xrightarrow{\cong}  [X,\map(E\Gamma, B\calp\calu(\calh))]^{\Gamma}
     \\ 
          [E\Gamma \times_{\Gamma} X,B\calp\calu(\calh)] 
     \cong
     H^3(E\Gamma \times_\Gamma X, \mathbb{Z}).
   \end{multline*}
    \end{theorem}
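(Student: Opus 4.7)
The plan is to mirror the proof of Theorem~\ref{the:compact_abelian_G}, with the adaptations necessary for the non-abelian group $\calp\calu(\calh)$ and for the stability condition defining $\cals$. As there, the central technical input is to prove, for every finite subgroup $H\subseteq\Gamma$, that the induced fixed-point map
\[
\psi^H\colon B(\Gamma,\calp\calu(\calh),\cals)^H \longrightarrow \map(E\Gamma,B\calp\calu(\calh))^H\simeq\map(BH,B\calp\calu(\calh))
\]
is a weak homotopy equivalence. Once this is established, the weak $\Gamma$-homotopy equivalence of $\map(\id_X,\psi)$, and hence the first claimed bijection, will follow word-for-word from the final paragraphs of the proof of Theorem~\ref{the:compact_abelian_G}: apply~\cite[Proposition~2.3 on page~35]{Lueck(1989)}, use the adjunction in the category of compactly generated spaces, and observe that for a Lie group $\Gamma$ and an almost free $\Gamma$-$CW$-complex $X$, the product $X\times Z$ is $\Gamma$-homotopy equivalent to an almost free $\Gamma$-$CW$-complex. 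The final identification with $H^3(E\Gamma\times_\Gamma X,\mathbb{Z})$ will then come from $\calp\calu(\calh)\simeq K(\mathbb{Z},2)$ in the norm topology, hence $B\calp\calu(\calh)\simeq K(\mathbb{Z},3)$, together with Brown representability on the $CW$-complex $E\Gamma\times_\Gamma X$.

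To establish the fixed-point equivalence, I will decompose both sides into path components and match them term by term. By Theorem~\ref{the:Fixed_point_sets_of_B(Gamma,G,calr)}, the source splits as $\bigsqcup_{[\alpha]} B(\Gamma,\calp\calu(\calh),\cals)^H_{\alpha}$, indexed by $\calp\calu(\calh)$-conjugacy classes of stable homomorphisms $\alpha\colon H\to\calp\calu(\calh)$, with each component weakly equivalent to $BC_{\calp\calu(\calh)}(\alpha)$, while the path components of the target $\map(BH,K(\mathbb{Z},3))$ are indexed by $H^3(BH,\mathbb{Z})=H^2(H,S^1)$. I will establish the bijection on $\pi_0$ by sending $[\alpha]$ to the class in $H^2(H,S^1)$ of the central $S^1$-extension $\widetilde{H}=\alpha^{*}\calu(\calh)$ from diagram~\eqref{diagram_tilde(H)_wolfgang}. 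Stability is what makes this bijective: any class in $H^2(H,S^1)$ is realized by an infinite orthogonal sum of all irreducible $S^1$-multiplication $\widetilde{H}$-representations, and any two stable homomorphisms inducing isomorphic extensions yield $\alpha$-twisted $H$-representations on $\calh$ in which every irreducible appears with infinite multiplicity, so they are unitarily equivalent and therefore conjugate in $\calp\calu(\calh)$.

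On each component, I will run the argument of~\cite[Theorem~2 and Proposition~4]{Lashof-May-Segal(1983)} with $BG$ replaced by $B\calp\calu(\calh)$. From the principal $\calp\calu(\calh)$-bundle $E(\Gamma,\calp\calu(\calh),\cals)^{K(H,\alpha)}\to B(\Gamma,\calp\calu(\calh),\cals)^H_{\alpha}$ of Theorem~\ref{the:Fixed_point_sets_of_B(Gamma,G,calr)}~\ref{the:Fixed_point_sets_of_B(Gamma,G,calr):BC_Galpha} together with a universal bundle $E\calp\calu(\calh)\to B\calp\calu(\calh)$, I will form the square of associated $E\calp\calu(\calh)$-bundles over $E\Gamma/H\times(-)$ and pass to the adjoint of the lower horizontal classifying map to obtain $BC_{\calp\calu(\calh)}(\alpha)\to\map(BH,B\calp\calu(\calh))_{\alpha}$. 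To verify that it is a weak equivalence on the basepoint component I will compute $\pi_{*}(BC_{\calp\calu(\calh)}(\alpha))$ via the short exact sequence
\[
1\to C_{\calu(\calh)}(\widetilde{\alpha})/S^1 \longrightarrow C_{\calp\calu(\calh)}(\alpha) \longrightarrow \hom(H,S^1)\to 1,
\]
where the left-hand term has the homotopy type of $BS^1=K(\mathbb{Z},2)$ because $C_{\calu(\calh)}(\widetilde{\alpha})$ is a finite product of copies of $\calu(\calh)$ by Schur's lemma and is therefore contractible by Kuiper's theorem. Delooping then yields $\pi_1 BC_{\calp\calu(\calh)}(\alpha)=\hom(H,S^1)=H^2(BH,\mathbb{Z})$ and $\pi_3=\mathbb{Z}=H^0(BH,\mathbb{Z})$, matching the Postnikov tower of $\map(BH,K(\mathbb{Z},3))_{\alpha}$ (the other homotopy groups vanish for finite $H$).

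The main obstacle will be the centralizer analysis, because $C_{\calp\calu(\calh)}(\alpha)$ is strictly larger than $C_{\calu(\calh)}(\widetilde{\alpha})/S^1$: for a lift $\widetilde{g}\in\calu(\calh)$ of an element $g\in C_{\calp\calu(\calh)}(\alpha)$ one has $\widetilde{g}\,\widetilde{\alpha(h)}\,\widetilde{g}^{-1}=\mu(h)\widetilde{\alpha(h)}$ for some character $\mu\in\hom(H,S^1)$, and the surjectivity of the resulting map $C_{\calp\calu(\calh)}(\alpha)\to\hom(H,S^1)$ uses stability once more (every character twist of a stable representation is isomorphic to the original). Showing that the Lashof--May--Segal adjoint map is a weak equivalence in this non-abelian setting amounts to checking that these explicit descriptions of both sides agree under the natural comparison, not merely that they have abstractly isomorphic homotopy groups. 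Once this is done, the remaining identifications in the theorem follow from $B\calp\calu(\calh)\simeq K(\mathbb{Z},3)$ and Brown representability.
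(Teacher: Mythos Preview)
Your proposal is correct and follows essentially the same approach as the paper. Both reduce to proving that $\psi^H$ is a weak homotopy equivalence for each finite $H\subseteq\Gamma$, identify $\pi_0$ of both sides with $\Ext(H,S^1)\cong H^3(BH,\mathbb{Z})$ via the central extension $\widetilde H=\alpha^*\calu(\calh)$ (the paper cites~\cite[Proposition~1.5]{Barcenas-Espinoza-Joachim-Uribe(2012)} for this, you sketch it directly), compute the homotopy groups of $C_{\calp\calu(\calh)}(\alpha)$ (the paper cites~\cite[Theorem~1.8]{Barcenas-Espinoza-Joachim-Uribe(2012)}, you derive them from the short exact sequence with kernel $C_{\calu(\calh)}(\widetilde\alpha)/S^1\simeq K(\mathbb{Z},2)$), and then verify that $\psi^H$ induces the claimed isomorphisms rather than merely having matching groups.

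The one place the paper is more concrete than your plan is the final verification step, which you flag as ``the main obstacle'' but leave open. Rather than adapting the Lashof--May--Segal adjoint-map argument (which relies on $G$ being abelian), the paper observes that the classifying map $E\Gamma/H\times B(\Gamma,\calp\calu(\calh),\cals)^H_\alpha\to B\calp\calu(\calh)$ defining $\psi^H$ is, up to weak equivalence, $B\phi$ for the explicit group homomorphism $\phi\colon H\times C_{\calp\calu(\calh)}(\alpha)\to\calp\calu(\calh)$, $(h,g)\mapsto\alpha(h)g$. The $\pi_3$-isomorphism then follows because stability forces the inclusion $C_{\calp\calu(\calh)}(\alpha)\hookrightarrow\calp\calu(\calh)$ to be a $\pi_2$-isomorphism, and the $\pi_1$-isomorphism is checked via the homeomorphism $C_{\calp\calu(\calh)}(\alpha)\cong\hom_\alpha(\mathbb{Z}\times H,\calp\calu(\calh))$ together with a commutative square comparing $\pi_0(C_{\calp\calu(\calh)}(\alpha))\cong\hom(H,S^1)$ with $[BH,\Omega B\calp\calu(\calh)]\cong[BH,BS^1]$. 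This is the same content as your short exact sequence and your surjection $C_{\calp\calu(\calh)}(\alpha)\to\hom(H,S^1)$, but packaged so that naturality under $\psi^H$ is visible.
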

    \begin{proof}
    For $H$ a finite group of $\Gamma$ we just need to show that the induced map
    \[
    \psi^H\colon B(\Gamma, \calp\calu(\calh) ,\cals)^H 
    \to \map(E\Gamma, B\calp\calu(\calh))^H \simeq \map(BH, B\calp\calu(\calh))
    \]
    is a weak homotopy equivalence; the rest of the proof is formal and it is equivalent 
    to the one that appears in the proof of Theorem~\ref{the:compact_abelian_G}.
    
    Let $\hom_\cals(H,\calp \calu(\calh))$ be the set of stable homomorphisms. 
    In~\cite[Proposition~1.5]{Barcenas-Espinoza-Joachim-Uribe(2012)}    it is shown that the map
    \[
   \hom_\cals(H,\calp \calu(\calh))/\calp \calu(\calh) \stackrel{\cong}{\to} \Ext(H,S^1),
    \quad     \alpha \mapsto \widetilde{H}=\alpha^*\calu(\calh)
   \]
    is an isomorphism of sets, where $\Ext(H,S^1)$ denotes the set of isomorphism 
    classes of central $S^1$-extensions of $H$.  Now, we have the isomorphism of sets 
    \[
    \Ext(H,S^1)\cong H^3(BH,\mathbb{Z}) \cong [BH,B\calp \calu(\calh)]
    \]
    since $\calp \calu(\calh)$ is $K(\mathbb{Z},2)$-space by 
     Kuiper's Theorem~\cite{Kuiper(1965)}. Since by Theorem~\ref{the:Fixed_point_sets_of_B(Gamma,G,calr)}  we know that 
    \[
    \hom_\cals(H,\calp \calu(\calh))/\calp \calu(\calh) \stackrel{\cong}{\to} \pi_0(B(\Gamma, \calp\calu(\calh) ,\cals)^H),
    \]
    we obtain the desired isomorphism at the level of connected components 
    \[
    \pi_0(\psi^H)\colon \pi_0(B(\Gamma, \calp\calu(\calh) ,\cals)^H) 
    \xrightarrow{\cong} \pi_0(\map(E\Gamma, B\calp\calu(\calh))^H).
    \]
    
    Take now $\alpha \in  \hom_\cals(H,\calp \calu(\calh))$. By Theorem~\ref{the:Fixed_point_sets_of_B(Gamma,G,calr)} 
     we have a weak homotopy equivalence
    \[
    BC_{ \calp\calu(\calh)}(\alpha) \stackrel{\simeq}{\to}B(\Gamma, \calp\calu(\calh) ,\cals)^H_\alpha;
    \]
    by~\cite[Theorem~1.8]{Barcenas-Espinoza-Joachim-Uribe(2012)} we know that the homotopy groups 
    of $C_{ \calp\calu(\calh)}(\alpha) $ are
    \[
    \pi_j(C_{ \calp\calu(\calh)}(\alpha)) = \left\{ \begin{matrix}
    \hom(H,S^1) & \text{for} & j=0; \\
    \mathbb{Z} & \text{for} & j=2; \\
    0 && \text{otherwise.} 
      \end{matrix} \right.
    \]
    By a simple calculation the homotopy groups for $j >0$ of $\map(E\Gamma, B\calp\calu(\calh))^H$ are
    \[
    \pi_j(\map(E\Gamma, B\calp\calu(\calh))^H) = \left\{ \begin{matrix}
    H^2(BH,\mathbb{Z})=\hom(H,S^1)& \text{for} & j=1; \\
    H^0(BH,\mathbb{Z})=\mathbb{Z} & \text{for} & j=3; \\
    0 & &\text{otherwise.}
      \end{matrix} \right.
    \]
    Therefore we see that the homotopy groups of $B(\Gamma, \calp\calu(\calh) ,\cals)^H$ 
    and $\map(E\Gamma, B\calp\calu(\calh))^H$
    are isomorphic and concentrated in degrees $j = 0$ and $j=2$.
    
    Now, in order to show that indeed $\psi^H$ induces the desired isomorphism on homotopy
    groups we need to resort to the proof of
    Theorem~\ref{the:Fixed_point_sets_of_B(Gamma,G,calr)}. For $E=E(\Gamma,
    \calp\calu(\calh) ,\cals)$ we know that
    \[
    E|_{B(\Gamma, \calp\calu(\calh) ,\cals)^H_\alpha} \cong \calp\calu(\calh) \times_{C_{\calp\calu(\calh)}(\alpha)} E^{K(H,\alpha)}
    \]
    with $E^{K(H,\alpha)} \to B(\Gamma, \calp\calu(\calh) ,\cals)^H_\alpha$ a principal $C_{ \calp\calu(\calh)}(\alpha)$-bundle. 
     Therefore the map that defines $\psi^H$
    \[
    E\Gamma/H \times B(\Gamma, \calp\calu(\calh) ,\cals)^H_\alpha \to B \calp\calu(\calh)
    \]
    is weakly homotopically equivalent to the map obtained by applying the classifying space functor 
    \[
    BH \times BC_{ \calp\calu(\calh)}(\alpha) \to B \calp\calu(\calh)
    \]
    to the group homomorphism 
    \[
    \phi \colon H \times C_{\calp\calu(\calh)}(\alpha) \to \calp\calu(\calh). \quad  (h,g) \mapsto \alpha(h) g.
    \]
    
    Since all the $\widetilde{H}$ representations defined by $\alpha$ appear infinitely
    number of times, then it follows that the inclusion of groups 
     $C_{\calp\calu(\calh)}(\alpha) \subset \calp\calu(\calh)$ induces an isomorphism on the
    second homotopy groups $\pi_2$; this implies that $\psi^H$ induces an isomorphism on
    the third homotopy groups $\pi_3$. It remains to show that $\psi^H$ induces an
    isomorphism at the level of fundamental groups.
    
   The map $\phi$ permits to define a homeomorphism
  \[
   C_{\calp\calu(\calh)}(\alpha) \to \hom_\alpha(\mathbb{Z} \times H ,  \calp\calu(\calh)), \quad
  g \mapsto \left[(n,h) \mapsto g^n\alpha(h) \right].
  \]
   where
   \[
  \hom_\alpha(\mathbb{Z} \times H ,  \calp\calu(\calh)) := \{\beta \in \hom_\alpha(\mathbb{Z} \times H ,  \calp\calu(\calh)) \mid
  \beta(0,h)=\alpha(h) \}.
 \]
  Moreover we can define a group homomorphism (see~\cite[Lemma~1.6]{Barcenas-Espinoza-Joachim-Uribe(2012)})
  \[
  C_{\calp\calu(\calh)}(\alpha)  \to \hom(H,S^1),
  \quad   g  \mapsto (h \mapsto \widetilde{\alpha}(\widetilde{h}) \circ  \overline{g} 
  \circ \widetilde{\alpha}(\widetilde{h})^{-1} \circ \overline{g}^{-1}),
  \]
  where $\widetilde{h}$ is any lift of $h$ in $\widetilde{H}$ and $\overline{g}$ is any lift  
  of $g$ on $\calu (\calh)$, such that
  it becomes an isomorphism of groups on path connected components 
  \[
  \pi_0(C_{ \calp\calu(\calh)}(\alpha) ) \xrightarrow{\cong} \hom(H,S^1).
  \]
  Therefore we obtain the following commutative diagram
  \[\xymatrix{
  \pi_0(C_{\calp\calu(\calh)}(\alpha)) \ar[r]^-{\cong}  \ar[d]^-{\cong} 
   & \pi_0( \hom_\alpha(\mathbb{Z} \times H ,  \calp\calu(\calh))) \ar[r]
   &
  \pi_0(\map_{B\alpha}(B \mathbb{Z} \times BH, B \calp\calu(\calh))) \ar[d]^{\cong} 
   \\
  \hom(H,S^1) \ar[r]^\cong 
   & 
   [BH,BS^1] \ar[r]^-{\cong} 
   & 
   [BH, \Omega B  \calp\calu(\calh) ]
   }  
  \]
  where
  \[
  \map_{B\alpha}(B \mathbb{Z} \times BH, B \calp\calu(\calh))
   := \{ f\colon  B \mathbb{Z} \times BH \to B \calp\calu(\calh) \colon f(*,y)= B\alpha(y)  \},  
  \]
  the right vertical arrow is given by the adjoint map since $B\mathbb{Z} \simeq S^1$ and $\Omega$ denotes the based loop space functor, 
  and the bottom right map is defined via the homotopy equivalence
  $BS^1 \stackrel{\simeq}{\to} \Omega B \calp\calu(\calh)$ which
  induces the homotopy equivalence $BS^1 \to \calp\calu(\calh)
  $. Hence we conclude that at the level of fundamental groups the map
  $\phi$ induces an isomorphism
  \[
  \pi_1(BC_{ \calp\calu(\calh)}(\alpha)) \stackrel{\cong}{\to} \pi_1(\map(BH,B  \calp\calu(\calh)))
  \]
  and therefore $\psi^H$ induces the desired isomorphism at the level
  of fundamental groups.
\end{proof}


\section{Appendix A:Compactly generated spaces}
\label{sec:Appendix_A:Compactly_generated_spaces}

We briefly recall some basics about compactly generated spaces. More information and
proofs can be found in~\cite{Steenrod(1967)}. A topological space $X$ is \emph{compactly
  generated} if it is a Hausdorff space and a set $A \subseteq X$ is closed if and only if
for any compact subset $C \subset X$ the intersection $C \cap A$ is a closed subspace of
$C$.

Every locally compact space, and every space satisfying the first axiom of countability,
e.g., a metrizable space, is compactly generated. If $p \colon X \to Y$ is an
identification of topological spaces and $X$ is compactly generated and $Y$ is Hausdorff,
then $Y$ is compactly generated.  A closed subset of a compactly generated space is again
compactly generated. For open subsets one has to be careful as it is explained 
in Subsection~\ref{subsec:open_subsets}.


\subsection{Open subsets}
\label{subsec:open_subsets}

Recall that a topological space $X$ is called \emph{regular} if for any point $x \in X$
and closed set $A \subseteq X$ there exists open subsets $U$ and $V$ with $x \in U$, $A
\subseteq V$ and $U \cap V = \emptyset$.  A space is called \emph{locally compact} if
every $x \in X$ possesses a compact neighborhood.  Equivalently, for every $x \in X$ and
open neighborhood $U$ there exist an open neighborhood $V$ of $x$ such that the closure of
$V$ in $X$ is compact and contained in $U$, 
see~\cite[Lemma~8.2 in Section~3-8 on page~185]{Munkres(1975)}.
Recall the Definition~\ref{def:quasi-regular} saying that an open subset $U \subseteq X$ is called \emph{quasi-regular} 
if for any $x \in U$ there exists an open neighborhood $V_x$ whose closure in $X$ is contained in $U$.

\begin{lemma} \label{lem:properties_of_quasi-regular_open_sets} 

\begin{enumerate}

\item \label{lem:properties_of_quasi-regular_open_sets:open_subset} 
Let  $B$ be a (compactly generated) space. 
A quasi-regular open subset $U \subseteq B$ equipped with the subspace topology 
is compactly generated;

\item \label{lem:properties_of_quasi-regular_open_sets:preimage} 
Let $f \colon X \to Y$ be a (continuous) map between (not necessarily compactly generated) 
spaces. If $V \subseteq Y$ is a quasi-regular open subset, then $f^{-1}(V) \subseteq X$ is a
quasi-regular open subset.

\item \label{lem:properties_of_quasi-regular_open_sets:finite_intersection} 
The intersection of finitely many quasi-regular open subsets is again a quasi-regular open subset;

\item \label{lem:properties_of_quasi-regular_open_sets:regular} 
A space is regular if and only if every open subset is quasi-regular;

\item \label{lem:properties_of_quasi-regular_open_sets:equivalence:locally_compact_or_metrizable} 
Any locally compact space, any metrizable space and every $CW$-complex are regular;

\item \label{lem:properties_of_quasi-regular_open_sets:Gamma-CW} 
Every $\Gamma$-invariant open subset of a $\Gamma$-$CW$-complex is quasi-regular and,
equipped with the subspace topology, compactly generated.
\end{enumerate}
\end{lemma}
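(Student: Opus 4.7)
The proof splits into six assertions. The easier parts (ii) and (iii) follow immediately from the definition: for (ii), given $x \in f^{-1}(V)$, choose an open $W \ni f(x)$ with $\overline{W} \subseteq V$; then $f^{-1}(W)$ is an open neighborhood of $x$ with $\overline{f^{-1}(W)} \subseteq f^{-1}(\overline{W}) \subseteq f^{-1}(V)$, using that $f^{-1}(\overline{W})$ is closed. For (iii), intersect witnesses: if $V_i \ni x$ witnesses quasi-regularity of $U_i$ at $x$, then $\bigcap_{i=1}^{n} V_i$ witnesses it for $\bigcap_{i=1}^{n} U_i$, since $\overline{\bigcap_i V_i} \subseteq \bigcap_i \overline{V_i} \subseteq \bigcap_i U_i$.

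For (iv), a regular space is by definition one in which points and disjoint closed sets can be separated by disjoint opens. Taking $U := X \setminus A$, this is equivalent to requiring that for every $x \in U$ there exists an open $V \ni x$ with $\overline{V} \cap A = \emptyset$, i.e., $\overline{V} \subseteq U$; so regularity is precisely quasi-regularity of every open set. Part (v) records standard facts: locally compact Hausdorff spaces admit arbitrarily small compact neighborhoods and are therefore regular, metric spaces are trivially regular, and $CW$-complexes are even normal.

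The technical heart is (i). Let $B$ denote the ambient compactly generated space and take $A \subseteq U$ such that $A \cap K$ is closed in $K$ for every compact $K \subseteq U$. To show $A$ is closed in $U$, it suffices to exhibit, for each $x \in U$, an open neighborhood $V \subseteq U$ on which $A \cap V$ is closed. Quasi-regularity of $U$ provides an open $V \ni x$ with $\overline{V}^{B} \subseteq U$. For any compact $K \subseteq B$, the intersection $\overline{V}^{B} \cap K$ is a compact subset of $U$, so by hypothesis $A \cap \overline{V}^{B} \cap K$ is closed in $\overline{V}^{B} \cap K$, and therefore closed in $K$. Compact generation of $B$ then forces $A \cap \overline{V}^{B}$ to be closed in $B$, and its intersection $A \cap V$ with $V$ is closed in $V$.

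Assertion (vi) I would prove by adapting the inductive construction from the first half of the Slice Theorem, which does not itself invoke Condition (S). Given a $\Gamma$-invariant open $U \subseteq X$ and $x \in U$, let $n_x$ be minimal with $x \in X_{n_x}$. I would inductively build an increasing chain of open $\Gamma$-invariant subsets $U[n] \subseteq X_n$ with $\overline{U[n]} \subseteq U$: start from the $\Gamma$-saturation of a small ball around $x$ lying in the preimage of $U$ in the open equivariant cell containing $x$, and at step $n \to n+1$ thicken radially into the interiors of the new equivariant $(n+1)$-cells by parameters $\epsilon_i$ chosen, using compactness of $S^n$ together with the inductive inclusion $\overline{U[n]} \subseteq U$, small enough that the resulting closure remains inside $U$. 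The union $\widetilde{U} := \bigcup_n U[n]$ is an open $\Gamma$-invariant neighborhood of $x$ with $\overline{\widetilde{U}} \subseteq U$, yielding quasi-regularity; compact generation then follows from (i). The main obstacle is controlling the radial thickening parameters at each induction step so that the closures do not escape $U$, which is handled exactly by the compactness argument used in the Slice Theorem proof.
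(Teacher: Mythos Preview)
Your proofs of (i)--(v) are correct and agree with the paper's (the paper merely cites Steenrod for (i), while you spell out the argument, which is fine). For (vi), however, the paper takes a much shorter route than your direct inductive construction. It observes that the projection $\pi \colon X \to X/\Gamma$ is open and that $X/\Gamma$ is a $CW$-complex (by the quotient lemma for equivariant $CW$-complexes, applied with $N = \Gamma$). By (v) the quotient $X/\Gamma$ is regular, by (iv) every open subset of $X/\Gamma$ is quasi-regular, and since a $\Gamma$-invariant open $U \subseteq X$ satisfies $U = \pi^{-1}(\pi(U))$, part (ii) then gives that $U$ is quasi-regular; compact generation follows from (i). Your approach via the Slice-Theorem induction also works and is self-contained, but it is considerably heavier for this elementary statement; the paper's argument better isolates the general principle that quasi-regularity can be pulled back from the orbit space.
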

\begin{proof}~\ref{lem:properties_of_quasi-regular_open_sets:open_subset} 
See~\cite[page~135]{Steenrod(1967)}.
\\[2mm]~\ref{lem:properties_of_quasi-regular_open_sets:preimage}
Consider a point $x \in f^{-1}(V)$. Choose an open set $W$ of $Y$ such that
$f(x) \in W$ and the closure of $W$ in $B$ is contained in $V$.
Then $f^{-1}(W)$ is an open subset of $X$ which contains $x$ 
and whose closure in $X$ is contained in $f^{-1}(V)$.
\\[2mm]~\ref{lem:properties_of_quasi-regular_open_sets:finite_intersection} 
Let $U_1$, $U_2$, \ldots,  $U_r$ be quasi-regular open subsets.
Consider $x \in U := \bigcap_{i=1}^r U_i$. Choose for every $i = 1,2 \ldots, r$ an open subset
$V_i$ with $x \in V_i$ such that the closure $\overline{V_i}$ of $V_i$ in $B$ is contained in $U_i$.
Put $V: = \bigcap_{i=1}^r V_i$. Then $x \in V$ and $\overline{V} \subseteq \cap_{i=1}^r \overline{V_i} \subseteq U$.
Hence $U$ is a quasi-regular open subset.
\\[2mm]~\ref{lem:properties_of_quasi-regular_open_sets:regular} 
See~\cite[Lemma~2.1 in Section~4-2 on page~196]{Munkres(1975)}.
\\[2mm]~\ref{lem:properties_of_quasi-regular_open_sets:equivalence:locally_compact_or_metrizable} 
This is obvious for locally compact spaces.
Metrizable spaces are treated in~\cite[Theorem~2.3 in Section~4-2 on page~198]{Munkres(1975)}. 
Every $CW$-complex is paracompact, see~\cite{Miyazaki(1952)}, and hence in particular regular, 
see~\cite[Theorem~4.1 in Section~6-4 on page~255]{Munkres(1975)}.
\\[2mm]~\ref{lem:properties_of_quasi-regular_open_sets:Gamma-CW} 
The projection $X \to X/\Gamma$ is open and $X/\Gamma$ is a $CW$-complex
by Lemma~\ref{lem:quotient_of_CW-complexes}.
Now apply assertions~\ref{lem:properties_of_quasi-regular_open_sets:open_subset},~%
\ref{lem:properties_of_quasi-regular_open_sets:regular} 
and~\ref{lem:properties_of_quasi-regular_open_sets:equivalence:locally_compact_or_metrizable} 
\end{proof}


\subsection{The retraction functor $k$}
\label{subsec:The_retraction_functor}

 There is a construction which assigns to a topological Hausdorff space $X$ a new topological
space $k(X)$ such that $X$ and $k(X)$ have the same underlying sets, $k(X)$ is compactly
generated, $X$ and $k(X)$ have the same compact subsets, the identity $k(X) \to
X$ is continuous and is a homeomorphism if and only if $X$ is compactly generated.
Namely,  define the new topology on $k(X)$ by declaring a subset $A \subseteq X$ to be 
closed if and only if for every compact subset of $X$ the intersection $A \cap C$ 
is a closed subset of $C$.


\subsection{Mapping spaces, product spaces and subspaces}
\label{subsec:mapping_spaces_product_spaces_and_subspaces}

Given two compactly generated spaces $X$ and $Y$, denote by $\map(X,Y)_{k.o.}$ the set of
maps $X \to Y$ with the compact-open-topology, i.e., a subbasis for the
compact-open-topology is given by the sets $W(C,U) = \{f \colon X \to Y \mid f(C)
\subseteq U\}$, where $C$ runs through the compact subsets of $X$ and $U$ runs though the
open subsets of $Y$.  Notice that $\map(X,Y)_{k.o.}$ is not compactly generated in general.
We denote by $\map(X,Y)$ the topological space given by $k(\map(X,Y)_{k.o.})$.

If $X$ and $Y$ are compactly generated spaces, then $X \times Y$ stands for $k(X \times_p X)$, 
where $ X \times_p Y$ is the topological space with respect to the ``classical''
product topology. 

If $A \subseteq X$ is a subset of a compactly generated space, the subspace topology means
that we take $k(A_{st})$ for $A_{st}$ the topology space given by the ``classical''
subspace topology on $A$.

Roughly speaking, all the usual constructions of topologies are made compactly generated by
passing from $Y$ to $k(Y)$ in order to stay within the category of compactly generated
spaces.


\subsection{Basic features of the category of compactly generated spaces}
\label{subsec:Basic_feature_of_the_category_of_compactly_generated_spaces}

The category of compactly generated spaces has the following convenient features:

\begin{itemize}

\item A map $f \colon X \to Y$ of compactly generated spaces is continuous if and only if
  its restriction $f|_C \colon C \to Y$ to any compact subset $C \subseteq X$ is
  continuous;

\item The product of two identifications is again an identification;

\item If $X$ is locally compact and $Y$ compactly generated, then $X \times Y$ and $X
  \times_p Y$ are the same topological spaces;

\item The product of a $\Gamma_1$-$CW$-complex and of a $\Gamma_2$-$CW$-complex is a
  $\Gamma_1 \times \Gamma_2$-$CW$-complex;

\item If $X$, $Y$, and $Z$ are compactly generated spaces, then the obvious maps
  \begin{eqnarray*}
    \map(X,\map(Y,Z)) & \xrightarrow{\cong} &\map(X \times Y,Z);
    \\
    \map(X, Y \times Z) & \xrightarrow{\cong} & \map(X,Y) \times \map(X,Z).
  \end{eqnarray*}
  are homeomorphisms and the map given by composition
  \[
  \map(X,Y) \times \map(Y;Z) \to \map(X,Z)
  \]
  is continuous;

\item Given a pushout in the category of compactly generated spaces, its product with a
  compactly generated space is again a pushout in the category of compactly generated
  spaces.
\end{itemize}


\subsection{Space of homomorphisms}
\label{subsec:Space_of_homomorphisms}

Let $H$ and $G$ be (compactly generated) topological groups.
Let $\hom(H,G)$ be the set of group homomorphisms from $H$ to $G$.
It is obviously a subset of $\map(H,G)$.  The proof of the next result is a
typical formal proof using the convenient basic properties of the category of compactly
generated spaces.

\begin{lemma} \label{lem:hom(H,G)_is_closed_in_map(H,G)}
The subset $\hom(H,G)$ of $\map(H,G)$ is closed.
\end{lemma}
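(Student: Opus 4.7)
The plan is to realize $\hom(H,G)$ as the equalizer of two continuous maps from $\map(H,G)$ to a Hausdorff mapping space, and then conclude that the equalizer is closed. First, I would reduce the defining conditions for a (continuous) group homomorphism to the single multiplicative identity $f(h_1 h_2) = f(h_1)\,f(h_2)$ for all $h_1, h_2 \in H$; the identity $f(1)=1$ and the compatibility with inverses follow automatically by setting $h_1 = h_2 = 1$ and $h_2 = h_1^{-1}$.

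Next, using the standard adjunction $\map(X, \map(Y,Z)) \cong \map(X \times Y, Z)$ in the category of compactly generated spaces (see Subsection~\ref{subsec:Basic_feature_of_the_category_of_compactly_generated_spaces}), I would define two continuous maps
\[
\Phi,\ \Psi \colon \map(H,G) \to \map(H \times H, G)
\]
as follows. The map $\Phi$ is the adjoint of
\[
\map(H,G) \times H \times H \xrightarrow{\id \times \mu_H} \map(H,G) \times H \xrightarrow{\ev} G,
\]
where $\mu_H$ is the multiplication of $H$ and $\ev$ is the evaluation map (continuous by the exponential adjunction); concretely, $\Phi(f)(h_1,h_2) = f(h_1 h_2)$. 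The map $\Psi$ is the adjoint of
\[
\map(H,G) \times H \times H \xrightarrow{(\ev \circ \pr_{12},\, \ev \circ \pr_{13})} G \times G \xrightarrow{\mu_G} G,
\]
so that $\Psi(f)(h_1,h_2) = f(h_1)\cdot f(h_2)$. By construction, $\hom(H,G)$ is exactly the equalizer $\{f \in \map(H,G) \mid \Phi(f) = \Psi(f)\}$.

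Finally, I would show that $\map(H \times H, G)$ is Hausdorff, whence the diagonal is closed and the equalizer, being the preimage of the diagonal under $(\Phi,\Psi)$, is closed in $\map(H,G)$. Hausdorffness is straightforward: if $\varphi \neq \psi$ in $\map(H \times H, G)$ then $\varphi(x) \neq \psi(x)$ at some point $x$; choose disjoint open $U, V \subseteq G$ separating $\varphi(x)$ from $\psi(x)$; the subbasic open sets $W(\{x\},U)$ and $W(\{x\},V)$ of the compact-open topology separate $\varphi$ from $\psi$, and passing to $k(-)$ preserves this separation since the underlying sets and finer-open-set collection only grow. The only mildly delicate point is this last Hausdorffness verification together with the bookkeeping of the adjunctions; both are standard and pose no real obstacle.
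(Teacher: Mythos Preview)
Your proof is correct and follows essentially the same approach as the paper's. The paper combines your two maps into a single map $v \colon \map(H,G) \to \map(H \times H, G)$ with $v(\alpha)(h_1,h_2) = \alpha(h_1)\alpha(h_2)\alpha(h_1h_2)^{-1}$ and takes the preimage of the constant map $1$, additionally intersecting with the preimage of $1$ under the evaluation $\alpha \mapsto \alpha(1)$; your equalizer formulation is equivalent, and your observation that the condition $f(1)=1$ is redundant is a small simplification over the paper (which includes it explicitly). Note also that in the paper's conventions compactly generated spaces are Hausdorff by definition, so your verification that $\map(H\times H,G)$ is Hausdorff, while correct, is not strictly needed.
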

\begin{proof}
Consider the map 
\[
u \colon H \times H \times \map(H,G) \to G, \quad (h_1,h_2,\alpha) \mapsto \alpha (h_1) \cdot \alpha (h_2) \cdot \alpha (h_1h_2)^{-1}.
\]
It is continuous since it  can be written as a composition of maps
each of which is obviously continuous by the basic features presented in
Subsection~\ref{subsec:Basic_feature_of_the_category_of_compactly_generated_spaces}.
\begin{multline*}
H \times H \times \map(H,G) \xrightarrow{u_1} H \times \map(H,G) \times H \times \map(H,G) \times H \times \map(H,G) 
\\
\xrightarrow{u_2}  G \times G \times G \xrightarrow{u_3} G
\end{multline*}
where 
\begin{eqnarray*}
u_1(h_1,h_1,\alpha )  
& = & 
(h_1,\alpha ,h_2,\alpha ,h_1h_2,\alpha );
\\
u_2(h_1,\alpha _1,h_2,\alpha _2,h_3,\alpha _3) 
& = & 
(\alpha _1(h_1), \alpha _2(h_2), \alpha _3(h_3));
\\
\alpha_3(g_1,g_2,g_3) 
& = & 
g_1g_2g_3^{-1}.
\end{eqnarray*}
Its adjoint is the continuous map
\[
v \colon \map(H,G) \to \map(H \times H,G), \quad \alpha \mapsto v(\alpha)
\]
given by $v(\alpha )(h_1,h_2) := \alpha (h_1) \cdot \alpha (h_2) \cdot \alpha (h_1h_2)^{-1}$.
The evaluation map
\[
\ev \colon \map(H,G) \to  G, \quad \alpha  \mapsto \alpha (1)
\]
is continuous by Subsection~\ref{subsec:Basic_feature_of_the_category_of_compactly_generated_spaces}.
Since $\hom(H,G)$ is the intersection of $v^{-1}(\{1\})$ and $\ev^{-1}(\{1\})$ and $\{1\} \subseteq G$ is closed,
$\hom(H,G)$ is a closed subset of $\map(H,G)$.
\end{proof}

We will equip $\hom(H;G)$  with the subspace topology coming from $\map(H,G)$.
Notice that it is automatically compactly generated so that $\hom(H;G)$ agrees with
$k(\hom(H;G))$.

We leave the formal proof  to the reader that the conjugation action
\[
G \times \hom(H,G) \to \hom(H,G), \quad (g,\alpha) \mapsto c_g \circ \alpha
\]
is continuous, where $c_g \colon G \to G$ sends $g'$ to $gg'g^{-1}$.

We also include the following results

\begin{lemma} \label{lem:identifications_induce_closed_embeddings}
Let $p \colon X \to Y$ be an identification and $Z$ be a space. Then  the induced map
\[
p^* \colon \map(Y,Z) \to \map(X,Z), \quad f \mapsto f \circ p
\]
is a closed embedding, i.e., has closed image and the induced map
$q \colon \map(Y,Z) \to \im(p^*)$ is a homeomorphism.
\end{lemma}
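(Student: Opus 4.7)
The plan is to verify three things: $p^*$ is continuous, its image is closed in $\map(X,Z)$, and the induced continuous bijection $q \colon \map(Y,Z) \to \im(p^*)$ has a continuous inverse. Continuity of $p^*$ is immediate from the basic feature of the compactly generated category that composition defines a continuous map $\map(Y,Z) \times \map(X,Y) \to \map(X,Z)$, applied with $p$ fixed in the second variable (see Subsection~\ref{subsec:Basic_feature_of_the_category_of_compactly_generated_spaces}). Injectivity of $p^*$ follows because $p$, being an identification, is surjective.

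For closedness of $\im(p^*)$, for each pair $(x,x') \in X \times X$ with $p(x) = p(x')$, the two-variable evaluation
\[
\map(X,Z) \to Z \times Z, \quad g \mapsto \bigl(g(x),g(x')\bigr),
\]
is continuous by the exponential law (its adjoint is the obvious continuous map $\map(X,Z) \times \{(x,x')\} \to Z \times Z$). Since $Z$ is compactly generated and hence Hausdorff, the diagonal $\Delta_Z \subseteq Z \times Z$ is closed, so the preimage of $\Delta_Z$ under each such evaluation is closed. The image $\im(p^*)$ consists precisely of the functions $g \colon X \to Z$ that factor set-theoretically through $p$, i.e., the intersection of these preimages over all $(x,x') \in X \times_Y X$, and is therefore closed.

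For the inverse of $q$, let $M := \im(p^*)$, equipped with the subspace topology; as a closed subspace of the compactly generated space $\map(X,Z)$, $M$ is itself compactly generated. The inclusion $M \hookrightarrow \map(X,Z)$ is adjoint to the continuous evaluation
\[
\ev \colon M \times X \to Z, \quad (g,x) \mapsto g(x).
\]
By definition of $M$, every $g \in M$ factors through $p$, so $\ev$ factors set-theoretically through $\id_M \times p \colon M \times X \to M \times Y$. The key point is now that, in the category of compactly generated spaces, the product of two identifications is again an identification (another of the basic features listed in Subsection~\ref{subsec:Basic_feature_of_the_category_of_compactly_generated_spaces}); hence $\id_M \times p$ is an identification, and the induced map $M \times Y \to Z$ is continuous. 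Its adjoint is a continuous map $M \to \map(Y,Z)$ which by construction is inverse to $q$.

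The main obstacle, and the only nonformal ingredient, is the last step: it relies crucially on the identification-preservation of products in the compactly generated category, a property that fails in general topology. Everything else reduces to the exponential law and to the closedness of the diagonal in Hausdorff $Z$.
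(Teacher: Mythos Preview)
Your proof is correct. The first three steps (continuity, injectivity, closedness of the image via evaluation at fiber-pairs and the closed diagonal in $Z\times Z$) match the paper's argument essentially verbatim.

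The genuine difference is in establishing continuity of $q^{-1}$. The paper works explicitly at the compact-open level: it takes an open $V\subseteq\map(Y,Z)_{k.o.}$, writes it as a union of finite intersections of subbasic sets, checks that $q(V)$ equals the intersection of $\im(p^*)$ with a corresponding open set in $\map(X,Z)_{k.o.}$, and then passes through the functor $k$ to conclude. Your route instead exploits the exponential law together with the fact that products preserve identifications in the compactly generated category: the evaluation $M\times X\to Z$ descends along the identification $\id_M\times p$ to a continuous map $M\times Y\to Z$, whose adjoint is $q^{-1}$. Your argument is shorter, more conceptual, and avoids any explicit manipulation of subbasic sets or the $k$-functor; the paper's argument is more hands-on and makes visible exactly where the compact-open topology enters, at the cost of length and some bookkeeping between $\map_{k.o.}$ and $k(\map_{k.o.})$.
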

\begin{proof}
Obviously $p^*$ is injective and continuous. 

Next we  show that the image of $p^*$ is closed. Consider $f \colon X \to Z$ which is not in the image of $p^*$.
Then  there exists $x_0$ and $x_1$ in $X$ with $f(x_0) \not= f(x_1)$ and $p(x_0) = p(x_1)$. 
Since the map 
\[\ev \colon \map(X,Z) \mapsto Z \times Z, \quad f \mapsto \bigl(f(x_0), f(x_1)\bigr)
\]
is continuous, the preimage of the open set $\{(z_0,z_1) \in Z \times Z \mid z_0 \not= z_1\}$ under $\ev$ is an open set
which contains $f$ and does not meet $\im(p^*)$. Hence $\im(p^*)$ is closed in $\map(X,Z)$.

It remains to show that the inverse of  $q$ 
\[
q^{-1}  \colon \im(p^*)  \to \map(Y,Z) 
\]
is continuous. Consider an open subset $V \subset \map(Y,Z)_{k.o.}$.
By the definition of the compact open topology we can find an index set $I$, for every
element $i \in I$ a finite index set $J_i$, and for every $i \in I$ and $j_i \in J_i$ an open subset
$U_{i,j_i} \subseteq Y$ and a compact subset $C_{i,j_i} \subseteq Z$ such that
\[
V= \bigcup_{i \in I} \bigcap_{j_i \in J_i} W(U_{i,j_i}, C_{i,j_i})
\]
where $W(U_{i,j_i}, C_{i,j_i}) = \{f \in \map(Y,Z) \mid f(U_{i,j_i}) \subseteq C_{i,j_i}\}$. We have
\begin{eqnarray*}
q(V) 
& = & 
p^*\left( \bigcup_{i \in I} \bigcap_{j_i \in J_i} W(U_{i,j_i}, C_{i,j_i})\right)
\\
& = &
\bigcup_{i \in I} \bigcap_{j_i \in J_i} p^*\bigl( W(U_{i,j_i}, C_{i,j_i})\bigr) 
\\
& = &
\bigcup_{i \in I} \bigcap_{j_i \in J_i} \bigl( W\bigl(p^{-1}(U_{i,j_i}), C_{i,j_i}) \cap \im(p^*)\bigr)
\\
& = &
\left(\bigcup_{i \in I} \bigcap_{j_i \in J_i} W\bigl(p^{-1}(U_{i,j_i}), C_{i,j_i}\bigr)\right) \cap \im(p^*)
\end{eqnarray*}
where $W\bigl(p^{-1}(U_{i,j_i}), C_{i,j_i}\bigr) = \{f \in \map(X,Z) \mid f(p^{-1}(U_{i,j_i})) \subseteq C_{i,j_i}\}$.
Since $\bigcup_{i \in I} \bigcap_{j_i \in J_i} W\bigl(p^{-1}(U_{i,j_i}), C_{i,j_i}\bigr)$ is an open subset of
$\map(Y,Z)_{k.o.}$, the subset $q(V)$ of  $\im(p^*)_{k.o.} $ is open where the topology on $\im(p^*)_{k.o.} $ is the subspace topology
with respect to the inclusion $\im(p^*) \to \map(X,Z)_{k.o.}$. Hence the map $q^{-1}_{k.o.} \colon \im(p^*)_{k.o.} \to \map(Y,Z)_{k.o.}$ is continuous.
This implies that the map 
\[
k(q^{-1}_{k.o.}) \colon k\bigl(\im(p^*)_{k.o.}\bigr) \to k\bigl(\map(Y,Z)_{k.o.}\bigr) = \map(Y,Z)
\]
 is continuous.
The identity induces a continuous map $i_0 \colon \map(X,Z) = k(\map(X,Z)_{k.o.}\bigr) \to  \map(X,Z)_{k.o.}$ and hence
by restriction a continuous map $i_1 \colon \im(p^*) \to \im(p^*)_{k,o.}$. If we apply $k$ to it and use the fact
that $\im(p^*)$ is compactly generated, we obtain a continuous map
\[
i_1 \colon  \im(p^*) \to k(\im(p^*)_{k.o.}).
\]
Since  $q^{-1}  \colon \im(p^*)  \to \map(Y,Z)$ is the composite of the two continuous maps
$k(q^{-1}_{k.o.})$ and $i_1$ above, it is continuous itself. This finishes the proof
of Lemma~\ref{lem:identifications_induce_closed_embeddings}.
\end{proof}

\begin{lemma} \label{lem:quotient_map_is_continuous}
For every group $G$ and $G$-spaces $Y$ and $Z$ the map 
\[
Q_{Y,Z} \colon \map_G(Y,Z) \to \map_G(Y/G,Z/G), \quad f \mapsto f/G
\]
is continuous.
\end{lemma}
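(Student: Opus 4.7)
The plan is to give a short, categorical proof using the convenient features of the compactly generated category listed in Subsection~\ref{subsec:Basic_feature_of_the_category_of_compactly_generated_spaces}, namely that products of identifications are identifications and that the exponential law $\map(A\times B,C)\xrightarrow{\cong}\map(A,\map(B,C))$ is a homeomorphism. No real obstacle is expected; everything is a formal manipulation of adjunctions.

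First, I would form the continuous evaluation map $\ev\colon Y\times\map_G(Y,Z)\to Z$, $(y,f)\mapsto f(y)$, obtained as the restriction of the evaluation $Y\times\map(Y,Z)\to Z$. Composing with the quotient projection $p_Z\colon Z\to Z/G$ gives a continuous map
\[
e\colon Y\times\map_G(Y,Z)\to Z/G,\qquad (y,f)\mapsto p_Z(f(y)).
\]
A one-line calculation $e(gy,f)=p_Z(f(gy))=p_Z(g\cdot f(y))=p_Z(f(y))=e(y,f)$, valid because $f$ is $G$-equivariant, shows that $e$ is invariant under the $G$-action on the first factor (with $\map_G(Y,Z)$ carrying the trivial $G$-action).

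Next, I would invoke the fact that the projection $p_Y\colon Y\to Y/G$ is an identification and that products of identifications are identifications in the category of compactly generated spaces; hence $p_Y\times\id\colon Y\times\map_G(Y,Z)\to Y/G\times\map_G(Y,Z)$ is an identification. By the $G$-invariance of $e$ established above, $e$ factors through $p_Y\times\id$ to a well-defined map of sets
\[
\bar{e}\colon Y/G\times\map_G(Y,Z)\to Z/G,\qquad (yG,f)\mapsto p_Z(f(y)),
\]
and since $p_Y\times\id$ is an identification, $\bar{e}$ is automatically continuous.

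Finally, I would apply the exponential law. The continuous map $\bar{e}$, viewed with coordinates swapped as $\map_G(Y,Z)\times Y/G\to Z/G$, corresponds under the adjunction to a continuous map $\map_G(Y,Z)\to\map(Y/G,Z/G)$ that sends $f$ to the map $yG\mapsto p_Z(f(y))$, i.e.\ to $f/G$. Since $Y/G$ and $Z/G$ carry trivial $G$-actions we have $\map(Y/G,Z/G)=\map_G(Y/G,Z/G)$, so the constructed map is exactly $Q_{Y,Z}$, which is thereby continuous.
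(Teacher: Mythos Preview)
Your proof is correct, and it takes a genuinely different route from the paper's. The paper factors $Q_{Y,Z}$ as the composite of $(p_Z)_*\colon\map_G(Y,Z)\to\map_G(Y,Z/G)$ followed by $Q_{Y,Z/G}\colon\map_G(Y,Z/G)\to\map(Y/G,Z/G)$; the first is clearly continuous, and for the second the paper observes that its inverse is $p_Y^*$, which is a homeomorphism onto its image by Lemma~\ref{lem:identifications_induce_closed_embeddings}. So the paper's argument relies on the closed-embedding lemma established just before. Your argument instead passes through the evaluation map, uses that $p_Y\times\id$ is an identification in the compactly generated category, and then invokes the exponential adjunction directly. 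Your approach is more self-contained in that it does not need Lemma~\ref{lem:identifications_induce_closed_embeddings}, and it is the more ``categorical'' route one would expect in a convenient category; the paper's approach has the small advantage of yielding along the way the stronger information that $p_Y^*$ is a closed embedding, which is of independent interest.
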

\begin{proof}
Let $p_Y\colon Y \to Y/G$ and $p_Z \colon Z \to Z/G$ be the projections. 
The map 
\[
(p_Z)_* \colon \map(Y,Z) \to \map(Y,Z/G), \quad f \mapsto p_Z \circ f
\]
is continuous and hence induces a continuous map
\[
p_Z \colon \map_G(Y, Z)\to \map_G(Y,Z/G), \quad f \mapsto p_Z \circ f.
\]
Therefore  the claim follows if we can prove that the map
\[
Q_{Y,Z/G} \colon \map_G(Y,Z/G) \to \map(Y/G,Z/G), \quad f \mapsto f/G
\]
is continuous.  Its inverse is the map
\[
p_Y^* \colon \map(Y/G,Z/G)  \to \map_G(Y,Z/G) \quad f \mapsto f \circ p_Y
\]
which is a homeomorphism by Lemma~\ref{lem:identifications_induce_closed_embeddings}.
This finishes the proof of Lemma~\ref{lem:quotient_map_is_continuous}.
\end{proof}


\subsection{Further features of the category of compactly generated spaces}
\label{subsec:Further_feature_of_the_category_of_compactly_generated_spaces}

\begin{lemma} \label{lem_pushouts-pullbacks}
Consider a commutative square of $\Gamma$-spaces
\[
\xymatrix{B_0 \ar[r]^{i_1} \ar[d]^{i_2} & B_1 \ar[d]^{j_1}
\\
B_2 \ar[r]^{j_2} & B
}
\]
Then 
\begin{enumerate}

\item \label{lem_pushouts-pullbacks:equivariant_versus_non-equivariant}
The square is a $\Gamma$-pushout if and only if it is pushout after forgetting the group action.

\item \label{lem_pushouts-pullbacks:pullbacks}
If the square above is a $\Gamma$-pushout and $f \colon E \to B$ is a $\Gamma$-map,
then the square obtained by the pullback construction
\[
\xymatrix{E_0 \ar[r]^{\overline{i_1}} \ar[d]^{\overline{i_2}} & E_1 \ar[d]^{\overline{j_1}}
\\
E_2 \ar[r]^{\overline{j_2}} & E
}
\]
is a $\Gamma$-pushout;

\item \label{lem_pushouts-pullbacks:quotients} 
If the given square is a $\Gamma$-pushout, the square
\[
\xymatrix{B_0/\Gamma \ar[r]^{i_1/\Gamma} \ar[d]^{i_2/\Gamma} & B_1/\Gamma \ar[d]^{j_1/\Gamma}
\\
B_2/\Gamma \ar[r]^{j_2/\Gamma} & B/\Gamma
}
\]
is a pushout of spaces.
\end{enumerate}
\end{lemma}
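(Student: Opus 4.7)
The plan is to treat all three assertions categorically, exploiting the adjunctions relating $\Gamma$-spaces, orbit spaces, and pullback along a fixed map, with a direct topological verification of the equivariance where the categorical abstraction no longer suffices.

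For~\ref{lem_pushouts-pullbacks:equivariant_versus_non-equivariant} I would first note that the forgetful functor $U$ from $\Gamma$-spaces to compactly generated spaces has a left adjoint, $L(X) = \Gamma \times X$ with $\Gamma$ acting on the first factor, and a right adjoint, $R(X) = \map(\Gamma, X)$ with $\Gamma$ acting by translation; hence $U$ preserves both limits and colimits, and in particular sends a $\Gamma$-pushout to a pushout of underlying spaces. For the converse direction I would verify the $\Gamma$-pushout universal property directly: given a $\Gamma$-space $Z$ and $\Gamma$-equivariant maps $g_k \colon B_k \to Z$ for $k = 1,2$ agreeing on $B_0$, the non-equivariant pushout property furnishes a unique continuous map $g \colon B \to Z$. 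The two continuous maps $g \circ \gamma$ and $\gamma \circ g$ then agree on the images of $j_1$ and $j_2$ (using the equivariance of each $g_k$), so by the uniqueness clause of the non-equivariant universal property they must coincide, which shows that $g$ is $\Gamma$-equivariant.

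Assertion~\ref{lem_pushouts-pullbacks:quotients} is immediate from the observation that the orbit functor $X \mapsto X/\Gamma$ is left adjoint to the functor equipping a space with the trivial $\Gamma$-action, hence preserves all small colimits and in particular pushouts.

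The main step is~\ref{lem_pushouts-pullbacks:pullbacks}, and I expect it to be the principal obstacle. By~\ref{lem_pushouts-pullbacks:equivariant_versus_non-equivariant} it suffices to verify that the pullback diagram is a pushout of underlying compactly generated spaces. The guiding principle is that in the category of compactly generated spaces, pullback along any fixed map preserves pushouts, because the functor $f^* \colon \mathrm{CGSp}/B \to \mathrm{CGSp}/E$ sending $(X \to B)$ to $(X \times_B E \to E)$ admits a right adjoint (the dependent product) coming from the closed monoidal/locally cartesian closed structure of compactly generated spaces recalled in Section~\ref{sec:Appendix_A:Compactly_generated_spaces}, and pushouts in a slice category agree with pushouts in the ambient category. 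As an alternative I would proceed more concretely: the pushout structure in~\ref{lem_pushouts-pullbacks:equivariant_versus_non-equivariant} makes $j_1 \sqcup j_2 \colon B_1 \sqcup B_2 \to B$ a quotient map of compactly generated spaces; I would check that its pullback $\overline{j_1} \sqcup \overline{j_2} \colon E_1 \sqcup E_2 \to E$ is again a quotient map whose fiberwise equivalence relation is precisely that induced by $E_0 = B_0 \times_B E$, so that the canonical continuous bijection $E_1 \cup_{E_0} E_2 \to E$ is a homeomorphism. This hands-on verification is where I expect the hardest detail to lie, namely controlling the quotient topology on $E$ obtained from the pulled-back cover.
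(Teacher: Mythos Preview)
Your handling of~\ref{lem_pushouts-pullbacks:equivariant_versus_non-equivariant} and~\ref{lem_pushouts-pullbacks:quotients} is correct and essentially matches the paper. The paper uses the right adjoint $\map(\Gamma,-)$ to the forgetful functor for the direction ``$\Gamma$-pushout $\Rightarrow$ pushout'', exactly as you do; for the converse it argues that $\Gamma\times(\text{square})$ is again a pushout, which is the global version of your pointwise uniqueness argument. For~\ref{lem_pushouts-pullbacks:quotients} the paper likewise just invokes the universal properties.

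For~\ref{lem_pushouts-pullbacks:pullbacks} there is a gap in your first, categorical route. Section~\ref{sec:Appendix_A:Compactly_generated_spaces} only records the \emph{cartesian} closed structure $\map(X\times Y,Z)\cong\map(X,\map(Y,Z))$; it says nothing about a \emph{locally} cartesian closed structure, and you cannot simply read off a right adjoint to $f^*$ from what is established there. Asserting the existence of dependent products in compactly generated spaces is a nontrivial claim that would itself require proof, so this branch of your plan does not stand on its own.

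Your concrete alternative, however, is exactly what the paper does, and the paper supplies the one trick you flag as the hard point. Since the square is a pushout, $j_1\amalg j_2\colon B_1\amalg B_2\to B$ is an identification; by Steenrod's theorem that products preserve identifications in compactly generated spaces, so is $\id_E\times(j_1\amalg j_2)\colon E\times(B_1\amalg B_2)\to E\times B$. Now embed $E$ in $E\times B$ as the graph of $f$, which is \emph{closed} because $B$ is Hausdorff; restricting the identification to the saturated closed preimage of this graph yields precisely $\overline{j_1}\amalg\overline{j_2}\colon E_1\amalg E_2\to E$ as an identification. Since the pulled-back square is visibly a pushout of sets, it is then a pushout of spaces. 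So drop the appeal to local cartesian closedness and run this graph argument instead.
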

\begin{proof}~\ref{lem_pushouts-pullbacks:equivariant_versus_non-equivariant}
Suppose that the square is a (non-equivariant) pushout. We want to show that it is
a $\Gamma$-pushout. Consider $\Gamma$-maps
$f_k \colon B_k \to Y$ for $k = 0,1,2$ satisfying $f_k \circ i_k = f_0$ for $k = 1,2$.
Then there is precisely one map $f \colon B \to X$ with $f \circ j_k = f_k$ for $k = 1,2$.
It remains to show that $f$ is $\Gamma$-equivariant. This follows from the fact that the product
of $\Gamma$ with the square above is again a (non-equivariant) pushout. This follows formally from the adjunctions
of mapping spaces and the universal property of pushouts.

Now suppose  that the square is a $\Gamma$-pushout. Then it is a pushout after forgetting the group action,
since there is a bijection
\[
a \colon \map(B,X) \xrightarrow{\cong} \map_\Gamma(B, \map(\Gamma,X)), \quad f \mapsto a(f),
\]
where $a(f)$ sends $b \in B$ to the map $\Gamma \to X,\; \gamma \mapsto f(\gamma \cdot b)$,
$X$ is a (non-equivariant) space and $\map(\Gamma,X)$ becomes a $\Gamma$-space 
$\gamma \cdot f(h) := f(h\gamma)$.
\\[2mm]~\ref{lem_pushouts-pullbacks:pullbacks}
Because of assertion~\ref{lem_pushouts-pullbacks:equivariant_versus_non-equivariant},
we can assume without loss of generality that $\Gamma$ is trivial. The map
$j_1 \amalg j_2 \colon B_1 \amalg B_2 \to B$ is an identification since the given square is a pushout.
Then $\id_E \times j_1 \amalg \id_E \times j_2 \colon E \times B_1 \amalg E \times B_2 \to E \times B$
is an identification, see~\cite[Theorem~4.4]{Steenrod(1967)}.  Consider $E$ as a 
closed subspace of $E \times B$ by identifying $e$ with $(e,f(e))$.
Then the restriction of $\id_E \times j_1 \amalg \id_E \times j_2$ to 
the preimage of $E$ is again an identification
which can be identified with $\overline{j_1} \amalg \overline{j_2} \colon E_1 \amalg E_2 \to E$.
Obviously the square obtained by the pullback construction is a pushout of sets. Hence it is a pushout 
of spaces.
\\[2mm]~\ref{lem_pushouts-pullbacks:quotients} This follows from the universal properties
of a pushout and the projection maps $B \to B/\Gamma$.
\end{proof}

\begin{lemma}\label{lem:filtrations}
Let $B_0 \subseteq B_1 \subseteq B_2 \subseteq \cdots \subseteq B$
be a filtration of the $\Gamma$-space $B$ by closed $\Gamma$-invariant subspaces.

\begin{enumerate}

\item \label{lem:filtrations:equivariant_versu_non_equivariant} 
We have $B = \colim_{n \to \infty} B_n$ in the category of $\Gamma$-spaces if and only if
we have after  forgetting the $\Gamma$-actions $B = \colim_{n \to \infty} B_n$ in the category of spaces;

\item \label{lem:filtrations:preimages}
Suppose that we have $B = \colim_{n \to \infty} B_n$ in the category of $\Gamma$-spaces.
Let $f \colon E \to B$ be a $\Gamma$-map. Then we obtain a filtration 
\[
E_0 \subseteq E_1 \subseteq E_2 \subseteq \cdots \subseteq E
\]
of $E$ by closed $\Gamma$-invariant subspaces
$E_n = f^{-1}(B_n)$ with the property that $E = \colim_{n \to \infty} E_n$ holds in the category of $\Gamma$-spaces;

\item \label{lem:filtrations:quotients} 
Suppose that $B = \colim_{n \to \infty} B_n$ holds in the category of $\Gamma$-spaces.
Then 
\[
B_0/\Gamma \subseteq B_1/\Gamma \subseteq B_2/\Gamma \subseteq \cdots \subseteq B/\Gamma
\]
is a filtration of the space $B\Gamma$ by closed subspaces with the property that 
$B/\Gamma = \colim_{n \to \infty} B_n/\Gamma$ holds  in the category of spaces.

\end{enumerate}
\end{lemma}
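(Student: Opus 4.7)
For part~\ref{lem:filtrations:equivariant_versu_non_equivariant}, I would argue exactly as in Lemma~\ref{lem_pushouts-pullbacks}~\ref{lem_pushouts-pullbacks:equivariant_versus_non-equivariant}: the adjunction $\map(B,X) \cong \map_\Gamma(B,\map(\Gamma,X))$ (for $X$ a non-equivariant space, with $\Gamma$ acting on $\map(\Gamma,X)$ by right translation) converts the universal mapping-out property defining a non-equivariant colimit into the one defining an equivariant colimit, and vice versa. Equivalently, the forgetful functor from $\Gamma$-spaces to compactly generated spaces preserves colimits (having both adjoints $\Gamma \times -$ and $\map(\Gamma,-)$), and the canonical comparison map $\colim_n B_n \to B$ is automatically $\Gamma$-equivariant, so it is a $\Gamma$-homeomorphism if and only if it is a homeomorphism.

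For part~\ref{lem:filtrations:preimages} I would first use part~\ref{lem:filtrations:equivariant_versu_non_equivariant} to reduce to the non-equivariant situation. Since $f$ is continuous and $\Gamma$-equivariant and each $B_n$ is closed and $\Gamma$-invariant, each $E_n = f^{-1}(B_n)$ is closed and $\Gamma$-invariant with $E = \bigcup_n E_n$. The topological identification $E = \colim_n E_n$ reduces, via the definition of closed sets in a compactly generated space, to the claim that every compact subset $K \subseteq E$ lies in some $E_n$. Since $f(K)$ is compact in $B$, this in turn follows from the sub-claim that every compact subset of a closed filtration colimit $B = \colim_n B_n$ of Hausdorff spaces is contained in some $B_n$.

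This sub-claim is the main obstacle, and I would establish it by contradiction. Assume $K \subseteq B$ is compact and contained in no $B_n$. Then one may inductively construct a strictly increasing sequence of indices $m_1 < m_2 < \cdots$ and distinct points $x_k \in K$ with $x_k \in B_{m_k}$ and $x_{k+1} \notin B_{m_k}$. For each fixed $n$ only finitely many $x_k$ lie in $B_n$, since once $m_k \geq n$ the points $x_{k+1}, x_{k+2}, \ldots$ all avoid $B_n$. Using that each $B_n$ is $T_1$, every subset of $S := \{x_k \mid k \geq 1\}$ meets $B_n$ in a finite, hence closed, set and is therefore closed in $B = \colim_n B_n$. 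Thus $S$ is a closed discrete infinite subspace of the compact set $K$, contradicting compactness.

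For part~\ref{lem:filtrations:quotients}, I would use that the orbit-space functor $-/\Gamma$ from $\Gamma$-spaces to compactly generated spaces is left adjoint to the trivial-action functor $X \mapsto X_{\mathrm{triv}}$ (via the natural bijection $\map(Y/\Gamma,X) \cong \map_\Gamma(Y,X_{\mathrm{triv}})$), so it preserves colimits. Applying this to $B = \colim_n B_n$ yields $B/\Gamma = \colim_n (B_n/\Gamma)$. The inclusions $B_n/\Gamma \hookrightarrow B_{n+1}/\Gamma$ are closed because the projection $B_{n+1} \to B_{n+1}/\Gamma$ is an identification whose preimage of $B_n/\Gamma$ is the closed $\Gamma$-invariant subset $B_n$.
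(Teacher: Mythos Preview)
Your proposal is correct and follows essentially the same approach as the paper. For part~\ref{lem:filtrations:equivariant_versu_non_equivariant} the paper likewise reduces to Lemma~\ref{lem_pushouts-pullbacks}~\ref{lem_pushouts-pullbacks:equivariant_versus_non-equivariant}, invoking that $\Gamma \times B = \colim_n \Gamma \times B_n$ (Steenrod, Theorem~10.3) where you phrase the same content via the adjoints of the forgetful functor; for part~\ref{lem:filtrations:preimages} the paper's argument is identical in structure except that it cites Steenrod's Lemma~9.3 for the fact that a compact subset of $B$ lies in some $B_n$, whereas you supply the standard direct proof; and for part~\ref{lem:filtrations:quotients} both arguments amount to the left-adjoint/universal-property observation, with your version additionally spelling out why the images $B_n/\Gamma$ are closed.
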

\begin{proof}~\ref{lem:filtrations:equivariant_versu_non_equivariant}
The proof is similar to the one of 
Lemma~\ref{lem_pushouts-pullbacks}~\ref{lem_pushouts-pullbacks:equivariant_versus_non-equivariant}, 
using the fact that after forgetting the group actions we obtain a filtration by closed subspaces
$\Gamma\times B_0 \subseteq \Gamma\times B_1 \subseteq \Gamma\times B_2 \subseteq \cdots \subseteq \Gamma\times B$
such that $\Gamma\times B = \colim_{n \to \infty} \Gamma\times B_n$ holds in the category of $\Gamma$-spaces,
see~\cite[Theorem~10.3]{Steenrod(1967)}.
\\[2mm]~\ref{lem:filtrations:preimages}
Let $A \subseteq E$ be a subset. Suppose that $A \cap E_n$ is closed. We have to show that $A \subseteq E$ is closed,
or, equivalently, that for every compact subset $C \subset E$ the space $C \cap A$ is a closed subspace of $C$.
Since $f(C)$ is compact, there exists a natural number $n$ with $f(C) \subseteq B_n$. This implies
$C \subseteq E_n$, see~\cite[Lemma~9.3]{Steenrod(1967)}. 
Now the claim follows from $A \cap C = A \cap (C \cap E_n) = (A \cap E_n ) \cap C$
since $A \cap E_n$ is closed in $E_n$ and hence $(A \cap E_n ) \cap C$ is closed in $C$.
\\[2mm]~\ref{lem:filtrations:quotients} 
This follows from the universal properties of $\colim_{n \to \infty}$ and of the quotient maps
$B_k \to B_k/\Gamma$ and $B \to B/\Gamma_k$.
\end{proof}

\begin{lemma} \label{lem:quotient_of_CW-complexes} If $X$ is a
  $G$-$CW$-complex. Let $N \subseteq G$ a normal subgroup and $Q$ be
  the topological group $G/N$ and $\pr \colon G \to Q$ be the projection. Suppose for any $x \in X$ 
  that $\pr(G_x)$ is closed in $Q$.  (This assumption is automatically satisfied if $N$
  is compact or $Q$ is trivial.) 

  Then $X/N$ is a $Q$-$CW$-complex.
\end{lemma}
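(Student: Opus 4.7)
The plan is to induct on the skeleta and transport the $G$-pushout defining the attachment of $n$-cells to a $Q$-pushout after quotienting by $N$. Let $X_n$ denote the $n$-skeleton of the $G$-CW-complex $X$. I claim that the filtration $\{X_n/N\}_{n \ge -1}$ realizes $X/N$ as a $Q$-CW-complex, with the cell of type $G/H_i \times D^n$ becoming a cell of type $Q/\pr(H_i) \times D^n$.

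The key identification is that for any subgroup $H \subseteq G$ occurring as an isotropy group of $X$ we have a $Q$-homeomorphism $(G/H)/N \cong Q/\pr(H)$. Indeed, since $N$ is normal, the set $NH = HN$ is a subgroup; the canonical map $G/H \to G/NH$ sends $N$-orbits to points and induces a continuous $Q$-equivariant bijection $(G/H)/N \to G/NH$, where we use the natural $Q$-action on $G/NH$ (well-defined because $N$ acts trivially from the left on $G/NH$ as $N$ is normal). Both sides carry the quotient topology inherited from $G$, so this bijection is a homeomorphism. Finally $G/NH = Q/(NH/N) = Q/\pr(H)$, and by hypothesis $\pr(H)$ is closed in $Q$, so this is a legitimate homogeneous $Q$-space of the type used to build $Q$-CW-complexes.

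Inductively, suppose $X_{n-1}/N$ carries the claimed $Q$-CW-structure. The $G$-pushout
\[
\xymatrix@!C=7em{
\coprod_{i \in I_n} G/H_i \times S^{n-1} \ar[r] \ar[d] & X_{n-1} \ar[d]
\\
\coprod_{i \in I_n} G/H_i \times D^n \ar[r] & X_n
}
\]
stays a pushout after forgetting the action by Lemma~\ref{lem_pushouts-pullbacks}~\ref{lem_pushouts-pullbacks:equivariant_versus_non-equivariant}. Taking $N$-quotients (and using that $N$ acts trivially on $D^n$ and $S^{n-1}$), Lemma~\ref{lem_pushouts-pullbacks}~\ref{lem_pushouts-pullbacks:quotients} yields a pushout of spaces which, because all arrows are automatically $Q$-equivariant and the pushout forms a $Q$-pushout by Lemma~\ref{lem_pushouts-pullbacks}~\ref{lem_pushouts-pullbacks:equivariant_versus_non-equivariant}, reads
\[
\xymatrix@!C=8em{
\coprod_{i \in I_n} Q/\pr(H_i) \times S^{n-1} \ar[r] \ar[d] & X_{n-1}/N \ar[d]
\\
\coprod_{i \in I_n} Q/\pr(H_i) \times D^n \ar[r] & X_n/N
}
\]
after the identification above. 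This exhibits $X_n/N$ as obtained from $X_{n-1}/N$ by attaching equivariant $n$-cells of the required type, completing the inductive step; the induction begins trivially with $X_{-1} = \emptyset$.

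It remains to show that $X/N$ has the colimit topology with respect to its skeleta $\{X_n/N\}$. This is precisely Lemma~\ref{lem:filtrations}~\ref{lem:filtrations:quotients} applied to the filtration of $X$ by $\{X_n\}$, together with the fact that each $X_n/N$ is closed in $X_{n+1}/N$ since $X_n$ is closed and $N$-invariant in $X_{n+1}$ and the projection $X_{n+1} \to X_{n+1}/N$ is a quotient map with saturated closed preimage. The sole subtlety in the argument, and the only place where the hypothesis enters, is the identification $(G/H)/N \cong Q/\pr(H)$ as topological $Q$-spaces and the verification that $\pr(H)$ is closed so that this counts as a proper $Q$-cell; both the case $N$ compact (so that $NH$ is automatically closed in $G$) and the case $Q = \{1\}$ (trivially) then reduce to the stated condition.
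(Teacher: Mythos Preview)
Your proof is correct and follows essentially the same route as the paper: identify the $Q$-homeomorphism $(G/H)/N \cong Q/\pr(H)$ (using that $\pr(H)$ is closed), then invoke Lemma~\ref{lem_pushouts-pullbacks}~\ref{lem_pushouts-pullbacks:equivariant_versus_non-equivariant} and~\ref{lem_pushouts-pullbacks:quotients} for the cell-attaching pushouts and Lemma~\ref{lem:filtrations}~\ref{lem:filtrations:equivariant_versu_non_equivariant} and~\ref{lem:filtrations:quotients} for the colimit topology. The paper's proof is more terse, but the ingredients and structure are identical.
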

\begin{proof} Let $\pr \colon G\to Q$ be the projection. Consider a subgroup $H \subseteq G$
such that $\pr(H)$ is closed in $Q$.  Then we obtain a $Q$-homeomorphism 
$N\backslash \bigl(G/H\bigr) \xrightarrow{\cong} Q/\pr(H)$.
Now the claim follows from
Lemma~\ref{lem_pushouts-pullbacks}~\ref{lem_pushouts-pullbacks:equivariant_versus_non-equivariant}
and~\ref{lem_pushouts-pullbacks:quotients} 
and Lemma~\ref{lem:filtrations}~\ref{lem:filtrations:equivariant_versu_non_equivariant} 
and~\ref{lem:filtrations:quotients}.
\end{proof}


  \section{Appendix B: Some properties of locally compact groups}
  \label{subsection:Appendix_B:Some_properties_of_locally_compact_groups_old}

Let $H$ and $G$ be topological groups. 
For $\alpha$ in $\hom(H,G)$, let $C_G(\alpha)$ be the centralizer of $\alpha$, i.e.,
$C_G(\alpha) =\{g \in G \mid c_g \circ \alpha = \alpha \}$,
and denote by $G \cdot \alpha$ the orbit of $\alpha$ under the $G$-action. Then the map
\[
\iota_\alpha \colon G / C_G(\alpha) \to G \cdot \alpha, \ \ \ g C_G(\alpha) \mapsto c_g \circ
\alpha
\]
is bijective, continuous and $G$-equivariant. The next two theorems are the main results of this section.

\begin{theorem} \label{the:connected_components_and_conjugation}
  Let $H$ be a compact group, let $G$ a locally compact second countable group and $\alpha \in
  \hom(H,G)$. 

  Then the connected  component $\mathcal{C}_\alpha$ of $\alpha$ in $\hom(H,G)$ is
  contained in the orbit of $\alpha$ under $G$, i.e. $\mathcal{C}_\alpha \subset G\cdot
  \alpha$.
\end{theorem}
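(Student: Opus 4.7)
The plan is to prove the stronger local statement that every $G$-orbit in $\hom(H,G)$ under the conjugation action is open. Once this holds, the orbits partition $\hom(H,G)$ into pairwise disjoint clopen subsets, so the connected component $\mathcal{C}_\alpha$ is contained in the orbit through $\alpha$, namely $G\cdot\alpha$.

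To show that $G\cdot\alpha$ contains a neighborhood of $\alpha$, I would construct, for each $\beta$ sufficiently close to $\alpha$, an intertwiner $T_\beta\in G$ with $T_\alpha=1$, depending continuously on $\beta$, such that
\[
\beta(h)\cdot T_\beta \;=\; T_\beta\cdot \alpha(h) \quad \text{for all } h\in H.
\]
Then $\beta = c_{T_\beta}\circ\alpha$, and $\beta\mapsto T_\beta$ is a local section of the orbit map $G\to G\cdot\alpha$, so the orbit is open at $\alpha$. The Birkhoff--Kakutani theorem (Theorem~\ref{the:Birkhoff-Kakutani}) endows $G$ with a left-invariant metric, hence $\hom(H,G)$ with a supremum metric in which ``close'' is measured, and compactness of $H$ provides a normalized Haar measure $dh$ for averaging. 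In the matrix case, $T_\beta$ would classically be the Haar integral
\[
T_\beta \;=\; \int_H \beta(h)\,\alpha(h)^{-1}\,dh,
\]
whose intertwining identity with $\alpha$ and $\beta$ follows from the change of variables $h\mapsto kh$ together with the left-invariance of $dh$; for $\beta$ close to $\alpha$ the integrand is close to $1$, so $T_\beta$ is a small perturbation of the identity and in particular lies in $G$.

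The main obstacle is the lack of an intrinsic vector-space structure on $G$ in which to form the integral. My plan is to reduce to the Lie-group setting via the Yamabe / Montgomery--Zippin structure theorem: any LCSC group contains an open subgroup $G'$ which is an inverse limit of Lie groups $G'/K_i$ with $K_i$ compact normal and $\bigcap_i K_i=\{1\}$. Choose $K_i$ small enough that the compact image $\alpha(H)$ (intersected with $G'$, which has finite index in $G$) projects faithfully into the Lie quotient $G'/K_i$; in that Lie group the averaging can be carried out by applying $\exp$-coordinates on a small neighborhood of $1$, integrating the Lie-algebra valued cocycle $h\mapsto \log(\beta(h)\alpha(h)^{-1})$, and applying the inverse function theorem to produce $\bar T_\beta\in G'/K_i$ with the intertwining identity. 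A continuous lift through the compact fibre $K_i$, normalized by $T_\alpha=1$ and using continuity of the construction in $\beta$, then yields $T_\beta\in G$. The technical heart of the argument is this reduction and continuous lifting; openness of $G\cdot\alpha$, and hence the theorem, follow formally from the clopen argument of the first paragraph once $T_\beta$ is in hand.
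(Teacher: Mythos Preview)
The paper does not prove this theorem directly; it simply records it as \cite[Theorem~I]{Lee-Wu(1970)} (see the sentence preceding Theorem~\ref{the:Lee-Wu_connected_components_Hom(H,G)}). Your strategy of showing that every conjugation orbit in $\hom(H,G)$ is open, and hence that the orbits are clopen so that each connected component lies in a single orbit, is exactly the right reduction, and your averaging/implicit-function-theorem outline for the Lie case is the standard mechanism behind such results. So in spirit you are reconstructing the Lee--Wu argument rather than taking a different route.

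There is, however, a genuine gap in your reduction to the Lie case. Yamabe's theorem gives an open almost-connected subgroup $G'\le G$, but your parenthetical ``$G'$, which has finite index in $G$'' is false in general: for instance $G=\mathbb{Q}_p$ is locally compact second countable with $G'=\mathbb{Z}_p$ of infinite index. More seriously, $\alpha$ need not take values in $G'$, so you cannot simply project $\alpha$ to the Lie quotient $G'/K_i$. The compact normal subgroups $K_i\lhd G'$ are not normal in $G$, and conjugation by elements of $\alpha(H)\subset G\setminus G'$ need not preserve them, so ``projecting $\alpha(H)$ faithfully into $G'/K_i$'' is not well-defined. A correct reduction first replaces $G'$ by the open subgroup $W=\bigcap_{h\in H}\alpha(h)G'\alpha(h)^{-1}$ (the tube lemma and compactness of $\alpha(H)$ make this an \emph{open} almost-connected subgroup normalized by $\alpha(H)$), and then works inside the open almost-connected group $L=W\cdot\alpha(H)$, which now contains $\alpha(H)$ and, for $\beta$ sufficiently close to $\alpha$, also $\beta(H)$. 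Only then can one run the inverse-limit-of-Lie-groups argument in $L$; and even there, the ``continuous lift through the compact fibre $K_i$'' needs an inverse-limit compatibility argument for the conjugating elements that you have not supplied. Your Lie-algebra step is also too quick: $h\mapsto\log(\beta(h)\alpha(h)^{-1})$ is only a cocycle to first order, so one must either invoke $H^1(H,\mathfrak{g})=0$ and the implicit function theorem in the Banach manifold $C(H,G)$, or cite the classical result that nearby homomorphisms from a compact group into a Lie group are conjugate.
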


\begin{theorem} 
  \label{the:G/C_G(alpha)_homeo_G.alpha} 
  Let $G$ be a second countable,
  locally compact group which  is almost connected. Let $H$  be a compact group.

 Then $  \iota_\alpha \colon G/C_G(\alpha) \to G \cdot \alpha$ is a $G$-homeomorphism.
\end{theorem}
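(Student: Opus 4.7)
The plan is to construct a continuous local section of the conjugation orbit map $\phi\colon G \to G\cdot\alpha$, $g \mapsto c_g \circ \alpha$, on a neighborhood of $\alpha$ in the orbit. Since $\iota_\alpha$ is already known to be a continuous $G$-equivariant bijection, the existence of such a section at the basepoint $\alpha$ translates equivariantly to local sections at every point of the orbit, yielding that $\iota_\alpha$ is a homeomorphism.

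I would first reduce to the case where $G$ is a Lie group. By Theorem~\ref{the:almost_connected_groups}, $G$ has a maximal compact subgroup $K$, and because $\alpha(H)$ is compact, after replacing $\alpha$ by a suitable conjugate we may assume $\alpha(H) \subseteq K$. By the Gleason--Montgomery--Zippin solution of Hilbert's fifth problem in the almost connected setting, $G$ is a projective limit of Lie groups $G/N_i$, where the $N_i$ form a filtered family of compact normal subgroups with $\bigcap_i N_i = \{e\}$. Each $\alpha$ descends compatibly to $\alpha_i\colon H \to G/N_i$, and one checks that coherent local sections on the $(G/N_i)$-orbits of the $\alpha_i$ assemble into a local section on the $G$-orbit of $\alpha$; this assembly step uses compactness of both the $N_i$ and $H$ to ensure the Mittag--Leffler condition on the inverse system of neighborhoods.

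For the case that $G$ is a Lie group with Lie algebra $\mathfrak{g}$, the Zariski tangent space of $\hom(H,G)$ at $\alpha$ is the space $Z^1(H,\mathfrak{g})$ of continuous $1$-cocycles, with $H$ acting on $\mathfrak{g}$ via $\mathrm{Ad}\circ\alpha$, while the differential $d\phi_e$ has image equal to the space $B^1(H,\mathfrak{g})$ of $1$-coboundaries and kernel $\mathrm{Lie}(C_G(\alpha))$. Since $H$ is compact, averaging against the normalized Haar measure on $H$ produces a splitting showing $H^1(H,\mathfrak{g}) = 0$, so $B^1 = Z^1$, and $d\phi_e$ surjects onto the full tangent space. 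Choosing a linear complement of $\mathrm{Lie}(C_G(\alpha))$ in $\mathfrak{g}$ and applying the implicit function theorem to $\phi$ restricted to the exponential image of this complement produces the desired smooth local section.

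The main obstacle is the passage from the Lie case to the general case via inverse limits: $\hom(H,G)$ does not a priori carry a manifold structure, and a coherent family of local sections on the Lie quotients must be shown to yield an actual continuous local section for $G$. This reduces to verifying that $\hom(H,G) = \varprojlim_i \hom(H,G/N_i)$ as topological spaces (which uses compactness of $H$ via an Arzel\`a--Ascoli argument for the compact-open topology) and that the resulting inverse system of local sections has surjective transition maps with compact fibers, allowing one to extract an actual section over a neighborhood of $\alpha$; these verifications are where the full force of the hypotheses (second countability, local compactness, almost connectedness, and compactness of $H$) comes together.
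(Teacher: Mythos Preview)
Your approach is genuinely different from the paper's and, as outlined, could be made to work, but it is considerably more elaborate than what is needed. The paper does not reduce to Lie groups or invoke any structure theory; instead it runs a short Baire category argument. First it quotes Lee--Wu (Theorem~\ref{the:Theorem-II_Lee-Wu(1970)}) to show that the orbit $G\cdot\alpha$ is \emph{closed} in $\hom(H,G)$ (Theorem~\ref{the:G_alpha_closed_in_Hom(H,G)}); since $\hom(H,G)$ is a complete metric space, the orbit is therefore a Baire space. Then, given any open neighborhood $U$ of $1\in G$, one picks a compact $K$ with $K^{-1}K\subset U$; second countability makes $G$ Lindel\"of, so countably many translates $g_nK$ cover $G$, hence countably many closed sets $g_nK\cdot\alpha$ cover the Baire space $G\cdot\alpha$, forcing one of them---and thus $K\cdot\alpha$---to have nonempty interior. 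It follows that $K^{-1}K\cdot\alpha\subset U\cdot\alpha$ contains an open neighborhood of $\alpha$, so $\iota_\alpha$ is open. This uses almost connectedness only through the closedness of the orbit, and avoids Lie theory, $H^1$-vanishing, and any inverse-limit assembly.

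What your route buys is an explicit local section of $G\to G\cdot\alpha$ (hence also of $G\to G/C_G(\alpha)$), which is strictly more information than the homeomorphism statement alone; the paper obtains Condition~(S) for $(G,C_G(\alpha))$ separately via Mostert's theorem. The cost is the inverse-limit step you flag as the main obstacle: one must choose the local sections on the Lie quotients \emph{coherently}, and this is not automatic since the sections produced by the implicit function theorem are not canonical. The compact fibers $N_i/N_j$ do allow a selection argument, but carrying it out carefully is real work. If your goal is just Theorem~\ref{the:G/C_G(alpha)_homeo_G.alpha}, the Baire argument is both shorter and more robust.
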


Their respective proofs need some preparation.
For a compact space $X$ and a complete metric space $(Y,d)$ we can equip
the set of continuous maps from $X$ to $Y$ with the supremums metric
\begin{align} \label{supremums_metric}
d_{\sup}(\alpha,\beta) := \sup\{d(\alpha(x),\beta(x)) \mid x \in X\},
\end{align}
and obtain a complete metric space, 
see~\cite[Theorem~1.4 in Section~7-1 on page~267]{Munkres(1975)}. 
Moreover, the topology induced
by the supremums metric agrees with the compact-open topology
by~\cite[Theorem~4.6 in Section~7-4 on page~283 and Theorem~5.1 in Section~7.5 on page~286]{Munkres(1975)}. 
In particular $\map(X,Y) = \map(X,Y)_{c.o.}$. 

The following result is due to Birkhoff~\cite{Birkhoff(1936)} and Kakutani~\cite{Kakutani(1936)}.

\begin{theorem}  \label{the:Birkhoff-Kakutani}
  A Hausdorff topological group is metrizable if and only if it is first countable. In
  this case the metric can be taken to be left invariant.
\end{theorem}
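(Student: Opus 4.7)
The forward direction is immediate: a metrizable space is always first countable, since the open balls of radius $1/n$ around any point form a countable neighborhood basis. So the entire content of the theorem lies in the reverse direction, for which I plan to follow the classical Birkhoff--Kakutani strategy of building a left-invariant pseudo-norm from a well-chosen nested sequence of symmetric neighborhoods of the identity.

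First I would fix a countable decreasing neighborhood basis $(V_n)_{n \ge 1}$ at the identity $e \in G$, which exists by first countability. Then, using continuity of multiplication $\mu \colon G \times G \times G \to G$ and inversion at $(e,e,e)$, I would inductively choose open symmetric neighborhoods $U_n$ of $e$ satisfying $U_n \subseteq V_n$, $U_n = U_n^{-1}$, and the key ``halving'' condition
\[
U_{n+1} \cdot U_{n+1} \cdot U_{n+1} \subseteq U_n,
\]
with $U_0 = G$. The fact that $G$ is Hausdorff will be used at the end to ensure $\bigcap_n U_n = \{e\}$.

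Next I would define a pre-norm $\|\cdot\| \colon G \to [0,1]$ by
\[
\|g\| \;:=\; \inf\Biggl\{\sum_{i=1}^{k} 2^{-n_i} \;\Big|\; g = g_1 g_2 \cdots g_k,\ g_i \in U_{n_i}\Biggr\},
\]
with the convention $\|g\|=1$ if no such factorisation exists. The main technical step is to verify the following properties, which is where the halving condition pays off: (a) $\|gh\| \le \|g\| + \|h\|$ (immediate from concatenating factorisations); (b) $\|g^{-1}\| = \|g\|$ (because each $U_n$ is symmetric); (c) the comparison estimates
\[
g \in U_n \;\Longrightarrow\; \|g\| \le 2^{-n}, \qquad \|g\| < 2^{-n-1} \;\Longrightarrow\; g \in U_n,
\]
where the nontrivial implication is the second one, proved by induction on the length $k$ of a factorisation using the inclusion $U_{n+1} \cdot U_{n+1} \cdot U_{n+1} \subseteq U_n$. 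This induction is the main obstacle, and is the place where one genuinely needs the specific cubical halving condition rather than a mere doubling one.

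Finally I would set $d(g,h) := \|g^{-1}h\|$. Left invariance is automatic from the definition, the triangle inequality follows from (a), symmetry from (b), and $d(g,h) = 0$ forces $g^{-1}h \in U_n$ for all $n$, hence $g^{-1}h = e$ by Hausdorffness of $G$ together with $\bigcap_n U_n = \{e\}$. The comparison estimates in (c) show that the $d$-balls around $e$ are sandwiched between the $U_n$'s, so the metric topology coincides with the original group topology on a neighborhood basis of $e$, and hence everywhere by left translation. This yields a left-invariant metric inducing the given topology, completing the proof.
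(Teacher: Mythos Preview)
The paper does not give a proof of this theorem at all: it simply attributes the result to Birkhoff and Kakutani and cites their original papers. So there is no ``paper's own proof'' to compare against; the statement is invoked as a black box.

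Your outline is the classical Birkhoff--Kakutani argument and is correct in substance. The inductive construction of symmetric neighborhoods with $U_{n+1}^{3} \subseteq U_n$, the definition of the length function via factorisations weighted by $2^{-n_i}$, and the identification of the key inductive estimate (your implication $\|g\| < 2^{-n-1} \Rightarrow g \in U_n$) are exactly the ingredients one needs. One small point worth tightening if you write this up in full: the precise constant in the comparison estimate and the induction that proves it deserve an explicit statement and proof, since this is where most expositions differ in bookkeeping (some use $U_{n+1}^2 \subseteq U_n$ and dyadic rationals instead); but the strategy you describe is sound and standard.
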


Theorem~\ref{the:Birkhoff-Kakutani} implies that for a locally compact first countable group $G$ we can choose a left invariant
metric $d_G$ inducing the given topology such that $G$ with this metric is complete.
Note that completeness follows from local compactness. Namely,  we then can find an $\epsilon > 0$ such that
the closed ball $\overline{B}_{\epsilon}(1)$ around $1$ is compact and hence the closed ball $\overline{B}_{\epsilon}(g)$
around any $g \in G$ is compact which implies that every Cauchy sequence contains a subsequence
contained in $B_{\epsilon}(g)$ for some $g \in G$ and hence contains a convergent subsequence,
by~\cite[Theorem~7.4 in Section~3-7 on page~181]{Munkres(1975)}, and finally we can
apply~\cite[Lemma~1.1 in Section~7-1 on page~264]{Munkres(1975)}

Recall from Subsection~\ref{subsec:Space_of_homomorphisms} that we have equipped
$\hom(H,G)$ with the subspace topology of $\map(H,G)$, that 
$\hom(H,G) \subseteq \map(H,G)$ is closed and that the conjugation action
\[
G \times \hom(H,G)   \to \hom(H,G ), \quad  (g,\alpha) \mapsto c_g \circ \alpha
\] 
is continuous. Hence this topology on $\hom(H,G)$ agrees with the topology coming from the
compact-open topology on $\map(H,G)$ restricted to $\hom(H;G)$ as well with the topology
associated to the supremums metric restricted to $\hom(H,G)$, and $\hom(H,G)$ is a
complete metric space with the supremums metric.

The next Theorem is taken from~\cite[Theorem~I]{Lee-Wu(1970)} and obviously implies
Theorem~\ref{the:connected_components_and_conjugation}.

\begin{theorem} \label{the:Lee-Wu_connected_components_Hom(H,G)} 
  Let $H$ be a compact
  group and $G$ a locally compact second countable group. Let $\mathcal{C} \subset \hom(H,G)$ be a
  connected  component of the space $\hom(H,G)$. Then if $\theta$ and $\phi$ are in
  $\mathcal{C}$, then there exists $g \in G$ with $\theta = c_g \circ \phi$.
\end{theorem}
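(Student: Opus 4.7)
The plan is to prove the stronger statement that each $G$-conjugation orbit $G \cdot \alpha \subseteq \hom(H,G)$ is open in $\hom(H,G)$. Since the orbits partition $\hom(H,G)$, once each orbit is open it is automatically closed as well (its complement being a union of the other open orbits). Therefore every connected component, being connected, must lie inside a single orbit, which gives the theorem.

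To show openness of the orbit through a fixed $\alpha \in \hom(H,G)$, I would establish the following rigidity lemma: there exist a neighborhood $W$ of $1$ in $G$ and an $\varepsilon > 0$ such that every $\beta \in \hom(H,G)$ with $d_{\sup}(\alpha,\beta) < \varepsilon$ can be written as $\beta = c_g \circ \alpha$ for some $g \in W$. Note that $d_{\sup}$ makes sense because, by Theorem~\ref{the:Birkhoff-Kakutani} together with local compactness, $G$ carries a left-invariant complete metric compatible with its topology, so the formula~\eqref{supremums_metric} gives a metric on $\map(H,G)$ (and hence on the closed subspace $\hom(H,G)$) inducing the given topology.

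The rigidity lemma would be proved by first reducing to the case where $G$ is a Lie group and then applying a Montgomery–Zippin averaging argument. For the reduction, invoke the Gleason–Yamabe theorem: $G$ has an open almost connected subgroup $G'$ possessing arbitrarily small compact normal subgroups $N \trianglelefteq G'$ with $G'/N$ a Lie group. Since $\alpha(H)$ is compact and any $\beta$ close enough to $\alpha$ takes values in a small neighborhood of $\alpha(H)$, one may choose $N$ small enough that two close homomorphisms $\alpha, \beta$ are conjugate in $G$ iff their composites with $G' \to G'/N$ are conjugate in the Lie group $G'/N$. For $G$ a Lie group with Lie algebra $\mathfrak{g}$, each $\beta$ sufficiently close to $\alpha$ can be written as $\beta(h) = \exp(X(h)) \cdot \alpha(h)$ for a continuous function $X \colon H \to \mathfrak{g}$ of small norm; integrating a suitable expression built from $X$ against the normalized Haar measure on the compact group $H$ produces an element $Y \in \mathfrak{g}$ such that $g := \exp(Y)$ satisfies $\beta = c_g \circ \alpha$.

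The main obstacle is executing the reduction from a general second countable locally compact $G$ to a Lie group cleanly: the structure-theoretic step using Gleason–Yamabe requires care because $\hom(H,-)$ does not behave trivially under inverse limits of groups, and one must verify that conjugacy can be detected at a finite stage of the approximation. This is precisely the work carried out in~\cite[Theorem~I]{Lee-Wu(1970)}, to which the proof ultimately defers.
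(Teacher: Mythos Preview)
The paper does not actually prove this theorem: it states it and immediately attributes it to~\cite[Theorem~I]{Lee-Wu(1970)}, using it only as input to deduce Theorem~\ref{the:connected_components_and_conjugation}. Your proposal likewise ends by deferring to the same reference, so at the level of what is formally established there is no difference.

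Where you go beyond the paper is in sketching the strategy behind Lee--Wu's argument (openness of conjugation orbits via a local rigidity lemma, reduction to the Lie case by Gleason--Yamabe, then an averaging argument over the compact group $H$). This outline is essentially correct and is indeed the shape of the classical proof. One caveat: the ``iff'' in your reduction step is stated too strongly---conjugacy modulo a small compact normal $N$ does not automatically lift to conjugacy in $G$ without an additional argument (this is part of the care you allude to in your final paragraph). But since both you and the paper ultimately cite~\cite{Lee-Wu(1970)} for the details, this is not a gap in your proposal so much as an honest acknowledgment of where the real work lies.
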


The following result is taken from Lee-Wu~\cite[Theorem~II]{Lee-Wu(1970)}. Notice
that the condition of being almost connected is necessary, see~\cite[Example, page 412]{Lee-Wu(1970)}.

\begin{theorem} \label{the:Theorem-II_Lee-Wu(1970)} Let $L$ be a locally compact group
  which is almost connected. Let $F$ be a compact subgroup of $L$ and denote by 
 $i \colon  F \to L$ the inclusion. If $\{x_\lambda \mid \lambda \in \Lambda \}$ is a net in $L$ such
  that the homomorphisms $c_{x_\lambda} \circ i$ converge to an element $\theta \in
  \hom(F,L)$, then there exists an element $y \in L$ such that $\theta = c_y \circ i$.
\end{theorem}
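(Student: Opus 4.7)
The plan is to show that the $L$-orbit of $i$ in $\hom(F,L)$ under conjugation is closed; this is precisely the content of the theorem, since the orbit consists exactly of homomorphisms of the form $c_y \circ i$. Equivalently, the injection $L/C_L(F) \to \hom(F,L)$, $xC_L(F) \mapsto c_x \circ i$, has closed image. I would prove this by extracting from the net $\{x_\lambda\}$ a subnet converging in $L$ modulo $C_L(F)$, since continuity of the orbit map then forces the limit in $\hom(F,L)$ to equal $\theta = c_y \circ i$.

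The first step is to reduce to the case $F, \theta(F) \subseteq K$, where $K$ is a maximal compact subgroup of $L$ furnished by Abels' Theorem~\ref{the:almost_connected_groups}. Every compact subgroup of $L$ is conjugate into $K$, so after conjugating $i$ and $\theta$ by suitable elements of $L$ and adjusting the net $\{x_\lambda\}$ accordingly (none of which affects the conclusion), I may assume $F \subseteq K$ and $\theta(F) \subseteq K$.

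The key geometric input is that $L/K$ is contractible and the fixed-point set $(L/K)^F$ is nonempty and contractible, again by Theorem~\ref{the:almost_connected_groups}. The assumption $c_{x_\lambda}\circ i \to \theta$ with $\theta(F)\subseteq K$ means that the points $p_\lambda := x_\lambda^{-1}K \in L/K$ are asymptotically $F$-fixed, since $f \cdot p_\lambda = x_\lambda^{-1}(x_\lambda f x_\lambda^{-1}) K$ and $x_\lambda f x_\lambda^{-1} \to \theta(f) \in K$ uniformly in $f\in F$. Applying the Slice Theorem~\ref{the:Slice_Theorem} to the proper $L$-action on $L/K$ at a point of $(L/K)^F$, I would obtain an $F$-invariant slice and use it to project each $p_\lambda$ (for $\lambda$ large) to a point $z_\lambda K \in (L/K)^F$ with $x_\lambda z_\lambda$ eventually lying in a fixed compact subset of $L$. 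Passing to a subnet yields $x_\lambda z_\lambda \to w \in L$, and the condition $z_\lambda K \in (L/K)^F$, i.e.\ $z_\lambda^{-1} F z_\lambda \subseteq K$, together with compactness of $K$, permits a further subnet along which $z_\lambda^{-1} f z_\lambda$ converges uniformly in $f$ to a homomorphism $\phi \colon F \to K$, giving $\theta = c_w \circ \phi$.

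The main obstacle is the concluding identification $\phi = i$, so that $y := w$ is the element sought. This reduces to showing that the $z_\lambda$ can be chosen in $C_L(F) \cdot U$ for arbitrarily small neighborhoods $U$ of $1 \in L$. This would follow from a careful analysis of the slice at an $F$-fixed point of $L/K$: the tangential directions to the $C_L(F)$-orbit through the base point span an $F$-invariant complement to the non-fixed directions, so the slice can be arranged to lie in $C_L(F)\cdot K/K$. With this refinement, $\phi$ agrees with $i$ up to an inner automorphism of $F$ that can be absorbed into $w$, and the conclusion $\theta = c_y \circ i$ with $y = w$ follows.
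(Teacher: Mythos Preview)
The paper does not prove this statement; it is quoted from Lee--Wu~\cite[Theorem~II]{Lee-Wu(1970)} with no argument supplied, so there is no proof in the paper against which to compare yours.

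On its own merits, your proposal has a genuine gap in the final step. Everything hinges on the claim that the projections $z_\lambda$ can be taken in $C_L(F)\cdot U$ for arbitrarily small neighborhoods $U$ of $1$, which you justify by asserting that ``the tangential directions to the $C_L(F)$-orbit through the base point span an $F$-invariant complement to the non-fixed directions, so the slice can be arranged to lie in $C_L(F)\cdot K/K$''. This is differential-geometric reasoning that presupposes a Lie structure; for a general locally compact almost connected group $L$ there are no tangential directions to speak of. Even in the Lie case the assertion that $(L/K)^F$ locally coincides with the $C_L(F)$-orbit through $eK$ requires a real argument (one must compare $(\mathfrak{l}/\mathfrak{k})^F$ with the image of $\mathfrak{l}^F$ in $\mathfrak{l}/\mathfrak{k}$ using an $F$-equivariant splitting $\mathfrak{l}=\mathfrak{k}\oplus\mathfrak{m}$), and passing from the Lie case to the general locally compact case would need an approximation-by-Lie-quotients step (via small compact normal subgroups, as in Montgomery--Zippin or Yamabe) that you have not supplied and that is not routine. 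A secondary issue is that Theorem~\ref{the:Slice_Theorem}, which you invoke, is formulated only for $G$-$CW$-complexes, and $L/K$ is not obviously one in this generality; you would need to substitute a slice theorem for proper actions on locally compact spaces.
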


\begin{theorem} \label{the:G_alpha_closed_in_Hom(H,G)} 
 Let $G$ be a locally compact group
  which is almost connected.  Let $H$ be a compact group and
  $\alpha \in \hom(H,G)$.  Then the orbit  $G \cdot \alpha$ of $\alpha$ under the conjugation 
  action of $G$ is closed in $\hom(H,G)$.
\end{theorem}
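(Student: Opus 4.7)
The plan is to reduce the claim to Theorem~\ref{the:Theorem-II_Lee-Wu(1970)} by factoring $\alpha$ through its image. Since $G$ is second countable and locally compact, Theorem~\ref{the:Birkhoff-Kakutani} makes it metrizable, hence $\map(H,G)$ with the supremums metric of~\eqref{supremums_metric} is metrizable; so is its closed subspace $\hom(H,G)$ by Lemma~\ref{lem:hom(H,G)_is_closed_in_map(H,G)}. Consequently it suffices to take a convergent sequence $c_{g_n} \circ \alpha \to \beta$ in $\hom(H,G)$ with all terms in $G \cdot \alpha$ and exhibit $y \in G$ with $\beta = c_y \circ \alpha$.

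First I would set up the factorization. Let $F := \alpha(H)$, which is a compact (in particular closed) subgroup of $G$ as the continuous image of the compact group $H$, and let $\pi \colon H \to F$ be the surjective homomorphism induced by $\alpha$, so that $\alpha = i_F \circ \pi$ for the inclusion $i_F \colon F \hookrightarrow G$. For every $h \in \ker(\alpha) = \ker(\pi)$ one has $c_{g_n}(\alpha(h)) = c_{g_n}(1) = 1$, so passing to the limit gives $\beta(h) = 1$. Thus $\ker(\pi) \subseteq \ker(\beta)$, and since $\pi$ is a quotient map of compact groups, $\beta$ factors uniquely as $\beta = \tilde{\beta} \circ \pi$ for some $\tilde{\beta} \in \hom(F,G)$.

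Next I would transfer the convergence to $\hom(F,G)$. Because $\pi$ is surjective, for every $n$ we have the identity
\[
\sup_{h \in H} d_G\bigl(c_{g_n}(\alpha(h)),\, \beta(h)\bigr) \;=\; \sup_{f \in F} d_G\bigl(c_{g_n}(f),\, \tilde{\beta}(f)\bigr),
\]
so the hypothesis $c_{g_n} \circ \alpha \to \beta$ in $\hom(H,G)$ is equivalent to $c_{g_n} \circ i_F \to \tilde{\beta}$ in $\hom(F,G)$. With $F$ a compact subgroup of the almost connected locally compact group $G$, Theorem~\ref{the:Theorem-II_Lee-Wu(1970)} produces $y \in G$ such that $\tilde{\beta} = c_y \circ i_F$. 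Composing on the right with $\pi$ gives
\[
\beta \;=\; \tilde{\beta} \circ \pi \;=\; c_y \circ i_F \circ \pi \;=\; c_y \circ \alpha,
\]
so $\beta \in G \cdot \alpha$ and $G \cdot \alpha$ is closed in $\hom(H,G)$.

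The substantive content of this argument is entirely absorbed by Theorem~\ref{the:Theorem-II_Lee-Wu(1970)} of Lee--Wu; the remaining steps (metrizability, replacing nets by sequences, factoring $\beta$ through $F = \alpha(H)$, and matching the two supremums norms via the surjectivity of $\pi$) are routine. The only subtle point I expect to require a line of justification is that $\alpha$ possibly being non-injective forces us to pass to $F$ rather than applying Lee--Wu directly to $H$; once that factorization is in place the proof is a one-line corollary.
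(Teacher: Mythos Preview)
Your argument is essentially correct and constitutes a genuinely different reduction to Theorem~\ref{the:Theorem-II_Lee-Wu(1970)} than the one the paper uses. The paper embeds the problem into the product: it sets $L = H \times G$, takes the graph $K(H,\alpha) = \{(h,\alpha(h))\}$ as the compact subgroup $F \subseteq L$, and conjugates by the net $(1,g_\lambda)$; the limit $\theta_\phi(h,\alpha(h)) = (h,\phi(h))$ then forces, via the first coordinate, that the element produced by Lee--Wu has the form $(y,g)$ with $c_y = \id_H$, whence $\phi = c_g \circ \alpha$. Your route instead stays inside $G$: you pass to the image $F = \alpha(H) \subseteq G$, factor $\beta$ through the quotient $\pi \colon H \twoheadrightarrow F$, and apply Lee--Wu with $L = G$. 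The paper's graph trick avoids having to check separately that $\ker\alpha \subseteq \ker\beta$ and that the convergence descends to $\hom(F,G)$; your approach avoids forming the product group and checking that $H \times G$ is almost connected. Both are short and either would serve.

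One genuine point to repair: the theorem as stated does \emph{not} assume $G$ is second countable, so you cannot invoke Theorem~\ref{the:Birkhoff-Kakutani} to get a metric and reduce to sequences. The fix is painless: work with a net $(g_\lambda)$ throughout, exactly as Theorem~\ref{the:Theorem-II_Lee-Wu(1970)} is phrased. The only place you actually used the metric was to transfer convergence from $\hom(H,G)$ to $\hom(F,G)$ via the equality of suprema; replace that step by observing that $\pi \colon H \to F$ is an identification (continuous surjection from compact to Hausdorff), so by Lemma~\ref{lem:identifications_induce_closed_embeddings} the map $\pi^* \colon \map(F,G) \to \map(H,G)$ is a closed embedding and hence reflects convergence of nets. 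With that adjustment your proof goes through under the stated hypotheses.
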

 \begin{proof}
  Suppose we  have a net $\{g_\lambda \in G \mid \lambda \in \Lambda \}$ of elements of $G$ such that the
  homomorphisms $c_{g_\lambda} \circ \alpha$ converge to an element 
  $\phi \in   \hom(H,G)$. We want to show that $\phi$ belongs to $G \cdot \alpha$.

  Let us take $L = H \times G$ and $K(H,\alpha) :=\{(h, \alpha(h) \mid h \in H \}$ the
  associated compact subgroup of $H \times G$ defined by $\alpha$. We have then that $L$
  is locally compact and $L$ is almost connected. Denote by 
  $i \colon   K(H,\alpha) \to H \times G$ the inclusion and take the net 
  $\{(1,g_\lambda) \in H \times  G \mid \lambda \in \Lambda \}$ of elements in $H \times G$ 
  induced by the elements $g_\lambda \in G$. Note that since the homomorphisms $c_{g_\lambda} \circ \alpha$
  converge to $\phi \in \hom(H,G)$, the homomorphisms
  \[
  c_{(1,g_\lambda)} \circ i \in \hom(K(H,\alpha),H \times G)
  \]
  converge to the homomorphism $\theta_\phi \in \hom(K(H,\alpha),H \times G)$ defined by
  \[
  \theta_\phi(h,\alpha(h)) := (h, \phi(h)).
  \]
   Applying Theorem~\ref{the:Theorem-II_Lee-Wu(1970)} we know that there exists a pair
  $(y,g) \in H \times G$ such that $\theta_\phi = c_{(y,g)} \circ i$; this implies that
  for all $h \in H$
  \[
  (h, \phi(h))= ( yhy^{-1}, g \alpha(h) g^{-1})
  \]
  and therefore we can conclude that $\phi = c_g \circ \alpha$. Hence $\phi$ belongs to the
  orbit $G \cdot \alpha$.
\end{proof}

Finally  we give the  proof Theorem~\ref{the:G/C_G(alpha)_homeo_G.alpha}.
\begin{proof}[Proof of Theorem~\ref{the:G/C_G(alpha)_homeo_G.alpha}]

  Since $G$ is  second countable, it is a Lindel\"of space, i.e., 
  every open covering contains a countable subcovering, 
  see~\cite[Theorem~1.3  in Section~4-1 on page~191]{Munkres(1975)}.

  By Theorem~\ref{the:G_alpha_closed_in_Hom(H,G)} we know that the orbit $G \cdot \alpha$
  is closed in $\hom(H,G)$. We have already explained
 that  $\hom(H,G)$ is a complete  metric space. We conclude that the orbit $G
  \cdot \alpha$ is a complete metric space and hence by the Baire
Category Theorem, see~\cite[Theorem~7.2 in Section~7.7 on page~294]{Munkres(1975)},
also a Baire space, i.e., every countable union of closed sets each of which has empty interior has itself empty interior.

  Next we show that the map $ \iota_\alpha$ is an open map. Since we know that it is a
  bijective continuous $G$-map, this will imply that $\iota_\alpha$ is indeed a
  $G$-homeomorphism. Since the image and source of $\iota_\alpha$ are transitive $G$-spaces,
  it suffices to show for every  open set $U$ of $G$ containing $1_G \in G$
  that there exists an open neighborhood of $\alpha$ which is contained in $\iota_{\alpha}(U)$.

  Since $G$ is locally compact and the map $\mu \colon G \times G \to G, \; (g_1,g_2)  \mapsto g_1^{-1} g_2$
  is continuous, we can find a non-empty open set $V \subset U$ such that its closure $K=\overline{V}$ is compact and
  satisfies 
  \[
  K^{-1} K := \{k_1^{-1} \cdot k_2 \mid k_1 , k_2 \in K\}  \subset U;
  \] any nonempty open set $V \subset U$ such that $\overline{V} \times \overline{V} \subset \mu^{-1}(U) \cap U \times U$ satisfy
  this condition.

  Since $G$ is a Lindel\"of space, it can be covered by countably many $G$-translates of the open
  subset $V$ and hence also by countably many $G$-translates of the subset $K$. 
  This implies that $G/C_G(\alpha)$ can be covered by countably many $G$-translates of
  $KC_G(\alpha)/C_G(\alpha)$. Since each 
  \[
  \iota_\alpha(g KC_G(\alpha)/C_G(\alpha))= gK\cdot \alpha
  \]
  is a closed set in $G \cdot \alpha$ and $G \cdot \alpha$ is a Baire space,
  one of the $G$-translates of $KC_G(\alpha)/C_G(\alpha)$ must have non empty
  interior. This implies that $K \cdot \alpha$ has non empty interior,
  and therefore the set $K^{-1} K \cdot \alpha $ contains an open neighborhood of $\alpha$.  
  Since $K^{-1}K \alpha$ is contained in $i_{\alpha}(U)$, the claim follows.
\end{proof}




\end{document}